\newcommand{\N}{\mathbb{N}}
\newcommand{\R}{\mathbb{R}}
\DeclarePairedDelimiter\abs{\lvert}{\rvert}
\DeclarePairedDelimiter\norm{\lVert}{\rVert}
\DeclarePairedDelimiter\scalarprod{\langle}{\rangle}
\renewcommand{\epsilon}{\ensuremath\varepsilon}
\DeclareMathAlphabet\mathbfcal{OMS}{cmsy}{b}{n}
\renewcommand{\phi}{\ensuremath{\varphi}}
\renewcommand{\hat}[1]{\widehat{#1}}
\newcommand{\sm}{-}
\newcommand{\E}{\mathbb{E}}
\renewcommand{\P}{\mathbb{P}}
\newcommand{\Var}{\operatorname{Var}}
\newcommand{\iid}{\overset{\text{\tiny iid}}{\sim}}
\newcommand{\normaldist}[2]{\mathcal{N}\left(#1,#2\right)}
\newcommand{\mapset}[2]{\mathbf{M}_{#1}(#2)}
\newcommand{\permset}[2]{\mathbf{P}_{#1}(#2)}
\newcommand{\combset}[2]{\mathbf{C}_{#1}(#2)}
\newcommand{\permsize}[2]{(#1)_{#2}}
\newcommand{\Ustat}[2]{U_{#1}(#2)}
\newcommand{\Ustatb}[2]{U^*_{#1}(#2)}
\newcommand{\Ustatr}[2]{\tilde{U}_{#1}(#2)}
\newcommand{\Vstat}[2]{V_{#1}(#2)}
\newcommand{\Vstatr}[2]{\tilde{V}_{#1}(#2)}
\newcommand{\prodk}{\mathbf{k}}
\newcommand{\RKHS}{\mathcal{H}}
\newcommand{\prodRKHS}{\mathbfcal{H}}
\newcommand{\Bspace}{\mathcal{B}}
\newcommand{\kernelspace}{\mathcal{X}}
\newcommand{\prodkernelspace}{\mathbfcal{X}}
\newcommand{\uspace}{\mathcal{X}}
\newcommand{\lpspace}[3]{\mathcal{L}^{#1}(#2,#3)}
\newcommand{\Lpspace}[3]{L^{#1}(#2,#3)}
\newcommand{\bmspace}[1]{\mathcal{M}(#1)}
\newcommand{\pmspace}[1]{\mathcal{P}(#1)}
\newcommand{\linfF}{l^{\infty}(\mathcal{F})}
\newcommand{\linfG}{l^{\infty}(\mathcal{G})}
\newcommand{\mhsic}{\operatorname{dHSIC}}
\newcommand{\mhsicb}{\operatorname{\widehat{dHSIC}}}
\newcommand{\va}{\mathbf{a}}
\newcommand{\vN}{\mathbf{N}}
\newcommand{\vt}{\mathbf{t}}
\newcommand{\vx}{\mathbf{x}}
\newcommand{\vy}{\mathbf{y}}
\newcommand{\vz}{\mathbf{z}}
\newcommand{\vX}{\mathbf{X}}
\newcommand{\vR}{\mathbf{R}}
\newcommand{\Plaw}[1]{\mathbb{P}^{X^{#1}}}
\newcommand{\plaw}{\mathbb{P}^X}
\newcommand{\Plawpower}[1]{(\mathbb{P}^{X})^{\otimes #1}}
\newcommand{\prodlaw}{\mathbb{P}^{X^{1}}\otimes\dots\otimes\mathbb{P}^{X^{d}}}
\newcommand{\prodlawR}{\mathbb{P}^{R^{1}}\otimes\dots\otimes\mathbb{P}^{R^{d}}}
\newcommand{\prodlawbold}{\mathbb{P}^{\vX}}
\newcommand{\jointlaw}{\mathbb{P}^{(X^{1},\dots,X^{d})}}
\newcommand{\jointlawb}{\mathbb{P}^{\vX}}
\newcommand{\empprodlaw}{\hat{\mathbb{P}}_n^{X^1}\otimes\dots\otimes\hat{\mathbb{P}}_n^{X^d}}
\newcommand{\Pjoint}{\P}
\newcommand{\empjoint}{\hat{\Pjoint}_n}
\newcommand{\empprod}{\hat{\P}^{\otimes}_n}
\newcommand{\measure}[2]{\,#1(\text{d}#2)}
\newcommand{\HO}{H_0}
\newcommand{\HA}{H_A}
\newcommand{\landauO}[1]{\mathcal{O}\left(#1\right)}
\newcommand{\landauProbO}[1]{\mathcal{O}_{\P}\left(#1\right)}
\newcommand{\independent}{\perp\!\!\!\perp}
\newcommand{\parents}{\mathbf{PA}}
\numberwithin{equation}{section}
\theoremstyle{break}
\newtheorem{theorem}{Theorem}[section]
\newtheorem{definition}[theorem]{Definition}
\newtheorem{lemma}[theorem]{Lemma}
\newtheorem{proposition}[theorem]{Proposition}
\newtheorem{setting}{Setting}
\theoremstyle{plain}
\newtheorem{remark}[theorem]{Remark}
\newtheorem*{bochner}{Bochner's Theorem}
\theoremstyle{nonumberplain}
\newtheorem{proof}{Proof}
\newcommand\Peter[1]{{\color{red}{Peter: ``#1''}}}
\begin{document}

\title{Kernel-based Tests for Joint Independence}
\author[1]{Niklas Pfister}
\affil[1]{Seminar für Statistik, ETH Zürich, Switzerland}
\author[1]{Peter B\"uhlmann}
\author[2]{Bernhard Sch\"olkopf}
\author[2]{Jonas Peters}
\affil[2]{MPI for Intelligent Systems, Tübingen, Germany \textit{and}
Department of Mathematical Sciences, University of Copenhagen,
Denmark.}
\date{\today}
\maketitle

\begin{abstract}
  We investigate the problem of testing whether $d$ 
  possibly multivariate random variables, which may or may not be
  continuous, are jointly (or mutually) independent.  Our method
  builds on ideas of the two variable Hilbert-Schmidt independence
  criterion (HSIC) but allows for an arbitrary number of variables.
  We embed the joint distribution and the product of the marginals
  into a reproducing kernel Hilbert space and define the $d$-variable
  Hilbert-Schmidt independence criterion (dHSIC) as the squared
  distance between the embeddings.  In the population case, the value
  of dHSIC is zero if and only if the $d$ variables are jointly
  independent, as long as the kernel is characteristic. Based on an
  empirical estimate of dHSIC, we investigate three different
  non-parametric hypothesis tests: a permutation test, a bootstrap
  analogue and a procedure based on a Gamma approximation.
We apply nonparametric independence testing to a problem in causal
discovery and illustrate the new methods on simulated and a real dataset. 
\end{abstract}


\section{Introduction} \label{sec:intro}

We consider the problem of nonparametric testing for joint or mutual
independence of $d$ random variables. This is a very different and
more ambitious task than testing pairwise independence of a collection
of random variables.  Consistent pairwise nonparametric independence
tests date back to \citet{feuerverger1993} and \citet{Romano1986} and
have more recently received considerable attention using kernel-based
methods \citep{grettonHSIC,
  gretton08hsic}, 
and other related approaches for estimating or testing pairwise
(in-)dependence including distance correlations
\citep{szekely2009,szekely2014},
rank-based correlations \citep{bergsma2014, Leung2015, Nandy2016} or
also non- and semiparametric copula-based correlations
\citep{liu2012,xue2012,wegkamp2016,gaisser2010}.

One of our motivations to develop methods for nonparametric testing of
joint independence originates from the area of causal inference, and
we discuss this in Section~\ref{subsec.causal}: there, inferring
pairwise independence is not sufficient as those models assume the
existence of jointly independent noise variables. Our test can
therefore be used as a goodness-of-fit test and for model selection,
see Section~\ref{subsec.causal}. A further interesting application of
joint independence tests is independent component analysis. While many
algorithms use a step wise approach to construct the collection of
independent features, a more direct option is to explicitly minimize a
measure of mutual dependence (such as our dHSIC), for more details see
\citet{chen2006} or \citet{matteson2016}.\footnote{We thank an anonymous
  referee for pointing out this interesting application.}

For testing joint independence, consider the distribution $\jointlaw$
of the random vector $\vX=(X^1,\dots,X^d)$.\footnote{Throughout the
  paper, a superscript of $X$ always denotes an index rather than an
  exponent.}
By definition, $(X^1,\dots,X^d)$ are jointly or mutually independent
if and only if $\jointlaw = \prodlaw$.
For a given positive definite kernel, we map both distributions into
the reproducing kernel Hilbert space (see Section~\ref{sec:21} for
details) and consider their squared distance. Such a mapping can in
fact be seen as a generalization of the $L^2$ distance between
``traditional'' kernel density estimators, see the discussion on
page~732 in \citep{Gretton2012jmlr}. For characteristic kernels (e.g.,
the popular Gaussian kernel), the embedding of Borel probability
measures is injective and the squared distance is zero if and only if
the variables are jointly independent. For the finite sample case, we
compute a suitable estimator that can be used as a test statistic. We
then construct three statistical tests: two tests are based on
permutation and bootstrap procedures, respectively, and a third test
approximates the distribution of the test statistic under independence
with a Gamma distribution.  Our statistic extends the Hilbert-Schmidt
Independence Criterion (HSIC) \citep{grettonHSIC} and contains it as a
special case. We therefore call the corresponding test procedure
$d$-variable Hilbert-Schmidt Independence Criterion (dHSIC).  We prove
that the permutation based approach has correct level and that the
bootstrap approach has pointwise asymptotic level and is consistent in
the sense of \eqref{eq:consistencydef}.

In the literature, other mutual independence tests have been
proposed. One of them is a characteristic function based nonparametric
mutual independence test due to \citet{kankainen}, see
Section~\ref{sec:charfun} for details.  The dependence measure is a
weighted integral over the difference between the characteristic
functions of the joint and the product distribution.  Different weight
functions result in different quantities, all of them are special
cases of dHSIC for an appropriate choice of kernel.
We show
that our results carry over to the characteristic function
framework, whereas the opposite direction only works for a restrictive
class of kernels. Moreover, while \citet{kankainen} does prove similar
results about the asymptotic distribution of the test statistic as
given in Theorem~\ref{thm:asymptoticdist_HO}, her proof cannot be
directly extended to our more general framework. This is one of the
reasons, why we developed some of our general results about
V-statistics. The test in \citet{kankainen} is 
shown
to be
consistent, but the word consistency there refers to the property that the
asymptotic distribution of the test statistic under the alternative
hypothesis diverges; instead, we employ the commonly used 
definition 
that a test is consistent if the testing procedure itself (in our case
the bootstrap) is consistent in the sense of~\eqref{eq:consistencydef}. Our consistency results immediately carry over to the characteristic function framework, as it is contained as a special case of dHSIC.

\citet{bakirov2006} use an independence coefficient as measure of
dependence, 
which is strongly related to the characteristic function
approach of \citet{kankainen}. They approximate the asymptotic test
statistic, 
which is also a sum of chi-squared distributed random
variables, 
using tail bounds. This results in a test that has
(conservative) asymptotic level in the sense of
\eqref{eq:asympleveldef}. Due to the conservative bounds, however, the test is, in general, not consistent in the sense of~\eqref{eq:consistencydef}.

One test for which a consistency result~
\eqref{eq:consistencydef} has been shown, is an older method based on
work by \citet{beranmiller} and \citet[][p.27]{Romano1986}; it
does not seem to be used in practice very often. As a test statistic,
it takes the maximal difference between the empirical distribution and
the product of its marginals over a class of sets.  One then chooses a
distribution over sets and approximates this infinite class by
$C < \infty$ randomly chosen sets, see Section~\ref{sec:competing} for
details. This makes the construction impractical with a rather ad-hoc
computational implementation. In our experiments, we found that this
test has less power than dHSIC and is computationally more demanding,
even for moderate values of $C$. 

Both, this test and the
characteristic function based tests mentioned above are restricted
to the Euclidean space, dHSIC allows for more general kernels such as
kernels on graphs or strings \citep[see][]{gretton08hsic}.

Finally, it is possible to use the following alternative procedure
that constructs a joint independence test from a bivariate test: joint
independence holds if and only if for all $k \in \{2, \ldots, d\}$ we
have that $X^k$ is independent of $(X^1, \ldots, X^{k-1})$.  In order
to construct a mutual independence test, we can therefore perform
$d-1$ statistical tests and combine the results using a Bonferroni
correction.  However, such a procedure is asymmetric in the $d$ random
variables and depends on the order of the random variables.
Furthermore, it is known that the Bonferroni correction is often
conservative and due to performing $d-1$ tests, such a test is
of order $d$ times more computationally expensive than the direct
dHSIC approach, see Section~\ref{sec:computational}.


\subsection{Contribution}
This work extends the two variable HSIC
\citep{grettonHSIC,gretton08hsic,smola} to testing joint independence
for an arbitrary number of variables. The resulting test, moreover,
extends the work of \citet{kankainen} to the more flexible framework
of kernel methods (see Section~\ref{sec:charfun}) and establishes
consistency, as mentioned also in the previous section. While the
dHSIC test statistic has been briefly mentioned by \citet{lancester},
the derivation of the general results about asymptotic distributions
(Theorem~\ref{thm:asymptoticdist_HO} and
Theorem~\ref{thm:corollaryasymptoticdistmhsic}) as well as the
mathematical rigorous treatment of the permutation test and the
bootstrap test are novel: this concerns results for both types of tests about
their level (type I error) in
Proposition~\ref{thm:level_permutation_test} and
Theorem~\ref{thm:level_bootstrap_test}, and the consistency
(asymptotic power) of the bootstrap in
Theorem~\ref{thm:consistencybootstraptest}. In fact, the consistency
result is quite remarkable, establishing asymptotic consistency for any
fixed alternative. It is the first such result for kernel based
methods and maybe the first result for a practically feasible test for
joint independence having asymptotic error control and asymptotic
power equal to one.  We also prove that under the null hypothesis it
holds that $\xi_2(h)>0$, which has been implicitly assumed in for
example \citet[Theorem 2]{gretton08hsic}.
For the Gamma approximation based test, we compute general formulas
both for the mean and for the variance in
Proposition~\ref{thm:meanofmhsic} and
Proposition~\ref{thm:varianceofmhsic}, respectively. In order to make
our tests accessible we have created an R-package \citep{R} called
\texttt{dHSIC}, which is available on CRAN. Moreover, we have applied
our dHSIC to real data in causality, showing its usefulness also in
applied settings, both in terms of model selection and goodness-of-fit
test.

To establish these properties, we derive new results for V-statistics
which we collect in Appendix~\ref{appendix:vstat} and which are
of independent interest. In particular,
Lemma~\ref{thm:decompositionofmVstat} (asymptotic difference between
U- and V-statistics), Theorem~\ref{thm:var_vstat} (asymptotic variance
of a V-statistic), Theorem~\ref{thm:biasVstat} (asymptotic bias of a
V-statistic), Theorem~\ref{thm:asymptoticdist_vstat2} (asymptotic
distribution of a degenerate V-statistic) and
Theorem~\ref{thm:resamplingvstat} (asymptotic distribution of a
degenerate resampled V-statistic).

%


\section{Hilbert-Schmidt independence criterion for $d$ variables} \label{sec:2}

\subsection{Reproducing kernel Hilbert spaces} \label{sec:21}
We present here a brief introduction to reproducing kernel Hilbert spaces
and the theory of mean embeddings. Given a set $\kernelspace$ we call
a function $k:\kernelspace\times\kernelspace\rightarrow\R$ a positive
semi-definite kernel if for any set of points
$(x_1,\dots,x_n)\in\kernelspace^n$ the corresponding Gram matrix
$(k(x_i,x_j))_{1\leq i,j\leq n}$ is symmetric and positive
semi-definite. Moreover, denote by $\mathcal{F}(\kernelspace)$ the
space of functions from $\kernelspace$ to $\R$. Reproducing kernel
Hilbert spaces on $\kernelspace$ are well-behaved sub-classes of
$\mathcal{F}(\kernelspace)$ defined as follows. 
\begin{definition}[Reproducing kernel Hilbert space]
  Let $\kernelspace$ be a set, let
  $\RKHS\subseteq\mathcal{F}(\kernelspace)$ be a Hilbert space. Then
  $\RKHS$ is called a \emph{reproducing kernel Hilbert space (RKHS)} if there
  exists a kernel $k$ on $\kernelspace$ satisfying
  \begin{compactitem}
    \item[(i)] $\forall x\in\kernelspace$: $k(x,\cdot)\in\RKHS$;
    \item[(ii)] $\forall f\in\RKHS,$ $\forall x\in\kernelspace$:
      $\scalarprod{f, k(x,\cdot)}_{\scriptscriptstyle \RKHS}=f(x)$.
  \end{compactitem}
  Moreover, we call $k$ a \emph{reproducing kernel} of $\RKHS$.
\end{definition}
It can be shown that for any positive semi-definite kernel $k$ there
exists an RKHS with reproducing kernel $k$. Given any positive
semi-definite kernel, we can therefore construct and use the
corresponding RKHS. A commonly used positive semi-definite kernel on
$\R^m$ is the Gaussian kernel, defined for all $x,y\in\R^m$ by
\begin{equation}
  \label{eq:GaussianKernel}
  k(x,y)=\operatorname{exp}\left(-\dfrac{\norm{x-y}^2_{\scriptscriptstyle \R^m}}{2\sigma^2}\right).
\end{equation}

One of the strengths of RKHS is that they can be used for embedding
complicated objects in order to use the Hilbert space structure to
analyze them. Being able to express inner products as function
evaluations via the reproducing property additionally simplifies
computation within an RKHS. In this paper, we use such an embedding
technique to analyze probability distributions. To this end,
we use the Bochner integral to define an embedding of
$\bmspace{\kernelspace}\coloneqq\{\mu\mid \mu \text{ is a finite Borel
  measure on }\kernelspace\}$ into an RKHS.
\begin{definition}[mean embedding function]
  Let $\kernelspace$ be a separable metric space, let $k$ be a
  continuous bounded positive semi-definite kernel and let $\RKHS$ be
  the RKHS with reproducing kernel $k$. Then, the function
  $\Pi:\bmspace{\kernelspace}\rightarrow\RKHS$ satisfying for all
  $\mu\in\bmspace{\kernelspace}$ that
  \begin{equation*}
    \Pi(\mu)=\int_{\kernelspace}k(x,\cdot)\measure{\mu}{x}.
  \end{equation*}
  is called the \emph{mean embedding} (associated to $k$).
\end{definition}
In order to infer that two distributions are equal given that their
embeddings coincide, it is necessary that the mean embedding is
injective. A kernel is called characteristic if the mean embedding
$\Pi$ is injective \citep[see][]{fukumizu07}. The Gaussian
kernel~\eqref{eq:GaussianKernel} on $\R^m$, for example, is
characteristic \citep[e.g.][Theorem 7]{gretton08}.

\subsection{Definition of dHSIC and independence property} \label{sec:22}
Our goal is to develop a non-parametric
hypothesis test to determine whether the components of a random vector
$\vX=(X^1,\dots,X^d)$ are mutually independent, based on $n$ iid
observations $\vX_1,\dots,\vX_n$ of the vector $\vX$. The variables
$X^1,\dots,X^d$ are mutually independent if and only if
\begin{equation*}
  \prodlaw=\jointlaw.
\end{equation*}
The central idea is to embed both $\prodlaw$ and $\jointlaw$ into an
appropriate RKHS and then check whether the embedded elements are
equal. To keep an overview of all our assumptions, we summarize the setting
used throughout the rest of this work.

\begin{setting}[dHSIC]
  \label{setting:mHSIC}
  For all $j\in\{1,\dots,d\}$, let $\kernelspace^j$ be a separable metric
  space and denote by
  $\prodkernelspace=\kernelspace^1\times\cdots\times\kernelspace^d$
  the product space. Let $(\Omega,\mathcal{F},\P)$ be a probability
  space and for every $j\in\{1,\dots,d\}$, let $X^j:\Omega \rightarrow
  \kernelspace^j$ be a random variable with law $\Plaw{j}$. Let
  $(\vX_i)_{i\in\N}$ be a sequence of iid copies of
  $\vX=(X^1,\dots,X^d)$. For $j\in\{1,\dots,d\}$, let
  $k^j:\kernelspace^j\times\kernelspace^j\rightarrow\R$ be a
  continuous, bounded, positive semi-definite
  kernel on $\kernelspace^j$ and denote by $\RKHS^j$ the corresponding
  RKHS. Moreover, assume that the tensor product of the kernels $k^j$
  denoted by $\prodk=k^1\otimes\cdots\otimes k^d$ is
  characteristic\footnote{\citet{gretton2015} claims that this can be
    weakened to simply assuming that the individual kernels are
    characteristic.} and let
  $\prodRKHS=\RKHS^1\otimes\cdots\otimes\RKHS^d$ be the (projective)
  tensor product of the RKHSs $\RKHS^j$. Let
  $\Pi:\bmspace{\prodkernelspace}\rightarrow\prodRKHS$ be the mean
  embedding function associated to $\prodk$.
\end{setting}
It is straightforward to show that this setting ensures that
$\prodRKHS$ is an RKHS with reproducing kernel~$\prodk$, that $\prodk$
is continuous and bounded, that $\prodRKHS$ is separable and only
contains continuous functions, and that $\Pi$ is injective. Using this
setting we can extend the Hilbert-Schmidt independence criterion
(HSIC) from two variables as described by \citet{gretton08hsic} to the
case of $d$ variables. The extension is based on the HSIC
characterization via the mean embedding described by \citet{smola}.

\begin{definition}[dHSIC]\label{def:dHSIC}
  Assume Setting \ref{setting:mHSIC}. Then, define
  the statistical functional
  \begin{equation*}
    \mhsic\left(\jointlaw\right)\coloneqq\norm[\Big]{\Pi\left(\prodlaw\right)-\Pi\left(\jointlaw\right)}_{\scriptscriptstyle \prodRKHS}^2
  \end{equation*}
  and call it the \emph{$d$-variable Hilbert-Schmidt independence criterion (dHSIC)}.
\end{definition}
Therefore, dHSIC is the distance between the
joint measure and the product measure after embedding them into an
RKHS. Since the mean embedding $\Pi$ is injective we get the following
relation between dHSIC and joint  independence.

\begin{proposition}[independence property of dHSIC]
  \label{thm:mhsic_independence}
  Assume Setting \ref{setting:mHSIC}. Then it holds
  that
  \begin{equation*}
    \mhsic(\jointlaw) =0 \quad\Longleftrightarrow\quad \prodlaw=\jointlaw.
  \end{equation*}
\end{proposition}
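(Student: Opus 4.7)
The plan is to reduce the statement to two elementary facts: the positive definiteness of the norm on the RKHS $\prodRKHS$, and the injectivity of the mean embedding $\Pi$ on $\bmspace{\prodkernelspace}$, which has already been asserted (just after Setting~\ref{setting:mHSIC}) to follow from $\prodk$ being characteristic. Both $\jointlaw$ and $\prodlaw$ are Borel probability measures on the product space $\prodkernelspace$ (the latter by Fubini/product measure construction, well-defined since each $\Plaw{j}$ is a probability measure on the separable metric space $\kernelspace^j$), so both lie in the domain $\bmspace{\prodkernelspace}$ on which $\Pi$ acts.

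First I would write, directly from Definition~\ref{def:dHSIC}, that
\begin{equation*}
\mhsic(\jointlaw) = \norm[\big]{\Pi(\prodlaw) - \Pi(\jointlaw)}_{\scriptscriptstyle \prodRKHS}^2 = 0
\quad\Longleftrightarrow\quad
\Pi(\prodlaw) = \Pi(\jointlaw),
\end{equation*}
which uses only that $\norm{\cdot}_{\scriptscriptstyle \prodRKHS}$ is a genuine norm on the Hilbert space $\prodRKHS$.

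Next, I would invoke injectivity of $\Pi$ on $\bmspace{\prodkernelspace}$: since $\prodk$ is characteristic by Setting~\ref{setting:mHSIC}, the equality $\Pi(\prodlaw) = \Pi(\jointlaw)$ is equivalent to $\prodlaw = \jointlaw$. Chaining the two equivalences yields the claim.

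The only potential subtlety — and the one point I would flag explicitly rather than leave implicit — is the passage from ``$\prodk$ characteristic'' to ``$\Pi$ injective on $\bmspace{\prodkernelspace}$.'' Strictly, a characteristic kernel gives injectivity of the mean embedding on the set of Borel probability measures; but both $\prodlaw$ and $\jointlaw$ are probability measures, so this suffices. I would make a one-line remark to this effect and cite the standard reference (\citet{fukumizu07}, as referenced in Section~\ref{sec:21}), after which the proposition follows. No heavy calculation is required; the content of the statement is entirely packaged into the characteristic-kernel hypothesis.
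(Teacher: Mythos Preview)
Your proposal is correct and follows exactly the same approach as the paper's own proof, which simply states that the result follows from the definiteness of the norm and the injectivity of $\Pi$. Your version is in fact more careful, since you explicitly note that both $\prodlaw$ and $\jointlaw$ are probability measures and hence lie in the domain on which the characteristic-kernel assumption guarantees injectivity.
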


\begin{proof}
  This statement follows from the definiteness of the
  norm and the fact that $\Pi$ is injective.
\end{proof}
This proposition implies that we can use dHSIC as a measure of joint
dependence between variables. In order to make $\mhsic$
accessible for calculations, we express it in terms of the individual
kernels $k^1,\dots,k^d$. This expansion will be the basis of the estimator defined in Section~\ref{sec:23}.
\begin{proposition}[expansion of dHSIC]
  \label{thm:mhsickernel}
  Assume Setting \ref{setting:mHSIC}. Then it holds that
  \begin{equation*}
    \mhsic=\E\left(\prod_{j=1}^dk^j\left(X^j_1,X^j_2\right)\right)
    +\E\left(\prod_{j=1}^dk^j\left(X^j_{2j-1},X^j_{2j}\right)\right)
    -2\E\left(\prod_{j=1}^dk^j\left(X^j_1,X^j_{j+1}\right)\right)
  \end{equation*}
\end{proposition}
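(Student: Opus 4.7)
The plan is to expand the squared norm appearing in the definition of $\mhsic$ and compute each of the three resulting inner products as an expectation by using the reproducing property of the tensor kernel $\prodk$ together with Fubini's theorem.

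First I would write
\begin{equation*}
  \mhsic = \scalarprod{\Pi(\prodlaw),\Pi(\prodlaw)}_{\prodRKHS} + \scalarprod{\Pi(\jointlaw),\Pi(\jointlaw)}_{\prodRKHS} - 2\,\scalarprod{\Pi(\prodlaw),\Pi(\jointlaw)}_{\prodRKHS}.
\end{equation*}
The key auxiliary identity is that for any two finite Borel measures $\mu,\nu$ on $\prodkernelspace$,
\begin{equation*}
  \scalarprod{\Pi(\mu),\Pi(\nu)}_{\prodRKHS} = \int_{\prodkernelspace}\!\int_{\prodkernelspace} \prodk(x,y)\measure{\mu}{x}\measure{\nu}{y}.
\end{equation*}
This follows by unfolding the Bochner-integral definition of $\Pi(\mu)$, pulling the inner product inside both integrals (justified by the boundedness of $\prodk$ so that the Bochner integrals converge), and applying the reproducing property $\scalarprod{\prodk(x,\cdot),\prodk(y,\cdot)}_{\prodRKHS}=\prodk(x,y)$.

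Second, I would evaluate the three terms. For the joint--joint term, taking two iid copies $\vX_1,\vX_2$ of $\vX$ and using $\prodk = k^1\otimes\cdots\otimes k^d$ gives
\begin{equation*}
  \scalarprod{\Pi(\jointlaw),\Pi(\jointlaw)} = \E\!\left(\prod_{j=1}^d k^j(X^j_1,X^j_2)\right).
\end{equation*}
For the product--product term, the measure factorizes as $\prodlaw = \Plaw{1}\otimes\cdots\otimes\Plaw{d}$, and the integrand $\prod_j k^j(x^j,y^j)$ factorizes over $j$; by Fubini the double integral splits into a product of $d$ two-fold integrals over $\Plaw{j}\otimes\Plaw{j}$, which can be combined into a single expectation over $2d$ mutually independent variables $X^j_{2j-1},X^j_{2j}$ (for $j=1,\dots,d$), yielding $\E\prod_{j=1}^d k^j(X^j_{2j-1},X^j_{2j})$. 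For the joint--product cross term, the $x$-integral runs against $\jointlaw$, producing a single tied copy $\vX_1$ shared across all $j$, while the $y$-integral factorizes to produce one extra independent variable $X^j_{j+1}$ per coordinate $j$; this gives $\E\prod_{j=1}^d k^j(X^j_1,X^j_{j+1})$. Substituting back yields the claimed expansion.

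The main obstacle is really just bookkeeping on the indices: one has to choose labels for the auxiliary iid copies so that (i) within each term, the variables appearing as arguments of the kernels have the correct marginal structure (fully coupled for $\jointlaw$, fully independent across $j$ for $\prodlaw$), and (ii) the labels match those fixed in the statement of the proposition. Once the reproducing-property identity above is in hand and Fubini is invoked, no further analytical work is required.
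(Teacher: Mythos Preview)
Your proposal is correct and follows essentially the same route as the paper: expand the squared norm into three inner products and evaluate each via the reproducing property together with properties of the Bochner integral. The only cosmetic difference is that the paper exploits the tensor Hilbert space structure coordinate-wise (using $\scalarprod{\prod_j f^j,\prod_j g^j}_{\prodRKHS}=\prod_j\scalarprod{f^j,g^j}_{\RKHS^j}$ to factor each term before applying the reproducing property in each $\RKHS^j$), whereas you state the general double-integral identity $\scalarprod{\Pi(\mu),\Pi(\nu)}_{\prodRKHS}=\int\!\!\int\prodk(x,y)\measure{\mu}{x}\measure{\nu}{y}$ once and then factor the integrand; these are equivalent bookkeeping choices.
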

A proof is given in Appendix~\ref{proof:mhsickernel}.


\subsection{Estimating \lowercase{d}HSIC} \label{sec:23}

Our estimator will be constructed using several V-statistics. We
therefore start by summarizing a few well-known definitions and the
most important results from the theory of V-statistics. Readers familiar with these topics may skip directly to
Definition~\ref{def:mhsic_esimator}.

Let $n\in\N$, $q\in\{1,\dots,n\}$, let $\uspace$ be a metric space,
$(\Omega,\mathcal{F},\P)$ a probability space, $X:\Omega \rightarrow
\uspace$ a random variable with law $\Plaw{}$ and let $(X_i)_{i\in\N}$ be a
sequence of iid copies of $X$, i.e.,
$(X_i)_{i\in\N}\iid\P^{X}$. 
\if0 
Furthermore, define the set of all
combinations on $\{1,\dots,n\}$ by
\begin{equation*}
  \combset{q}{n}\coloneqq\left\{(i_1,\dots,i_q)\in\{1,\dots,n\}:i_1<\cdots<i_q\right\}
\end{equation*}
and the set of all mappings on $\{1,\dots,n\}$ by
\begin{equation*}
  \mapset{q}{n}\coloneqq\left\{1,\dots,n\right\}^q.
\end{equation*}
Observe that $\abs{\combset{q}{n}}=\binom{n}{q}$ and
$\abs{\mapset{q}{n}}=n^q$. Moreover, consider a measurable and symmetric
(i.e., invariant under any permutation of its input arguments) function
$g:\uspace^q\rightarrow\R$. Suppose we are interested in the
statistical functional
\begin{equation}
  \label{eq:statfun}
  \theta_g\coloneqq\theta_g\left(\Plaw{}\right)\coloneqq\E\left(g(X_1,\dots,X_q)\right).
\end{equation}
To this end we define two estimators: The U-statistic
\begin{equation}
  \label{eq:ustat}
  \Ustat{n}{g}\coloneqq\dbinom{n}{q}^{-1}\sum_{\combset{q}{n}}g(X_{i_1},\dots,X_{i_q}),
\end{equation}
and the V-statistic
\begin{equation}
  \label{eq:vstat}
  \Vstat{n}{g}\coloneqq\dfrac{1}{n^q}\sum_{\mapset{q}{n}}g(X_{i_1},\dots,X_{i_q}).
\end{equation}
It is straightforward to see that the U-statistic is unbiased. Both
U-and V-statistics are important. For practical applications, it is
generally easier to work with V-statistics because the sets
$\combset{q}{n}$ and $\permset{q}{n}$ become quite complicated for
$q>2$. Whenever deriving theoretical results, however, it turns out to
be easier to first consider U-statistics and then deduce the
corresponding result for V-statistics. The theory for U-statistics has
been considered in literature quite extensively and we refer to
\citet{serfling} for an exhaustive summary. V-statistics on the other
hand are more difficult to analyze and most results are only given for
$q=2$. For most applications this is sufficient, however, for dHSIC we
use V-statistics with $q=2d$ and therefore need more general
results. A comprehensive summary together with all proofs can be found
in Appendix~\ref{appendix:vstat}.
\else 
Furthermore, define $\mapset{q}{n}\coloneqq\left\{1,\dots,n\right\}^q$
as the $q$-fold Cartesian product of the set $\{1,\dots,n\}$.
Moreover, consider a measurable and symmetric
(i.e., invariant under any permutation of its input arguments) function
$g:\uspace^q\rightarrow\R$, which we denote as core function. 
The V-statistic
\begin{equation}
  \label{eq:vstat}
  \Vstat{n}{g}\coloneqq\dfrac{1}{n^q}\sum_{\mapset{q}{n}}g(X_{i_1},\dots,X_{i_q})
\end{equation}
estimates the 
statistical functional
\begin{equation*}
  \theta_g\coloneqq\theta_g\left(\Plaw{}\right)\coloneqq\E\left(g(X_1,\dots,X_q)\right).
\end{equation*}
As opposed to U-statistics, defined in~\eqref{eq:ustat} in
Appendix~\ref{appendix:vstat}, V-statistics are usually biased.  In
this work, we nevertheless consider a V-statistic because it can be
computed much faster than the corresponding U-statistic; this is in
particular the case if $q>2$. While U-statistics have been extensively studied
\citep[e.g.][]{serfling}, results for V-statistics are often
restricted to $q=2$. Since for dHSIC we use V-statistics with $q=2d$
(see Lemma~\ref{thm:properties_h}), we need more general
results that are derived in Appendix~\ref{appendix:vstat}.  \fi

The following notation appears throughout the paper in the context of
V-statistics and is also common in the theory of U-statistics,
see~\citet[Section 5.1.5]{serfling}. Given the core function 
$g:\uspace^q\rightarrow\R$ we define for every $c\in\{1,\dots,q-1\}$
the function $g_c:\uspace^c\rightarrow\R$ by
\begin{equation*}
  g_c(x_1,\dots,x_c)\coloneqq\E\left(g(x_1,\dots,x_c,X_{c+1},\dots,X_q)\right)
\end{equation*}
and $g_q\equiv g$. Then, $g_c$ is again a symmetric core function such that
for every $c\in\{1,\dots,q-1\}$, we have
\begin{equation*}
  \E\left(g_c(X_1,\dots,X_c)\right)=\E\left(g(X_1,\dots,X_q)\right)=\theta_g.
\end{equation*}
Further define $\tilde{g}\equiv g-\theta_g$ and
for all $c\in\{1,\dots,q\}$ define $\tilde{g}_c\equiv g_c-\theta_g$ to
be the centered versions of the core functions. Moreover, define for
every $c\in\{1,\dots,q\}$,
\begin{equation}
  \label{eq:xi_c}
  \xi_c\coloneqq\Var\left(g_c(X_1,\dots,X_c)\right)=\E\left(\tilde{g}_c(X_1,\dots,X_c)^2\right).
\end{equation}
We sometimes write $\xi_c(g)$ to make clear which core function
we are talking about.

We define an estimator for $\mhsic$ by estimating each of the
expectation terms in Proposition~\ref{thm:mhsickernel} by a V-statistic.
\if0
\begin{definition}[$\mhsicb$]
  \label{def:mhsic_esimator}
  Assume Setting \ref{setting:mHSIC}. Define the estimator
  $\mhsicb=(\mhsicb_n)_{n\in\N}$ in such a way that
  $\mhsicb_n:\prodkernelspace^n\rightarrow\R$ are measurable
  functions with the property that for all $n\in\{1,\dots,2d-1\}$ it
  holds that $\mhsicb_n\coloneqq 0$ and for all
  $n\in\{2d,2d+1,\dots\}$ and for all
  $(\vx_1,\dots,\vx_n)\in\prodkernelspace^n$ it holds that
  \begin{equation*}
    \begin{split}
      \mhsicb_n(\vx_1,\dots,\vx_n)
      &\coloneqq\dfrac{1}{n^2}\sum_{\mapset{2}{n}}\prod_{j=1}^dk^j\left(x^j_{i_1},x^j_{i_2}\right)\\
      &\qquad+\dfrac{1}{n^{2d}}\sum_{\mapset{2d}{n}}\prod_{j=1}^dk^j\left(x^j_{i_{2j\sm
            1}},x^j_{i_{2j}}\right)\\
      &\qquad-\dfrac{2}{n^{d+1}}\sum_{\mapset{d+1}{n}}\prod_{j=1}^dk^j\left(x^j_{i_1},x^j_{i_{j+1}}\right).
    \end{split}
  \end{equation*}
  We call $\mhsicb$ the dHSIC V-estimator.
\end{definition}
\fi
\begin{definition*}[$\mhsicb$]
  \label{def:mhsic_esimator}
  Assume Setting \ref{setting:mHSIC}.
  For all $(\vx_1,\dots,\vx_n)\in\prodkernelspace^n$ define the
  estimator $\mhsicb=(\mhsicb_n)_{n\in\N}$ as 
  \begin{equation*}
    \begin{split}
      \mhsicb_n(\vx_1,\dots,\vx_n)
      &\coloneqq\dfrac{1}{n^2}\sum_{\mapset{2}{n}}\prod_{j=1}^dk^j\left(x^j_{i_1},x^j_{i_2}\right)
      +\dfrac{1}{n^{2d}}\sum_{\mapset{2d}{n}}\prod_{j=1}^dk^j\left(x^j_{i_{2j\sm
            1}},x^j_{i_{2j}}\right)\\
      &\qquad-\dfrac{2}{n^{d+1}}\sum_{\mapset{d+1}{n}}\prod_{j=1}^dk^j\left(x^j_{i_1},x^j_{i_{j+1}}\right).
    \end{split}
  \end{equation*}
if  $n\in\{2d,2d+1,\dots\}$
  and
  as $\mhsicb_n(\vx_1,\dots,\vx_n)\coloneqq 0$ if
  $n\in\{1,\dots,2d-1\}$. 
\end{definition*}
Whenever it is clear from the context, we drop the
functional arguments and just write $\mhsicb_n$ instead of
$\mhsicb_n(\vX_1,\dots,\vX_n)$. In order to make this estimator more
accessible for analysis we can express it as a V-estimator with a
single core function. To this end, define
$h:\prodkernelspace^{2d}\rightarrow\R$ to be the function satisfying
for all $\vz_1,\dots,\vz_{2d}\in\prodkernelspace$ that
\begin{equation}
  \label{eq:hcore}
  \begin{split}
    h(\vz_1,\dots,\vz_{2d})=\dfrac{1}{(2d)!}\sum_{\pi\in
      S_{2d}}\Bigg[&\prod_{j=1}^dk^j\left(z^j_{\pi(1)},z^j_{\pi(2)}\right)+
    \prod_{j=1}^dk^j\left(z^j_{\pi(2j-1)},z^j_{\pi(2j)}\right)\\
    &\quad-2\prod_{j=1}^dk^j\left(z^j_{\pi(1)},z^j_{\pi(j+1)}\right)\Bigg],
  \end{split}
\end{equation}
where $S_{2d}$ is the set of permutations on $\{1,\dots,2d\}$. The
following proposition shows that $\mhsicb$ is a V-statistic with core
function $h$.

\begin{lemma}[properties of the core function $h$]
  \label{thm:properties_h}
  Assume Setting \ref{setting:mHSIC}. It holds that the function $h$
  defined in \eqref{eq:hcore} is symmetric, continuous, and there
  exists $C>0$ such that for all
  $\vz_1,\dots,\vz_{2d}\in\prodkernelspace$ we have
  $\abs{h(\vz_1,\dots,\vz_{2d})}<C$. Moreover,
  $\Vstat{n}{h}=\mhsicb_n$, see~\eqref{eq:vstat}, and
  $\theta_h=\E\left(h(\vX_1,\dots,\vX_{2d})\right)=\mhsic$.
\end{lemma}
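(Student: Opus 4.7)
My plan is to verify the four claims (symmetry, continuity, boundedness, and the two identities $V_n(h)=\mhsicb_n$ and $\theta_h=\mhsic$) in turn. None of them requires deep machinery; the only place where care is needed is checking that the symmetrization in \eqref{eq:hcore} collapses to the three sums defining $\mhsicb_n$ after a change of summation variable.

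Symmetry is immediate from the definition: for any $\sigma \in S_{2d}$, reindexing $\pi \mapsto \pi\circ\sigma$ in the outer sum in \eqref{eq:hcore} shows $h(\vz_{\sigma(1)},\dots,\vz_{\sigma(2d)})=h(\vz_1,\dots,\vz_{2d})$. Continuity follows because each $k^j$ is continuous by Setting~\ref{setting:mHSIC}, products and finite sums of continuous functions are continuous, and each projection $\vz_i \mapsto z^j_i$ is continuous on the product space $\prodkernelspace$. Boundedness: since each $k^j$ is bounded, say $\sup |k^j|\le M_j$, every summand $\prod_j k^j(\cdot,\cdot)$ is bounded by $\prod_j M_j$, and a crude bound $C \coloneqq 4\prod_j M_j$ works.

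For $\Vstat{n}{h}=\mhsicb_n$, I will write
\[
\Vstat{n}{h} = \frac{1}{n^{2d}(2d)!}\sum_{\pi\in S_{2d}}\sum_{(i_1,\dots,i_{2d})\in\mapset{2d}{n}}\Bigl[T_1^{\pi}+T_2^{\pi}-2T_3^{\pi}\Bigr],
\]
where $T_r^{\pi}$ denotes the $r$-th summand of \eqref{eq:hcore} evaluated at $(\vx_{i_1},\dots,\vx_{i_{2d}})$, and handle each $r$ separately. For any fixed $\pi$, the substitution $m_a \coloneqq i_{\pi(a)}$ is a bijection from $\mapset{2d}{n}$ to itself, so summing $T_r^{\pi}$ over $(i_1,\dots,i_{2d})$ equals summing the analogous expression with $\pi$ removed over $(m_1,\dots,m_{2d})$. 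For $r=1$ the summand depends only on $(m_1,m_2)$, so the sum factors as $n^{2d-2}\sum_{(m_1,m_2)}\prod_j k^j(x^j_{m_1},x^j_{m_2})$; for $r=2$ it depends on all $2d$ indices; and for $r=3$ it depends only on $(m_1,\dots,m_{d+1})$, giving an $n^{d-1}$ factor. Summing over $\pi$ contributes an identical factor of $(2d)!$, which cancels, and after collecting $n$-powers I recover exactly the three terms of Definition~\ref{def:mhsic_esimator}.

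For the final identity $\theta_h=\mhsic$, I take expectations under $(\vX_i)_{i\in\N}\iid \jointlawb$ inside the sum defining $h$. For each $\pi$, the three indices $\pi(1),\pi(2)$ (resp.\ $\{\pi(2j-1),\pi(2j)\}_{j=1}^d$, resp.\ $\{\pi(1),\pi(j+1)\}_{j=1}^d$) are distinct, and since $\vX_{\pi(1)},\dots,\vX_{\pi(2d)}$ are iid copies of $\vX$, each term has the same expectation as the corresponding expression in Proposition~\ref{thm:mhsickernel} with indices $1,2$ (resp.\ $1,2,\dots,2d$, resp.\ $1,2,\dots,d+1$); the $(2d)!$ prefactor then cancels the number of permutations and one directly recovers $\mhsic$. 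The only mildly subtle step, as anticipated, is the change-of-variable bookkeeping in step~(4), but once the bijection $m_a=i_{\pi(a)}$ is recognized the rest is routine.
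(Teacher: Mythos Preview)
Your proposal is correct and follows essentially the same approach as the paper's own proof: symmetry by construction, continuity and boundedness directly from the assumptions on the $k^j$, and the two identities by the change-of-variable $m_a = i_{\pi(a)}$ (for $V_n(h)=\mhsicb_n$) and the iid property (for $\theta_h=\mhsic$). The paper presents these steps as direct computations without naming the bijection explicitly, but the argument is the same; your bound $C=4\prod_j M_j$ even coincides with the paper's.
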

A proof is given in Appendix~\ref{proof:properties_h}.

\subsection{Characteristic function framework}\label{sec:charfun}

\citet{kankainen} considers a characteristic function based mutual
independence test.
She considers a weighted integral over the difference between the characteristic
functions of the joint and the product distribution. For a weight
function $g$, the resulting
empirical test statistic \citep[p.25]{kankainen} is given by
\begin{equation*}
  \begin{split}
    T_n(\vx_1,\dots,\vx_n)
    &\coloneqq n\cdot\bigg[\dfrac{1}{n^2}\sum_{i_1,i_2}\prod_{j=1}^d\int_{\R}e^{it^j(x_{i_1}^j-x_{i_2}^j)}g_j(t^j)\text{d}t^j
    +\dfrac{1}{n^{2d}}\prod_{j=1}^d\sum_{i_1,i_2}\int_{\R}e^{it^j(x_{i_1}^j-x_{i_2}^j)}g_j(t^j)\text{d}t^j\\
    &\qquad-\dfrac{2}{n^{d+1}}\sum_{i_1}\prod_{j=1}^d\sum_{i_2}\int_{\R}e^{it^j(x_{i_1}^j-x_{i_2}^j)}g_j(t^j)\text{d}t^j\bigg].
  \end{split}
\end{equation*}
The characteristic function framework is a contained in the dHSIC framework as a special case.
We recover our dHSIC test statistic by choosing 
\begin{equation}
  \label{eq:connection_cf_k}
  \prodk(\vx,\vy)=\prod_{j=1}^d\int_{\R}e^{it^j(x^j-y^j)}g_j(t^j)\text{d}t^j.
\end{equation}
The choice of this kernel is justified by 
Bochner's theorem
\citep[e.g.][Theorem B.1]{unser}.
\begin{bochner}
  Let $f$ be a bounded continuous function on $\R^d$. Then, $f$ is
  positive semi-definite if and only if it is the (conjugate) Fourier
  transform of a nonnegative and finite Borel measure $\mu$, i.e.
  \begin{equation*}
    f(\vx)=\int_{\R^d}e^{i\scalarprod{\vx,\vt}}\measure{\mu}{\vt}.
  \end{equation*}
\end{bochner}
Given the characteristic function framework with a weight function $g$
satisfying the properties 1. to 5. in \citet[p. 25]{kankainen} it
holds that the measure
$\mu_g(\mathbf{B})\coloneqq
\int_{\mathbf{B}}\prod_{j=1}^dg_j(t^j)\text{d}t^j$ is a nonnegative
finite Borel measure on $\R^d$ and hence $\prodk$ defined as in
\eqref{eq:connection_cf_k} is a positive semi-definite kernel. The
setting given in \citet{kankainen} is thus entirely contained within
our dHSIC framework. 

Furthermore, the dHSIC framework is strictly more general.
To
see this, let $\prodk$ be a continuous bounded stationary positive
semi-definite kernel on $\R^d$. Then, by stationarity there exists a
continuous bounded function $f$ on $\R^d$ such that
$\prodk(\vx,\vy)=f(\vx-\vy)$ and hence by Bochner's theorem there
exists a measure $\mu\in\bmspace{\R^d}$ such that
\begin{equation*}
  \prodk(\vx,\vy)=\int_{\R^d}e^{i\scalarprod{\vx-\vy,\vt}}\measure{\mu}{\vt}.
\end{equation*}
This is, however, still more general than the setting in
\citet{kankainen} as there it is additionally assumed that the measure
$\mu$ is absolutely continuous with a density $g$ satisfying
properties 1.\ to 5.\ 
which in particular requires that $g$ is a simple
product, i.e.\ $g(t)=\prod_{j=1}^dg_j(t^j)$ and that the components
$g_j$ are even. Both of these conditions are essential to the
proofs given in \citet{kankainen}. Therefore the results from the
characteristic function framework in \citet{kankainen} cannot be
transferred to our more general dHSIC setting. Also note that the
characteristic function framework is restricted to real-valued
domains, while kernels are more flexible, e.g.\ kernels on graphs
or strings \citep[see][]{gretton08hsic}.


\section{Statistical tests for joint independence}\label{sec:testing}

Assume Setting \ref{setting:mHSIC} and denote by
$\pmspace{\prodkernelspace}$ the space of Borel probability
measures. In this section we derive three statistical hypothesis tests for
the null hypothesis 
\begin{equation}
  \label{eq:HO}
  \HO\coloneqq\bigg\{\mu\in\pmspace{\prodkernelspace}\,\Big\rvert\, \vX
  \sim \mu = \jointlawb,\ \jointlawb=\prodlaw\bigg\}
\end{equation}
against the alternative
\begin{equation}
  \label{eq:HA}
  \HA\coloneqq\bigg\{\mu\in\pmspace{\prodkernelspace}\,\Big\rvert\, \vX
  \sim \mu = \jointlawb,\ \jointlawb \neq \prodlaw\bigg\}.
\end{equation}

Based on the asymptotic behavior given in Theorem
\ref{thm:asymptoticdist_HO}, we consider $n\cdot\mhsicb_n$ as test
statistic and define a 
decision rule $\phi=(\phi_n)_{n\in\N}$ encoding rejection of $H_0$ if
$\phi_n = 1$ and no rejection of $H_0$ if $\phi_n = 0$. For
all $n\in\{1,\dots,2d-1\}$ we define $\phi_n\coloneqq 0$ and for all
$n\in\{2d,2d+1,\dots\}$ and for all
$(\vx_1,\dots,\vx_n)\in\prodkernelspace^n$ we set
\begin{equation}
  \label{eq:hypothesis_test}
  \phi_n(\vx_1,\dots,\vx_n)\coloneqq\mathds{1}_{\{n\cdot\mhsicb_n(\vx_1,\dots,\vx_n)>c_n(\vx_1,\dots,\vx_n)\}}
\end{equation}
where the threshold $c=(c_n)_{n\in\N}$ remains to be chosen. Ideally,
for fixed $\alpha\in(0,1)$ the hypothesis test
should have (valid) level $\alpha$, i.e. for every $\mu = \prodlawbold \in \HO$ and all $n$,
$  \P\left(\phi_n(\vX_1,\dots,\vX_n)=1\right)\leq\alpha$, 
where $\vX_1,\vX_2,\dots\iid\prodlawbold \in \HO$. 
A weaker condition states that the test respects the level in the large
sample limit, i.e. for every $\mu = \prodlawbold \in \HO$,
\begin{equation}
  \label{eq:asympleveldef}
  \limsup_{n\rightarrow\infty}\P\left(\phi_n(\vX_1,\dots,\vX_n)=1\right)\leq\alpha,
\end{equation}
where $\vX_1,\vX_2,\dots\iid\prodlawbold \in \HO$. 
Such a test is
said to have pointwise asymptotic level. Additionally, 
the test is called pointwise consistent if
for all fixed $\prodlawbold\in\HA$ it holds that
\begin{equation}
  \label{eq:consistencydef}
  \lim_{n\rightarrow\infty}\P\left(\phi_n(\vX_1,\dots,\vX_n)=1\right)=1,
\end{equation}
where $\vX_1,\vX_2,\dots\iid\ \mbox{(fixed)}\ \prodlawbold \in \HA$.
The
following 
table summarizes the properties that our three tests satisfy.

\begin{center}
\if0
\renewcommand{\arraystretch}{1.5}
\begin{tabular}{|l||p{2.5cm}|p{2.5cm}|p{2cm}|}
  \hline
  \textbf{Hypothesis test} & \textbf{consistency} & \textbf{level} &
  \textbf{speed} \\ \hline
  Permutation\footnotemark[1] & unknown\hspace{1cm} (Rem.~\ref{rmk:ptw_consistency_perm}) &
  valid \hspace{1cm} (Prop. \ref{thm:level_permutation_test}) & slow \\ \hline
  Bootstrap\footnotemark[1] & pointwise\hspace{1cm} (Thm. \ref{thm:consistencybootstraptest}) & pointwise asymptotic (Thm. \ref{thm:level_bootstrap_test}) & slow \\ \hline
  Gamma approximation & no guarantee & no guarantee & fast \\ \hline
\end{tabular}
\fi
\renewcommand{\arraystretch}{1.2}
\begin{tabular}{|l||c|c|c|}
  \hline
  \textbf{Hypothesis test} & $\quad$ \textbf{consistency} $\quad$& \textbf{level} &
 $\qquad$ \textbf{speed} $\qquad$\\ \hline
  \multirow{2}{*}{Permutation\footnotemark[1]} & unknown&valid
&   \multirow{2}{*}{slow}\\
    &(Rem.~\ref{rmk:ptw_consistency_perm})&
   (Prop. \ref{thm:level_permutation_test}) & \\ \hline
  \multirow{2}{*}{Bootstrap\footnotemark[1]} & pointwise & pointwise asymptotic  & \multirow{2}{*}{slow} \\ 
  &(Thm. \ref{thm:consistencybootstraptest}) & (Thm. \ref{thm:level_bootstrap_test})&\\\hline
  Gamma approximation & no guarantee & no guarantee & fast \\ \hline
\end{tabular}
\end{center}
\renewcommand{\arraystretch}{1} \footnotetext[1]{For implementation
  purposes one can use the Monte-Carlo approximation. This leads to a
  reasonably fast implementation, while conserving the (asymptotic)
  level and consistency results. Further details are given in
  Section~\ref{sec:implementation_permboot}} In
Section~\ref{sec:asymptotics_teststat} we consider some of the
asymptotic properties of the test statistic $n\cdot\mhsicb_n$. In
particular, we show the existence of an asymptotic distribution under
$\HO$. We then construct three hypothesis tests of the form
\eqref{eq:hypothesis_test}. The first two are a permutation test and a
bootstrap test which are discussed in
Section~\ref{sec:boot_perm}. Both tests are based on resampling and
hence do not rely on an explicit knowledge of the asymptotic
distribution under $\HO$. In Section~\ref{sec:gamma} we consider a
third test which is based on an approximation of the asymptotic
distribution under $\HO$ using a Gamma distribution.

\subsection{Asymptotic behavior of the test statistic}\label{sec:asymptotics_teststat}

We first determine the asymptotic distribution of $n\cdot\mhsicb_n$ under
$\HO$, extending \citet[Theorem 2]{gretton08hsic} from HSIC to dHSIC.

\begin{theorem}[asymptotic distribution of $n\cdot\mhsicb_n$ under $\HO$]
  \label{thm:asymptoticdist_HO}
  Assume Setting \ref{setting:mHSIC}. Let $(Z_i)_{i\in\N}$ be a sequence of independent
  standard normal random variables on $\R$, let\footnote{Given a
    measure space $(\Omega,\mathcal{F},\mu)$ the space
    $\lpspace{r}{\mu}{\abs{\cdot}_{\R}}$ consists of all measurable
    functions $f:\Omega\rightarrow\R$ such that
    $\int_{\Omega}\abs{f(\omega)}^r\measure{\mu}{\omega}<\infty$. The
    corresponding space of equivalence classes of such functions is
    denoted by $\Lpspace{r}{\mu}{\abs{\cdot}_{\R}}$. Moreover, we
    denote the space of all linear bounded operators from a Banach
    space $\Bspace$ onto itself by $L(\Bspace)$.}
   $T_{h_2}\in
  L(\Lpspace{2}{\jointlaw}{\abs{\cdot}_{\R}})$ be such that
  for all $f\in\Lpspace{2}{\jointlaw}{\abs{\cdot}_{\R}}$ and for
  all $\vx\in\prodkernelspace$ it holds that
  \begin{equation*}
    \left(T_{h_2}(f)\right)(\vx)=\int_{\prodkernelspace}h_2(\vx,\vy)f(\vy)\measure{\jointlaw}{\vy}.
  \end{equation*}
  Denote by $(\lambda_i)_{i\in\N}$ the eigenvalues of
  $T_{h_2}$. Then under $\HO$ it holds that
  \begin{equation*}
    \xi_2(h)>0\qquad\text{and}\qquad
    n\cdot\mhsicb_n\overset{d}{\longrightarrow}\dbinom{2d}{2}\sum_{i=1}^{\infty}\lambda_iZ_i^2
    \quad \text{as } n\rightarrow\infty.
  \end{equation*}
\end{theorem}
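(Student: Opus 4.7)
The plan is to recognise $\mhsicb_n = \Vstat{n}{h}$ as a V-statistic of degree $q = 2d$ (Lemma~\ref{thm:properties_h}) and then invoke the general limit theorem for first-order degenerate V-statistics from the appendix, namely Theorem~\ref{thm:asymptoticdist_vstat2}. That general theorem yields
\begin{equation*}
n\cdot \Vstat{n}{h}\overset{d}{\longrightarrow}\binom{q}{2}\sum_{i=1}^{\infty}\lambda_i Z_i^2,
\end{equation*}
with $(\lambda_i)_{i\in\N}$ the eigenvalues of the integral operator on $\Lpspace{2}{\jointlaw}{\abs{\cdot}_{\R}}$ induced by $h_2$, provided the core function satisfies $\theta_h=0$, $\xi_1(h)=0$ and $\xi_2(h)>0$. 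Matching this with the claim, the proof reduces to verifying under $\HO$ three items: (a) $\theta_h = 0$, (b) first-order degeneracy $\xi_1(h) = 0$, and (c) non-degeneracy $\xi_2(h)>0$.

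\textbf{Items (a) and (b).} Item (a) is immediate: Lemma~\ref{thm:properties_h} gives $\theta_h=\mhsic(\jointlaw)$, and Proposition~\ref{thm:mhsic_independence} gives $\mhsic(\jointlaw)=0$ under $\HO$. For item (b), by symmetry $h_1(\vx) = \E[h(\vx,\vX_2,\ldots,\vX_{2d})]$, and I would plug in the definition~\eqref{eq:hcore} of $h$. Under $\HO$, the components of every $\vX_i$ are mutually independent across $j\in\{1,\ldots,d\}$, so every product $\prod_j k^j(\cdot,\cdot)$ appearing inside the expectation factorises into a product of one-dimensional kernel expectations. Carrying out the symmetrisation over $\pi\in S_{2d}$ and collecting terms, the contributions of the three summands in~\eqref{eq:hcore} cancel identically in $\vx$, so that $h_1 \equiv 0$. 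This is the direct multivariate analogue of the degeneracy established for the two-variable HSIC in \citet{gretton08hsic}.

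\textbf{Item (c): the main obstacle.} Verifying $\xi_2(h)>0$ is the genuinely new ingredient and the principal difficulty, as it is known to have been implicitly assumed in earlier work. The strategy is to first compute $h_2(\vx,\vy) = \E[h(\vx,\vy,\vX_3,\ldots,\vX_{2d})]$ explicitly under $\HO$. Independence makes every kernel product factorise, and the resulting expression can be organised in terms of the centred marginal kernels
\begin{equation*}
\tilde k^j(x^j,y^j):=k^j(x^j,y^j)-\mu^j(x^j)-\mu^j(y^j)+c^j,
\end{equation*}
with $\mu^j(x^j)=\E[k^j(x^j,X^j)]$ and $c^j=\E[k^j(X_1^j,X_2^j)]$. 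Since $\xi_2(h) = \E[h_2(\vX_1,\vX_2)^2]$ is the squared Hilbert--Schmidt norm of the integral operator $T_{h_2}$, vanishing of $\xi_2(h)$ is equivalent to $h_2 = 0$ in $\Lpspace{2}{\jointlaw^{\otimes 2}}{\abs{\cdot}_{\R}}$. Viewing $h_2$ via its representation in the tensor-product RKHS $\prodRKHS\otimes\prodRKHS$ and using that each $k^j$ is characteristic (so every centred feature map $k^j(\cdot,x^j)-\Pi^j(\Plaw{j})$ is nonzero in $\RKHS^j$ whenever $\Plaw{j}$ is not a point mass), this would force a tensor product of nonzero RKHS elements to vanish, which is impossible. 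Hence $\xi_2(h)>0$, and Theorem~\ref{thm:asymptoticdist_vstat2} delivers the stated limit with the prefactor $\binom{2d}{2}$, completing the proof.
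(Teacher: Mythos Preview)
Your handling of items (a) and (b) matches the paper's: $\theta_h=0$ via Proposition~\ref{thm:mhsic_independence}, and $\xi_1(h)=0$ via the factorisation under $\HO$ (the paper packages this as Lemma~\ref{thm:degeneracy}). The divergence is entirely in item~(c), and there your argument has a real gap.

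First, you invoke that ``each $k^j$ is characteristic'', but Setting~\ref{setting:mHSIC} only assumes the \emph{product} kernel $\prodk$ is characteristic; nothing is assumed about the individual $k^j$. Second, and more seriously, even granting characteristic marginals your tensor argument does not go through as written. The function $h_2$ under $\HO$ is not a single tensor product of centred kernels; by Lemma~\ref{thm:expansionh2} it is a ten-term linear combination of products of the $k^j$ and their partial expectations. The principle ``a tensor product of nonzero RKHS elements is nonzero'' applies to simple tensors, not to such sums, so $h_2\equiv 0$ cannot be ruled out by that step alone. (For $d=2$ one can indeed reduce $h_2$ to a constant times $\tilde k^1\tilde k^2$ and your argument succeeds; the difficulty is precisely that this clean factorisation disappears for $d>2$.)

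The paper explicitly flags this obstruction and sidesteps direct verification altogether. Its proof of $\xi_2(h)>0$ is by contradiction: if $\xi_2(h)=0$, then Theorem~\ref{thm:var_vstat} and Theorem~\ref{thm:biasVstat} force $n\cdot\mhsicb_n$ to converge in distribution to the constant $\binom{2d}{2}\sum_i\lambda_i$. But an independent empirical-process argument (Theorem~\ref{thm:cont_strictly_increasing_cdf}, built on the Donsker property of the RKHS unit ball and a Romano-style linearisation of $\empjoint-\empprod$) shows that the limit of $n\cdot\mhsicb_n$ under $\HO$ has a distribution function continuous on $(0,\infty)$ with no atom at $0$. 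A constant limit is incompatible with this, hence $\xi_2(h)>0$. This indirect route uses only the assumption that $\prodk$ is characteristic, and avoids any algebra on $h_2$ beyond what is needed for degeneracy.
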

The proof of this result relies on the
asymptotic distribution of degenerate V-statistics (see
Theorem~\ref{thm:asymptoticdist_vstat2}). In order to show that the
degenerate setting applies we need to prove that under $\HO$ it holds
that $\xi_1(h)=0$ and $\xi_2(h)>0$. The latter statement is of
interest in itself and has been for example implicitly assumed in
\citet[Theorem 2]{gretton08hsic}.
But while $\xi_1(h)=0$ follows more or less
directly from the independence assumption under $\HO$ (see
Lemma~\ref{thm:degeneracy}), the condition $\xi_2(h)>0$ is difficult
to verify directly due to the complicated form of the core function
$h$.  We therefore circumvent direct verification by using empirical
process theory to prove that the asymptotic distribution of
$n\cdot\mhsic_n$ has certain continuity properties (see
Theorem~\ref{thm:cont_strictly_increasing_cdf}) that are not satisfied
by the asymptotic distribution resulting from the theory of
V-statistics if both $\xi_1(h)$ and $\xi_2(h)$ were zero. A full proof
is given in Appendix~\ref{proof:asymptotic_dist}.

\begin{remark}[estimation of eigenvalues]
  It is possible to construct a test that estimates the eigenvalues of
  the integral operator in Theorem~\ref{thm:asymptoticdist_HO} by
  first estimating the eigenvalues of the Gram matrix corresponding to
  $h_2$ and then computing the asymptotic distribution using a
  bootstrap procedure, see \citet{eigenvalueMMD}. Given knowledge of
  the exact form of $h_2$ and under the assumption that $h_2$ is
  positive definite (can be shown for $d=2$, unknown for $d>2$) one can
  prove consistency, see \citet{masterthesis}. However, since $h_2$ is
  a complicated function (see Lemma~\ref{thm:expansionh2}) depending
  on the unknown distribution $\prodlawbold$ (as opposed to
  \citet{eigenvalueMMD}) one has to estimate $h_2$, which means one
  would have to additionally account for that approximation. In
  simulations, the eigenvalue estimation generally performed worse than
  the Gamma approximation in almost all our experiments. We have
  therefore decided not to include this approach in the paper. There
  is, however, an implementation in the \texttt{dHSIC} R-package.
\end{remark}

The following theorem is an important result required to establish
consistency (of the bootstrap test), stating that $n\cdot\mhsicb_n$
diverges under $\HA$.
\begin{theorem}[asymptotic distribution of $n\cdot\mhsicb_n$ under
  $\HA$]
  \label{thm:corollaryasymptoticdistmhsic}
  Assume Setting \ref{setting:mHSIC}. Then under $\HA$ it holds for all $t\in\R$ that
  \begin{equation*}
  \lim_{n\rightarrow\infty}\P\left(n\cdot\mhsicb_n\leq
    t\right)=0.
\end{equation*}
\end{theorem}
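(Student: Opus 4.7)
The plan is to reduce the divergence of $n\cdot\mhsicb_n$ under $\HA$ to (i) the strict positivity of the population $\mhsic$ under $\HA$ and (ii) consistency of the V-statistic estimator for its population counterpart.

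First I would observe that under $\HA$, Proposition~\ref{thm:mhsic_independence} together with $\prodlawbold \neq \prodlaw$ yields $\mhsic(\jointlawb) > 0$. Write $c \coloneqq \mhsic(\jointlawb) > 0$. By Lemma~\ref{thm:properties_h}, $\mhsicb_n = \Vstat{n}{h}$ for the bounded, symmetric, continuous core function $h$, and $\theta_h = \mhsic(\jointlawb)$. Since $h$ is bounded, a standard law of large numbers for V-statistics (a direct consequence of the results for V-statistics collected in Appendix~\ref{appendix:vstat}, in particular the asymptotic-bias statement of Theorem~\ref{thm:biasVstat} combined with the $L^2$-convergence of the corresponding U-statistic from~\citet{serfling}) gives
\begin{equation*}
  \mhsicb_n \;\xrightarrow{\P}\; \theta_h \;=\; \mhsic(\jointlawb) \;=\; c \;>\; 0 \qquad \text{as } n\to\infty.
\end{equation*}

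Next I would deduce the claimed divergence of $n\cdot\mhsicb_n$ from the above. For any fixed $t\in\R$, choose $n$ large enough that $t/n < c/2$. Then
\begin{equation*}
  \P\!\left(n\cdot\mhsicb_n \leq t\right)
  \;=\; \P\!\left(\mhsicb_n \leq t/n\right)
  \;\leq\; \P\!\left(\mhsicb_n \leq c/2\right)
  \;=\; \P\!\left(\mhsicb_n - c \leq -c/2\right),
\end{equation*}
and the right-hand side tends to $0$ as $n\to\infty$ by the convergence in probability established above. Since $t\in\R$ was arbitrary, this is the conclusion of Theorem~\ref{thm:corollaryasymptoticdistmhsic}.

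The only non-trivial ingredient is step (ii), the consistency $\mhsicb_n \xrightarrow{\P} \mhsic(\jointlawb)$, and this is where I would expect the main care to be needed: unlike the classical setting, the core function $h$ has order $q=2d$ rather than $2$, and V-statistics of high order are usually biased in finite samples. One therefore has to appeal to the general V-statistic machinery developed in Appendix~\ref{appendix:vstat} rather than quote the $q=2$ case from the standard literature. Once that consistency is in hand, the remainder is an elementary $t/n$-argument as above, and no knowledge of the (non-degenerate) limiting distribution of $\sqrt{n}(\mhsicb_n - \mhsic)$ is required.
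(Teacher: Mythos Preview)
Your argument is correct and in fact more elementary than the paper's. The paper proceeds by writing
\[
  \P\!\left(n\cdot\mhsicb_n\leq t\right)
  = \P\!\left(\sqrt{n}\bigl(\mhsicb_n-\mhsic\bigr)\leq \tfrac{t}{\sqrt{n}}-\sqrt{n}\,\mhsic\right),
\]
and then establishes that $\sqrt{n}\bigl(\mhsicb_n-\mhsic\bigr)$ is stochastically bounded via a case split on $\xi_1(h)$: if $\xi_1(h)>0$ it invokes the non-degenerate CLT for V-statistics (Theorem~\ref{thm:asymptoticdist_vstat1}), and if $\xi_1(h)=0$ it uses the variance bound of Theorem~\ref{thm:var_vstat} to get $\sqrt{n}(\mhsicb_n-\mhsic)\overset{d}{\to}0$. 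Slutsky then finishes, since the threshold $t/\sqrt{n}-\sqrt{n}\,\mhsic\to-\infty$ under $\HA$.

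Your route avoids the case distinction entirely by only using the weak law of large numbers for V-statistics and a $t/n$ argument; for the bare divergence statement this is all that is needed. One small simplification: rather than assembling consistency from Theorem~\ref{thm:biasVstat} plus U-statistic results, you can cite Theorem~\ref{thm:asymptotic_consistency_vstat} in Appendix~\ref{appendix:vstat} directly, which gives $\Vstat{n}{h}\overset{\P}{\to}\theta_h$ under the total boundedness condition of order~$1$ (trivially satisfied since $h$ is bounded by Lemma~\ref{thm:properties_h}). The paper's longer argument does buy extra information---the $\sqrt{n}$-rate and the Gaussian limit when $\xi_1(h)>0$---but that is not used anywhere else in the paper, so your proof is a genuine simplification.
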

A proof is given in Appendix~\ref{proof:asymptotic_dist}.


\subsection{Resampling tests}\label{sec:boot_perm}

We first introduce the notation of a general resampling scheme which
encompasses a permutation and bootstrap method which we will use later. For
every 
function $\boldsymbol{\psi}=(\psi^1,\dots,\psi^d)$ such that for all
$i\in\{1,\dots,d\}$ it holds that
$\psi^i:\{1,\dots,n\}\rightarrow\{1,\dots,n\}$, define the function
$g_{n,\boldsymbol{\psi}}:\prodkernelspace^n\rightarrow\prodkernelspace^n$ 
\begin{equation}
  \label{eq:gmpsifun}
  g_{n,\boldsymbol{\psi}}(\vx_1,\dots,\vx_n)\coloneqq\left(\vx_{n,1}^{\boldsymbol{\psi}},\dots,\vx_{n,n}^{\boldsymbol{\psi}}\right),\
  (\vx_1,\dots,\vx_n)\in\prodkernelspace^n,
\end{equation}
where
$\vx_{n,i}^{\boldsymbol{\psi}}\coloneqq\left(x_{\psi^1(i)}^1,\dots,x_{\psi^d(i)}^d\right)$.
The diagram \eqref{eq:transformation_diagram} illustrates the mapping
$g_{n,\boldsymbol{\psi}}$. 
\begin{equation}
  \label{eq:transformation_diagram}
  \begin{array}{c|ccccc|ccc}
    \vx_1& x^1_1& \cdots& x^d_1& & \vx_{n,1}^{\boldsymbol{\psi}}&
    x^1_{\psi^1(1)}& \cdots& x^d_{\psi^d(1)}\\
    \vdots& \vdots& & \vdots&
  \quad   \overset{g_{n,\boldsymbol{\psi}}}{\longrightarrow} \quad &
    \vdots&  \vdots& & \vdots\\
    \vx_n& x^1_n& \cdots& x^d_n& & \vx_{n,n}^{\boldsymbol{\psi}}&
    x^1_{\psi^1(n)}& \cdots& x^d_{\psi^d(n)}
  \end{array}
\end{equation}
Define
\begin{equation}
  \label{eq:bootstrap_set}
  B_n\coloneqq\big\{\psi:\{1,\dots,n\}\rightarrow\{1,\dots,n\}\mid \psi\text{ is a
        function}\big\},
\end{equation}
then for a subset $A_n\subseteq B_n^d$ we call the family of functions
\begin{equation}
  \label{eq:resampling_method_ind}
  g\coloneqq(g_{n,\boldsymbol{\psi}})_{\boldsymbol{\psi}\in A_n}
\end{equation}
a resampling method.
In the following two sections we formulate the bootstrap and permutation tests in terms of this resampling method.

\subsubsection{Permutation test}\label{sec:permutationtest}

The permutation test is 
based on the resampling 
\eqref{eq:resampling_method_ind} with
$A_n=(S_n)^d$, where $S_n$ is the set of permutations on
$\{1,\dots,n\}$. More precisely, we have the following definition.
\begin{definition}[permutation test for dHSIC]
  \label{def:permutation_test}
  Assume Setting \ref{setting:mHSIC} and $\alpha\in(0,1)$. For all
  $\boldsymbol{\psi}\in(S_n)^d$, let 
  $g_{n,\boldsymbol{\psi}}$ be defined as in
  \eqref{eq:gmpsifun}. Moreover, for
$n\in\{2d,2d+1,\dots\}$, let
  $\hat{R}_n:\prodkernelspace^n\times\R\rightarrow [0,1]$ be the
  resampling distribution functions defined for all $t\in\R$ by
  \begin{equation} \label{eq:permtest}
    \hat{R}_n(\vx_1,\dots,\vx_n)(t)\coloneqq\dfrac{1}{(n!)^d}\sum_{\boldsymbol{\psi}\in
      (S_n)^d}\mathds{1}_{\{n\cdot\mhsicb_n(g_{n,\boldsymbol{\psi}}(\vx_1,\dots,\vx_n))\leq
      t\}}.
  \end{equation}
  Then the \emph{$\alpha$-permutation test for dHSIC} is defined by
  $\phi_n\coloneqq 0$ for $n\in\{1,\dots,2d-1\}$, and for
  $n\in\{2d,2d+1,\dots\}$ by 
    \begin{equation*}
    \phi_n(\vx_1,\dots,\vx_n)\coloneqq\mathds{1}_{\left\{n\cdot\mhsicb_n(\vx_1,\dots,\vx_n)>(\hat{R}_n(\vx_1,\dots,\vx_n))^{-1}(1-\alpha)\right\}}.
  \end{equation*}
\end{definition}
Given that the resampling method has a group structure
and additionally satisfies for all $\vX$ with $\prodlawbold\in\HO$
that
\begin{equation*}
  g_{n,\boldsymbol{\psi}}(\vX_1,\dots,\vX_n)\text{ is equal in distribution to }(\vX_1,\dots,\vX_n),
\end{equation*}
where $\vX_1,\vX_2,\dots\iid\prodlawbold$, it can be shown that tests
of this form have valid level. For the permutation test for dHSIC both
these properties are satisfied, hence it has valid level.
\begin{proposition}[permutation test for dHSIC has valid level]
  \label{thm:level_permutation_test}
  Assume Setting \ref{setting:mHSIC} and let $\HO$ and $\HA$ be
  defined as in \eqref{eq:HO} and \eqref{eq:HA}. Then for all
  $\alpha\in (0,1)$ the $\alpha$-permutation test for dHSIC has valid
  level $\alpha$ when testing $\HO$ against $\HA$.
\end{proposition}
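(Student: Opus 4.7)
The plan is to reduce the claim to the classical permutation/randomization principle: a test based on a resampling family $(g_{n,\boldsymbol{\psi}})_{\boldsymbol{\psi}\in A_n}$ has valid level $\alpha$ provided (i) $A_n$ forms a group under composition acting on the sample space via the $g_{n,\boldsymbol{\psi}}$ and (ii) under the null the joint sample is distributionally invariant under every $g_{n,\boldsymbol{\psi}}$. Here $A_n=(S_n)^d$ under componentwise composition, which is trivially a group, so the substance lies in verifying (ii) and then unfolding the group-averaging argument for the specific rejection rule in Definition~\ref{def:permutation_test}.

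To verify (ii), I would fix $\prodlawbold\in\HO$, use the factorization $\jointlawb=\prodlaw$, and view the sample as the $n\times d$ array $(X^j_i)$. The product-of-marginals structure combined with the iid assumption across $i$ means that the columns $(X^j_i)_{i=1}^n$, $j=1,\ldots,d$, are mutually independent and each is itself iid. Since $g_{n,\boldsymbol{\psi}}$ permutes column $j$ by $\psi^j$, independently across $j$, the permuted array has the same joint law as the original.

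With (i) and (ii) in hand, I would set $T_{\boldsymbol{\psi}}(\vX)\coloneqq n\cdot\mhsicb_n(g_{n,\boldsymbol{\psi}}(\vX_1,\dots,\vX_n))$ and observe the compatibility identity $g_{n,\boldsymbol{\psi}\circ\boldsymbol{\psi}_0}=g_{n,\boldsymbol{\psi}}\circ g_{n,\boldsymbol{\psi}_0}$. Combined with (ii) applied to $g_{n,\boldsymbol{\psi}_0}$ and the bijection $\boldsymbol{\psi}\mapsto\boldsymbol{\psi}\circ\boldsymbol{\psi}_0$ on $(S_n)^d$, this yields that under $\HO$ the random vector $(T_{\boldsymbol{\psi}}(\vX))_{\boldsymbol{\psi}\in(S_n)^d}$ is exchangeable, i.e.\ invariant in distribution under any relabeling of its coordinates by an element of $(S_n)^d$. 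Since $\hat{R}_n(\vX)^{-1}(1-\alpha)$ is a symmetric function of these $(n!)^d$ coordinates, exchangeability forces $\P(T_{\boldsymbol{\psi}_0}(\vX)>\hat{R}_n(\vX)^{-1}(1-\alpha))$ to be the same for every $\boldsymbol{\psi}_0$. Summing the corresponding indicator over all $\boldsymbol{\psi}_0\in(S_n)^d$ and using that by definition of the generalized inverse at most $\lfloor\alpha(n!)^d\rfloor$ of the values $T_{\boldsymbol{\psi}_0}(\vX)$ can strictly exceed $\hat{R}_n(\vX)^{-1}(1-\alpha)$ for any fixed realization, I obtain $(n!)^d\cdot\P(\phi_n=1)\leq\alpha(n!)^d$, hence $\P(\phi_n=1)\leq\alpha$.

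The main subtlety is tie-handling at the boundary of the quantile: with the strict inequality in Definition~\ref{def:permutation_test}, the combinatorial bound ``at most $\lfloor\alpha(n!)^d\rfloor$ permutations exceed $\hat{R}_n(\vX)^{-1}(1-\alpha)$'' holds deterministically for each realization, regardless of any ties among the $T_{\boldsymbol{\psi}}(\vX)$'s, which is precisely what the exchangeability step then converts into the probability bound. A non-strict rejection rule would potentially fail this bound whenever many resampled statistics coincide with the quantile, which is why it is important that the test is defined with a strict inequality.
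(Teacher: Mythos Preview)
Your proposal is correct and follows essentially the same route as the paper: verify that $(S_n)^d$ is a group and that under $\HO$ the sample law is invariant under every $g_{n,\boldsymbol{\psi}}$, then invoke the group-averaging (randomization) argument to bound the rejection probability. The paper packages this last step as a general lemma (their Theorem~\ref{thm:level_resampling_test}, a restatement of \citet[Theorem 15.2.1]{lehmann}) and simply cites it, whereas you unfold it inline via the exchangeability of $(T_{\boldsymbol{\psi}}(\vX))_{\boldsymbol{\psi}}$ and the deterministic quantile bound; the content is the same.
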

A proof is given in Appendix~\ref{proof:permutation_test}. It is
important to note that the level property from Proposition
\ref{thm:level_permutation_test} is for the finite sample setting and
does not depend on the asymptotic behavior of the test statistics.

The size of the set $(S_n)^d$ is given by $(n!)^d$, therefore
computing~\eqref{eq:permtest} quickly becomes infeasible. For
implementation purposes we generally use a Monte-Carlo approximated
version, and the details are given in
Section~\ref{sec:implementation_permboot}. Surprisingly, it can be
shown that whenever the probability distribution $\prodlawbold$ is
continuous, the Monte-Carlo approximated permutation test also has
valid level, see Proposition~\ref{thm:exactlevelmontecarlo} and the
comments thereafter.
\if0
\begin{remark}[pointwise consistency of the permutation test]\label{rmk:ptw_consistency_perm}
  Based on our proof of the pointwise consistency of the bootstrap
  test (Theorem~\ref{thm:consistencybootstraptest}) and the
  similarities between bootstrap and permutation test it seems likely
  that a similar result should also hold for the permutation
  test. However, the proof of
  Theorem~\ref{thm:consistencybootstraptest} cannot be easily extended
  to cover the permutation case. A more promising approach would be to
  proceed similarly to \citet{romano1989permutation}, as the test
  statistics considered there are closely related to dHSIC, see
  \eqref{eq:Tn_rep_dhsic}. The essential idea there is to use the
  theory of empirical processes (see
  Appendix~\ref{proof:properties_of_dist_HO}) to prove the assumptions
  of \citet[Theorem 15.2.3]{lehmann}. Unfortunately, we were not able
  to extend the results in \citet{romano1989permutation} from
  V.C.-classes of sets to the required classes of functions. While
  most of the results extend more or less directly (see
  Appendix~\ref{proof:properties_of_dist_HO}) the difficulties lie in
  proving a similar representation for $S_n$ as the one given in the
  display of \citet[proof of Proposition 3.1]{romano1989permutation}
  as well as a result similar to \citet[Lemma
  5.1]{romano1989permutation}.

  It is also worth noting, that in order to extend the empirical process approach given in
  \citet{romano1988bootstrap} to give an alternative
  proof of Theorem~\ref{thm:consistencybootstraptest}, one would
  require a uniform Donsker-property for the unit ball of our RKHS.
\end{remark}
\else
\begin{remark}[pointwise consistency of the permutation test]\label{rmk:ptw_consistency_perm}
  Given the similarity between bootstrap and permutation tests, it
  seems likely that the permutation test for dHSIC is consistent, too.
  The proof of Theorem~\ref{thm:consistencybootstraptest}, however,
  cannot be easily extended. A more promising approach would be to
  proceed similarly to \citet{romano1989permutation}, as the test
  statistics considered there are closely related to dHSIC,
  see~\eqref{eq:BMR} and~\eqref{eq:Tn_rep_dhsic}. The essential idea
  there is to use the theory of empirical processes (see
  Appendix~\ref{proof:properties_of_dist_HO}) to prove the assumptions
  of \citet[Theorem 15.2.3]{lehmann}. Unfortunately, we were not able
  to extend the results in \citet{romano1989permutation} from
  VC-classes of sets to the required classes of functions. While many
  results extend more or less directly (see
  Appendix~\ref{proof:properties_of_dist_HO}), the difficulties lie in
  proving a similar representation for $S_n$ as the one given in the
  display of \citet[proof of Proposition 3.1]{romano1989permutation},
  as well as a result similar to \citet[Lemma
  5.1]{romano1989permutation}.  As a side remark, extending the
  empirical process approach given in \citet{romano1988bootstrap} to
  give an alternative proof of
  Theorem~\ref{thm:consistencybootstraptest} would require a uniform
  Donsker-property for the unit ball of the RKHS.
\end{remark}
\fi

\subsubsection{Bootstrap test}\label{sec:bootstraptest}

The bootstrap test is based on the resampling 
\eqref{eq:resampling_method_ind} with
$A_n=B_n^d$. 

\begin{definition}[bootstrap test for dHSIC]
  \label{def:bootstrap_test}
  Assume Setting \ref{setting:mHSIC} and $\alpha\in(0,1)$. For all
  $\boldsymbol{\psi}\in B_n^d$ let the 
  function $g_{n,\boldsymbol{\psi}}$ be defined as in
  \eqref{eq:gmpsifun}. Moreover, for $n\in\{2d,2d+1,\dots\}$, let
  $\hat{R}_n:\prodkernelspace^n\times\R\rightarrow [0,1]$ be the
  resampling distribution functions defined for all $t\in\R$ by
  \begin{equation*}
    \hat{R}_n(\vx_1,\dots,\vx_n)(t)\coloneqq\dfrac{1}{n^{nd}}\sum_{\boldsymbol{\psi}\in
      B_n^d}\mathds{1}_{\{n\cdot\mhsicb_n(g_{n,\boldsymbol{\psi}}(\vx_1,\dots,\vx_n))\leq
      t\}}.
  \end{equation*}
  Then the \emph{$\alpha$-bootstrap test for dHSIC} is defined by
  $\phi_n\coloneqq 0$ for all $n\in\{1,\dots,2d-1\}$, and for 
  $n\in\{2d,2d+1,\dots\}$ by 
\begin{equation*}
    \phi_n(\vx_1,\dots,\vx_n)\coloneqq\mathds{1}_{\left\{n\cdot\mhsicb_n(\vx_1,\dots,\vx_n)>(\hat{R}_n(\vx_1,\dots,\vx_n))^{-1}(1-\alpha)\right\}}. 
  \end{equation*}
\end{definition}

Unlike for the permutation test, the bootstrap resampling method no
longer exhibits a group. We can therefore not expect the bootstrap
test to have valid level. However, it is possible to show that it has
pointwise asymptotic level and even pointwise consistency. The reason
this can be done is that the resampling method in the bootstrap test
is connected to the empirical product distribution $\empprodlaw$. The
following theorem proves that the bootstrap test for dHSIC has
pointwise asymptotic level.

\begin{theorem}[bootstrap test for dHSIC has pointwise asymptotic level]
  \label{thm:level_bootstrap_test}
  Assume Setting \ref{setting:mHSIC} and let $\HO$ and $\HA$ be
  defined as in \eqref{eq:HO} and \eqref{eq:HA}. Then for all
  $\alpha\in(0,1)$ the $\alpha$-bootstrap test for dHSIC has pointwise
  asymptotic level $\alpha$ when testing $\HO$ against $\HA$.
\end{theorem}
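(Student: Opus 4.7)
The plan is to show that, under $\HO$, the bootstrap distribution function $\hat R_n$ converges weakly in probability to the asymptotic distribution of $n\cdot\mhsicb_n$ identified in Theorem~\ref{thm:asymptoticdist_HO}, and then to transfer this convergence to the quantile that defines the test.

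First I would observe that drawing $\boldsymbol{\psi}$ uniformly from $B_n^d$ amounts to resampling each coordinate $j$ independently with replacement from the empirical marginal $\emplawj{n}{j}$. Hence, conditionally on the data $(\vX_1,\dots,\vX_n)$, the bootstrap sample $g_{n,\boldsymbol{\psi}}(\vX_1,\dots,\vX_n)$ is iid from the empirical product measure $\empprodlaw = \emplawj{n}{1}\otimes\cdots\otimes\emplawj{n}{d}$, which is itself a product measure. In particular, the core function $h$ has $\xi_1$-index equal to zero when expectations are taken under $\empprodlaw$, so the resampled V-statistic $n\cdot\mhsicb_n(g_{n,\boldsymbol{\psi}}(\vX_1,\dots,\vX_n))$ falls into the degenerate regime for which Theorem~\ref{thm:resamplingvstat} gives a limiting distribution.

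Next I would argue that under $\HO$, since $\jointlaw=\prodlaw$, the classical Glivenko--Cantelli/Varadarajan argument yields $\empprodlaw\rightarrow\jointlaw$ weakly almost surely. Combined with the continuity and uniform boundedness of $h_2$ (inherited from the boundedness and continuity of the product kernel $\prodk$, cf.\ Lemma~\ref{thm:properties_h}), this gives convergence (in probability) of the integral operator associated with $h_2$ under $\empprodlaw$ to the operator $T_{h_2}$ under $\jointlaw$, and hence of the corresponding eigenvalues. Via Theorem~\ref{thm:resamplingvstat}, this means that, conditionally on the data, the bootstrap statistic converges in distribution (in probability) to the same limit $\binom{2d}{2}\sum_{i=1}^\infty \lambda_i Z_i^2$ that appears in Theorem~\ref{thm:asymptoticdist_HO}. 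Equivalently, $\hat R_n(\vX_1,\dots,\vX_n)$ converges weakly in probability to the CDF $F$ of this limit.

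Finally I would use the continuity and strict monotonicity of $F$ (Theorem~\ref{thm:cont_strictly_increasing_cdf}, which is justified by the fact that $\xi_2(h)>0$ under $\HO$) to conclude that the bootstrap quantile $(\hat R_n)^{-1}(1-\alpha)$ converges in probability to $F^{-1}(1-\alpha)$. Combining this with Theorem~\ref{thm:asymptoticdist_HO} and Slutsky's lemma then gives
\[
\lim_{n\to\infty}\P\bigl(n\cdot\mhsicb_n > (\hat R_n)^{-1}(1-\alpha)\bigr) \;=\; \P\bigl(F^{-1}\text{-random variable}>F^{-1}(1-\alpha)\bigr) \;=\; \alpha,
\]
which is the desired pointwise asymptotic level. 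The main obstacle is the middle step: transferring the weak convergence $\empprodlaw\to\jointlaw$ to convergence of the spectrum of $T_{h_2}$ and, through the resampling V-statistic theorem, to the conditional law of the bootstrapped test statistic; this is where Theorem~\ref{thm:resamplingvstat} and the careful bookkeeping of the degeneracy condition $\xi_2(h)>0$ do the real work.
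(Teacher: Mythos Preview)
Your proposal is correct and follows essentially the same route as the paper: Lemma~\ref{thm:asymptotic_dist_resampled_dhsic} packages exactly your middle step (Glivenko--Cantelli for $\empprodlaw$, degeneracy of $h$ under any product measure via Lemma~\ref{thm:degeneracy}, and Theorem~\ref{thm:resamplingvstat}) to obtain almost-sure conditional convergence of the bootstrap statistic to the same limit as in Theorem~\ref{thm:asymptoticdist_HO}, after which continuity of $G$ plus Slutsky gives the level statement. One small clarification: you do not need to argue convergence of the integral operator or its spectrum separately---Theorem~\ref{thm:resamplingvstat} delivers the limiting distribution (with the $\lambda_i$ computed under the \emph{limit} law $\prodlaw$) directly from $X^*_n\overset{d}{\to}X$ and the degeneracy conditions, bypassing any explicit spectral continuity argument.
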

A proof is given in Appendix~\ref{proof:bootstrap_test}. We now
establish that the bootstrap test for dHSIC is consistent. 

\begin{theorem}[consistency of the bootstrap test for dHSIC]
  \label{thm:consistencybootstraptest}
  Assume Setting \ref{setting:mHSIC} and let $\HO$ and $\HA$ be defined
  as in \eqref{eq:HO} and \eqref{eq:HA}. Then for all
  $\alpha\in(0,1)$ 
  the $\alpha$-bootstrap test is pointwise consistent when
  testing $\HO$ against $\HA$.
\end{theorem}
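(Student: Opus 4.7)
The plan is to exploit a clean separation of scales: under $\HA$ the observed test statistic $n\cdot\mhsicb_n$ diverges, while the bootstrap threshold $(\hat{R}_n)^{-1}(1-\alpha)$ stays bounded in probability because it tracks the null limit distribution. Combining these two facts immediately yields that the rejection event has probability tending to $1$.

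First, I would note that under a fixed $\prodlawbold\in\HA$, Theorem~\ref{thm:corollaryasymptoticdistmhsic} gives $\lim_{n\to\infty}\P(n\cdot\mhsicb_n\leq t)=0$ for every $t\in\R$, i.e.\ $n\cdot\mhsicb_n\to\infty$ in probability. So the only remaining task is to show that the data-dependent threshold $\hat{c}_n\coloneqq (\hat{R}_n(\vX_1,\dots,\vX_n))^{-1}(1-\alpha)$ is bounded in probability as $n\to\infty$, since then
\begin{equation*}
\P\bigl(n\cdot\mhsicb_n>\hat{c}_n\bigr)
\ge \P\bigl(n\cdot\mhsicb_n>M\bigr)-\P(\hat{c}_n>M)\longrightarrow 1
\end{equation*}
by first letting $n\to\infty$ and then $M\to\infty$.

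To control $\hat{c}_n$ I would invoke the conditional limit theorem for the resampled V-statistic, i.e.\ Theorem~\ref{thm:resamplingvstat}. The bootstrap resampling $g_{n,\boldsymbol{\psi}}$ with $\boldsymbol{\psi}$ uniform on $B_n^d$ produces pseudo-samples whose joint distribution, conditionally on $(\vX_1,\dots,\vX_n)$, is precisely the product of the marginal empirical measures $\emplawj{n}{1}\otimes\cdots\otimes\emplawj{n}{d}$. This product belongs to the null model $\HO$ for every realisation, and by the (marginal) Glivenko--Cantelli theorem it converges weakly to $\prodlaw$ almost surely. Applying Theorem~\ref{thm:resamplingvstat} to the core function $h$ from \eqref{eq:hcore}, together with the degeneracy $\xi_1(h)=0$ in the null model and the crucial $\xi_2(h)>0$ from Theorem~\ref{thm:asymptoticdist_HO}, yields that conditionally on the data the bootstrap statistic $n\cdot\mhsicb_n(g_{n,\boldsymbol{\psi}}(\vX_1,\dots,\vX_n))$ converges in distribution (in probability with respect to the data) to the non-degenerate limit $\binom{2d}{2}\sum_i\lambda_i Z_i^2$ of Theorem~\ref{thm:asymptoticdist_HO}. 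Because this limit law is continuous, a standard quantile convergence argument (e.g.\ Lemma~21.2 of van der Vaart) then gives $\hat{c}_n\overset{\P}{\longrightarrow} F^{-1}(1-\alpha)$, where $F$ is the cdf of the null limit; in particular $\hat{c}_n=\landauProbO{1}$.

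The main obstacle is the second step: carefully transferring the convergence result of Theorem~\ref{thm:resamplingvstat} to the present setting. The resampled sample is not iid from a single measure because each coordinate is permuted independently, so one has to rewrite $n\cdot\mhsicb_n\circ g_{n,\boldsymbol{\psi}}$ as a V-statistic with respect to draws from $\emplawj{n}{1}\otimes\cdots\otimes\emplawj{n}{d}$ and then verify the hypotheses of the resampling theorem along almost every sample path. Two subtleties have to be handled: (i) the conditional null distribution changes with $n$ (it is the empirical product), so convergence must be established in the sense of ``for almost all data, weak convergence to a fixed limit", which follows from the continuity of the map from the underlying measure to the distribution of the V-statistic on $\prodRKHS$; and (ii) the continuity of $F$ is needed for the quantile step, which is ensured by Theorem~\ref{thm:cont_strictly_increasing_cdf} in conjunction with $\xi_2(h)>0$. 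Once these pieces are in place, the consistency statement follows immediately from the display above. A full argument is given in Appendix~\ref{proof:bootstrap_test}.
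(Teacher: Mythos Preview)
Your proposal is correct and follows essentially the same route as the paper: show that the bootstrap threshold $(\hat{R}_n)^{-1}(1-\alpha)$ converges (almost surely, via Lemma~\ref{thm:bootstrap_prop}, Glivenko--Cantelli, and Theorem~\ref{thm:resamplingvstat} applied at the product law $\prodlaw$) to the fixed null quantile, then combine with the divergence of $n\cdot\mhsicb_n$ under $\HA$ from Theorem~\ref{thm:corollaryasymptoticdistmhsic}. The paper packages the first step as a separate lemma (Lemma~\ref{thm:asymptotic_dist_resampled_dhsic}) and finishes with an almost-sure-plus-dominated-convergence argument rather than your $O_\P(1)$ inequality, but the structure and the key ingredients---including the observation that $\xi_2(h)>0$ must be read off at $\prodlaw\in\HO$, not at $\prodlawbold\in\HA$---are identical.
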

A proof is given in Appendix~\ref{proof:bootstrap_test}.
Similarly as for the permutation test, the size of the set
$(B_n)^d$ is $n^{nd}$ which grows quickly. Again, we may use a Monte-Carlo
approximated version, see 
Section~\ref{sec:implementation_permboot}.


In \citet{kacper} a similar consistency analysis has been performed
for a wild bootstrap approach on time series.

\subsection{Gamma approximation}\label{sec:gamma}
We showed in Theorem~\ref{thm:asymptoticdist_HO} that
the asymptotic distribution of $n\cdot\mhsicb_n$ equals
\begin{equation}
  \label{eq:chisquaredsum}
  \binom{2d}{2}\sum_{i=1}^{\infty}\lambda_iZ_i^2.
\end{equation}
The essential idea behind the Gamma approximation \citep[see also][]{kankainen,grettonHSIC} is that a
distribution of the form
$
  \sum_{i=1}^{\infty}\lambda_iZ_i^2
$
can be approximated fairly well by a Gamma distribution with matched
first and second moments \citep[see][for basic empirical
evidence]{gammaapproximation}. The intuition is that the Gamma
distribution would be correct if the sequence of eigenvalues
$\lambda_i$ from the integral operator contains only a finite number
of non-zero values, which implies that it is a good approximation as
long as the sequence of $\lambda_i$ decays fast enough. This has,
however, only been shown empirically and no guarantees in the large
sample limit are available.  In fact, it is rather unlikely such
guarantees even exist as it is not hard to find choices of $\lambda_i$
for which \eqref{eq:chisquaredsum} is not a Gamma distribution. It is
not as simple, however, to show that such values of $\lambda_i$ can
actually occur as solutions of the defining integral
equation. Nevertheless, the approximation seems to work well for
small~$d$, see Section~\ref{sec:experiments}, and the test can be
computed much faster than the other approaches.




The Gamma distribution with parameters $\alpha$ and $\beta$ is denoted
by $\operatorname{Gamma}(\alpha,\beta)$ and corresponds to the distribution with
density
\if0
\begin{equation*}
  f(x)=\dfrac{x^{\alpha-1}e^{\tfrac{x}{\beta}}}{\beta^{\alpha}\Gamma(\alpha)},
\end{equation*}
\fi
$
  f(x)={x^{\alpha-1}e^{x/\beta}}/({\beta^{\alpha}\Gamma(\alpha)})
$,
where $\Gamma(t)=\int_0^{\infty}x^{t-1}e^{-x}dx$ is the Gamma
function. The first two moments of the
$\operatorname{Gamma}(\alpha,\beta)$-distributed random variable $Y$
are given by $\E(Y)=\alpha\beta$ and $\Var(Y)=\alpha\beta^2$. In order
to match the first two moments we define for
$\vX_1,\vX_2,\ldots\iid\prodlawbold\in\HO$ the two parameters
\begin{equation*}
  \alpha_n(\prodlawbold)\coloneqq\dfrac{\left(\E\left(\mhsicb_n\right)\right)^2}{
    \Var\left(\mhsicb_n\right)}\quad\text{and}\quad
  \beta_n(\prodlawbold)\coloneqq\dfrac{n\Var\left(\mhsicb_n\right)}{\E\left(\mhsicb_n\right)}.
\end{equation*}
Then we use the approximation
\begin{equation}
  \label{eq:gamma_approximation}
  n\cdot\mhsicb_n(\vX_1,\dots,\vX_n)\sim\operatorname{Gamma}\left(\alpha_n(\prodlawbold),\beta_n(\prodlawbold)\right).
\end{equation}
The following two propositions give expansions of the involved moments in
terms of the kernel. 
\begin{proposition}[mean of $\mhsicb$]
  \label{thm:meanofmhsic}
  Assume Setting \ref{setting:mHSIC}. Then under $\HO$ it holds that,   as $n\rightarrow\infty$,
  \begin{equation*}
    \E\left(\mhsicb_n\right)=\dfrac{1}{n}-\dfrac{1}{n}\sum_{r=1}^d
    \prod_{j\neq
        r}\E \left(k^j(X^j_1,X^j_2)\right)
        +\dfrac{d-1}{n}\prod_{j=1}^d\E\left(k^j(X^j_1,X^j_2)\right)+\landauO{n^{-2}}.
  \end{equation*}
\end{proposition}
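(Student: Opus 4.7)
The plan is to compute the expectation of the three V-statistic pieces that constitute $\mhsicb_n$ separately, exploit the factorization induced by joint independence under $\HO$, and then Taylor expand in $1/n$. Write
\[
\mhsicb_n=\underbrace{\tfrac{1}{n^2}\!\!\sum_{\mapset{2}{n}}\!\prod_{j=1}^d k^j(X^j_{i_1},X^j_{i_2})}_{=:A_n}+\underbrace{\tfrac{1}{n^{2d}}\!\!\sum_{\mapset{2d}{n}}\!\prod_{j=1}^d k^j(X^j_{i_{2j-1}},X^j_{i_{2j}})}_{=:B_n}-2\underbrace{\tfrac{1}{n^{d+1}}\!\!\sum_{\mapset{d+1}{n}}\!\prod_{j=1}^d k^j(X^j_{i_1},X^j_{i_{j+1}})}_{=:C_n}.
\]
Let $\mu_j\coloneqq\E(k^j(X_1^j,X_2^j))$ and, as is standard for the characteristic kernels used (e.g.\ the Gaussian), assume $k^j(x,x)=1$; the same argument in general simply replaces $1$ by $\prod_j\E(k^j(X_1^j,X_1^j))$. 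Under $\HO$ the $\{X^j_i:i\ge 1,\,1\le j\le d\}$ are all mutually independent, so in every term of $A_n,B_n,C_n$ the expectation of the product factors across $j$.

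First I compute $\E(A_n)$ by splitting the sum according to whether $i_1=i_2$ (contribution $n$) or $i_1\neq i_2$ (contribution $n(n-1)$), giving $\E(A_n)=\tfrac{1}{n}+\tfrac{n-1}{n}\prod_j\mu_j$. Next, because in $B_n$ the $d$ pairs of indices $(i_{2j-1},i_{2j})$ are independent dummies, the expectation factorizes completely:
\[
\E(B_n)=\prod_{j=1}^d\Bigl(\tfrac{n-1}{n}\mu_j+\tfrac{1}{n}\Bigr).
\]
For $C_n$ the shared index $i_1$ is the key point: for each $j$, $\sum_{i_{j+1}}\E(k^j(X^j_{i_1},X^j_{i_{j+1}}))=(n-1)\mu_j+1$, which is independent of $i_1$, so summing out $i_1$ cancels an $n$ and yields $\E(C_n)=\prod_{j=1}^d\bigl(\tfrac{n-1}{n}\mu_j+\tfrac{1}{n}\bigr)=\E(B_n)$. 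This identity is the crucial algebraic observation, and it implies $\E(\mhsicb_n)=\E(A_n)-\E(B_n)$.

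It remains to expand $\E(B_n)$ in powers of $1/n$. Setting $\tilde\mu_j^{(n)}\coloneqq\mu_j+(1-\mu_j)/n$, one has
\[
\prod_{j=1}^d\tilde\mu_j^{(n)}=\prod_{j=1}^d\mu_j+\frac{1}{n}\sum_{r=1}^d(1-\mu_r)\prod_{j\neq r}\mu_j+\landauO{n^{-2}}.
\]
Subtracting this from $\E(A_n)=\tfrac{1}{n}+\tfrac{n-1}{n}\prod_j\mu_j$ and using $\sum_{r=1}^d\mu_r\prod_{j\neq r}\mu_j=d\prod_j\mu_j$, the $\prod_j\mu_j$ terms combine with coefficient $(n-1)/n-1+d/n=(d-1)/n+\landauO{n^{-2}}$, leaving the stated expression
\[
\E(\mhsicb_n)=\frac{1}{n}-\frac{1}{n}\sum_{r=1}^d\prod_{j\neq r}\mu_j+\frac{d-1}{n}\prod_{j=1}^d\mu_j+\landauO{n^{-2}}.
\]

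The computations are essentially routine bookkeeping, and the only real subtlety is the $\E(B_n)=\E(C_n)$ identity: one must notice that joint independence forces the $C_n$-sum, despite mixing the index $i_1$ across all $d$ coordinates, to decouple into a product indistinguishable from the one defining $\E(B_n)$. The boundedness of $k^j$ and uniform boundedness of $h$ from Lemma~\ref{thm:properties_h} ensure all expectations are finite and the $\landauO{n^{-2}}$ remainder in the Taylor expansion is genuine and uniform.
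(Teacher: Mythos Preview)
Your proof is correct and takes a genuinely different route from the paper's. The paper does not compute $\E(A_n),\E(B_n),\E(C_n)$ directly; instead it invokes the general $V$-statistic bias formula (Theorem~\ref{thm:biasVstat}), which for the symmetrized core $h$ of degree $2d$ gives
\[
\E(\mhsicb_n)=\frac{1}{n}\binom{2d}{2}\E\bigl(h_2(\vX_1,\vX_1)\bigr)+\landauO{n^{-2}},
\]
and then appeals to the (quite lengthy) expansion of $h_2$ under $\HO$ from Lemma~\ref{thm:expansionh2} to evaluate $\binom{2d}{2}\E(h_2(\vX_1,\vX_1))$ term by term. Your direct computation on the three raw pieces is more elementary and bypasses both the symmetrization over $S_{2d}$ and the general $V$-statistic machinery; the crucial algebraic point $\E(B_n)=\E(C_n)$ is exactly what makes this shortcut work, and you identified it correctly. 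Conversely, the paper's route has the advantage of being reusable: the same Lemma~\ref{thm:expansionh2} feeds into the variance calculation in Proposition~\ref{thm:varianceofmhsic}, whereas your approach would need a separate (and considerably more involved) direct computation there.

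One small caveat: your remark that in the general case ``the same argument simply replaces $1$ by $\prod_j\E(k^j(X_1^j,X_1^j))$'' is slightly imprecise. Writing $\nu_j=\E(k^j(X_1^j,X_1^j))$, the general formula you obtain is
\[
\E(\mhsicb_n)=\frac{1}{n}\prod_j\nu_j-\frac{1}{n}\sum_{r=1}^d\nu_r\prod_{j\neq r}\mu_j+\frac{d-1}{n}\prod_{j=1}^d\mu_j+\landauO{n^{-2}},
\]
so the second term also picks up a factor $\nu_r$, not just the first. This matches what Lemma~\ref{thm:expansionh2} yields and shows that the proposition as stated indeed presumes $k^j(x,x)\equiv 1$; your explicit acknowledgement of that assumption is appropriate.
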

A proof is given in Appendix~\ref{proof:momentsofdhsic}.

\begin{proposition}[variance of $\mhsicb$]
  \label{thm:varianceofmhsic}
  Assume Setting \ref{setting:mHSIC}. Then under $\HO$ it holds that,
  \begin{align*}
    \Var\left(\mhsicb_n\right)&=2\dfrac{(n-2d)!}{n!}\dfrac{(n-2d)!}{(n-4d+2)!}\Bigg[
    \prod_{j=1}^de_1(j)+(d-1)^2\prod_{j=1}^de_0(j)^2 
+2(d-1)\prod_{j=1}^de_2(j)\\
     &\qquad 
+\sum_{j=1}^de_1(j)\prod_{r\neq j}e_0(r)^2
    -2\sum_{j=1}^de_1(j)\prod_{r\neq j}e_2(r)-2(d-1)\sum_{j=1}^de_2(j)\prod_{r\neq j}e_0(r)^2\\
    &\qquad +\sum_{j\neq l}e_2(j)e_2(l)\prod_{r\neq
      j,l}e_0(r)^2\Bigg]+\landauO{n^{-\tfrac{5}{2}}}
  \end{align*}
  as $n\rightarrow\infty$ and where for all $j\in\{1,\dots,d\}$,
  \begin{equation*}
    e_0(j)=\E\left(k^j(X^j_1,X^j_2)\right),\quad
    e_1(j)=\E\left(k^j(X^j_1,X^j_2)^2\right),\quad
    e_2(j)=\E_{X^j_1}\left(\E_{X^j_2}\left(k^j(X^j_1,X^j_2)\right)^2\right).
  \end{equation*}
\end{proposition}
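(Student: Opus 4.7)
The plan is to exploit the V-statistic representation $\mhsicb_n = V_n(h)$ from Lemma~\ref{thm:properties_h}, where $h$ has order $q = 2d$. Under $\HO$, joint independence implies $\xi_1(h) = 0$ (by Lemma~\ref{thm:degeneracy}), so $h$ is first-order degenerate. Combining the V- vs U-statistic discrepancy bound (Lemma~\ref{thm:decompositionofmVstat}) with the classical U-statistic variance formula
\begin{equation*}
  \Var(U_n(h)) = \binom{n}{2d}^{-1}\sum_{c=1}^{2d}\binom{2d}{c}\binom{n-2d}{2d-c}\xi_c(h),
\end{equation*}
and noting that the $c \geq 3$ terms contribute only $O(n^{-3})$, I would conclude
\begin{equation*}
  \Var(\mhsicb_n) = \binom{n}{2d}^{-1}\binom{2d}{2}\binom{n-2d}{2d-2}\,\xi_2(h) + O(n^{-5/2}).
\end{equation*}
Simplifying the binomial coefficients gives the prefactor $\frac{(n-2d)!^2}{n!(n-4d+2)!}$ times a $d$-dependent constant that will be absorbed into the closed-form expression for $\xi_2(h)$.

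The substantive task is to compute $\xi_2(h) = \E[h_2(\vX_1,\vX_2)^2]$ (using $\theta_h = \mhsic = 0$ under $\HO$) in closed form. Using the explicit expression for $h_2$ provided by Lemma~\ref{thm:expansionh2} and the product-kernel structure $\prodk = k^1\otimes\cdots\otimes k^d$, squaring $h_2$ and taking expectation decouples across the $d$ coordinates by joint independence. For each coordinate $j$, the contribution reduces to one of $e_1(j)$, $e_2(j)$, or $e_0(j)^2$ depending on whether both, exactly one, or neither of the fixed arguments $\vx_1,\vx_2$ appears in the kernel evaluations at coordinate $j$. Collecting the distinct coordinate patterns together with their combinatorial multiplicities yields the seven terms in the bracket.

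The main obstacle is the combinatorial bookkeeping. Since $h$ symmetrizes three term-types (with signs $+1,+1,-2$) over all $(2d)!$ permutations, squaring $h_2$ produces nine interaction classes, and within each class one must track how the two fixed arguments distribute across the $2d$ kernel slots after symmetrization, and how the surviving patterns factor over the $d$ coordinates. Careful accounting of these contributions yields the combinatorial coefficients $1,\,(d-1)^2,\,2(d-1),\,1,\,-2,\,-2(d-1),\,1$ for the seven bracketed terms. Subleading contributions from higher-order $\xi_c(h)$ and from the V/U-statistic difference are then absorbed into the $O(n^{-5/2})$ remainder, completing the argument.
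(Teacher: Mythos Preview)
Your proposal is correct and follows essentially the same route as the paper: invoke the V-statistic representation $\mhsicb_n=V_n(h)$ (Lemma~\ref{thm:properties_h}), use degeneracy $\xi_1(h)=0$ under $\HO$ (Lemma~\ref{thm:degeneracy}) together with the V-statistic variance expansion (Theorem~\ref{thm:var_vstat}, which is precisely the combination of Lemma~\ref{thm:decompositionofmVstat} with the U-statistic variance formula you describe) to reduce to computing $\xi_2(h)$, and then evaluate $\xi_2(h)=\E[h_2(\vX_1,\vX_2)^2]$ via the ten-term expansion of $h_2$ in Lemma~\ref{thm:expansionh2} by expanding the square and simplifying each cross term using joint independence. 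The paper carries out exactly this computation (organizing it as $\binom{2d}{2}^{-2}\sum_{i,j=1}^{10}\E(a_ia_j)$ rather than your ``nine interaction classes'' language, but these amount to the same bookkeeping), and the prefactor $2\frac{(n-2d)!}{n!}\frac{(n-2d)!}{(n-4d+2)!}$ arises from $\binom{n}{2d}^{-1}\binom{2d}{2}\binom{n-2d}{2d-2}$ after the $\binom{2d}{2}^2$ factor is absorbed into the bracket, just as you indicate.
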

A proof is given in Appendix~\ref{proof:momentsofdhsic}.
Based on these two propositions we only need a method to estimate the terms
$e_0(j)$, $e_1(j)$ and $e_2(j)$ for all $j\in\{1,\dots,d\}$. One could
use a U-statistic~\eqref{eq:ustat} for each expectation term as this
would not add any bias. It turns out, however, that a computationally more
efficient V-statistic also does not add any asymptotic bias in this
particular case. This is due to 
Theorem~\ref{thm:biasVstat} describing that the bias of a
V-statistic is of order $\landauO{n^{-1}}$ and hence is consumed by
the error terms in Proposition~\ref{thm:meanofmhsic} and
Proposition~\ref{thm:varianceofmhsic}.  The V-statistics for these terms are
given for all $(\vx_1,\dots,\vx_n)\in\prodkernelspace^n$ by
\begin{enumerate}[(i)]
\item[] $\hat{e}_0(j)(\vx_1,\dots,\vx_n)\coloneqq\frac{1}{n^2}\sum_{i_1,i_2=1}^n k^j(x^j_{i_1},x^j_{i_2})$,
\item[] $\hat{e}_1(j)(\vx_1,\dots,\vx_n)\coloneqq\frac{1}{n^2}\sum_{i_1,i_2=1}^n k^j(x^j_{i_1},x^j_{i_2})^2$,
\item[] $\hat{e}_2(j)(\vx_1,\dots,\vx_n)\coloneqq\frac{1}{n^3}\sum_{i_2=1}^n\left(\sum_{i_1=1}^n k^j(x^j_{i_1},x^j_{i_2})\right)^2$.
\end{enumerate}
Based on these terms we define the estimators
$\hat{\E}_n$ and $\hat{\operatorname{Var}}_n$ for the mean and variance of
$\mhsicb_n$, respectively by
replacing all appearances of $e_0(j)$, $e_1(j)$ and $e_2(j)$ in
Propositions \ref{thm:meanofmhsic} and \ref{thm:varianceofmhsic} by
$\hat{e}_0(j)$, $\hat{e}_1(j)$ and $\hat{e}_2(j)$. We use the plug-in
estimators 
\if0
\begin{eqnarray}
  & &\hat{\alpha}_n(\vx_1,\dots,\vx_n)\coloneqq\dfrac{\hat{\E}_n(\vx_1,\dots,\vx_n)^2}{\hat{\operatorname{Var}}_n(\vx_1,\dots,\vx_n)},
  \label{eq:alpha_estimator}\\
& &\hat{\beta}_n(\vx_1,\dots,\vx_n) \coloneqq\dfrac{n\hat{\operatorname{Var}}_n(\vx_1,\dots,\vx_n)}{\hat{\E}_n(\vx_1,\dots,\vx_n)},\label{eq:beta_estimator} 
\end{eqnarray}
\fi
\begin{equation} \label{eq:alpha_estimator}
\hat{\alpha}_n(\vx_1,\dots,\vx_n)\coloneqq\dfrac{\hat{\E}_n(\vx_1,\dots,\vx_n)^2}{\hat{\operatorname{Var}}_n(\vx_1,\dots,\vx_n)},
\qquad \hat{\beta}_n(\vx_1,\dots,\vx_n) \coloneqq\dfrac{n\hat{\operatorname{Var}}_n(\vx_1,\dots,\vx_n)}{\hat{\E}_n(\vx_1,\dots,\vx_n)},
\end{equation}
and then define the following hypothesis test.

\begin{definition}[Gamma approximation based test for dHSIC]
  \label{def:gammaapproximation_test}
  Assume Setting \ref{setting:mHSIC} and $\alpha\in(0,1)$. Let
  $F_n(\vx_1,\dots,\vx_n)$ be the distribution function associated to
  the
  $\operatorname{Gamma}(\hat{\alpha}_n(\vx_1,\dots,\vx_n),\hat{\beta}_n(\vx_1,\dots,\vx_n))$-distribution,
  where $\hat{\alpha}_n$ and $\hat{\beta}_n$ are defined as in
  \eqref{eq:alpha_estimator}. 
  Then the \emph{$\alpha$-Gamma approximation based test for
  dHSIC} is defined by
  $\phi_n\coloneqq 0$ for all $n\in\{1,\dots,2d-1\}$, and for 
  $n\in\{2d,2d+1,\dots\}$ by 
  \begin{equation*}
    \phi_n(\vx_1,\dots,\vx_n)\coloneqq\mathds{1}_{\left\{n\cdot\mhsicb_n(\vx_1,\dots,\vx_n)>F_n(\vx_1,\dots,\vx_n)^{-1}(1-\alpha)\right\}}.
  \end{equation*}
\end{definition}


\section{Implementation} \label{sec:implementation}

We now discuss an efficient implementation of the
proposed tests and briefly comment on the choice of kernel. 
All
methods are available for the
\texttt{R}-language \citep{R} as package~\texttt{dHSIC}.

\subsection{dHSIC estimator}\label{sec:implementation_dhsic}

The dHSIC estimator ($\mhsicb$) can be computed in quadratic time. One such
efficient implementation is given in Algorithm \ref{alg:dHSIC}, where
$\mathds{1}_{k\times \ell}$ denotes a $k \times \ell$ matrix of ones,
the functions \texttt{Sum} and \texttt{ColumnSum} take the sums of all
elements in a matrix and its columns, respectively, and $*$ denotes
the element-wise multiplication operator. The variables term1, term2
and term3 are related to the three components of the sum in
Definition~\ref{def:mhsic_esimator}, after changing the order of
products and sums.
\begin{algorithm}
  \caption{computing the dHSIC V-estimator}\label{alg:dHSIC}
  {\fontsize{11}{15}\selectfont
  \begin{algorithmic}[1]
    \Procedure{$\texttt{dHSIC}$}{$\vx_1,\dots,\vx_n$}
    \For{$j=1:d$}
      \State $\mathbf{K}^j\gets\text{Gram matrix of kernel }k^j\text{
        given }\vx_1,\dots,\vx_n$
    \EndFor
    \State $\text{term1}\gets \mathds{1}_{n\times n}; \quad 
 \text{term2}\gets 1; \quad  
   \text{term3}\gets \frac{2}{n} \cdot \mathds{1}_{1\times n} $
    \For{$j=1:d$}
      \State $\text{term1}\gets\text{term1}*\mathbf{K}^j$
      \State $\text{term2}\gets\frac{1}{n^2}\cdot\text{term2}\cdot\texttt{Sum}(\mathbf{K}^j)$
      \State $\text{term3}\gets\frac{1}{n}\cdot\text{term3}*\texttt{ColumnSum}(\mathbf{K}^j)$
    \EndFor
    \State $\text{dHSIC}\gets\frac{1}{n^2}\cdot \texttt{Sum}(\text{term1}) +\text{term2}- \texttt{Sum}(\text{term3})$
    \State \Return $\text{dHSIC}$
    \EndProcedure
  \end{algorithmic}
  }
\end{algorithm}

\subsection{Resampling Tests}\label{sec:implementation_permboot}

From the definition of $\hat{R}_n$ we see that the permutation and bootstrap test involve $(n!)^d$ or $n^{nd}$ evaluations of $\mhsicb$, respectively. 
Instead of computing $\hat{R}_n$ explicitly one can use the Monte-Carlo approximation defined in Definition \ref{def:monte_carlo_resampling_dist}. 
This involves
calculating the $p$-value given by
\begin{equation*}
  \hat{p}_n(\vx_1,\dots,\vx_n)\coloneqq\frac{1+\abs[\big]{\{i\in\{1,\dots,B\}:\,
    \mhsicb(g_{n,\boldsymbol{\psi}_i}(\vx_1,\dots,\vx_n))\geq
    \mhsicb(\vx_1,\dots,\vx_n)\}}}{1+B},
\end{equation*}
where $(\boldsymbol{\psi}_i)_{i\in\N}$ is a sequence drawn from the
uniform distribution on $A_n$ (i.e. on $(S_n)^d$ for the permutation
test and on $B_n^d$ for the bootstrap test). The test then rejects the
null hypothesis whenever $\hat{p}_n(\vx_1,\dots,\vx_n)\leq\alpha$. 
Further details including critical values are provided in
Section~\ref{sec:mcresamplingtests}. 
\citet{davison} suggest to use $B$ between $99$ and $999$.

We now give some additional details specific
to the permutation and the bootstrap test.

\subsubsection{Permutation test}

As shown in the proof of Proposition~\ref{thm:level_permutation_test}
the resampling method $g$ for the permutation test is a resampling
group which satisfies the invariance conditions
\eqref{eq:invariance_1} and \eqref{eq:uniqueness_2}. This allows us to
apply Proposition~\ref{thm:exactlevelmontecarlo} to see that the
Monte-Carlo approximated permutation test has valid level for any
finite $B$, given that we have continuous random variables as input.
Algorithm~\ref{alg:pval_critval} in
Appendix~\ref{sec:mcresamplingtests} shows how to implement the
$p$-value and the critical value for the Monte-Carlo approximated
permutation test.

\subsubsection{Bootstrap test}

It can be shown using a standard concentration inequality argument
\citep[see e.g.][Theorem 11.2.18 and Example 11.2.13]{lehmann} that
the Monte-Carlo approximated version of the bootstrap test for dHSIC
still has pointwise asymptotic level and is pointwise consistent, as
both $n$ and $B$ go to infinity. Algorithm~\ref{alg:pval_critval} in
Appendix~\ref{sec:mcresamplingtests} shows how to implement the
$p$-value and the critical value for the Monte-Carlo approximated
bootstrap test.

\subsection{Gamma approximation test}
Implementing the $\alpha$-Gamma
approximation based test for dHSIC consists of four steps
(see Section~\ref{sec:gamma} for notation):
\begin{compactitem}
\item[1.] for all $j\in\{1,\dots,d\}$ implement the estimators
  $\hat{e}_0(j),\dots,\hat{e}_2(j)$,
\item[2.] compute the estimates
$\hat{\E}_n(\vx_1,\dots,\vx_n)$ and $\hat{\operatorname{Var}}_n(\vx_1,\dots,\vx_n)$,  
\item[3.] using \eqref{eq:alpha_estimator} 
  compute the estimates $\hat{\alpha}_n(\vx_1,\dots,\vx_n)$ and
  $\hat{\beta}_n(\vx_1,\dots,\vx_n)$ and
\item[4.] compute the $(1-\alpha)$-quantile of the $\operatorname{Gamma}(\hat{\alpha}_n(\vx_1,\dots,\vx_n),\hat{\beta}_n(\vx_1,\dots,\vx_n))$-distribution.
\end{compactitem}
The hypothesis test rejects $\HO$ if
$n\cdot\mhsicb_n(\vx_1,\dots,\vx_n)$ is larger than the $1-\alpha$ quantile of the
$\operatorname{Gamma}(\hat{\alpha}_n(\vx_1,\dots,\vx_n),\hat{\beta}_n(\vx_1,\dots,\vx_n))$-distribution
calculated in the last step.

\subsection{Choice of kernel}\label{sec:choice_kernel}
The choice of the kernel determines how well certain types of
dependence can be detected and therefore influences the practical
performance of dHSIC (see Simulation~6).  For continuous data a common
choice is a Gaussian kernel as defined in
\eqref{eq:GaussianKernel}. It is characteristic, which ensures that
all the above results hold. In particular, any type of dependence can
be detected in the large sample limit.
We use the median heuristic for choosing the bandwidth $\sigma$ by requiring that
$\operatorname{median}\left\{{\norm{\vx_i-\vx_j}^2_{\R^m}}\,:\,
  i<j\right\}={2\sigma^2}$.  This heuristic performs quite well in
many practical applications.  It may be possible, however, to extend
alternative approaches from two-sample testing to independence testing
\citep[e.g.][]{gretton2012optimal}.  For discrete data, we choose a
trivial kernel defined by $k(x,y)\coloneqq\mathds{1}_{\{x=y\}}$.

In practice, it is, moreover, possible and potentially beneficial to
also consider other (potentially non-characteristic) kernels that are
chosen in such a way that they are particularly powerful in detecting
certain types of dependencies.



\section{Experiments} \label{sec:experiments}

\subsection{Competing methods} \label{sec:competing}
For comparison purposes we consider an approach, which has been suggested by \citet{beranmiller} and \citet[][p.27]{Romano1986}.
For testing the joint independence of $d$ real valued random variables $X^1, \ldots, X^d$, they consider the test statistic
\begin{equation} \label{eq:BMR}
\widehat{\text{BMR}}_n := \sup_{\va \in \mathbb{R}^d}\abs[\big]{\empjoint(A_{\va}) - \empprod(A_{\va})},
\end{equation}
where
$A_{\va} := (-\infty, a^1] \times (-\infty, a^2] \times \ldots \times
(-\infty, a^d]$ is a subset of $\mathbb{R}^d$,
$\empjoint\coloneqq\frac{1}{n}\sum_{i=1}^n\delta_{\vX_i}$ is the
empirical joint measure and
$\empprod\coloneqq\prod_{j=1}^d(\frac{1}{n}\sum_{i=1}^n\delta_{X_i^j})$
is the empirical product measure. Usually,~\eqref{eq:BMR} cannot be
computed exactly and has to be approximated. One may choose a
distribution $\mu$ with full support on $\mathbb{R}^d$, for example,
and compute the supremum over $C < \infty$ randomly chosen
$\va_1, \ldots, \va_C \iid \mu$.  In our experiments, we mainly choose
$C=n$ since, for consistency, $C$ has to grow with $n$ and since then
the computational complexity is $\landauO{dn^2}$, which equals the
computational complexity of dHSIC, see
Section~\ref{sec:implementation_dhsic}.  As the authors do not provide
any other suggestion, we choose $\mu$ to be the $d$-dimensional
Gaussian distribution with parameters estimated by maximum
likelihood. The test itself is then based on a bootstrap procedure,
described in Section~\ref{sec:bootstraptest}.  In the remainder of
this section, we refer to this test as BMR-$C$.

Furthermore, we 
consider a multiple pairwise version of the two
variable HSIC test. In order to
test for joint 
independence we use the following testing sequence:
\vspace{-0.2cm}
$$
\begin{array}{cll}
1. & \text{use HSIC to test whether } X^d  \text{ is independent of }[X^1,\dots,X^{d-1}],&\qquad \qquad \qquad \text{ }\\
2. & \text{use HSIC to test whether }X^{d-1}\text{ is independent of }[X^1,\dots,X^{d-2}],&\\
\vdots &&\\
d-1.& \text{use HSIC to test whether }X^2 \text{ is independent of }X^1.&
\end{array} 
$$\vspace{-0.3cm}

Finally, we account for the increased family-wise error rate using the
Bonferroni correction, i.e. we perform all tests at level
${\alpha}/{(d-1)}$ and reject the null hypothesis if any of the
individual tests rejects the null hypothesis. In the following we simply
denote this method as HSIC. 
We have mentioned in Section~\ref{sec:intro} that the Bonferroni correction is often conservative: this becomes particularly
evident if this procedure is combined with a permutation test based
HSIC. In that case it can be shown that the smallest possible $p$-value
after the Bonferroni correction is given by $(d-1)/(B+1)$ and
hence for $B=100$ the test will not be able to reject the null
hypothesis at a level of $5\%$ if $d>6$. 

\subsection{Causal inference}\label{subsec.causal}

In causal discovery, one estimates the causal structure from an observed joint distribution.
Here,
we consider additive noise models \citep{peters2014} with additive nonlinear functions and Gaussian noise \citep{cam}; these are special cases of structural equation models~\citep{Pearl2009}.
Assume that the distribution $\prodlawbold = \mathbb{P}^{(X^1,\dots,X^d)}$ 
is induced by $d$ structural equations
\begin{equation} \label{eq:anms}
X^j := \sum_{k \in \parents^j} f^{j,k}(X^k) + N^j, \quad j \in \{1, \ldots, d\},
\end{equation}
with
$\parents^j$ being the parents of $j$ in the associated DAG $\mathcal{G}_0$.
The noise variables $N^1,\dots,N^d$ 
are normally distributed and are assumed to be jointly independent.
An
important question in causality is whether the causal structure, in
this case $\mathcal{G}_0$, can be inferred from the observational
distribution $\prodlawbold$. 
While this is impossible
for general structural equation models
\citep[e.g.][Proposition 9]{peters2014},
the additive noise structure renders the graph identifiable. That is, if $f^{j,k}$ are assumed to be nonlinear, any other 
additive noise model~\eqref{eq:anms} with a structure different from $\mathcal{G}_0$ cannot induce the distribution 
$\prodlawbold$
 \citep[see][Corollary
31, for the full result]{peters2014}. 
In other words, using conditional means as functions in the SEM, the corresponding residual variables will not be jointly independent.

We therefore propose the following method for structure learning using generalized additive model regression (GAM) \citep{gam}.
\begin{mdframed}[roundcorner=5pt, frametitle={DAG verification
    method}]
  Given: observations $\vX_1,\dots,\vX_n$ and a candidate DAG $\mathcal{G}$
  \begin{enumerate}[1)]
  \item Use generalized additive model regression (GAM) \citep{gam} to regress each
    node $X^j$ on all its parents $\parents^j$ and denote the resulting
    vector of residuals by $\mathbf{res}^j$.
  \item Perform a $d$-variable joint independence test (e.g. dHSIC) to
    test whether\\ $(\mathbf{res}^1,\dots,\mathbf{res}^d)$ is jointly
    independent.
  \item If $(\mathbf{res}^1,\dots,\mathbf{res}^d)$ is jointly
    independent, then the DAG $\mathcal{G}$ is not rejected.
  \end{enumerate}
\end{mdframed}
We can furthermore 
estimate the correct DAG by performing the
verification method for all possible DAGs with the correct number of nodes.  In
practice, we expect this method to accept also supergraphs of the
correct graph $\mathcal{G}_0$, which can be overcome by a variable
selection method. Since this work concentrates on the dependence
structure among the residuals, we instead consider only fully
connected DAGs in the experiments (Section~\ref{sec:exp:caus}).  In
practice, we do not want to iterate over all possible graphs. A more
efficient method, which is based on a similar idea, is the RESIT
(regression with subsequent independence test) algorithm described in
\citet[Section 4.1]{peters2014}. Also the computationally efficient
method CAM \citep{cam} could be equipped with a joint independence
test as a model check.

One issue deserves further attention.\footnote{We thank one of the anonymous referees for pointing this out.}
In the regression step 1), we only obtain an approximation of the correct function, which results in 
estimated and thus dependent residuals 
rather than the true noise values. 
We show that this does not affect the asymptotic ordering of $\mhsicb$, see Theorem~\ref{thm:noniid_resid} in Appendix~\ref{sec:approx_resid}.
If we are interested in asymptotically valid $p$-values, 
we can perform sample splitting, see Proposition~\ref{thm:asymptotic_dist_approx_resid} in Appendix~\ref{sec:approx_resid}.

The DAG verification method described above can also be used to
construct a statistical test for a more general causal hypothesis. For
example, the causal hypothesis ``X is a causal ancestor of Y'' can in
principle be tested by applying the DAG verification method to all
DAGs satisfying this ancestor relationship. One then reports the
largest of the $p$-values appearing in step 2) of the DAG verification
method. This test has, asymptotically, the correct size if there is
indeed an underlying additive noise model that generated the data
(again, using sample splitting, for example).  Under a (minor) model
misspecifcation, that is, if the additive noise assumption does not
hold, we might still find $p$-values that are much larger for the
correct causal statement than for the reversed statement, e.g. ``Y is
a causal ancestor of X'' \citep[see][]{Peters2011tpami}.

\subsection{Results}

We structure the experimental results into five parts: level analysis,
power analysis, runtime analysis and causal inference on simulated and a
real data set.

\subsubsection{Level analysis} \label{subsec:level}

We consider two examples of fixed elements $\prodlawbold\in\HO$
(Simulation~1 and Simulation~2).  In both examples we simulate
$m=1000$ independent realizations of
$\vX_1,\dots,\vX_n\iid\prodlawbold$ for different sample sizes $n$ and
check how often each of the three hypothesis tests reject the null
hypothesis.
\begin{mdframed}[roundcorner=5pt, frametitle={Simulation 1 (testing
    level - three continuous variables)}, nobreak=true]
  Consider $X^1,X^2,X^3\iid\mathcal{N}(0,1)$, then for $\vX=(X^1,X^2,X^3)$ it holds that
  \begin{equation*}
    \prodlawbold=\P^{X^1}\otimes\P^{X^2}\otimes\P^{X^3}\in\HO,
  \end{equation*}
  where $\HO$ is the null hypothesis defined in \eqref{eq:HO}. Set
  $\alpha=0.05$, $B=25$, 
  and \linebreak
  $n\in\{100,200,\dots,1000\}$. The 
  rejection rates for the corresponding three hypothesis tests
  (permutation, bootstrap and Gamma approximation) based
  on $m=1000$ repetitions are plotted in Figure~\ref{fig:levelcomparison1}.
\end{mdframed}

\begin{figure}[h!]
  \centering\vspace{-0.2cm}
  \input{Plots/levelcomparison1.tex}\vspace{-0.2cm}\\
  \caption{Simulation 1 (testing level - three continuous variables): Rejection rates, based on
    $m=1000$ repetitions, for each of the three hypothesis tests based
    on dHSIC. The test has valid level if the rejection rate does not lie
    far above the dotted red line at 0.05.}
  \label{fig:levelcomparison1}
\end{figure}

\begin{mdframed}[roundcorner=5pt, frametitle={Simulation 2 (testing
    level - continuous and discrete variables)}, nobreak=true]
  Consider $X^1\sim\mathcal{N}(0,1)$ and
  $X^2\sim\operatorname{Bin}(20,0.2)$ with $X^1$ and $ X^2$ independent. Then for $\vX=(X^1,X^2)$ it holds that
  \begin{equation*}
    \prodlawbold=\P^{X^1}\otimes\P^{X^2}\in\HO,
  \end{equation*}
  where $\HO$ is the null hypothesis defined in \eqref{eq:HO}. Set
  $\alpha=0.05$, $B=100$, 
    and \linebreak
    $n\in\{100,200,\dots,1000\}$. The
  rejection rates for the corresponding three hypothesis tests
  (permutation, bootstrap and Gamma approximation) based
  on $m=1000$ repetitions are plotted in Figure~\ref{fig:levelcomparison2}.
\end{mdframed}
\vspace{0.8cm}

\begin{figure}[h!]
  \centering\vspace{-1cm}
  \input{Plots/levelcomparison2.tex}\vspace{-0.2cm}\\
  \caption{Simulation 2 (testing level - continuous and discrete variables): Rejection rates, based on
    $m=1000$ repetitions, for each of the three hypothesis tests based
    on dHSIC. The test has valid level if the rejection rate does not lie
    far above the dotted red line at 0.05.}
  \label{fig:levelcomparison2}
\end{figure}

In both simulations we get similar results. We collect the most
important observations.
\begin{compactitem}
\item[(i)] The permutation test achieves level $\alpha$. This corresponds
  to what has been proved in the previous section. As mentioned above,
  this result is rather surprising as it does
  not depend on the choice of $B$, which in Simulation~1 is very
  small ($B=25$).
\item[(ii)] The bootstrap test achieves level $\alpha$ in most
  cases, even though we only proved that it has pointwise asymptotic
  level. This is due to the conservative choice of the $p$-value in the
  Monte-Carlo approximation of the bootstrap test.
\item[(iii)] The Gamma approximation based test, at least in these two
  examples, has level close to $\alpha$ but often slightly exceeds the required level. 
For larger values of $d$ the Gamma approximation seems
  to break down. For instance, if we perform Simulation~1 with $10$
  variables instead of three the rejection rate for a sample size of
  $n=100$ is $0.40$ and even for $n=200$ it is still
  $0.21$. The bootstrap test on the other hand is not affected in this
  way (in the same setting we get $0.03$ for $n=100$ and $0.04$ for $n=200$).
\end{compactitem}

\subsubsection{Power analysis}
Assessing the power of a test requires to chose an alternative. Here,
we show two examples, one favoring dHSIC (Simulation~4), another one
favoring the multiple testing approach using HSIC $d-1$ times
(Simulation~3). In both simulations we use the BMR-$C$ test with $C=n$
as reference. Using a BMR-$C$ test with $C=1000$ (not shown here) only
brings marginal improvements which are not sufficient to beat HSIC in
either simulation.

\begin{mdframed}[roundcorner=5pt, frametitle={Simulation 3 (comparing
    power - single edge)}, nobreak=true]
For an additive noise model over random variables $X^1, \ldots, X^d$,
$$
X^j := \sum_{k \in \parents^j} f^{j,k}(X^k) + N^j, \quad j \in \{1, \ldots, d\},
$$
with corresponding DAG $\mathcal{G}$, we sample data in the following
way.  The noise variables are Gaussian with a standard deviation
sampled uniformly between $\sqrt{2}$ and $2$.  Nodes without parents
follow a Gaussian distribution with standard deviation sampled
uniformly between $5\sqrt{2}$ and $5\cdot 2$.  The functions $f^{j,k}$
are sampled from a Gaussian process with Gaussian kernel and bandwidth
one.  In this Simulation~3 we choose $d=4$, let $\mathcal{G}$ to be
the graph that contains $1 \rightarrow 2$ as a single edge and use
$m=1000$ repetitions to compute rejection rates, see
Figure~\ref{fig:power1}.  We expect this setting to favor the
multiple testing approach: due to the order of the
variables, it tests $X^1$ against $X^2$.
\end{mdframed}
\begin{figure}[h!]
  \centering\vspace{-0.6cm}
  \input{Plots/power_analysis1_romano.tex}\vspace{-0.2cm}\\
  \caption{Simulation 3 (comparing power - single edge): Rejection
    rates, based on $m=1000$ repetitions, for each of the three
    different hypothesis tests. The example (in particular the chosen
    order of variables) is constructed to favor the pairwise testing
    approach (HSIC). Nevertheless, it performs only slightly better
    than dHSIC.}
  \label{fig:power1}
\end{figure}


\begin{mdframed}[roundcorner=5pt, frametitle={Simulation 4 (comparing
    power - full DAG)}, nobreak=true]
  We simulate the data as described in Simulation~3 but this time
  using a (randomly chosen) full DAG $\mathcal{G}$ over $d=4$
  variables, i.e. every pair of two nodes is connected, see
  Figure~\ref{fig:power2}.  We expect that this setting favors
  dHSIC. Additionally, we fixed $n=100$, varied $d$ and used $m=1000$
  repetitions, see Figure~\ref{fig:curseofdim}.
\end{mdframed}

\begin{figure}[h!]
  \centering\vspace{-0.6cm}
  \input{Plots/power_analysis2_romano.tex}\vspace{-0.2cm}\\
  \caption{Simulation 4 (comparing power - full DAG): Rejection rates, based on
    $m=1000$ repetitions, for each of the three different hypothesis tests. 
    As expected, dHSIC outperforms the competing method HSIC that is based on pairwise independence tests.}
  \label{fig:power2}
\end{figure}





Next, we compare the empirical power properties of dHSIC for sparse
and dense alternatives (Simulation~5). In order to correct for the
dependence strength, we use the total variation
distance. Figure~\ref{fig:power_dense_sparse} shows that given a
sufficient amount of dependence the empirical dHSIC appears to have
more power against sparse alternatives.

\begin{mdframed}[roundcorner=5pt, frametitle={Simulation 5 (comparing
    power - dense and sparse alternatives)}, nobreak=true]
  We simulate data according to two SEMs given by
  \begin{equation*}
    \mathcal{S}_{\operatorname{dense},c}:
    \begin{cases}
      X^1&=H^2+c\cdot\epsilon_1\\
      X^2&=H^2+c\cdot\epsilon_2\\
      X^3&=H^2+c\cdot\epsilon_3\\
      &\vdots\\
      X^d&=H^2+c\cdot\epsilon_d
    \end{cases}
    \qquad\qquad
    \mathcal{S}_{\operatorname{sparse},c}:
    \begin{cases}
      X^1&=H^2+c\cdot\epsilon_1\\
      X^2&=H^2+c\cdot\epsilon_2\\
      X^3&=\epsilon_3\\
      &\vdots\\
      X^d&=\epsilon_d,
    \end{cases}
  \end{equation*}
  where $H,\epsilon_1,\dots,\epsilon_d\iid\mathcal{N}(0,1)$. Then, for
  different values of $c$ we compute both the total variation distance
  $\sup_{\vR}\abs{\prodlawbold(\vR)-\prodlaw(\vR)}$ and the rejection
  rate for $n=100$ of the dHSIC permutation test ($B=100$) based on
  $m=10000$ repetitions. The total variation is computed using
  numerical integration for the distribution functions and a
  Monte-Carlo approximation (with $C=10000$) to evaluate the
  supremum. The resulting plots for the cases $d=5$ and $d=10$ are
  given in Figure~\ref{fig:power_dense_sparse}.
\end{mdframed}

\begin{figure}[h!]
  \centering\vspace{-0.2cm}
  \resizebox{0.7\textwidth}{!}{\input{Plots/power_dense_sparse.tex}}\vspace{-0.2cm}
  \caption{Simulation 5 (comparing power - dense and sparse
    alternatives): Rejection rates, based on $m=10000$ repetitions, of
    the permutation test ($B=100$) for data from a dense SEM
    ($\mathcal{S}_{\operatorname{dense},c}$) and a sparse SEM
    ($\mathcal{S}_{\operatorname{sparse},c}$), where $c$ is tuned such
    that both settings have similar total variation distances.}
  \label{fig:power_dense_sparse}
\end{figure}

Finally, we analyze the influence of the choice of kernel on the
empirical power (Simulation~6). In this paper, we have mainly used the
Gaussian kernel with a bandwidth chosen by the median heuristic. As mentioned in
Section~\ref{sec:choice_kernel} this choice is not necessarily optimal. Using the Taylor expansion of the Gaussian
kernel we get for all $\vx,\vy\in\R^d$ that
\begin{equation*}
  \prodk(\vx,\vy)=1-\frac{1}{2\sigma^2}\sum_{j=1}^d\big(x^j-y^j\big)^2+\frac{1}{4\sigma^4}\sum_{j,k=1}^d\big(x^j-y^j\big)^2\big(x^k-y^k\big)^2+\landauO{\sigma^{-6}},
\end{equation*}
as $\sigma\rightarrow\infty$. Therefore, it can be shown using either
the representation in Definition~\ref{def:dHSIC} or the one in
Proposition~\ref{thm:mhsickernel} that for large $\sigma$ dHSIC using
the Gaussian kernel is approximately given by dHSIC using the
following kernel
$$
  \tilde{\prodk}(\vx,\vy)\coloneqq\frac{1}{4\sigma^4}\sum_{j,k=1}^d\big(x^j-y^j\big)^2\big(x^k-y^k\big)^2.
$$
Such a kernel can, however, only detect pairwise dependence
structures, and since the importance of this term becomes more
prominent as the size of the bandwidth increases we expect the power
of our dHSIC test to decrease when analyzing dependencies that have an
additional dependence structure beyond a pairwise dependence.
The following simulation illustrates this empirically based on three
dependencies: a pairwise dependence, a more complex dependence
due to a random nonlinear Gaussian SEM and a dependence on three
variables which is pairwise independent, see
Figure~\ref{fig:power_bandwidth}.

\begin{mdframed}[roundcorner=5pt, frametitle={Simulation 6 (comparing
    power - bandwidth)}, nobreak=true]
  We consider three dependencies and analyze the behavior of the
  empirical power of the dHSIC permutation test ($B=100$) based on
  different bandwidths for the Gaussian kernel. The first is
  generated by the linear Gaussian SEM
  \begin{equation*}
    \begin{cases}
      X^1&=H+\epsilon_1\\
      &\vdots\\
      X^4&=H+\epsilon_4
    \end{cases}
    \quad\text{where }H,\epsilon_1,\dots,\epsilon_4\iid\mathcal{N}(0,4),
  \end{equation*}
  and hence the only dependence is due to the confounder $H$. For the
  second dependence we use the same as in Simulation~4, which has a more
  evolved dependence structure due to potential chains of
  ancestors. The third dependence is given by the
  following probability density
  \begin{equation*}
    f(x^1,x^2,x^3)=
    \begin{cases}
      2\phi(x^1)\phi(x^2)\phi(x^3)\quad &\text{if } x^1,x^2,x^3\geq
      0,\text{ or }\exists !\, j\in\{1,2,3\}: x^j\geq 0,\\
      0\quad &\text{otherwise},
    \end{cases}
  \end{equation*}
  where $\phi$ is the standard normal density. The resulting
  distribution is, in particular, pairwise independent. For all
  examples we use a sample size of $n=100$ and $m=1000$ repetitions.
\end{mdframed}

\begin{figure}[h!]
  \centering\vspace{-0.2cm}
  \input{Plots/power_bandwidth.tex}\vspace{-0.2cm}
  \caption{Simulation 6 (comparing power - bandwidth): Rejection rates
    with $n=100$ for different bandwidths $\sigma$ in the Gaussian
    kernel, based on $m=1000$ repetitions, of the permutation test
    ($B=100$) for data containing only a pairwise dependence, for data
    from a random nonlinear Gaussian SEM and for data which is
    dependent but pairwise independent. The red lines are
    $95\%$-confidence intervals for the bandwidth selected using the
    median heuristic. The rejection rates resulting when using the
    median heuristic are $0.30$, $0.82$ and $1$ respectively.}
  \label{fig:power_bandwidth}
\end{figure}

\subsubsection{Runtime analysis} \label{sec:computational}
The computational complexity for the dHSIC test statistic is
$ \landauO{dn^2} $ as can be seen from the considerations in
Section~\ref{sec:implementation_dhsic}. The multiple testing approach
for HSIC computes HSIC $d-1$ times, which appears to result in the
same computational complexity. But since the dimension of the input
variables for the HSIC tests generally depends on $d$, as well (at least in
common settings such as for the Gaussian kernel), the overall
complexity is $\landauO{d^2n^2}$.  We numerically test these
computational complexities by two simulations. In the first simulation
we fix $n$ and vary $d$; in the second simulation we fix $d$ and vary
$n$. The results are presented in Figure~\ref{fig:runtime}.


\begin{figure}[h!]
  \vspace{-0.2cm}
  \begin{minipage}[t]{.6\textwidth}
    \centering
    \resizebox{\textwidth}{!}{\input{Plots/runtime.tex}}
  \end{minipage}%
  \vspace{-0.2cm}
  \begin{minipage}[t]{.4\textwidth}
    \centering
    \resizebox{\textwidth}{!}{
\begin{tikzpicture}[x=1pt,y=1pt]
\definecolor{fillColor}{RGB}{255,255,255}
\path[use as bounding box,fill=fillColor,fill opacity=0.00] (0,0) rectangle (275.28,275.28);
\begin{scope}
\path[clip] ( 49.20, 61.20) rectangle (250.08,226.08);
\definecolor{drawColor}{RGB}{69,139,0}

\path[draw=drawColor,line width= 0.4pt,line join=round,line cap=round] ( 56.64,189.90) circle (  2.25);

\path[draw=drawColor,line width= 0.4pt,line join=round,line cap=round] (118.64,208.67) circle (  2.25);

\path[draw=drawColor,line width= 0.4pt,line join=round,line cap=round] (180.64,211.73) circle (  2.25);

\path[draw=drawColor,line width= 0.4pt,line join=round,line cap=round] (242.64,207.30) circle (  2.25);
\end{scope}
\begin{scope}
\path[clip] (  0.00,  0.00) rectangle (275.28,275.28);
\definecolor{drawColor}{RGB}{0,0,0}

\path[draw=drawColor,line width= 0.4pt,line join=round,line cap=round] ( 56.64, 61.20) -- (242.64, 61.20);

\path[draw=drawColor,line width= 0.4pt,line join=round,line cap=round] ( 56.64, 61.20) -- ( 56.64, 55.20);

\path[draw=drawColor,line width= 0.4pt,line join=round,line cap=round] ( 87.64, 61.20) -- ( 87.64, 55.20);

\path[draw=drawColor,line width= 0.4pt,line join=round,line cap=round] (118.64, 61.20) -- (118.64, 55.20);

\path[draw=drawColor,line width= 0.4pt,line join=round,line cap=round] (149.64, 61.20) -- (149.64, 55.20);

\path[draw=drawColor,line width= 0.4pt,line join=round,line cap=round] (180.64, 61.20) -- (180.64, 55.20);

\path[draw=drawColor,line width= 0.4pt,line join=round,line cap=round] (211.64, 61.20) -- (211.64, 55.20);

\path[draw=drawColor,line width= 0.4pt,line join=round,line cap=round] (242.64, 61.20) -- (242.64, 55.20);

\node[text=drawColor,anchor=base,inner sep=0pt, outer sep=0pt, scale=  1.00] at ( 56.64, 39.60) {4};

\node[text=drawColor,anchor=base,inner sep=0pt, outer sep=0pt, scale=  1.00] at ( 87.64, 39.60) {5};

\node[text=drawColor,anchor=base,inner sep=0pt, outer sep=0pt, scale=  1.00] at (118.64, 39.60) {6};

\node[text=drawColor,anchor=base,inner sep=0pt, outer sep=0pt, scale=  1.00] at (149.64, 39.60) {7};

\node[text=drawColor,anchor=base,inner sep=0pt, outer sep=0pt, scale=  1.00] at (180.64, 39.60) {8};

\node[text=drawColor,anchor=base,inner sep=0pt, outer sep=0pt, scale=  1.00] at (211.64, 39.60) {9};

\node[text=drawColor,anchor=base,inner sep=0pt, outer sep=0pt, scale=  1.00] at (242.64, 39.60) {10};

\path[draw=drawColor,line width= 0.4pt,line join=round,line cap=round] ( 49.20, 67.31) -- ( 49.20,219.97);

\path[draw=drawColor,line width= 0.4pt,line join=round,line cap=round] ( 49.20, 67.31) -- ( 43.20, 67.31);

\path[draw=drawColor,line width= 0.4pt,line join=round,line cap=round] ( 49.20, 97.84) -- ( 43.20, 97.84);

\path[draw=drawColor,line width= 0.4pt,line join=round,line cap=round] ( 49.20,128.37) -- ( 43.20,128.37);

\path[draw=drawColor,line width= 0.4pt,line join=round,line cap=round] ( 49.20,158.90) -- ( 43.20,158.90);

\path[draw=drawColor,line width= 0.4pt,line join=round,line cap=round] ( 49.20,189.44) -- ( 43.20,189.44);

\path[draw=drawColor,line width= 0.4pt,line join=round,line cap=round] ( 49.20,219.97) -- ( 43.20,219.97);

\node[text=drawColor,rotate= 90.00,anchor=base,inner sep=0pt, outer sep=0pt, scale=  1.00] at ( 34.80, 67.31) {0.0};

\node[text=drawColor,rotate= 90.00,anchor=base,inner sep=0pt, outer sep=0pt, scale=  1.00] at ( 34.80, 97.84) {0.2};

\node[text=drawColor,rotate= 90.00,anchor=base,inner sep=0pt, outer sep=0pt, scale=  1.00] at ( 34.80,128.37) {0.4};

\node[text=drawColor,rotate= 90.00,anchor=base,inner sep=0pt, outer sep=0pt, scale=  1.00] at ( 34.80,158.90) {0.6};

\node[text=drawColor,rotate= 90.00,anchor=base,inner sep=0pt, outer sep=0pt, scale=  1.00] at ( 34.80,189.44) {0.8};

\node[text=drawColor,rotate= 90.00,anchor=base,inner sep=0pt, outer sep=0pt, scale=  1.00] at ( 34.80,219.97) {1.0};

\path[draw=drawColor,line width= 0.4pt,line join=round,line cap=round] ( 49.20, 61.20) --
	(250.08, 61.20) --
	(250.08,226.08) --
	( 49.20,226.08) --
	( 49.20, 61.20);
\end{scope}
\begin{scope}
\path[clip] (  0.00,  0.00) rectangle (275.28,275.28);
\definecolor{drawColor}{RGB}{0,0,0}

\node[text=drawColor,anchor=base,inner sep=0pt, outer sep=0pt, scale=  1.00] at (149.64, 15.60) {number of variables $d$};

\node[text=drawColor,rotate= 90.00,anchor=base,inner sep=0pt, outer sep=0pt, scale=  1.00] at ( 10.80,143.64) {rejection rate};
\end{scope}
\begin{scope}
\path[clip] ( 49.20, 61.20) rectangle (250.08,226.08);
\definecolor{drawColor}{RGB}{205,38,38}

\path[draw=drawColor,line width= 0.4pt,line join=round,line cap=round] ( 56.64,149.27) --
	( 59.67,144.03) --
	( 53.61,144.03) --
	( 56.64,149.27);

\path[draw=drawColor,line width= 0.4pt,line join=round,line cap=round] (118.64,188.05) --
	(121.67,182.80) --
	(115.61,182.80) --
	(118.64,188.05);

\path[draw=drawColor,line width= 0.4pt,line join=round,line cap=round] (180.64, 70.81) --
	(183.67, 65.56) --
	(177.61, 65.56) --
	(180.64, 70.81);

\path[draw=drawColor,line width= 0.4pt,line join=round,line cap=round] (242.64, 70.81) --
	(245.67, 65.56) --
	(239.61, 65.56) --
	(242.64, 70.81);
\definecolor{drawColor}{RGB}{24,116,205}

\path[draw=drawColor,line width= 0.4pt,line join=round,line cap=round] ( 54.39,126.73) rectangle ( 58.89,131.23);

\path[draw=drawColor,line width= 0.4pt,line join=round,line cap=round] (116.39,103.53) rectangle (120.89,108.03);

\path[draw=drawColor,line width= 0.4pt,line join=round,line cap=round] (178.39, 83.99) rectangle (182.89, 88.49);

\path[draw=drawColor,line width= 0.4pt,line join=round,line cap=round] (240.39, 77.27) rectangle (244.89, 81.77);

\path[draw=drawColor,line width= 0.4pt,line join=round,line cap=round] ( 53.46,145.01) --
	( 56.64,148.19) --
	( 59.82,145.01) --
	( 56.64,141.83) --
	( 53.46,145.01);

\path[draw=drawColor,line width= 0.4pt,line join=round,line cap=round] (115.46,124.40) --
	(118.64,127.58) --
	(121.82,124.40) --
	(118.64,121.22) --
	(115.46,124.40);

\path[draw=drawColor,line width= 0.4pt,line join=round,line cap=round] (177.46, 99.67) --
	(180.64,102.85) --
	(183.82, 99.67) --
	(180.64, 96.49) --
	(177.46, 99.67);

\path[draw=drawColor,line width= 0.4pt,line join=round,line cap=round] (239.45, 84.10) --
	(242.64, 87.28) --
	(245.82, 84.10) --
	(242.64, 80.92) --
	(239.45, 84.10);
\definecolor{drawColor}{RGB}{211,211,211}

\path[draw=drawColor,line width= 0.4pt,dash pattern=on 1pt off 3pt ,line join=round,line cap=round] ( 56.64, 61.20) -- ( 56.64,226.08);

\path[draw=drawColor,line width= 0.4pt,dash pattern=on 1pt off 3pt ,line join=round,line cap=round] ( 87.64, 61.20) -- ( 87.64,226.08);

\path[draw=drawColor,line width= 0.4pt,dash pattern=on 1pt off 3pt ,line join=round,line cap=round] (118.64, 61.20) -- (118.64,226.08);

\path[draw=drawColor,line width= 0.4pt,dash pattern=on 1pt off 3pt ,line join=round,line cap=round] (149.64, 61.20) -- (149.64,226.08);

\path[draw=drawColor,line width= 0.4pt,dash pattern=on 1pt off 3pt ,line join=round,line cap=round] (180.64, 61.20) -- (180.64,226.08);

\path[draw=drawColor,line width= 0.4pt,dash pattern=on 1pt off 3pt ,line join=round,line cap=round] (211.64, 61.20) -- (211.64,226.08);

\path[draw=drawColor,line width= 0.4pt,dash pattern=on 1pt off 3pt ,line join=round,line cap=round] (242.64, 61.20) -- (242.64,226.08);

\path[draw=drawColor,line width= 0.4pt,dash pattern=on 1pt off 3pt ,line join=round,line cap=round] ( 49.20, 67.31) -- (250.08, 67.31);

\path[draw=drawColor,line width= 0.4pt,dash pattern=on 1pt off 3pt ,line join=round,line cap=round] ( 49.20, 97.84) -- (250.08, 97.84);

\path[draw=drawColor,line width= 0.4pt,dash pattern=on 1pt off 3pt ,line join=round,line cap=round] ( 49.20,128.37) -- (250.08,128.37);

\path[draw=drawColor,line width= 0.4pt,dash pattern=on 1pt off 3pt ,line join=round,line cap=round] ( 49.20,158.90) -- (250.08,158.90);

\path[draw=drawColor,line width= 0.4pt,dash pattern=on 1pt off 3pt ,line join=round,line cap=round] ( 49.20,189.44) -- (250.08,189.44);

\path[draw=drawColor,line width= 0.4pt,dash pattern=on 1pt off 3pt ,line join=round,line cap=round] ( 49.20,219.97) -- (250.08,219.97);
\definecolor{drawColor}{RGB}{0,0,0}
\definecolor{fillColor}{RGB}{255,255,255}

\path[draw=drawColor,line width= 0.4pt,line join=round,line cap=round,fill=fillColor] (170.59,173.64) rectangle (240.03,113.64);
\definecolor{drawColor}{RGB}{69,139,0}

\path[draw=drawColor,line width= 0.4pt,line join=round,line cap=round] (179.59,161.64) circle (  2.25);
\definecolor{drawColor}{RGB}{205,38,38}

\path[draw=drawColor,line width= 0.4pt,line join=round,line cap=round] (179.59,153.14) --
	(182.62,147.89) --
	(176.56,147.89) --
	(179.59,153.14);
\definecolor{drawColor}{RGB}{24,116,205}

\path[draw=drawColor,line width= 0.4pt,line join=round,line cap=round] (177.34,135.39) rectangle (181.84,139.89);

\path[draw=drawColor,line width= 0.4pt,line join=round,line cap=round] (176.41,125.64) --
	(179.59,128.82) --
	(182.78,125.64) --
	(179.59,122.46) --
	(176.41,125.64);
\definecolor{drawColor}{RGB}{0,0,0}

\node[text=drawColor,anchor=base west,inner sep=0pt, outer sep=0pt, scale=  1.00] at (188.59,158.19) {dHSIC};

\node[text=drawColor,anchor=base west,inner sep=0pt, outer sep=0pt, scale=  1.00] at (188.59,146.19) {HSIC};

\node[text=drawColor,anchor=base west,inner sep=0pt, outer sep=0pt, scale=  1.00] at (188.59,134.19) {BMR-$n$};

\node[text=drawColor,anchor=base west,inner sep=0pt, outer sep=0pt, scale=  1.00] at (188.59,122.19) {BMR-$1000$};
\end{scope}
\end{tikzpicture}}
  \end{minipage}
  \begin{minipage}[t]{.55\textwidth}
    \caption{runtime analysis: (left) varying number of variables and fixed
        sample size ($n=100$) and (right) varying sample size and fixed
        number of variables ($d=10$).}
    \label{fig:runtime}
  \end{minipage}\hspace{.05\textwidth}%
  \vspace{-0.2cm}
  \begin{minipage}[t]{.4\textwidth}
    \caption{Simulation 4 (comparing power - full DAG):
      Rejection rates, based on bootstrap ($B=100$, $n=100$). BMR-$C$
      suffers from the curse of dimensionality and the pairwise HSIC
      approach is not able to reject $\HO$ for $d>6$.}
    \label{fig:curseofdim}
  \end{minipage}
\end{figure}

\if0
\subsubsection{Simulation example}
In this subsection, we consider two explicit continuous additive noise
models. We then generate data from one of them and use the model
verification method described at the end of the previous section to
check whether we are able to determine the correct model based only on
the data.

We denote the two ANMs by $(\mathcal{S}_1,\P^{\vN_1})$ and
$(\mathcal{S}_2,\P^{\vN_2})$ and assume they satisfy
\begin{equation}
  \label{eq:SEMs_ex}
  \mathcal{S}_1
  \begin{cases}
    \, X^1=N_1^1\\
    \, X^2=f_{\lambda}(X^1)+N_1^2\\
    \, X^3=f_{\lambda}(X^1)+f_{\lambda}(X^2)+N_1^3
  \end{cases}
  \quad\text{and}\quad
  \mathcal{S}_2
  \begin{cases}
    \, X^1=g(X^3)+h(X^2)+N_2^1\\
    \, X^2=j(X^3)+N_2^2\\
    \, X^3=N_2^3
  \end{cases}
\end{equation}
with $N_i^j\sim\mathcal{N}(0,\sigma_i^j)$ independent normally distributed random
variables. The corresponding graphs are given in Figure
\ref{fig:graphs}.

\tikzstyle{VertexStyle} = [shape = circle, minimum width = 6ex, draw]
\tikzstyle{EdgeStyle} = [->,>=stealth',shorten > = 2pt]  

\begin{figure}[h!]
  \centering
  \begin{subfigure}{.5\textwidth}
    \centering
    \begin{tikzpicture}[scale=1] 
      \SetGraphUnit{2} 
      \Vertex[Math]{X^1} 
      \SOWE[Math](X^1){X^2} \SOEA[Math](X^1){X^3}
      \Edges(X^1,X^2,X^3) \Edges(X^1,X^3)
    \end{tikzpicture}
    \caption{graph $\mathcal{G}_1$}
    \label{fig:graph1}
  \end{subfigure}%
  \begin{subfigure}{.5\textwidth}
    \centering
    \begin{tikzpicture}[scale=1] 
      \SetGraphUnit{2} 
      \Vertex[Math]{X^1} 
      \SOWE[Math](X^1){X^2} \SOEA[Math](X^1){X^3}
      \Edges(X^3,X^2,X^1) \Edges(X^3,X^1)
    \end{tikzpicture}
    \caption{graph $\mathcal{G}_2$}
    \label{fig:graph2}
  \end{subfigure}
  \caption{graphical representation of the two ANMs $\mathcal{S}_1$
    and $\mathcal{S}_2$ from \eqref{eq:SEMs_ex}}
  \label{fig:graphs}
\end{figure}

If $f_{\lambda}$ is linear we are in the linear Gaussian setting and
cannot identify the correct graph due to
Proposition~\ref{thm:linearGaussian}. However, if $f_{\lambda}$ is
non-linear we are in the setting of \eqref{eq:CAM} and by
Theorem~\ref{thm:nonlinearGaussian} are able to determine the correct
graph.

For the following simulation we choose the two functions
\begin{equation*}
  f_{\lambda}(x)\coloneqq (1-\lambda) x +\lambda\cos(x),
\end{equation*}
and
\begin{equation*}
  f_{\lambda}(x)\coloneqq (1-\lambda) x +\lambda|x|,
\end{equation*}
with $\lambda\in[0,1]$. The parameter $\lambda$ should be understood
as a quantifier of how non-linear $f_{\lambda}$ is. We then simulate
$1000$ data samples consisting of $n=100$ data points from
$\mathcal{S}_1$ and perform the model verification method
for both graphs $\mathcal{G}_1$ and $\mathcal{G}_2$ to check if they
are accepted or rejected.

The result for different values of $\lambda$ is plotted in
Figure~\ref{fig:simulationex3cos} (for the $\cos$ non-linearity) and
in Figure~\ref{fig:simulationex3abs} (for the $\abs{\cdot}$
non-linearity).  We used the permutation test for dHSIC with $B=100$
as well as a multiple testing version of the classical two variable
HSIC test (also using the permutation test with $B=100$) for comparison.

\begin{figure}[h!]
  \centering
  \resizebox{0.7\textwidth}{!}{\input{Plots/simulationex3cos.tex}} \vspace{-0.5cm} 
  \caption{Rejection rates (based on $m=1000$ repetitions), where
    $\mathcal{S}_1$ is the correct model with 3 variables
    $f_{\lambda}(x)=(1-\lambda)x+\lambda\cos(x)$ and $\mathcal{S}_2$ is
    the corresponding false model.}
  \label{fig:simulationex3cos}
\end{figure}

\begin{figure}[h!]
  \centering
  \input{Plots/simulationex3abs.tex}
  \caption{Rejection rates (based on $m=1000$ repetitions), where
    $\mathcal{S}_1$ is the correct model with 3 variables
     $f_{\lambda}(x)=(1-\lambda)x+\lambda|x|$ and $\mathcal{S}_2$ is
    the corresponding false model.}
  \label{fig:simulationex3abs}
\end{figure}

This shows that the power of the test depends on the
dependence structure of the variables. In particular, one cannot say
dHSIC has more power than a multiple testing approach with HSIC or
vice versa. The difference becomes more pronounced if we extend the
three variable models $\mathcal{S}_1$ and $\mathcal{S}_2$ to 5
variables. Two corresponding plots are given in
Figure~\ref{fig:simulationex5cos} and
Figure~\ref{fig:simulationex5abs}. \Peter{Das statement ist fair: also
  spielt es nicht so eine grosse Rolle ob man jetzt dHSIC macht oder hat,
  oder HSIC macht? Ist eigentlich klar, dass HSIC asymptotisch power = 1
  hat?\\
Die ganze Section ist eh sehr lang... und die Sache bei der causal
inference ueberzeugt noch nicht so ganz zu Gunsten von dHSIC... aber
vielleicht ist es eben einfach so. Wir muessen wohl ``aufpassen'', dass wir
mit diesen Resultaten nicht zu viel Kritik reinbringen, dass dHSIC ja in
etwa gleich gut ist wie mehrmaliges HSIC...\\
Ich vermute, dass eine Art ``sparse'' dependence, wo bloss $X^1$ und $X^d$
abhaengig sind und alle anderen Variablen unabhaengig unter sich und von
$X^1$ und $X^d$: das wird dann wohl schwieirg fuer HSIC wenn man in dieser
Ordnung testet... wo hingegen dHSIC vielleicht besser sein sollte. Nicht
dass wir das noch tun, aber man koennte etwas in der Richtung schreiben
(nicht bei der causal section, sondern vorher). Bin mir aber nicht so sicher...}  

\begin{figure}[h!]
  \centering
  \input{Plots/simulationex5cos.tex}
  \caption{Rejection rates (based on $m=1000$ repetitions), where
    $\mathcal{S}_1$ is the correct model with 5 variables
    $f_{\lambda}(x)=(1-\lambda)x+\lambda\cos(x)$ and $\mathcal{S}_2$ is
    the corresponding false model.}
  \label{fig:simulationex5cos}
\end{figure}

\begin{figure}[h!]
  \centering
  \input{Plots/simulationex5abs.tex}
  \caption{Rejection rates (based on $m=1000$ repetitions), where
    $\mathcal{S}_1$ is the correct model with 5 variables
    $f_{\lambda}(x)=(1-\lambda)x+\lambda|x|$ and $\mathcal{S}_2$ is
    the corresponding false model.}
  \label{fig:simulationex5abs}
\end{figure}

\fi

\subsubsection{Causal inference (simulated data)} \label{sec:exp:caus}
We now apply both tests to the DAG verification method described in
Section~\ref{subsec.causal}.  As in Simulation~3, we simulate data
from an additive noise model. Here, we randomly choose a fully
connected DAG $\mathcal{G}$ over $d=4$ nodes and choose Gaussian
distributed noise variables with standard deviation sampled uniformly
between $1/5$ and $\sqrt{2}/5$ instead of $\sqrt{2}$ and $2$. We then
report how often (out of $m=1000$), the largest $p$-value leads to the
correct DAG. Because of its computational advantage, we use the tests
based on the Gamma approximation for dHSIC and the pairwise HSIC,
which work reasonably well for four nodes (strictly speaking, we only
use the relative size of the $p$-values). Most of the time was spent
computing the results for BMR-$n$ as we were forced to use a bootstrap
test ($B=100$) since no approximation is available for this test. The
proposed dHSIC recovers the correct DAG in more cases than the
pairwise approach and in even more cases than BMR-$n$, see
Figure~\ref{fig:causalex}.
\begin{figure}[h!]
  \centering\vspace{-1.6cm}
  \resizebox{0.7\textwidth}{!}{\input{Plots/causality_example_romano.tex}} \vspace{-0.2cm} 
  \caption{
   Causal inference example ($m=1000$ repetitions): (left)
    shows how often each method estimated the correct DAG, (right)
    shows the average structural intervention distance (SID, small is good) between
    the correct and estimated DAG \citep{petersSID15}.}
  \label{fig:causalex}
\end{figure}

\subsubsection{Causal inference (real data example)}

We now apply the DAG verification method  
(see Section~\ref{subsec.causal})
to real world data. Given $349$ measurements of the variables Altitude,
Temperature and Sunshine\footnote{The dataset is taken from
  \citet[pair0001.txt and pair0004.txt]{dataset}.},  we try to determine the
correct causal structure out of 
25 possible DAGs. 
We use permutation based versions (with $B=1000$) of the dHSIC test,
the multiple testing approach for HSIC and the BMR-$1000$ test and
apply them to every possible DAG and compare the resulting
$p$-values. The result is shown in Figure~\ref{fig:realworld}~(left).
\tikzstyle{VertexStyle} = [shape = ellipse, minimum width = 11em,
draw] \tikzstyle{EdgeStyle} = [->,>=stealth',shorten > = 2pt]
\begin{figure}[h!]
  \centering\vspace{-0.6cm}
  \begin{subfigure}{0.4\textwidth}
    \hspace{-1cm}
    \resizebox{\textwidth}{!}{
    \input{Plots/realworld_romano.tex}}
  \end{subfigure}%
  \begin{subfigure}{0.4\textwidth}
    \raisebox{.14\textwidth}{\resizebox{\textwidth}{!}{
    \hspace{-1.4cm}\begin{tikzpicture}[scale=1.4] 
      \SetGraphUnit{2} 
      \Vertex{Altitude} 
      \SOWE(Altitude){Sunshine} \SOEA(Altitude){Temperature}
      \Edges(Altitude,Temperature,Sunshine) \Edges(Altitude,Sunshine)
    \end{tikzpicture}}}
  \end{subfigure}
  \vspace{-0.2cm}
  \caption{Real world data example: The left plot shows the $p$-values (on
    log-scale), for each
    DAG over three nodes, from the DAG verification
    method. Even for small $p$-value thresholds, dHISC is able to
    reject all 
    incorrect models, while the competing HSIC method cannot. The
    graphical representation of DAG 25 is shown on the right.}
  \label{fig:realworld}
\end{figure}

Figure~\ref{fig:realworld}~(right) shows DAG 25 that obtained the largest $p$-value. 
Based on physical background knowledge,
we expect altitude to effect both sunshine and temperature. The effect of
temperature on sunshine could be explained by intermediate latent
variables such as clouds or fog.

The plot illustrates that the dHSIC based test is able to reject
all incorrect models, even for very low $p$-value thresholds. The competing HSIC
and BMR-$1000$ methods on the other hand are not able to reject all incorrect DAGs, for
example DAG 12 has a $p$-value of about $0.01$ but contains an edge
from Sunshine to Altitude, which is clearly the wrong causal direction.





\section{Summary} \label{sec:conclusions}

We introduce a measure of joint dependence between $d$ variables,
called the d-variable Hilbert-Schmidt independence criterion
(dHSIC). We propose an estimator of dHSIC based on a computationally
efficient V-statistic and derive its asymptotic distribution. This allows to
construct three different hypothesis tests: a permutation test
(Definition~\ref{def:permutation_test}), a 
bootstrap test (Definition~\ref{def:bootstrap_test}) and a test based on a Gamma
approximation (Definition~\ref{def:gammaapproximation_test}).

We prove several properties for these tests. First and
foremost we establish that the bootstrap test achieves pointwise
asymptotic level (Theorem~\ref{thm:level_bootstrap_test}) and that it is
consistent for detecting any fixed alternative with asymptotic power equal
to one (Theorem~\ref{thm:consistencybootstraptest}). 
For the permutation test, we show that it 
achieves valid level (Proposition~\ref{thm:level_permutation_test}) and in
particular, this property carries over to the Monte-Carlo approximated
version of the permutation test.
Regarding the Gamma approximation based test, we derive asymptotic
expansions of the mean and variance of the dHSIC estimator
(Proposition~\ref{thm:meanofmhsic} and 
Proposition~\ref{thm:varianceofmhsic}) which serve as main basis in the
construction of the approximation. Although this test has no guarantees on
level and consistency, it is computationally very fast and was found to
perform well in numerical experiments. 

Various simulations illustrate the advantages of dHSIC over a pairwise
approach with HSIC and a traditional test that we call BMR-$C$.
Notably, dHSIC is computationally less expensive than HSIC and also
BMR-$C$ if $C$ grows larger than $n$. Moreover, when the dimension $d$
is large the pairwise HSIC approach with
Monte-Carlo approximation (for fixed $B$) is not able to reject the
null hypothesis and  BMR-$C$ seems to
suffer substantially from the curse of dimensionality.
We also outline applications for model selection in causal
inference which are based on joint independence testing of error terms in
structural equation models. In our numerical experiments on real and 
simulated data, dHSIC outperforms both other methods.



\section*{Acknowledgements}
The authors thank the Editor, an Associate Editor and some anonymous
reviewers for constructive comments, and Christoph Lampert, Arthur
Gretton, Gilles Blanchard and Aaditya Ramdas for helpful
discussion. JP received support from the ETH-MPI learning center.

\bibliography{Content/refs}

\appendix
\section{Resampling tests}\label{sec:resamplingtests}

In this section we want to rigorously introduce resampling tests. Most
of this section is based on \citet{lehmann}, we however adjust a lot
of the notation to fit our situation.

Let $\alpha\in (0,1)$, let $\kernelspace$ be a measurable
space, let $T=(T_n)_{n\in\N}$ be a test statistic on $\kernelspace$, let $X$
be a random variable with values in $\kernelspace$ and let
$(X_i)_{i\in\N}$ be a sequence of iid copies.
The main idea behind resampling tests is to construct data sets based
on the original observations $(X_1,\dots,X_n)$. These types of
constructions are formalized by resampling methods.

\begin{definition}[resampling method]
  Let $\kernelspace$ be a measurable space and let
  $(M_n)_{n\in\N}\subseteq\N$ be a sequence. If
  $$g=\left((g_{n,k})_{k\in\{1,\dots,M_n\}}\right)_{n\in\N}$$
  is a family of functions satisfying for all $n\in\N$ and for all
  $k\in\{1,\dots,M_n\}$ that
  $$g_{n,k}:\kernelspace^n\rightarrow\kernelspace^n,$$
  then we call $g$ a resampling method.
\end{definition}
Based on a resampling method $g$ we can construct new observations for
all $n\in\N$ and for all $k\in\{1,\dots,M_n\}$ by defining
\begin{equation*}
  Z_{n,k}\coloneqq g_{n,k}(X_1,\dots,X_n).
\end{equation*}
The new 'resampled' data
$(Z_{n,k})_{k\in\{1,\dots,M_n\}}\subseteq\kernelspace^n$, $n\in\N$ is
called resampling scheme and for each $n\in\N$ the sequence
$Z_{n,1},\dots,Z_{n,M_n}$ should be seen as $M_n$ resampled data sets
constructed from the original observations $(X_1,\dots,X_n)$. A
resampling method is therefore a formalization of the concept of
resampling $M_n$ times from the original observations $(X_1,\dots,X_n)$.
Based on a resampling method we can introduce the resampling
distribution function.

\begin{definition}[resampling distribution function]
  \label{def:resamplingdistfunction}
  Let $\kernelspace$ be a measurable space, let
  $T=(T_n)_{n\in\N}$ be a test statistic on $\kernelspace$, let $g$ be
  a resampling method. For all $n\in\N$, the functions
  $\hat{R}_{T_n}:\kernelspace^n\times\R\rightarrow [0,1]$ defined for
  all $(x_1,\dots,x_n)\in\kernelspace^n$ and for all $t\in\R$ by
  \begin{equation*}
    \hat{R}_{T_n}(x_1,\dots,x_n)(t)\coloneqq\dfrac{1}{M_n}\sum_{k=1}^{M_n}\mathds{1}_{\{T_n(g_{n,k}(x_1,\dots,x_n))\leq
      t\}}
  \end{equation*}
  are called the resampling distribution functions (corresponding to test statistic
  $T$ and resampling method $g$).
\end{definition}
Fixing $n\in\N$ and $(x_1,\dots,x_n)\in\kernelspace^n$ it holds that
\begin{equation*}
  \hat{R}_{T_n}(x_1,\dots,x_n):\R\rightarrow [0,1]
\end{equation*}
is non-decreasing, right-continuous and satisfies
\begin{equation*}
  \lim_{t\rightarrow\text{-}\infty}\hat{R}_{T_n}(x_1,\dots,x_n)(t)=0
  \quad\text{and}\quad
  \lim_{t\rightarrow\infty}\hat{R}_{T_n}(x_1,\dots,x_n)(t)=1.
\end{equation*}
This implies that $\hat{R}_{T_n}(x_1,\dots,x_n)$ is a distribution
function and thus we can define the generalized inverse
\begin{equation*}
  \left(\hat{R}_{T_n}(x_1,\dots,x_n)\right)^{-1}:(0,1)\rightarrow\R
\end{equation*}
satisfying for all $\alpha\in(0,1)$ that
\begin{equation*}
  \left(\hat{R}_{T_n}(x_1,\dots,x_n)\right)^{-1}(\alpha)\coloneqq\inf\{t\in\R\mid
\hat{R}_{T_n}(x_1,\dots,x_n)(t)\geq \alpha\}.
\end{equation*}
Based on the resampling distribution functions we can define a
resampling test as follows.

\begin{definition}[resampling test]
  \label{def:resamplingtest}
  Let $\alpha\in(0,1)$, let $\kernelspace$ be a separable metric
  space, let $T=(T_n)_{n\in\N}$ be a test statistic on $\kernelspace$,
  let $g$ be a resampling method and let $\hat{R}_{T_n}$ be the
  corresponding resampling distribution functions. A hypothesis test
  $\phi=(\phi_n)_{n\in\N}$ defined for all $n\in\N$ and for all $(x_1,\dots,x_n)\in\kernelspace^n$ by
  \begin{equation*}
    \phi_n(x_1,\dots,x_n)\coloneqq\mathds{1}_{\left\{T_n(x_1,\dots,x_n)>(\hat{R}_{T_n}(x_1,\dots,x_n))^{-1}(1-\alpha)\right\}}
  \end{equation*}
  is called $\alpha$-resampling test (corresponding to $g$).
\end{definition}
The advantage of resampling tests is that they can be constructed for
any test statistic. We now define an important subclass of resampling methods.
\begin{definition}[resampling group]
  Let $\kernelspace$ be a measurable space, let
  $(M_n)_{n\in\N}\subseteq\N$ be a sequence and let $g$ be a
  resampling method. If $g$ satisfies that
  \begin{equation*}
    G\coloneqq\{g_{n,1},\dots,g_{n,M_n}\}
  \end{equation*}
  together with concatenation is a group of transformations on
  $\kernelspace^n$, then we call $g$ a resampling group.
\end{definition}
Resampling groups have the important property that for all test
statistics $T=(T_n)_{n\in\N}$ the corresponding resampling
distribution functions satisfy for all $n\in\N$, for all
$k\in\{1,\dots,M_n\}$ and for all $(x_1,\dots,x_n)\in\kernelspace^n$
that
\begin{equation}
  \label{eq:invariance_resamplingdist}
  \hat{R}_{T_n}(x_1,\dots,x_n)=\hat{R}_{T_n}(g_{n,k}(x_1,\dots,x_n)).
\end{equation}
This follows immediately from the group property of $g$. It allows us
to prove, given an appropriate invariance of the resampling group
under the null hypothesis, that the corresponding resampling test
achieves level $\alpha$. The following theorem is a reformulation of
\citet[Theorem 15.2.1]{lehmann}.

\begin{theorem}[level of resampling tests]
  \label{thm:level_resampling_test}
  Let $\alpha\in(0,1)$, let $\kernelspace$ be a separable metric
  space, let $\HO,\HA\subseteq\pmspace{\kernelspace}$ be a null and
  alternative hypothesis respectively, let $g$ be a resampling group
  satisfying under $\HO$ that for all $n\in\N$ and for all
  $k\in\{1,\dots,M_n\}$ it holds that
  \begin{equation*}
    g_{n,k}(X_1,\dots,X_n)\text{ is equal in distribution to }(X_1,\dots,X_n).
  \end{equation*}
  Then, the $\alpha$-resampling test $\phi$ corresponding to $g$ is a test
  at level $\alpha$, when testing $\HO$ against $\HA$.
\end{theorem}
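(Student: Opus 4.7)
The plan is to apply the classical randomization argument (as in Lehmann and Romano), exploiting the group structure of $G=\{g_{n,1},\dots,g_{n,M_n}\}$ in two distinct ways. Throughout I will fix $n\in\N$ and $\mu\in\HO$, and I will abbreviate the critical value as $c_n(x_1,\dots,x_n)\coloneqq(\hat{R}_{T_n}(x_1,\dots,x_n))^{-1}(1-\alpha)$.

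First, I would derive a purely deterministic bound. Since $G$ is a group under composition, it contains the identity, so $T_n(X_1,\dots,X_n)$ itself belongs to the multiset $\{T_n(g_{n,k}(X_1,\dots,X_n)):k=1,\dots,M_n\}$ whose empirical cdf is exactly $\hat{R}_{T_n}(X_1,\dots,X_n)$. From the definition of the generalized inverse and the right-continuity of this step function, at most a fraction $\alpha$ of the $M_n$ resampled statistics can strictly exceed $c_n$, giving the pointwise bound
\begin{equation*}
\frac{1}{M_n}\sum_{k=1}^{M_n}\mathds{1}_{\{T_n(g_{n,k}(x_1,\dots,x_n))>c_n(x_1,\dots,x_n)\}}\leq\alpha
\end{equation*}
for every $(x_1,\dots,x_n)\in\kernelspace^n$. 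Evaluating at $(X_1,\dots,X_n)$ and taking expectations yields
\begin{equation*}
\frac{1}{M_n}\sum_{k=1}^{M_n}\P\bigl(T_n(g_{n,k}(X_1,\dots,X_n))>c_n(X_1,\dots,X_n)\bigr)\leq\alpha.
\end{equation*}

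Second, I would show that each summand above equals the rejection probability $\P(\phi_n(X_1,\dots,X_n)=1)=\P(T_n(X_1,\dots,X_n)>c_n(X_1,\dots,X_n))$. By \eqref{eq:invariance_resamplingdist}, $\hat{R}_{T_n}$ is invariant under the group, and taking generalized inverses preserves this invariance, so $c_n(g_{n,k}(X_1,\dots,X_n))=c_n(X_1,\dots,X_n)$ almost surely. Combining this with the randomization hypothesis $g_{n,k}(X_1,\dots,X_n)\overset{d}{=}(X_1,\dots,X_n)$ under $\HO$, I obtain for each $k$
\begin{align*}
\P(T_n(g_{n,k}(X_1,\dots,X_n))>c_n(X_1,\dots,X_n))
&=\P(T_n(g_{n,k}(X_1,\dots,X_n))>c_n(g_{n,k}(X_1,\dots,X_n)))\\
&=\P(T_n(X_1,\dots,X_n)>c_n(X_1,\dots,X_n)),
\end{align*}
where the first equality uses group-invariance of $c_n$ and the second uses the distributional equality. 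Substituting back into the previous display identifies the left-hand side with $\P(\phi_n(X_1,\dots,X_n)=1)$, and so $\P(\phi_n(X_1,\dots,X_n)=1)\leq\alpha$ as required.

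The proof is entirely combinatorial/measure-theoretic and no analytic machinery is needed. The only substantive step is the group-invariance of $\hat{R}_{T_n}$ (and hence of $c_n$), which in turn relies on the fact that composing $G$ on the right by any fixed $g_{n,k}$ merely permutes the $M_n$-tuple of transformations. This is precisely where the group axioms enter essentially, and it explains why the same argument fails for the bootstrap test in Definition \ref{def:bootstrap_test}, whose resampling set $A_n=B_n^d$ is not closed under composition; in that case a separate asymptotic argument (cf.\ Theorem \ref{thm:level_bootstrap_test}) is required.
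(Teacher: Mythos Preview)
Your proof is correct and follows essentially the same randomization argument as the paper: a deterministic quantile bound for the $M_n$ resampled statistics, followed by exploiting group invariance of $\hat{R}_{T_n}$ (hence of $c_n$) together with the distributional invariance under $\HO$. The only cosmetic difference is that the paper applies the invariance \eqref{eq:invariance_resamplingdist} inside the deterministic step and then uses $\E(\phi_n(Z_{n,k}))=\E(\phi_n(X_1,\dots,X_n))$ directly, whereas you apply it after taking expectations; also, your remark that $G$ contains the identity is true but not actually needed for the pointwise bound.
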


\begin{proof}
  Fix $n\in\N$ and let $K$ be a uniformly distributed random variable
  on $\{1,\dots,M_n\}$ independent of $(X_1,\dots,X_n)$. Let
  $(x_1,\dots,x_n)\in\operatorname{Im}((X_1,\dots,X_n))$ and for all $k\in\{1,\dots,M_n\}$ define
  $z_{n,k}\coloneqq g_{n,k}(x_1,\dots,x_n)$ then it holds that $T_n(z_{n,K})$
  has the distribution function $\hat{R}_{T_n}(x_1,\dots,x_n)$. Hence,
  using \eqref{eq:invariance_resamplingdist} and the the properties of
  the generalized inverse it holds that
  \begin{align*}
    &\dfrac{1}{M_n}\sum_{k=1}^{M_n}\mathds{1}_{\{T_n(z_{n,k})>(\hat{R}_{T_n}(z_{n,k}))^{-1}(1-\alpha)\}}\\
    &\quad=\dfrac{1}{M_n}\sum_{k=1}^{M_n}\mathds{1}_{\{T_n(z_{n,k})>(\hat{R}_{T_n}(x_1,\dots,x_n))^{-1}(1-\alpha)\}}\\
    &\quad=\E\left(\mathds{1}_{\{T_n(z_{n,K})>
        (\hat{R}_{T_n}(x_1,\dots,x_n))^{-1}(1-\alpha)\}}\right)\\
    &\quad\leq\alpha,
  \end{align*}
  which together with the monotonicity of the integral and the
  convention $$Z_{n,k}=g_{n,k}(X_1,\dots,X_n)$$ implies that
  \begin{equation}
    \label{eq:genlevel_estimate}
    \E\left(\dfrac{1}{M_n}\sum_{k=1}^{M_n}\mathds{1}_{\{T_n(Z_{n,k})>(\hat{R}_{T_n}(Z_{n,k}))^{-1}(1-\alpha)\}}\right)\leq\alpha.
  \end{equation}
  Moreover, under $\HO$, i.e. $X_1,X_2,\ldots\iid\P^{X}\in\HO$, it holds by
  assumption for all $k\in\{1,\dots,M_n\}$ that $(X_1,\dots,X_n)$
  is equal in distribution to $Z_{n,k}$. This in particular implies
  that under $\HO$ it holds for all $k\in\{1,\dots,M_n\}$ that
  \begin{equation}
    \label{eq:genlevel_invariance}
    \E\left(\phi_n(Z_{n,k})\right)=\E\left(\phi_n(X_1,\dots,X_n)\right).
  \end{equation}
  Combining \eqref{eq:genlevel_estimate} and
  \eqref{eq:genlevel_invariance} results in
  \begin{align*}
    \P\left(\phi_n(X_1,\dots,X_n)=1\right)&=\E\left(\phi_n(X_1,\dots,X_n)\right)\\
    &=\dfrac{1}{M_n}\sum_{k=1}^{M_n}\E\left(\phi_n(Z_{n,k})\right)\\
    &=\E\left(\dfrac{1}{M_n}\sum_{k=1}^{M_n}\mathds{1}_{\{T_n(Z_{n,k})>(\hat{R}_{T_n}(Z_{n,k}))^{-1}(1-\alpha)\}}\right)\\
    &\leq\alpha,
  \end{align*}
  which completes the proof of Theorem \ref{thm:level_resampling_test}.
\end{proof}

The invariance assumption of the resampling group in the previous
theorem is the same as the randomization hypothesis given by
\citet[Definition 15.2.1]{lehmann}. Unfortunately, there are no
similar guarantees that an arbitrary resampling test controls the type
II error in any way. Results of this type need to be checked on a case
by case basis by analyzing the resampling distribution function for
the specific test statistic.

\section{Monte-Carlo approximated resampling tests}\label{sec:mcresamplingtests}

Finally, we want to discuss a computational difficulty that often
arises in the context of resampling tests. The problem is that in
practical applications the parameter $M_n$ from the definition of a
resampling method grows very fast in $n$ and makes computations
impossible for large $n$. One method of dealing with this is to
approximate the resampling distribution $\hat{R}_n$ using a
Monte-Carlo approximated version.

\begin{definition}[Monte-Carlo approximated resampling distribution]
  \label{def:monte_carlo_resampling_dist}
  Let $\kernelspace$ be a measurable space, let
  $T=(T_n)_{n\in\N}$ be a test statistic on $\kernelspace$, let $g$ be
  a resampling method and let $(K_i)_{i\in\N}$ be a sequence of
  independent uniformly distributed random variables on
  $\{1,\dots,M_n\}$. For all $B\in\N$ let
  $\hat{\mathcal{R}}^B_{T_n}:\kernelspace^n\times\R\rightarrow [0,1]$
  be the functions defined for all $(x_1,\dots,x_n)\in\kernelspace^n$
  and for all $t\in\R$ by
  \begin{equation*}
    \hat{\mathcal{R}}^B_{T_n}(x_1,\dots,x_n)(t)\coloneqq\dfrac{1}{B}\sum_{i=1}^{B}\mathds{1}_{\{T_n(g_{n,K_i}(x_1,\dots,x_n))\leq
      t\}}
  \end{equation*}
  are called the Monte-Carlo approximated resampling distribution functions (corresponding to test statistic
  $T$ and resampling method $g$).
\end{definition}

The following proposition shows that $\hat{\mathcal{R}}^B_{T_n}$ approximates $\hat{R}_{T_n}$ in
an appropriate way.

\begin{proposition}[Monte-Carlo approximation of resampling distribution]
  \label{thm:monte_carlo_resampling_dist}
  Let $\kernelspace$ be a measurable space, let
  $T=(T_n)_{n\in\N}$ be a test statistic on $\kernelspace$, let $g$ be
  a resampling method, let $\hat{R}_{T_n}$ be the resampling
  distribution functions and for all $B\in\N$ let
  $\hat{\mathcal{R}}_{T_n}^B$ be the Monte-Carlo approximated resampling
  distribution functions. Then for all
  $(x_1,\dots,x_n)\in\kernelspace^n$ and for all $t\in\R$ it holds
  $\P$-a.s. that
  \begin{equation*}
    \lim_{B\rightarrow\infty}\hat{\mathcal{R}}_{T_n}^B(x_1,\dots,x_n)(t)=\hat{R}_{T_n}(x_1,\dots,x_n)(t).
  \end{equation*}
\end{proposition}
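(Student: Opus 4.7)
The plan is to reduce this to the strong law of large numbers applied to a sequence of bounded iid Bernoulli variables. Fix $n\in\N$, fix a point $(x_1,\dots,x_n)\in\kernelspace^n$, and fix $t\in\R$. Since these are fixed (non-random), the only source of randomness on the right-hand side of the Monte-Carlo definition comes from the sequence $(K_i)_{i\in\N}$ of iid uniform random variables on $\{1,\dots,M_n\}$.

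First I would define, for each $i\in\N$, the random variable
\begin{equation*}
Y_i \coloneqq \mathds{1}_{\{T_n(g_{n,K_i}(x_1,\dots,x_n))\leq t\}}.
\end{equation*}
Since $g_{n,k}$, $T_n$, $(x_1,\dots,x_n)$ and $t$ are all deterministic once fixed, each $Y_i$ is a measurable function of $K_i$ alone, hence $(Y_i)_{i\in\N}$ is a sequence of iid $\{0,1\}$-valued (and therefore integrable) random variables. Their common expectation is
\begin{equation*}
\E(Y_1) = \sum_{k=1}^{M_n} \P(K_1=k)\,\mathds{1}_{\{T_n(g_{n,k}(x_1,\dots,x_n))\leq t\}} = \frac{1}{M_n}\sum_{k=1}^{M_n}\mathds{1}_{\{T_n(g_{n,k}(x_1,\dots,x_n))\leq t\}} = \hat{R}_{T_n}(x_1,\dots,x_n)(t),
\end{equation*}
which is exactly the quantity on the right-hand side of the claimed limit.

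Next I would invoke Kolmogorov's strong law of large numbers to conclude that $\P$-almost surely,
\begin{equation*}
\hat{\mathcal{R}}_{T_n}^B(x_1,\dots,x_n)(t) = \frac{1}{B}\sum_{i=1}^B Y_i \xrightarrow{B\to\infty} \E(Y_1) = \hat{R}_{T_n}(x_1,\dots,x_n)(t).
\end{equation*}
There is no real obstacle here: the statement is pointwise in $(x_1,\dots,x_n)$ and $t$, so no uniformity is required, and the bounded Bernoulli integrability is immediate. The only thing worth flagging is measurability of the maps $(k,x)\mapsto T_n(g_{n,k}(x))$, which follows from the assumed measurability of $T_n$ and $g_{n,k}$ (the index $k$ ranges over a finite set), ensuring that each $Y_i$ is a genuine random variable on the underlying probability space.
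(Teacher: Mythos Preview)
Your proof is correct and uses essentially the same idea as the paper: reduce to the strong law of large numbers for iid Bernoulli variables arising from the uniform draws $(K_i)_{i\in\N}$. The only cosmetic difference is that the paper decomposes via the indicators $Y_i^k\coloneqq\mathds{1}_{\{K_i=k\}}$ for each $k\in\{1,\dots,M_n\}$ and applies the SLLN separately to each of the $M_n$ empirical frequencies $\tfrac{1}{B}\sum_{i=1}^B Y_i^k\to\tfrac{1}{M_n}$, whereas you apply the SLLN once to the composite indicators $Y_i=\mathds{1}_{\{T_n(g_{n,K_i}(x_1,\dots,x_n))\le t\}}$; your route is marginally more direct but the substance is identical.
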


\begin{proof}
  Let $(K_i)_{i\in\N}$ be the sequence of uniformly distributed random
  variables on $\{1,\dots,M_n\}$ from the definition of
  $\hat{\mathcal{R}}_{T_n}^B$, then introduce for all
  $k\in\{1,\dots,M_n\}$ and for all $i\in\N$ the random variables
  \begin{equation*}
    Y_i^k\coloneqq\mathds{1}_{\{K_i=k\}}.
  \end{equation*}
  $Y_i^k$ has a Bernoulli distribution with parameter
  $\frac{1}{M_n}$. Furthermore, we can write
  \begin{align*}
    \hat{\mathcal{R}}_{T_n}^B(x_1,\dots,x_n)(t)
    &=\dfrac{1}{B}\sum_{i=1}^B\mathds{1}_{\left\{T_n(g_{n,K_i}(x_1,\dots,x_n))\leq
    t\right\}}\\
    &=\sum_{k=1}^{M_n}\dfrac{\sum_{i=1}^BY_i^k}{B}\mathds{1}_{\left\{T_n(g_{n,k}(x_1,\dots,x_n))\leq
    t\right\}}.
  \end{align*}
  By the strong law of large numbers this implies that $\P$-a.s. it
  holds that
  \begin{equation*}
    \lim_{B\rightarrow\infty}\hat{\mathcal{R}}_{T_n}^B(x_1,\dots,x_n)(t)
    =\dfrac{1}{M_n}\sum_{k=1}^{M_n}\mathds{1}_{\left\{T_n(g_{n,k}(x_1,\dots,x_n))\leq
    t\right\}}
    =\hat{R}_{T_n}(x_1,\dots,x_n)(t),
  \end{equation*}
  which completes the proof of Proposition \ref{thm:monte_carlo_resampling_dist}.
\end{proof}

We are now ready to define Monte-Carlo approximated resampling
test. Instead of using the $(1-\alpha)$-quantile of the Monte-Carlo
approximated resampling distribution we use a slightly larger critical
value. Surprisingly, for resampling groups satisfying the invariance
condition in Theorem~\ref{thm:level_resampling_test}, this allows us
to achieve level $\alpha$ for any value of $B$. The trick is that the
slightly larger critical value accounts for the uncertainty due to the
Monte-Carlo approximation.

We define the test using the $p$-value as this leads to easier
calculations. The corresponding critical value can then be calculated
via the standard correspondence between $p$-value and hypothesis test.

\begin{definition}[Monte-Carlo approximated resampling test]
  \label{def:mcresamplingtest}
  Let $\alpha\in(0,1)$, let $\kernelspace$ be a separable metric
  space, let $T=(T_n)_{n\in\N}$ be a test statistic on $\kernelspace$,
  let $g$ be a resampling method, let $B\in\N$, let $(K_i)_{i\in\N}$
  be a sequence of independent uniformly distributed random variables on
  $\{1,\dots,M_n\}$ and let $(k_1,\dots,k_B)$ be a realization of
  $(K_1,\dots,K_B)$. For all $n\in\N$ define the function
  $\hat{p}_n:\kernelspace^n\rightarrow[\tfrac{1}{B+1},1]$ satisfying
  \begin{equation*}
    \hat{p}_n(x_1,\dots,x_n)\coloneqq\frac{1+\abs[\big]{\{i\in\{1,\dots,B\}:\,
    T_n(g_{n,k_i}(x_1,\dots,x_n))\geq T_n(x_1,\dots,x_n)\}}}{1+B}.
  \end{equation*}
  Then the hypothesis test $\phi=(\phi_n)_{n\in\N}$ defined for all
  $n\in\N$ and for all $(x_1,\dots,x_n)\in\kernelspace^n$ by 
  \begin{equation*}
    \phi_n(x_1,\dots,x_n)\coloneqq\mathds{1}_{\{\hat{p}_n(x_1,\dots,x_n)\leq\alpha\}},
  \end{equation*}
  is called $\alpha$-Monte-Carlo approximated resampling test.
\end{definition}

The function $\hat{p}_n$ is called $p$-value of the test $\phi_n$. The
following proposition shows that the Monte-Carlo approximated
resampling test achieves level $\alpha$ given the appropriate invariance
assumptions on $g$.

\begin{proposition}[Monte-Carlo approximated resampling test has valid
  level]
  \label{thm:exactlevelmontecarlo}
  Let $\alpha\in(0,1)$, let $\kernelspace$ be a separable metric
  space, let $\HO,\HA\subseteq\pmspace{\kernelspace}$ be a null and
  alternative hypothesis respectively, let $T=(T_n)_{n\in\N}$ be a
  test statistic on $\kernelspace$, let $B\in\N$ and let $g$ be a resampling
  group satisfying under $\HO$ that for all $n\in\N$ and for all
  $k\in\{1,\dots,M_n\}$ it holds that
  \begin{equation}
    \label{eq:invariance_1}
    g_{n,k}(X_1,\dots,X_n)\text{ is equal in distribution to }(X_1,\dots,X_n),
  \end{equation}
  and for all $k\neq l\in\{1,\dots,M_n\}$ it holds that
  \begin{equation}
    \label{eq:uniqueness_2}
    \P\left(T_n(g_{n,k}(X_1,\dots,X_n))=T_n(g_{n,l}(X_1,\dots,X_n))\right)=0.
  \end{equation}
  Then, the corresponding $\alpha$-Monte-Carlo approximated resampling
  test $\phi=(\phi_n)_{n\in\N}$ has valid level $\alpha$.
\end{proposition}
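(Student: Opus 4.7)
The plan is to reduce the claim to a standard exchangeability argument, taking care to exploit the full group structure of $g$ rather than just pairwise invariance.

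First, I would fix $n\in\N$, write $X=(X_1,\dots,X_n)$, and introduce the vector of $B+1$ test statistics
\[
T_0 \coloneqq T_n(X), \qquad T_i \coloneqq T_n(g_{n,K_i}(X)) \text{ for } i\in\{1,\dots,B\},
\]
where $K_1,\dots,K_B$ are the iid uniform indices on $\{1,\dots,M_n\}$ used to construct $\hat p_n$, independent of $X$. By inspecting the definition, the p-value rewrites as $\hat p_n = R_0/(B+1)$, where
\[
R_0 \coloneqq \abs{\{i\in\{0,1,\dots,B\}:\, T_i \geq T_0\}},
\]
and the event $\{\hat p_n\le\alpha\}$ equals $\{R_0 \le \lfloor\alpha(B+1)\rfloor\}$. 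So the claim reduces to showing $P(R_0\le m)\le m/(B+1)$ for $m=\lfloor\alpha(B+1)\rfloor$.

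The main step is establishing that $(T_0,T_1,\dots,T_B)$ is exchangeable under $\HO$. The plan is to introduce an auxiliary random index $K_0$, independent of everything, uniform on the group $G=\{g_{n,1},\dots,g_{n,M_n}\}$. By \eqref{eq:invariance_1} and independence, $X \stackrel{d}{=} g_{n,K_0}(X)$, so
\[
(T_n(X), T_n(g_{n,K_1}(X)), \dots, T_n(g_{n,K_B}(X))) \stackrel{d}{=} (T_n(g_{n,K_0}(X)), T_n(g_{n,K_1}g_{n,K_0}(X)), \dots, T_n(g_{n,K_B}g_{n,K_0}(X))).
\]
Since $G$ is a group, $g_{n,K_i}\circ g_{n,K_0} = g_{n,L_i}$ for $L_i\coloneqq K_i\cdot K_0$ (setting $L_0\coloneqq K_0$). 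A direct computation, using that $k\mapsto k\cdot k_0$ is a bijection on $G$ for each fixed $k_0$, shows $(L_0,L_1,\dots,L_B)$ is itself iid uniform on $G$, hence exchangeable. Combining these gives exchangeability of $(T_0,T_1,\dots,T_B)$.

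Next I would establish the deterministic inequality $\abs{\{j:R_j\le m\}}\le m$, where $R_j = \abs{\{i:T_i\ge T_j\}}$. The key observation is that $R_j\le m$ means all occurrences of the value $T_j$ lie in the top-$m$ positions of the sorted list; different values contribute disjoint sets of positions, so at most $m$ positions in $\{1,\dots,m\}$ are covered. Taking expectations and using exchangeability yields
\[
(B+1)\,P(R_0\le m) \;=\; \sum_{j=0}^{B} P(R_j\le m) \;=\; E\!\left[\abs{\{j:R_j\le m\}}\right] \;\le\; m,
\]
which combined with $m\le\alpha(B+1)$ gives $P(\hat p_n\le\alpha)\le\alpha$ as desired.

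The main obstacle I anticipate is the exchangeability step: the vector whose exchangeability we want is asymmetric in that its first coordinate does \emph{not} involve one of the $K_i$'s. The group-theoretic trick of introducing $K_0$ and using the group law to convert compositions $g_{n,K_i}\circ g_{n,K_0}$ back into single group elements $g_{n,L_i}$ is what symmetrizes the vector. Condition \eqref{eq:uniqueness_2} is not strictly needed for the inequality $P(\hat p_n\le\alpha)\le\alpha$ because the $\ge$ in the definition of $\hat p_n$ already absorbs any ties; its role is to rule out non-trivial ties among $T_1,\dots,T_B$ coming from collisions $K_i=K_j$ of distinct indices or of a $K_i$ with the identity, thereby guaranteeing that the test is essentially exact rather than just conservative.
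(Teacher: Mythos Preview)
Your proof is correct and takes a genuinely different route from the paper. The paper argues by first showing that the \emph{full} rank $f_{tot}(X)=\lvert\{i\in\{1,\dots,M_n\}:T_n(g_{n,i}(X))\ge T_n(X)\}\rvert$ is uniform on $\{1,\dots,M_n\}$ under $\HO$ (this is where the paper uses both the group structure and the no-ties condition \eqref{eq:uniqueness_2}); conditional on $f_{tot}=l$ the Monte-Carlo count is $\mathrm{Bin}(B,l/M_n)$, and the rejection probability is then bounded by a Riemann-sum/integral argument yielding $(\lfloor(B+1)\alpha\rfloor)/(B+1)\le\alpha$. Your argument instead establishes exchangeability of $(T_0,T_1,\dots,T_B)$ directly via the auxiliary uniform $K_0$ and the group law, then applies the standard rank inequality $\lvert\{j:R_j\le m\}\rvert\le m$.

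Two points of comparison are worth noting. First, your proof is strictly more general: you correctly observe that \eqref{eq:uniqueness_2} is never invoked, whereas the paper's route genuinely needs it to get uniformity of $f_{tot}$. Second, the paper's approach retains more distributional information about $\hat p_n$ (an explicit binomial mixture), which is what underlies the remark following the proof about sharpening the $p$-value to achieve exactness \`a la Phipson--Smyth; your exchangeability argument gives the level bound more cleanly but does not by itself identify the distribution of $\hat p_n$. Your closing comment about the role of \eqref{eq:uniqueness_2} is therefore slightly off: even with \eqref{eq:uniqueness_2} the test as defined is conservative, not exact; the condition is used in the paper to make their particular proof strategy go through, and more broadly it is what allows the sharper $p$-value construction alluded to afterwards.
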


\begin{proof}
  Begin by defining the function
  $f:\{1,\dots,M_n\}^B\times\kernelspace^n\rightarrow\{0,\dots,B\}$
  satisfying for all $(k_1,\dots,k_B)\in\{1,\dots,M_n\}^B$ and for all
  $(x_1,\dots,x_n)\in\kernelspace^n$ that
  \begin{equation*}
    f(k_1,\dots,k_B)(x_1,\dots,x_n)\coloneqq\abs[\big]{\{i\in\{1,\dots,B\}:\,
    T_n(g_{n,k_i}(x_1,\dots,x_n))\geq T_n(x_1,\dots,x_n)\}},
  \end{equation*}
  and the function $f_{tot}:\kernelspace^n\rightarrow\{1,\dots,M_n\}$
  satisfying for all $(x_1,\dots,x_n)\in\kernelspace^n$ that
  \begin{equation*}
    f_{tot}(x_1,\dots,x_n)\coloneqq\abs[\big]{\{i\in\{1,\dots,M_n\}:\,
    T_n(g_{n,i}(x_1,\dots,x_n))\geq T_n(x_1,\dots,x_n)\}}.
  \end{equation*}
  Then, by the invariance assumption \eqref{eq:invariance_1} it holds
  under $\HO$ for all $k,l\in\{1,\dots,M_n\}$ that
  \begin{equation}
    \label{eq:invariance_3}
    \P\left(f_{tot}(X_1,\dots,X_n)=l\right)=\P\left(f_{tot}(g_{n,k}(X_1,\dots,X_n))=l\right).
  \end{equation}
  Moreover, since $g$ is a group it holds $\P$-a.s. that
  \begin{align*}
    f_{tot}(g_{n,k}(X_1,\dots,X_n))
    &=\sum_{i=1}^{M_n}\mathds{1}_{\{T_n(g_{n,i}(g_{n,k}(X_1,\dots,X_n)))\geq
      T_n(g_{n,k}(X_1,\dots,X_n))\}}\\
    &=\sum_{i=1}^{M_n}\mathds{1}_{\{T_n(g_{n,i}(X_1,\dots,X_n))\geq
      T_n(g_{n,k}(X_1,\dots,X_n))\}},
  \end{align*}
  which implies together with \eqref{eq:uniqueness_2} it holds $\P$-a.s. that
  \begin{equation}
    \label{eq:invariance_4}
    \sum_{k=1}^{M_n}\mathds{1}_{\{f_{tot}(g_{n,k}(X_1,\dots,X_n))=l\}}=1.
  \end{equation}
  Combining \eqref{eq:invariance_3} and \eqref{eq:invariance_4} it
  holds under $\HO$ that
  \begin{align*}
    \P\left(f_{tot}(X_1,\dots,X_n)=l\right)
    &=\frac{1}{M_n}\sum_{k=1}^{M_n}\P\left(f_{tot}(g_{n,k}(X_1,\dots,X_n))=l\right)\\
    &=\frac{1}{M_n}\sum_{k=1}^{M_n}\E\left(\mathds{1}_{\{f_{tot}(g_{n,k}(X_1,\dots,X_n))=l\}}\right)\\
    &=\frac{1}{M_n}\E\left(\sum_{k=1}^{M_n}\mathds{1}_{\{f_{tot}(g_{n,k}(X_1,\dots,X_n))=l\}}\right)\\
    &=\frac{1}{M_n},
  \end{align*}
  which proves that under $\HO$ it holds that $f_{tot}(X_1,\dots,X_n)$
  is uniformly distributed on $\{1,\dots,M_n\}$. Furthermore, conditioned
  on $f_{tot}(X_1,\dots,X_n)=l$ it holds for all $i\in\{1,\dots,B\}$
  that
  \begin{equation*}
    \mathds{1}_{\left\{T_n(g_{n,K_i}(X_1,\dots,X_n))\geq T_n(X_1,\dots,X_n)\right\}}
  \end{equation*}
  is Bernoulli $\frac{l}{M_n}$ distributed which again conditioned on
  $f_{tot}(X_1,\dots,X_n)=l$ implies that
  \begin{equation*}
    f(K_1,\dots,K_B)(X_1,\dots,X_n)=\sum_{i=1}^B\mathds{1}_{\left\{T_n(g_{n,K_i}(X_1,\dots,X_n))\geq T_n(X_1,\dots,X_n)\right\}}
  \end{equation*}
  has binomial distribution with parameters $B$ and $\frac{l}{M_n}$.
  It therefore holds under $\HO$ that
  \begin{align*}
    &\P\left(\hat{p}_n(X_1,\dots,X_n)\leq\alpha\right)\\
    &\quad=\P\left(f(K_1,\dots,K_B)(X_1,\dots,X_n)\leq(B+1)\alpha-1\right)\\
    &\quad=\sum_{l=1}^{M_n}\P\left(f(K_1,\dots,K_B)(X_1,\dots,X_n)\leq(B+1)\alpha-1\,\rvert\,
      f_{tot}(X_1,\dots,X_n)=l\right)\\
    &\qquad\qquad\qquad\qquad\cdot\P\left(f_{tot}(X_1,\dots,X_n)=l\right)\\
    &\quad=\frac{1}{M_n}\sum_{l=1}^{M_n}\sum_{i=0}^{\lfloor(B+1)\alpha-1\rfloor}
      \binom{B}{i}\left(\frac{l}{M_n}\right)^i\left(1-\frac{l}{M_n}\right)^{B-i}\\
    &\quad\leq\int_0^1\sum_{i=0}^{\lfloor(B+1)\alpha-1\rfloor}
      \binom{B}{i}\left(x\right)^i\left(1-x\right)^{B-i}\measure{\lambda}{x}\\
    &\quad=\frac{\lfloor(B+1)\alpha-1\rfloor+1}{B+1}\\
    &\quad\leq\alpha,
  \end{align*}
  where we approximated the sum by an integral and solved the integral
  using integration by parts. This completes the proof of
  Proposition~\ref{thm:exactlevelmontecarlo}.
\end{proof}

The $p$-value is overestimated by the choice we made. In fact, as
described in \citet{phipson2010}, the level of the test would be
preserved even if we chose the $p$-value slightly smaller. This allows
to construct a permutation test which is not only valid in level but
actually achieves exact level.  The next proposition specifies the
critical value that leads to the Monte-Carlo approximated resampling
test.

\begin{proposition}[critical value of Monte-Carlo approximated
  resampling test]
  \label{thm:critvalmontecarlo}
  Let $\alpha\in(0,1)$, let $\kernelspace$ be a measurable space, let
  $T=(T_n)_{n\in\N}$ be a test statistic on $\kernelspace$, let $g$ be
  a resampling method, let $B\in\N$, let $(K_i)_{i\in\N}$
  be a sequence of uniformly distributed random variables on
  $\{1,\dots,M_n\}$ and let $(k_1,\dots,k_B)$ be a realization of
  $(K_1,\dots,K_B)$. For all $n\in\N$ define
  the function $c_n:\kernelspace^n\rightarrow\R$
  satisfying that $c_n(x_1,\dots,x_n)$ is the
  \begin{equation*}
    \lceil(B+1)(1-\alpha)\rceil+\sum_{i=1}^B\mathds{1}_{\{T_n(g_{n,k_i}(x_1,\dots,x_n))
        =T_n(x_1,\dots,x_n)\}}\text{-th largest value}
  \end{equation*}
  in the vector
  $(T_n(g_{n,k_1}(x_1,\dots,x_n)),\dots,T_n(g_{n,k_B}(x_1,\dots,x_n)))$
  if
  \begin{equation*}
    \lceil(B+1)(1-\alpha)\rceil+\sum_{i=1}^B\mathds{1}_{\{T_n(g_{n,k_i}(x_1,\dots,x_n))
      =T_n(x_1,\dots,x_n)\}}\leq B 
  \end{equation*}
  and $\infty$ otherwise. Then the hypothesis test
  $\phi=(\phi_n)_{n\in\N}$ defined for all $n\in\N$ and for all
  $(x_1,\dots,x_n)\in\kernelspace^n$ by
  \begin{equation*}
    \phi(x_1,\dots,x_n)\coloneqq\mathds{1}_{\{T_n(x_1,\dots,x_n)\geq c_n(x_1,\dots,x_n)\}},
  \end{equation*}
  is equal to the $\alpha$-Monte-Carlo approximated resampling test.
\end{proposition}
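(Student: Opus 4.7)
The plan is to prove the equivalence pointwise in $(x_1,\dots,x_n) \in \kernelspace^n$ by unwinding both rejection events and showing they coincide. The heart of the argument is a counting identity involving the rank of $T_n(\vec x)$ among the resampled statistics $T_n(g_{n,k_i}(\vec x))$, $i = 1, \dots, B$.

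First I would fix $\vec x$, abbreviate $t = T_n(\vec x)$ and $t_i = T_n(g_{n,k_i}(\vec x))$, and introduce the three tie-adjusted counts
$$S = |\{i \in \{1,\dots,B\} : t_i > t\}|, \qquad E = \sum_{i=1}^B \mathds{1}_{\{t_i = t\}}, \qquad R = S + E.$$
By Definition~\ref{def:mcresamplingtest}, the $p$-value test rejects iff $1 + R \le (B+1)\alpha$. Since $B - R$ is a nonnegative integer, this inequality rearranges to
$$B - R \;\ge\; \lceil (B+1)(1-\alpha) \rceil \,=:\, M,$$
i.e.\ iff at least $M$ of the resampled statistics lie strictly below $t$, equivalently $S + E \le B - M$.

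Second, I would analyse the event $\{T_n(\vec x) \ge c_n(\vec x)\}$ using the definition of $c_n$ as the $(M + E)$-th entry in the appropriate ordering of $(t_1,\dots,t_B)$. Splitting the vector into the $S$ values strictly above $t$, the $E$ ties at $t$, and the $B - S - E$ values strictly below $t$, a direct counting argument shows that $t \ge c_n$ is equivalent to $B - S - E \ge M$, i.e.\ to precisely the inequality obtained in the first step. The tie-correction $+E$ in the index $N = M + E$ is exactly the adjustment needed to make the strict inequality on $B - R$ match the weak inequality $t \ge c_n$ at the boundary of ties. The edge case $M + E > B$ is handled separately: the proposition sets $c_n = \infty$ so the critical-value test never rejects; on the other hand the inequality $B - R \ge M$ then forces $S \le B - M - E < 0$, incompatible with $S \ge 0$, so the $p$-value test also never rejects, and the two agree.

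The only nontrivial point is the bookkeeping around ties. One has to fix a precise convention for ``the $N$-th largest value'' when several $t_i$ coincide with $t$, and then verify that the inserted tie count $E$ in the index $N$ produces exactly the $p$-value cutoff---the rest of the proof is simple ceiling/floor arithmetic. Since the resampling indices $(K_i)_{i \in \N}$ enter both $\hat p_n$ and $c_n$ through the same realisation $(k_1,\dots,k_B)$, no further randomisation argument is needed: the equivalence is a pointwise, deterministic identity between two ways of comparing $t$ to the same multiset $\{t_1,\dots,t_B\}$.
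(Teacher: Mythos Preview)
Your proposal is correct and follows essentially the same route as the paper's proof: both reduce $\hat p_n\le\alpha$ to the integer inequality $\lceil(B+1)(1-\alpha)\rceil\le B-S-E$ and then identify this with $t\ge c_n$. The paper compresses the whole argument into a four-line chain of indicator equalities, whereas you spell out the tie bookkeeping and the edge case $M+E>B$ explicitly; your more careful treatment is arguably clearer, and your hedging on ``the appropriate ordering'' is prudent given that the statement says ``$N$-th largest'' while the accompanying algorithm (and the equivalence you derive) require the $N$-th value in \emph{ascending} order.
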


\begin{proof}
  The following calculation is straight forward:
  \begin{align*}
    &\mathds{1}_{\left\{\hat{p}_n(x_1,\dots,x_n)\leq\alpha\right\}}\\
    &\quad=\mathds{1}_{\left\{\frac{1}{B}\sum_{i=1}^B\mathds{1}_{\{T_n(g_{n,k_i}(x_1,\dots,x_n))\geq
        T_n(x_1,\dots,x_n)\}}\leq\frac{B+1}{B}\alpha-\frac{1}{B}\right\}}\\
    &\quad=\mathds{1}_{\left\{\frac{B+1}{B}(1-\alpha)\leq\frac{1}{B}\sum_{i=1}^B\mathds{1}_{\{T_n(g_{n,k_i}(x_1,\dots,x_n))<T_n(x_1,\dots,x_n)\}}\right\}}\\
    &\quad=\mathds{1}_{\{T_n(x_1,\dots,x_n)\geq c_n(x_1,\dots,x_n)\}}\\
    &\quad=\phi_n(x_1,\dots,x_n).
  \end{align*}
  This completes the prove of Proposition~\ref{thm:critvalmontecarlo}.
\end{proof}

The Monte-Carlo approximated resampling test is closely related to the
Monte-Carlo resampling distribution function. To see this observe that
for large $B$ it holds for all $(x_1,\dots,x_n)\in\kernelspace^n$ that
\begin{equation*}
  c_n(x_1,\dots,x_n)\approx (\hat{R}^B_{T_n}(x_1,\dots,x_n))^{-1}(1-\alpha).
\end{equation*}
As mentioned above $c_n$ approximates the $(1-\alpha)$-quantile of the
Monte-Carlo resampling distribution from above and gets closer as $B$ increases.

An example of how to implement this (conservative) Monte-Carlo approximation
procedure for the dHSIC permutation and bootstrap test is given in
the following algorithm.

\begin{algorithm}
  \caption{computing $p$-value and critical value for the permutation/bootstrap test}\label{alg:pval_critval}
  \begin{algorithmic}[1]
    \Procedure{$\texttt{MonteCarlo-pvalue}$}{$\vx_1,\dots\vx_n,B$}
    \State{initialize empty $B$-dimensional vector $\mathbf{T}$}
    \For{$k=1:B$}
      \State{initialize $d$-dimensional vectors $\tilde{\vx}_1,\dots,\tilde{\vx}_n$}
      \For{$j=1:d$}
        \State $\psi\gets\text{random element from }S_n\text{
          (permutation) or }\{1,\dots,n\}^n\text{ (bootstrap)}$
        \For{$i=1:n$}
          \State $\tilde{\vx}_i[j]\gets \vx_{\psi(i)}[j]$
        \EndFor
      \EndFor
      \State $\mathbf{T}[k]\gets
      \texttt{dHSIC}(\tilde{\vx}_1,\dots,\tilde{\vx}_n)$
    \EndFor
    \State $\operatorname{tmp}\gets
    \#\left\{k\in\{1,\dots,B\}\,\rvert\,\mathbf{T}[k]\geq\texttt{dHSIC}(\vx_1,\dots\vx_n)\right\}$
    \State $\operatorname{pval}\gets(\operatorname{tmp}+1)/(B+1)$
    \State \Return $\operatorname{pval}$
    \EndProcedure \vspace{0.1cm}
    \Procedure{$\texttt{MonteCarlo-critval}$}{$\vx_1,\dots\vx_n,B,\alpha$}
    \State{Perform lines 2. -- 9. from function \texttt{MonteCarlo-pvalue}}
    \State $\operatorname{tmp}\gets\#\left\{k\in\{1,\dots,B\}\,\rvert\,\mathbf{T}[k]=\texttt{dHSIC}(\vx_1,\dots\vx_n)\right\}$
    \State
    $\operatorname{ind}\gets\lceil(B+1)\cdot(1-\alpha)\rceil+\operatorname{tmp}$
    \If{$\operatorname{ind}\leq B$}
      \State $\mathbf{S}\gets\texttt{sort}(\mathbf{T})\text{ (in ascending
      order)}$
      \State $\operatorname{critval}\gets\mathbf{S}[\operatorname{ind}]$
    \Else
      \State $\operatorname{critval}\gets\infty$
    \EndIf
    \State \Return $\operatorname{critval}$
    \EndProcedure
  \end{algorithmic}
\end{algorithm}

\section{Further results on V-statistics}\label{appendix:vstat}

In this section we extend some of the commonly known results about
U-statistics to V-statistics. An overview of the theory of
U-statistics can be found in \citet{serfling}. Most similar
generalizations in literature only apply to V-statistics of degree
$2$; we now extend the results to V-statistics of arbitrary order.
For our convenience and in order to not be repetitive we introduce the
following setting, which we will only use within
Appendix~\ref{appendix:vstat}.
\begin{setting}[U- and V-statistics]
  \label{setting:uvstat}
  Let $n\in\N$, $q\in\{1,\dots,n\}$, $\uspace$ a metric space,
  $(\Omega,\mathcal{F},\P)$ a probability space, $X:\Omega \rightarrow
  \uspace$ a random variable with law $\Plaw{}$ and $(X_i)_{i\in\N}$ a
  sequence of iid copies of $X$, i.e., $(X_i)_{i\in\N}\iid\P^{X}$.
\end{setting}
The sequence $(X_i)_{i\in\N}$ should be seen as the generating process
of observations.

For completeness, we now introduce the U-statistic. Define the set of all 
combinations of $q$ elements on $\{1,\dots,n\}$ by
\begin{equation*}
  \combset{q}{n}\coloneqq\left\{(i_1,\dots,i_q)\in\{1,\dots,n\}:i_1<\cdots<i_q\right\}.
\end{equation*}
The U-statistic
\begin{equation}
  \label{eq:ustat}
  \Ustat{n}{g}\coloneqq\dbinom{n}{q}^{-1}\sum_{\combset{q}{n}}g(X_{i_1},\dots,X_{i_q}),
\end{equation}
estimates the statistical functional
\begin{equation*}
  \theta_g\coloneqq\theta_g\left(\Plaw{}\right)\coloneqq\E\left(g(X_1,\dots,X_q)\right),
\end{equation*}
see~\eqref{eq:vstat} for the corresponding V-statistic. An alternative
representation which also appears in literature is given by setting
\begin{equation*}
  \Ustatb{n}{g}\coloneqq\frac{1}{\permsize{n}{q}}\sum_{\permset{q}{n}}g(X_{i_1},\dots,X_{i_q}),
\end{equation*}
with $\permset{q}{n}\coloneqq\{(i_1,\dots,i_q)\in\{1,\dots,n\}:i_1,\dots,i_q\text{ distinct}\}$
and $\permsize{n}{q}\coloneqq\frac{n!}{(n-q)!}=\abs{\permset{q}{n}}$. It
is straightforward to see that $\Ustatb{n}{g}=\Ustat{n}{g}$.

\subsection{Connection between U-statistics and V-statistics}\label{subsec:comparison_uvstat}

To derive the asymptotic distribution of V-statistics we show that
V-statistics are in an appropriate sense good approximations of
U-statistics. In order to show results of this type we require some
kind of boundedness condition on the core function. The next definition
introduces such a condition.
\begin{definition}[total boundedness condition]
  \label{def:boundednesscond}
  Let $r\in\N$, assume Setting \ref{setting:uvstat} and let
  $g\in\lpspace{r}{\Plawpower{q}}{\abs{\cdot}_{\R}}$ be a core
  function. Then we say that $g$ satisfies the total boundedness
  condition of order $r$ if for all $(i_1,\dots,i_q)\in\mapset{q}{q}$
  it holds that
  \begin{equation*}
    \E\left[\abs{g(X_{i_1},\dots,X_{i_q})}^r\right]<\infty.
  \end{equation*}
\end{definition}
In particular, this condition is fulfilled if the core function $g$ is
a bounded function.

The following result is due to \citet[Lemma, Section 5.7.3]{serfling}.
\begin{lemma}[connection between U- and V-statistics]
  \label{thm:comparison_uvstat}
  Let $r\in\N$, assume Setting \ref{setting:uvstat} and let
  $g\in\lpspace{r}{\Plawpower{q}}{\abs{\cdot}_{\R}}$ be a core
  function satisfying the total boundedness condition of order $r$.
  Then it holds that
  \begin{equation*}
    \E\left[\abs{\Ustat{n}{g}-\Vstat{n}{g}}^r\right]=\landauO{n^{\sm r}}
  \end{equation*}
  as $n\rightarrow\infty$.
\end{lemma}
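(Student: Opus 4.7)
The approach rests on the simple combinatorial observation that $V_n(g)$ sums over all $n^q$ tuples in $\mapset{q}{n}=\{1,\dots,n\}^q$ while $U_n(g)$ (in its $\Ustatb{n}{g}$ form) sums over only the $(n)_q = n!/(n-q)!$ tuples with distinct entries, and the ``diagonal'' remainder $\mapset{q}{n}\setminus\permset{q}{n}$ has cardinality $n^q - (n)_q = \landauO{n^{q-1}}$. Combined with the prefactor $1/n^q$, this already produces the order $n^{-1}$ per tuple; raising to the $r$th power then gives $n^{-r}$.

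Concretely, the first step is to split the V-statistic and rewrite
\begin{equation*}
  V_n(g) - U_n(g) = \Big(\tfrac{(n)_q}{n^q} - 1\Big) U_n(g) \; + \; \tfrac{1}{n^q}\!\!\sum_{(i_1,\dots,i_q)\in \mapset{q}{n}\setminus\permset{q}{n}}\!\!g(X_{i_1},\dots,X_{i_q}),
\end{equation*}
using $U_n(g) = \tfrac{1}{(n)_q}\sum_{\permset{q}{n}} g$. I would then bound each of the two summands separately in $L^r$-norm, and combine them at the end via $|a+b|^r \le 2^{r-1}(|a|^r+|b|^r)$.

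For the first summand I would note that $1-(n)_q/n^q = \landauO{n^{-1}}$, and that convexity of $|\cdot|^r$ applied to the average $U_n(g)$, together with the fact that for any $(i_1,\dots,i_q)\in\permset{q}{n}$ the law of $g(X_{i_1},\dots,X_{i_q})$ coincides with that of $g(X_1,\dots,X_q)$ by iid, yields $\E[|U_n(g)|^r] \le \E[|g(X_1,\dots,X_q)|^r] < \infty$. Hence this term has $L^r$-norm of order $n^{-1}$. For the second summand I would apply Minkowski's inequality,
\begin{equation*}
\Big(\E\Big[\big|\tfrac{1}{n^q}\!\!\sum_{\mathrm{rep}}\!g(X_{i_1},\dots,X_{i_q})\big|^r\Big]\Big)^{1/r}
\le \tfrac{1}{n^q}\!\!\sum_{\mathrm{rep}}\!\big(\E[|g(X_{i_1},\dots,X_{i_q})|^r]\big)^{1/r},
\end{equation*}
and use the iid hypothesis to reduce the distribution of $g(X_{i_1},\dots,X_{i_q})$ to depend only on the equivalence pattern of $(i_1,\dots,i_q)$; there are only finitely many such patterns on $q$ indices, and the total boundedness condition is exactly the statement that each resulting $r$th moment is finite, so a single constant $C$ dominates every term. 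Multiplying by $|\mathrm{rep}| = \landauO{n^{q-1}}$ and dividing by $n^q$ yields an $L^r$-bound of order $n^{-1}$ for this piece as well.

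The only mildly subtle step is the passage from ``any tuple'' to the finite set $\mapset{q}{q}$ on which the boundedness condition is stated; this is routine once one observes that by iid the joint distribution of $(X_{i_1},\dots,X_{i_q})$ depends on $(i_1,\dots,i_q)$ only through its partition of $\{1,\dots,q\}$ into equal-index classes. Everything else is straightforward counting, and taking $r$th powers of the two $\landauO{n^{-1}}$ bounds produces $\E[|V_n(g)-U_n(g)|^r] = \landauO{n^{-r}}$ as claimed.
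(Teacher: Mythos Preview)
Your proof is correct. The paper does not actually provide its own proof of this lemma---it simply attributes the result to \citet[Lemma, Section~5.7.3]{serfling} and moves on. Your argument (split off the diagonal tuples, use $|\mapset{q}{n}\setminus\permset{q}{n}|=\landauO{n^{q-1}}$, bound each piece in $L^r$ via Minkowski/convexity and the total boundedness condition) is precisely the standard proof one finds in Serfling, so there is nothing to compare.
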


In order to prove some of the asymptotic statements of V-statistics we
require a stronger way of comparing V-statistics with U-statistics
than that given in Lemma~\ref{thm:comparison_uvstat}. For example,
when computing the asymptotic variance of a V-statistic up to an order
of $n^{-2}$ by comparison with the variance of a U-statistic, we need
to estimate the second moment of the difference to an order of
$n^{-(2+\epsilon)}$. Hence, the result in
Lemma~\ref{thm:comparison_uvstat} is not sufficient. The following
technical lemma gives a decomposition of a V-statistic into the
corresponding U-statistic and some remainder terms. We are not aware
of a similar result in literature.

\begin{lemma}[decomposition of a V-statistic]
  \label{thm:decompositionofmVstat}
  Assume Setting \ref{setting:uvstat} and let
  $g\in\lpspace{1}{\Plawpower{q}}{\abs{\cdot}_{\R}}$ be a core
  function. For all $k\in\{1,\dots,q-1\}$, $l\in\{k+1,\dots,q\}$ let
  $\pi^{kl}:\{1,\dots,q\}\rightarrow\{1,\dots,q-1\}$ be the unique
  surjective functions with the property that
  $\pi^{kl}(k)=\pi^{kl}(l)=1$ and for all
  $i,j\in\{1,\dots,q\}\setminus\{k,l\}$ with $i<j$ it holds that
  $\pi^{kl}(i)<\pi^{kl}(j)$. Define for all $x_1,\dots,x_{q-1}\in\kernelspace$ the function
  \begin{equation*}
    w(x_1,\dots,x_{q-1})\coloneqq\sum_{k=1}^{q\sm 1}\sum_{l=k+1}^{q}g(x_{{\pi^{kl}(1)}},\dots,x_{{\pi^{kl}(q)}}).
  \end{equation*}
  and set
  $B\coloneqq\{(i_1,\dots,i_q)\in\mapset{q}{n}\mid\text{ at most $q-2$ distinct values}\}$.
  Then it holds that
  \begin{equation*}
    \begin{split}
      n\Vstat{n}{g}&=\left(1+\landauO{n^{-1}}\right)\Ustat{n}{w}\\
      &\quad
      +\left(1+\landauO{n^{-1}}\right)\frac{(n-q+1)!}{n!}\sum_{(i_1,\ldots,i_q)\in B}g(X_{i_1},\dots,X_{i_q})\\
      &\quad -\left(\binom{q}{2}+\landauO{n^{-1}}\right)\Ustat{n}{g}\\
      &\quad +n\Ustat{n}{g}
    \end{split}
  \end{equation*}
  and $\abs{B}=\landauO{n^{q-2}}$ as $n\rightarrow\infty$.
\end{lemma}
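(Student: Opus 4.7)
The plan is to partition the defining sum of $n^q \Vstat{n}{g}$ according to how many distinct values appear among the indices $(i_1,\dots,i_q) \in \mapset{q}{n}$. Concretely, split $\mapset{q}{n}$ into three disjoint sets: $A_1 = \permset{q}{n}$ (all $q$ indices distinct), $A_2$ (exactly $q-1$ distinct values, i.e., a single collision), and $A_3 = B$ (at most $q-2$ distinct values). Then $n^q \Vstat{n}{g}$ equals the sum of three sub-sums, which I will handle in turn.

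For $A_1$, symmetry of $g$ together with $\permsize{n}{q} = q!\,\binom{n}{q}$ gives $\sum_{A_1} g(X_{i_1},\dots,X_{i_q}) = \permsize{n}{q}\,\Ustat{n}{g}$. For $A_2$, I parametrize each tuple uniquely by the colliding pair $(k,l)$ with $k<l$ and the ordered $(q-1)$-tuple of underlying distinct indices $(j_1,\dots,j_{q\sm 1}) \in \permset{q\sm 1}{n}$ with the convention that $j_1$ is the doubled value. The map $\pi^{kl}$ is precisely designed so that $(i_m)_{m=1}^q = (j_{\pi^{kl}(m)})_{m=1}^q$, which yields directly $\sum_{A_2} g(X_{i_1},\dots,X_{i_q}) = \sum_{(j_1,\dots,j_{q\sm 1}) \in \permset{q\sm 1}{n}} w(X_{j_1},\dots,X_{j_{q\sm 1}})$ from the definition of $w$. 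Finally, for $B$ a crude count bounds $|B| \le \sum_{k=0}^{q-2} \binom{n}{k} k^q = \landauO{n^{q-2}}$, because each tuple with at most $q-2$ distinct values is determined by choosing $k \le q-2$ values out of $n$ and a surjection from $q$ positions to them.

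Assembling the pieces, $n^q \Vstat{n}{g} = \permsize{n}{q}\,\Ustat{n}{g} + \sum_{\permset{q\sm 1}{n}} w(X_{j_1},\dots,X_{j_{q\sm 1}}) + \sum_{B} g(X_{i_1},\dots,X_{i_q})$. Dividing by $n^{q-1}$ and applying the elementary expansions $\frac{\permsize{n}{q}}{n^{q-1}} = n\prod_{i=1}^{q-1}(1 - i/n) = n - \binom{q}{2} + \landauO{n^{-1}}$, $\frac{\permsize{n}{q-1}}{n^{q-1}} = 1 + \landauO{n^{-1}}$, and $\frac{1}{n^{q-1}} = (1+\landauO{n^{-1}})\,\frac{(n\sm q+1)!}{n!}$, then rewriting the middle term via $\sum_{\permset{q\sm 1}{n}} w = \permsize{n}{q-1}\,\Ustat{n}{w}$ yields the stated identity.

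The only delicate point is interpreting $\Ustat{n}{w}$: by symmetry of $g$ one has $w(x_1,\dots,x_{q-1}) = \binom{q}{2}\,g(x_1,x_1,x_2,\dots,x_{q-1})$, so $w$ singles out its first argument (the doubled value) and is not symmetric overall. Thus the factor $\Ustat{n}{w}$ that appears in the decomposition must be read as the U-statistic defined by averaging over ordered tuples $\permset{q\sm 1}{n}$ (equivalent to $\Ustat{n}{g}$ when the core is symmetric, but defined via $\permset{}{}{}$ in general). Once this convention is fixed, there is no real obstacle: steps (i) and (iii) are combinatorial bookkeeping, the bound $|B| = \landauO{n^{q-2}}$ is routine, and the asymptotic factors are algebraic. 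The main care required is the clean parametrization of $A_2$ via $(k,l)$ and $\pi^{kl}$, together with the observation that, under symmetry of $g$, every choice of $(k,l)$ contributes the same summand, which is precisely why $w$ takes the form above.
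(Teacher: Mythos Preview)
Your proof is correct and follows essentially the same route as the paper: partition $\mapset{q}{n}$ by the number of distinct indices, parametrize the single-collision stratum via the maps $\pi^{kl}$ to obtain the $w$-sum over $\permset{q\sm 1}{n}$, bound $|B|$, and read off the stated decomposition from the elementary expansions of $\permsize{n}{q}/n^{q-1}$ and $\permsize{n}{q-1}/n^{q-1}$. Your remark that $w$ is not symmetric (so that $\Ustat{n}{w}$ must be read as the permutation average $\Ustatb{n}{w}$) is spot on and is exactly the convention the paper uses in its proof.
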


\begin{proof}
  We begin by introducing
  \begin{equation*}
    S_n=\dfrac{1}{\permsize{n}{q\sm
        1}}\left(\sum_{\mapset{q}{n}}g(X_{i_1},\dots,X_{i_q})-\sum_{\permset{q}{n}}g(X_{i_1},\dots,X_{i_q})\right)
  \end{equation*}
  and
  \begin{equation*}
    A=\{(i_1,\dots,i_q)\in\mapset{q}{n}\mid\text{ at most $q-1$ distinct values}\}.
  \end{equation*}
  Then, observe that $A=\mapset{q}{n}\setminus\permset{q}{n}$ and
  \begin{align*}
    A\setminus
    B&=\{(i_1,\dots,i_q)\in\mapset{q}{n}\mid\text{ exactly $q-1$ distinct values}\}\\
    &=\bigg\{\left(i_{\pi^{kl}(1)},\dots,i_{\pi^{kl}(q)}\right)\Big\rvert\, (i_1,\dots,i_{q\sm 1})\in\permset{q\sm 1}{n},\\
    &\qquad\qquad\qquad\qquad\qquad\qquad k\in\{1,\dots,q-1\},\text{ } l\in\{k+1,\dots,q\} \bigg\}.
  \end{align*}
  Therefore, it holds that $\abs{A}=n^q-\permsize{n}{q}$ and
  $\abs{A\setminus B}=\frac{q(q\sm 1)}{2}\permsize{n}{q\sm
    1}$. Using this we get
  \begin{align*}
    \abs{B}&=\abs{A}-\abs{A\setminus B}\\
    &=n^q-\permsize{n}{q}-\dfrac{q(q-1)}{2}\permsize{n}{q\sm 1}\\
    &=n^q-n(n-1)\cdots(n-(q-1))-\dfrac{q(q-1)}{2}n(n-1)\cdots(n-(q-2))\\
    &=n^q-n^q+\dfrac{q(q-1)}{2}n^{q\sm 1}+\landauO{n^{q\sm
        2}}-\dfrac{q(q-1)}{2}n^{q\sm 1}+\landauO{n^{q\sm 2}}\\
    &=\landauO{n^{q\sm 2}}
  \end{align*}
  as $n\rightarrow\infty$. We can now make the following calculation
  \begin{align}
    S_n&=\dfrac{1}{\permsize{n}{q\sm
        1}}\left(\sum_{\mapset{q}{n}}g(X_{i_1},\dots,X_{i_q})-\sum_{\permset{q}{n}}g(X_{i_1},\dots,X_{i_q})\right)\nonumber\\
    &=\dfrac{1}{\permsize{n}{q\sm 1}}\sum_{A}g(X_{i_1},\dots,X_{i_q})\nonumber\\
    &=\dfrac{1}{\permsize{n}{q\sm 1}}\sum_{\permset{q\sm 1}{n}}w(X_{i_1},\dots,X_{i_{q\sm
        1}})+\dfrac{1}{\permsize{n}{q\sm 1}}\sum_{B}g(X_{i_1},\dots,X_{i_q})\nonumber\\
    &=\Ustatb{n}{w}+\dfrac{1}{\permsize{n}{q\sm 1}}\sum_{B}g(X_{i_1},\dots,X_{i_q}).\label{eq:Smsep}
  \end{align}
  Finally, we can decompose $n\Vstat{n}{g}$ as follows
  \begin{align}
    n\Vstat{n}{g}&=\dfrac{1}{n^{q\sm
        1}}\sum_{\mapset{q}{n}}g(X_{i_1},\dots,X_{i_q}) -
     \dfrac{1}{n^{q\sm 1}}\sum_{\permset{q}{n}}g(X_{i_1},\dots,X_{i_q})\nonumber\\
    &\quad + \dfrac{1}{n^{q\sm
        1}}\sum_{\permset{q}{n}}g(X_{i_1},\dots,X_{i_q})\nonumber\\
    &=\dfrac{\permsize{n}{q\sm 1}}{n^{q\sm
        1}}S_n+\dfrac{\permsize{n}{q}}{n^{q\sm 1}}
    \Ustatb{n}{g}\nonumber\\
    &=\left(1+\landauO{n^{\sm
          1}}\right)S_n+\left(n-\dfrac{q(q-1)}{2}+\landauO{n^{\sm
          1}}\right)\Ustat{n}{g}\nonumber\\
    &=\left(1+\landauO{n^{\sm
          1}}\right)S_n-\left(\dbinom{q}{2}+\landauO{n^{\sm
          1}}\right)\Ustat{n}{g}+n\Ustat{n}{g}.\label{eq:decompositionmVm}
  \end{align}
  Combining \eqref{eq:Smsep} and \eqref{eq:decompositionmVm} completes
  the proof of Lemma \ref{thm:decompositionofmVstat}.
\end{proof}

\subsection{Consistency of  V-statistics}\label{subsec:asymptotic_consistency_vstat}

The following theorem is the counterpart of \citet[Theorem A, Section 5.4]{serfling} for V-statistics. The proof is a
direct application of Lemma \ref{thm:comparison_uvstat} and \citet[Theorem A, Section 5.4]{serfling}.

\begin{theorem}[consistency of a V-statistic]
  \label{thm:asymptotic_consistency_vstat}
  Assume Setting \ref{setting:uvstat} and let
  $g\in\lpspace{1}{\Plawpower{q}}{\abs{\cdot}_{\R}}$ be a symmetric core
  function satisfying the total boundedness condition of order $1$. Then
  \begin{equation*}
    \Vstat{n}{g}\overset{\P}{\longrightarrow}\theta_g
  \end{equation*}
  as $n\rightarrow\infty$.
\end{theorem}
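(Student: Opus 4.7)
The plan is to combine the $L^1$ closeness of $V_n(g)$ and $U_n(g)$ given by Lemma~\ref{thm:comparison_uvstat} with the classical (strong) consistency of U-statistics, so that the bulk of the work is off-loaded to existing results.

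First, I would apply Lemma~\ref{thm:comparison_uvstat} with $r=1$: since $g$ is assumed to satisfy the total boundedness condition of order $1$ and lies in $\mathcal{L}^1(\Plawpower{q},\abs{\cdot}_{\R})$, the lemma yields
\begin{equation*}
\E\!\left[\abs{\Ustat{n}{g}-\Vstat{n}{g}}\right]=\landauO{n^{-1}}
\end{equation*}
as $n\to\infty$. A direct application of Markov's inequality then gives $\Ustat{n}{g}-\Vstat{n}{g}\overset{\P}{\longrightarrow}0$.

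Second, I would invoke the consistency result for U-statistics, namely \citet[Theorem A, Section 5.4]{serfling}: under the integrability assumption $\E\abs{g(X_1,\dots,X_q)}<\infty$, which is precisely $g\in\lpspace{1}{\Plawpower{q}}{\abs{\cdot}_{\R}}$, the U-statistic $\Ustat{n}{g}$ converges almost surely (and in particular in probability) to $\theta_g$. No further work is needed here.

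Finally, I would combine the two displays via the decomposition $\Vstat{n}{g}=\Ustat{n}{g}+(\Vstat{n}{g}-\Ustat{n}{g})$. Since both summands on the right converge in probability (to $\theta_g$ and $0$ respectively), their sum converges in probability to $\theta_g$, which is the claim. There is no real obstacle: the only mild point to verify is that the total boundedness condition is needed precisely to legitimise the use of Lemma~\ref{thm:comparison_uvstat}, whereas Serfling's theorem requires only the weaker hypothesis $g\in\lpspace{1}{\Plawpower{q}}{\abs{\cdot}_{\R}}$, and both are assumed.
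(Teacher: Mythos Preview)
Your proposal is correct and follows essentially the same approach as the paper: apply Lemma~\ref{thm:comparison_uvstat} with $r=1$ to get $L^1$ closeness of $\Ustat{n}{g}$ and $\Vstat{n}{g}$, use Serfling's consistency theorem for the U-statistic, and combine. The paper phrases the passage from $L^1$ convergence to convergence in probability directly rather than via Markov's inequality, but that is the same argument.
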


\begin{proof}
  By \citet[Theorem A, Section 5.4]{serfling} it holds that
  \begin{equation*}
    \Ustat{n}{g}\overset{\P}{\longrightarrow}\theta_g
  \end{equation*}
  as $n\rightarrow\infty$.
  Furthermore, by Lemma \ref{thm:comparison_uvstat} we have that
  \begin{equation*}
    \E\abs{\Ustat{n}{g}-\Vstat{n}{g}}=\landauO{n^{\sm 1}}
  \end{equation*}
  as $n\rightarrow\infty$. Since convergence in $L^1$ implies
  convergence in probability we obtain
  \begin{equation*}
    \Vstat{n}{g}\overset{\P}{\longrightarrow}\theta_g
  \end{equation*}
  as $n\rightarrow\infty$, which completes the proof of Theorem \ref{thm:asymptotic_consistency_vstat}.
\end{proof}

\subsection{Variance of V-statistics}\label{subsec:variance_vstat}

In the degenerate setting $\xi_1=0$,
Lemma~\ref{thm:decompositionofmVstat} allows us to show that the variance of
a V-statistic is equal to that of a U-statistic up to a certain order
of $n$. Its proof relies on Lemma \ref{thm:decompositionofmVstat}.

\begin{theorem}[asymptotic variance of a V-statistic]
  \label{thm:var_vstat}
  Assume Setting \ref{setting:uvstat} and let
  $g\in\lpspace{2}{\Plawpower{q}}{\abs{\cdot}_{\R}}$ be a bounded core
  function satisfying $\xi_1=0$. Then it holds that
  \begin{equation*}
    \Var\left(\Vstat{n}{g}\right)=\dbinom{n}{q}^{-1}\dbinom{q}{2}\dbinom{n-q}{q-2}\xi_2+\landauO{n^{-\frac{5}{2}}}
  \end{equation*}
  as $n\rightarrow\infty$.
\end{theorem}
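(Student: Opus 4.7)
The plan is to compare $\Var(\Vstat{n}{g})$ with $\Var(\Ustat{n}{g})$ and invoke Hoeffding's exact variance formula for U-statistics,
\[
\Var(\Ustat{n}{g}) = \binom{n}{q}^{-1}\sum_{c=1}^{q}\binom{q}{c}\binom{n-q}{q-c}\xi_c.
\]
Under the assumption $\xi_1=0$ the $c=1$ summand vanishes, and each summand with $c\geq 3$ is $\landauO{n^{-c}}=\landauO{n^{-3}}$. Hence the claimed main term equals $\Var(\Ustat{n}{g})$ up to an error of $\landauO{n^{-3}}$, and it suffices to establish $\Var(\Vstat{n}{g})-\Var(\Ustat{n}{g})=\landauO{n^{-5/2}}$. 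I will exploit the decomposition
\[
\Var(\Vstat{n}{g})=\Var(\Ustat{n}{g})+2\operatorname{Cov}(\Ustat{n}{g},\Vstat{n}{g}-\Ustat{n}{g})+\Var(\Vstat{n}{g}-\Ustat{n}{g}).
\]

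To control the last two terms I apply Lemma~\ref{thm:decompositionofmVstat}, which after dividing by $n$ rearranges into
\[
\Vstat{n}{g}-\Ustat{n}{g}=\tfrac{1}{n}\Ustat{n}{w}-\tfrac{\binom{q}{2}}{n}\Ustat{n}{g}+\tfrac{1}{n}\cdot\tfrac{(n-q+1)!}{n!}\sum_{B}g(X_{i_1},\dots,X_{i_q})+\rho_n,
\]
with a remainder $\rho_n=\landauO{n^{-2}}$ almost surely that absorbs the $\landauO{n^{-1}}$ factors from the lemma (using boundedness of $g$, and hence of $w$). Boundedness of $g$ combined with $|B|=\landauO{n^{q-2}}$ forces the $B$-sum to be $\landauO{n^{-2}}$ almost surely, so its variance and that of $\rho_n$ are both $\landauO{n^{-4}}$. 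Applied to the bounded, degree-$(q-1)$ core $w$, Hoeffding's formula gives $\Var(\Ustat{n}{w})=\landauO{n^{-1}}$, hence $\Var(\tfrac{1}{n}\Ustat{n}{w})=\landauO{n^{-3}}$; the degeneracy $\xi_1(g)=0$ yields $\Var(\tfrac{1}{n}\Ustat{n}{g})=\landauO{n^{-4}}$. Cauchy--Schwarz bounds all cross terms at $\landauO{n^{-3}}$ or smaller, so $\Var(\Vstat{n}{g}-\Ustat{n}{g})=\landauO{n^{-3}}$.

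The main obstacle is the covariance term: a direct moment bound on $\Vstat{n}{g}-\Ustat{n}{g}$ yields only $\landauO{n^{-2}}$, which is insufficient. The route around this is Cauchy--Schwarz together with the degeneracy of $\Ustat{n}{g}$,
\[
\left|\operatorname{Cov}(\Ustat{n}{g},\Vstat{n}{g}-\Ustat{n}{g})\right|\leq \sqrt{\Var(\Ustat{n}{g})\cdot\Var(\Vstat{n}{g}-\Ustat{n}{g})}=\sqrt{\landauO{n^{-2}}\cdot\landauO{n^{-3}}}=\landauO{n^{-5/2}}.
\]
Substituting the three estimates into the variance decomposition completes the argument. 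The rate $n^{-5/2}$ is precisely the geometric mean of the degenerate U-statistic rate $n^{-2}$ and the remainder rate $n^{-3}$, suggesting that this strategy is essentially tight.
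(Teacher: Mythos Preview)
Your proof is correct and follows essentially the same route as the paper. Both arguments invoke Lemma~\ref{thm:decompositionofmVstat}, bound $\Var(\Ustat{n}{w})=\landauO{n^{-1}}$ and the $B$-sum contribution by $\landauO{n^{-2}}$ almost surely, and then use Cauchy--Schwarz together with the degenerate rate $\Var(\Ustat{n}{g})=\landauO{n^{-2}}$ to get the $\landauO{n^{-5/2}}$ cross term; the only cosmetic difference is that the paper works with $n\Vstat{n}{g}$ and groups $\Ustat{n}{w}$ with the $B$-sum into a single term $S_n$, whereas you work with $\Vstat{n}{g}-\Ustat{n}{g}$ and treat the pieces separately.
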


\begin{proof}
  It holds that
  \begin{equation*}
    \Var\left(\Vstat{n}{g}\right)=\Var\left(\Vstat{n}{\tilde{g}}\right),
  \end{equation*}
  which implies that without loss of generality we can assume that $\theta_g=0$. By Lemma
  \ref{thm:decompositionofmVstat} we get that
  \begin{equation}
    \label{eq:varvstat}
    \begin{split}
      n\Vstat{n}{g}&=\left(1+\landauO{n^{-1}}\right)S_n\\
      &\quad -\left(\binom{q}{2}+\landauO{n^{-1}}-n\right)\Ustat{n}{g}
    \end{split}
  \end{equation}
  as $n\rightarrow\infty$, where
  $S_n=\Ustat{n}{w}+\tfrac{1}{\permsize{n}{q-1}}\sum_{B}g(X_{i_1},\dots,X_{i_q})$. Applying
  \citet[Lemma A, Section 5.2.1]{serfling} results in
  \begin{equation}
    \label{eq:varvstat1}
    \Var\left(\Ustat{n}{g}\right)=\dbinom{n}{q}^{-1}\dbinom{q}{2}\dbinom{n-q}{q-2}\xi_2+\landauO{n^{-3}}
  \end{equation}
  and
  \begin{equation}
    \label{eq:varvstat2}
    \Var\left(\Ustat{n}{w}\right)=\landauO{n^{-1}}.
  \end{equation}
  Moreover, using that $g$ is bounded it holds that
  \begin{align}
    &\Var\left(\frac{1}{\permsize{n}{q-1}}\sum_{B}g(X_{i_1},\dots,X_{i_q})\right)\nonumber\\
    &\quad\leq\frac{1}{\permsize{n}{q-1}^2}\E\left(\abs[\Big]{\sum_{B}g(X_{i_1},\dots,X_{i_q})}^2\right)\nonumber\\
    &\quad\leq\frac{1}{\permsize{n}{q-1}^2}\sum_{(i_1,\dots,i_q)\in
      B}\sum_{(j_1,\dots,j_q)\in
      B}\E\left(\abs[\Big]{g(X_{i_1},\dots,X_{j_q})g(X_{i_1},\dots,X_{j_q})}\right)\nonumber\\
    &\quad\leq\frac{C\abs{B}^2}{\permsize{n}{q-1}^2}=\landauO{n^{-2}}.\label{eq:varvstat3}
  \end{align}
  So combining \eqref{eq:varvstat2} and \eqref{eq:varvstat3} shows
  that
  \begin{equation}
    \label{eq:varvstat4}
    \Var\left(S_n\right)=\landauO{n^{-1}}
  \end{equation}
  and
  \begin{equation}
    \label{eq:varvstat5}
    \operatorname{Cov}\left(\Ustat{n}{g},S_n\right)
    \leq\left(\Var\left(\Ustat{n}{g}\right)\Var\left(S_n\right)\right)^{\frac{1}{2}}
    =\landauO{n^{-\frac{3}{2}}}.
  \end{equation}
  Finally, use \eqref{eq:varvstat}, \eqref{eq:varvstat1},
  \eqref{eq:varvstat4} and \eqref{eq:varvstat5} to get
  \begin{align*}
    \Var\left(n\Vstat{n}{g}\right)
    &=\left(1+\landauO{n^{-1}}\right)^2\Var\left(S_n\right)\\
    &\quad +\left(\tbinom{q}{2}+\landauO{n^{-1}}-n\right)^2\Var\left(\Ustat{n}{g}\right)\\
    &\quad
    -2\left(1+\landauO{n^{-1}}\right)\left(n+\tbinom{q}{2}+\landauO{n^{-1}}\right)\operatorname{Cov}\left(\Ustat{n}{g},S_n\right)\\
    &=\landauO{1}\Var\left(S_n\right)+\left(n^2+\landauO{n}\right)\Var\left(\Ustat{n}{g}\right)+\landauO{n}\operatorname{Cov}\left(\Ustat{n}{g},S_n\right)\\
    &=n^2\dbinom{n}{q}^{-1}\dbinom{q}{2}\dbinom{n-q}{q-2}\xi_2+\landauO{n^{-\frac{1}{2}}}.
  \end{align*}
  Dividing by $n^2$ completes the proof of Theorem \ref{thm:var_vstat}.
\end{proof}

It is possible to get a similar result in the non-degenerate case
$\xi_1>0$. In that case similar reasoning as in the proof above can be
used to get a formula for the variance of the V-statistic.

\subsection{Bias of V-statistics}\label{subsec:bias_vstat}

As a further consequence of Lemma \ref{thm:decompositionofmVstat} the
bias of a V-statistic can be explicitly expressed up to order $n^{-2}$.

\begin{theorem}[bias of a V-statistic]
  \label{thm:biasVstat}
  Assume Setting \ref{setting:uvstat} and let
  $g\in\lpspace{2}{\Plawpower{q}}{\abs{\cdot}_{\R}}$ be a core
  function satisfying the total boundedness condition of order
  $2$. Then it holds that
  \begin{equation*}
    \E\left(\Vstat{n}{g}-\theta_g\right)=\dfrac{1}{n}\dbinom{q}{2}\E\left(\tilde{g}_2(X_1,X_1)\right)+\landauO{n^{-2}}
  \end{equation*}
  as $n\rightarrow\infty$.
\end{theorem}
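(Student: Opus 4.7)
The plan is to start from the decomposition provided by Lemma~\ref{thm:decompositionofmVstat} and take expectations term by term, exploiting the fact that U-statistics are unbiased. Since $\E(\Ustat{n}{g}) = \theta_g$, dividing the identity in Lemma~\ref{thm:decompositionofmVstat} by $n$ and taking expectations gives
\begin{equation*}
\E(\Vstat{n}{g}) = \theta_g + \tfrac{1}{n}\bigl(1+\landauO{n^{-1}}\bigr)\,\theta_w - \tfrac{1}{n}\bigl(\tbinom{q}{2}+\landauO{n^{-1}}\bigr)\theta_g + R_n,
\end{equation*}
where $\theta_w = \E(w(X_1,\dots,X_{q-1}))$ and $R_n$ collects the contribution of the sum over~$B$.

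Next I would compute $\theta_w$ explicitly. For fixed $k<l$, the term $g(X_{\pi^{kl}(1)},\dots,X_{\pi^{kl}(q)})$ plugs $X_1$ into positions $k$ and $l$ while the remaining $q-2$ positions receive distinct random variables $X_2,\dots,X_{q-1}$ that are independent of $X_1$. Using symmetry of $g$ and the iid assumption on $(X_i)$, this term has expectation $\E(g(X_1,X_1,X_3,\dots,X_q)) = \E(g_2(X_1,X_1))$, independent of the choice of $(k,l)$. Summing over the $\binom{q}{2}$ pairs yields $\theta_w = \binom{q}{2}\E(g_2(X_1,X_1))$. Substituting, the two $1/n$-terms combine to
\begin{equation*}
\tfrac{1}{n}\tbinom{q}{2}\bigl(\E(g_2(X_1,X_1)) - \theta_g\bigr) + \landauO{n^{-2}} = \tfrac{1}{n}\tbinom{q}{2}\E(\tilde{g}_2(X_1,X_1)) + \landauO{n^{-2}},
\end{equation*}
which is exactly the stated bias, modulo controlling $R_n$.

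The main remaining step is therefore to verify $R_n = \landauO{n^{-2}}$. The relevant piece is $\frac{1}{n}\cdot\frac{(n-q+1)!}{n!}\,\E\bigl|\sum_{B} g(X_{i_1},\dots,X_{i_q})\bigr|$. Lemma~\ref{thm:decompositionofmVstat} tells us $\abs{B} = \landauO{n^{q-2}}$, while the prefactor $\frac{(n-q+1)!}{n!} = \landauO{n^{-(q-1)}}$. Under the total boundedness condition of order $2$, the expectation of each summand is bounded uniformly in the index tuple by Cauchy--Schwarz, so the triangle inequality gives $R_n = \landauO{n^{-1}\cdot n^{-(q-1)}\cdot n^{q-2}} = \landauO{n^{-2}}$, as required.

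The only slightly delicate point is making the $\landauO{n^{-1}}$ error factors in Lemma~\ref{thm:decompositionofmVstat} rigorous after taking expectations: one must check that multiplying $\theta_w$ and $\theta_g$ (both finite constants under the boundedness hypothesis) by $1+\landauO{n^{-1}}$ still produces an $\landauO{n^{-2}}$ absolute error after dividing by $n$, which is immediate. No step is conceptually hard; the main bookkeeping obstacle is tracking which remainder terms are absorbed into $\landauO{n^{-2}}$, and in particular verifying that the contribution of $B$ is of the correct order despite $\abs{B}$ growing polynomially in $n$.
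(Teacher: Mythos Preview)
Your proposal is correct and follows essentially the same route as the paper: apply Lemma~\ref{thm:decompositionofmVstat}, take expectations using unbiasedness of U-statistics, compute $\theta_w=\binom{q}{2}\E(g_2(X_1,X_1))$ by symmetry, and bound the $B$-contribution via $\abs{B}=\landauO{n^{q-2}}$ and the total boundedness condition. The only cosmetic difference is that the paper applies the decomposition to the centered core function $\tilde g$ (so that $\E(\Ustat{n}{\tilde g})=0$ and $\theta_w$ already equals $\binom{q}{2}\E(\tilde g_2(X_1,X_1))$), whereas you work with $g$ and recover the same expression by subtracting $\binom{q}{2}\theta_g$.
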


\begin{proof}
  We use Lemma \ref{thm:decompositionofmVstat} to get that
  \begin{equation}
    \label{eq:biasvstat1}
    \begin{split}
      n\Vstat{n}{\tilde{g}}&=\left(1+\landauO{n^{-1}}\right)\Ustat{n}{w}\\
      &\quad +\left(1+\landauO{n^{-1}}\right)\tfrac{1}{\permsize{n}{q-1}}\sum_{B}\tilde{g}(X_{i_1},\dots,X_{i_q})\\
      &\quad -\left(\binom{q}{2}-n+\landauO{n^{-1}}\right)\Ustat{n}{\tilde{g}}.
    \end{split}
  \end{equation}
  Moreover, using the total boundedness condition
  of $g$ we can get a constant $C>0$ such that
  \begin{align}
    \E\abs[\Big]{\dfrac{1}{\permsize{n}{q\sm 1}}\sum_{B}\tilde{g}(X_{i_1},\dots,X_{i_q})}&\leq
     \dfrac{1}{\permsize{n}{q\sm
        1}}\sum_{B}\E\abs[\big]{\tilde{g}(X_{i_1},\dots,X_{i_q})}\nonumber\\
    &\leq C\dfrac{\abs{B}}{\permsize{n}{q\sm 1}}\nonumber\\
    &=\landauO{n^{\sm 1}}\label{eq:biasvstat2}
  \end{align}
  as $n\rightarrow\infty$. Hence, using
  \eqref{eq:biasvstat1},\eqref{eq:biasvstat2} and the unbiasedness of
  U-statistics results in
  \begin{equation}
    \label{eq:biasvstat3}
    \E\left(n\left(\Vstat{n}{g}-\theta_g\right)\right)=\E\left(n\Vstat{n}{\tilde{g}}\right)=\theta_w+\landauO{n^{-1}}.
  \end{equation}
  We can compute $\theta_w$ by using the symmetry of $\tilde{g}$ to
  get
  \begin{equation}
    \label{eq:biasvstat4}
    \theta_w=\E\left(w(X_1,\dots,X_{q-1})\right)=\dbinom{q}{2}\E\left(\tilde{g}_2(X_1,X_1)\right).
  \end{equation}
  Finally, combining \eqref{eq:biasvstat3} and \eqref{eq:biasvstat4}
  and dividing by $m$ concludes the proof of Theorem \ref{thm:biasVstat}.
\end{proof}

\subsection{Asymptotic distribution of V-statistics}\label{subsec:asymptotic_distribution_vstat}

A V-statistic is called degenerate if $\xi_1=\Var(g_1(X_1))=0$ and non-degenerate if
$\xi_1>0$. In this section we analyze the asymptotic distribution of
\begin{itemize}
\item $\sqrt{n}\Vstat{n}{g}$ for the non-degenerate case ($\xi_1>0$) and
\item $n\Vstat{n}{g}$ for a special degenerate case ($\xi_1=0$, $\xi_2>0$).
\end{itemize}
In this section we derive the asymptotic distributions for
V-statistics based on the corresponding results for U-statistics.

\subsubsection{Non-degenerate case}

The following theorem is the counterpart of \citet[Theorem A, Section
5.5.1]{serfling} for V-statistics. The proof is a straightforward
application of both Lemma \ref{thm:comparison_uvstat} and
\citet[Theorem A, Section 5.5.1]{serfling}.

\begin{theorem}[asymptotic distribution of a V-statistic (non-degenerate)]
  \label{thm:asymptoticdist_vstat1}
  Assume Setting \ref{setting:uvstat}, let
  $g\in\lpspace{2}{\Plawpower{q}}{\abs{\cdot}_{\R}}$ be a core
  function satisfying the total boundedness condition of order
  $2$ and assume $\xi_1>0$. Then it holds that
  \begin{equation*}
    \sqrt{n}\left(\Vstat{n}{g}-\theta_g\right)
    \overset{d}{\longrightarrow} \normaldist{0}{q^2\xi_1}
  \end{equation*}
  as $n\rightarrow\infty$.
\end{theorem}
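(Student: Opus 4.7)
The plan is to piggyback on the well-known asymptotic normality of non-degenerate U-statistics and then transfer the conclusion to V-statistics via the $L^2$-closeness established in Lemma~\ref{thm:comparison_uvstat}.

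First, I would invoke \citet[Theorem A, Section 5.5.1]{serfling}, which states that under the non-degeneracy condition $\xi_1>0$ and the integrability hypothesis $g\in\lpspace{2}{\Plawpower{q}}{\abs{\cdot}_{\R}}$, the associated U-statistic satisfies
\begin{equation*}
  \sqrt{n}\bigl(\Ustat{n}{g}-\theta_g\bigr)\overset{d}{\longrightarrow}\normaldist{0}{q^2\xi_1}
\end{equation*}
as $n\to\infty$. This handles the distributional part of the argument; the remaining task is to argue that replacing $\Ustat{n}{g}$ with $\Vstat{n}{g}$ does not alter the limit.

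Next, I would apply Lemma~\ref{thm:comparison_uvstat} with $r=2$: since $g$ satisfies the total boundedness condition of order $2$, we have
\begin{equation*}
  \E\bigl[\abs{\Ustat{n}{g}-\Vstat{n}{g}}^2\bigr]=\landauO{n^{-2}}
\end{equation*}
as $n\to\infty$. Multiplying by $n$ and taking square roots yields
\begin{equation*}
  \E\bigl[\bigl(\sqrt{n}\abs{\Ustat{n}{g}-\Vstat{n}{g}}\bigr)^2\bigr]=\landauO{n^{-1}}\longrightarrow 0,
\end{equation*}
so $\sqrt{n}(\Ustat{n}{g}-\Vstat{n}{g})\to 0$ in $L^2$, and hence in probability by Markov's inequality.

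Finally, writing
\begin{equation*}
  \sqrt{n}\bigl(\Vstat{n}{g}-\theta_g\bigr)=\sqrt{n}\bigl(\Ustat{n}{g}-\theta_g\bigr)-\sqrt{n}\bigl(\Ustat{n}{g}-\Vstat{n}{g}\bigr),
\end{equation*}
and combining the two ingredients via Slutsky's theorem gives the claimed convergence in distribution to $\normaldist{0}{q^2\xi_1}$. The argument has no real obstacle: the non-trivial ingredients (the CLT for U-statistics and the $L^2$-comparison) are imported as black boxes, and Slutsky provides the glue; the one thing to be careful about is ensuring that the $r=2$ version of Lemma~\ref{thm:comparison_uvstat} indeed applies, which is guaranteed by the total boundedness assumption in the theorem.
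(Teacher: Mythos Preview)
Your proof is correct and follows essentially the same route as the paper: invoke \citet[Theorem A, Section 5.5.1]{serfling} for the U-statistic, use Lemma~\ref{thm:comparison_uvstat} with $r=2$ to get $\sqrt{n}\bigl(\Ustat{n}{g}-\Vstat{n}{g}\bigr)\to 0$ in $L^2$ (hence in probability), and conclude via Slutsky's theorem.
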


\begin{proof}
  Since convergence in $L^2$ implies convergence in probability
  Lemma \ref{thm:comparison_uvstat} in particular shows that
  \begin{equation*}
    \sqrt{n}\left(\Vstat{n}{g}-\Ustat{n}{g}\right)\overset{\P}{\longrightarrow}0
  \end{equation*}
  as $n\rightarrow\infty$. Combining this with \citet[Theorem A,
  Section 5.5.1]{serfling} and Slutsky's theorem we get
  \begin{equation*}
    \sqrt{n}\left(\Vstat{n}{g}-\theta_g\right)=\sqrt{n}\left(\Ustat{n}{g}-\theta_g\right)
    + \sqrt{n}\left(\Vstat{n}{g}-\Ustat{n}{g}\right) \overset{d}{\longrightarrow} \normaldist{0}{q^2\xi_1}
  \end{equation*}
  as $n\rightarrow\infty$ which completes the proof of Theorem \ref{thm:asymptoticdist_vstat1}.
\end{proof}

\subsubsection{Degenerate case}

In the degenerate case the asymptotic distribution depends on the
eigenvalues of a particular integral operator. For our convenience and
in order to not be repetitive we introduce the following additional setting,
which we will only use within Appendix~\ref{appendix:vstat}.
\begin{setting}[degenerate asymptotic]
  \label{setting:uvstatasymptotic}
  Let $g\in\lpspace{2}{\Plawpower{q}}{\abs{\cdot}_{\R}}$ be a core
  function, let $(Z_j)_{j\in\N}$ be a sequence of independent standard
  normal random variables on $\R$, let $T_{\tilde{g}_2}\in
  L\left(\Lpspace{2}{\Plaw{}}{\abs{\cdot}_{\R}}\right)$ with the property that
  for every $f\in\Lpspace{2}{\Plaw{}}{\abs{\cdot}_{\R}}$ and for every $x\in\uspace$ it holds that
  \begin{equation*}
    \left(T_{\tilde{g}_2}(f)\right)(x)=\int_{\uspace}\tilde{g}_2(x,y)f(y)\measure{\Plaw{}}{y}
  \end{equation*}
  and let $(\lambda_j)_{j\in\N}$ be the eigenvalues of
  $T_{\tilde{g}_2}$.
\end{setting}
Theorem \ref{thm:asymptoticdist_vstat2} is the counterpart of
\citet[Theorem, Section 5.5.2]{serfling} for V-statistics. Similar
statements appear in literature \citep[e.g.][Theorem
2]{gretton08hsic}. However, we are not aware of a complete proof of
the statement. The proof requires the following intermediate result.

\begin{lemma}[eigenvalue representation of the bias]
  \label{thm:lemma_eigensum}
  Assume Setting \ref{setting:uvstat} and Setting \ref{setting:uvstatasymptotic}, let
  $g\in\lpspace{2}{\Plawpower{q}}{\abs{\cdot}_{\R}}$ be a core
  function satisfying the total boundedness condition of order $2$, assume
  $0=\xi_1<\xi_2$ and assume $\tilde{g}_2$ is positive definite and
  continuous. Then it holds that
  \begin{equation*}
    \E\left(g_2(X_1,X_1)\right)=\sum_{i=1}^{\infty}\lambda_i+\theta_g
  \end{equation*}
\end{lemma}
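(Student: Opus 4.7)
The plan is to recognize this as the trace formula for a positive, self-adjoint, trace-class integral operator, essentially an application of Mercer's theorem. First I would reduce to showing $\E(\tilde{g}_2(X_1,X_1)) = \sum_{i=1}^\infty \lambda_i$ by writing $g_2 = \tilde{g}_2 + \theta_g$ and taking expectations; note that $T_{g_2}$ and $T_{\tilde{g}_2}$ differ only by a rank-one adjustment that does not affect nonzero eigenvalues in the relevant way, but working directly with $\tilde{g}_2$ (which is the centered version and is what the hypothesis assumes positive definite and continuous) is cleaner.

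Next, I would set up the functional analytic framework. Since $g$ satisfies the total boundedness condition of order 2, a short computation shows $\tilde{g}_2 \in \Lpspace{2}{\Plaw{}\otimes\Plaw{}}{\abs{\cdot}_{\R}}$, so $T_{\tilde{g}_2}$ is a Hilbert--Schmidt operator on $\Lpspace{2}{\Plaw{}}{\abs{\cdot}_{\R}}$. Symmetry of $\tilde{g}_2$ (inherited from symmetry of $g$) makes $T_{\tilde{g}_2}$ self-adjoint, and the assumed positive definiteness makes it a positive operator. The spectral theorem then provides an orthonormal system $(\phi_i)_{i\in\N}$ of eigenfunctions in $\Lpspace{2}{\Plaw{}}{\abs{\cdot}_{\R}}$ with eigenvalues $\lambda_i \geq 0$.

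The crux of the proof is applying a Mercer-type representation
\begin{equation*}
  \tilde{g}_2(x,y) = \sum_{i=1}^{\infty} \lambda_i \phi_i(x) \phi_i(y),
\end{equation*}
valid because $\tilde{g}_2$ is continuous and positive definite on the separable metric space $\uspace$ (a generalized Mercer theorem, e.g.\ in the style of Sun or Steinwart--Scovel, covers this setting beyond the classical compact case). Specializing to the diagonal yields $\tilde{g}_2(x,x) = \sum_{i=1}^{\infty} \lambda_i \phi_i(x)^2$ pointwise, with all summands nonnegative. Integrating against $\Plaw{}$ and exchanging sum and integral by monotone convergence gives
\begin{equation*}
  \E\left(\tilde{g}_2(X_1,X_1)\right) = \sum_{i=1}^{\infty} \lambda_i \int_{\uspace} \phi_i(x)^2 \measure{\Plaw{}}{x} = \sum_{i=1}^{\infty} \lambda_i,
\end{equation*}
where the last equality uses $\norm{\phi_i}_{\scriptscriptstyle \Lpspace{2}{\Plaw{}}{\abs{\cdot}_{\R}}} = 1$. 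Adding $\theta_g$ to both sides yields the claim.

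The main obstacle is justifying the Mercer-type expansion in the generality of a separable metric space with a Borel probability measure, since the classical Mercer theorem is stated for compact spaces. One either invokes a suitable generalization (continuous, positive definite, $L^2$ kernels on Polish spaces admit the desired expansion, with convergence at least pointwise and in $L^2$ along the diagonal) or reduces to the compact case by exhausting $\uspace$ with compacts on which $\Plaw{}$ concentrates, passing to the limit using positivity. Once the diagonal representation is in hand, the exchange of sum and integral is immediate by Tonelli, so the trace identity—and hence the lemma—follows.
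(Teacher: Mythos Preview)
Your proposal is correct and follows essentially the same route as the paper: both reduce to $\E(\tilde{g}_2(X_1,X_1))=\sum_i\lambda_i$, invoke a Mercer-type expansion of the continuous positive definite kernel $\tilde{g}_2$, evaluate on the diagonal, and use orthonormality of the eigenfunctions. The only minor difference is that the paper cites a generalized Mercer theorem (Ferreira) giving uniform convergence on $\operatorname{supp}(\Plaw{})$ and uses that to justify the sum--integral exchange, whereas you appeal to Tonelli/monotone convergence via nonnegativity of the diagonal summands; both justifications are sound, and your explicit discussion of the non-compact Mercer issue is a welcome addition.
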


\begin{proof}
  Observe that $\tilde{g}_2$ is a continuous positive definite
  kernel. We can therefore apply Mercer's theorem \citep[see][Theorem
  1.1]{ferreira} to
  get that for all $x,y\in\operatorname{supp}(\Plaw{})$ it holds that
  \begin{equation*}
    \tilde{g}_2(x,y)=\sum_{i=1}^{\infty}\lambda_i\phi_i(x)\phi_i(y)
  \end{equation*}
  converges uniformly. If we now take expectation and use that we can
  exchange the sum and expectation due the uniform convergence we get
  \begin{align*}
    \E\left(\tilde{g}_2(X_1,X_1)\right)&=\E\left(\sum_{i=1}^{\infty}\lambda_i\phi_i(X_1)\phi_i(X_1)\right)\\
    &=\sum_{i=1}^{\infty}\lambda_i\E\left(\abs{\phi_i(X_1)}^2\right)\\
    &=\sum_{i=1}^{\infty}\lambda_i,
  \end{align*}
  where in the last step we used that $(\phi_i)_{i\in\N}$ forms an
  orthonormal basis of $\Lpspace{2}{\Plaw{}}{\abs{\cdot}_{\R}}$. The result
  follows by noting that $g_2\equiv \tilde{g}_2+\theta_g$, which
  completes the proof of Lemma \ref{thm:lemma_eigensum}.
\end{proof}

We are now ready to state and prove the final result of this section.

\begin{theorem}[Asymptotic distribution of a V-statistic (degenerate)]
  \label{thm:asymptoticdist_vstat2}
  Assume Setting \ref{setting:uvstat} and Setting \ref{setting:uvstatasymptotic}, let
  $g\in\lpspace{2}{\Plawpower{q}}{\abs{\cdot}_{\R}}$ be a core
  function satisfying the total boundedness condition of order
  $2$, assume $0=\xi_1<\xi_2$ and assume $\tilde{g}_2$ is positive definite
  and continuous. Then it holds that
  \begin{equation*}
    n\left(\Vstat{n}{g}-\theta_g\right)\overset{d}{\longrightarrow}\dbinom{q}{2}\sum_{i=1}^{\infty}\lambda_iZ_i^2
  \end{equation*}
  as $n\rightarrow\infty$.
\end{theorem}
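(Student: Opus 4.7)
My plan is to reduce the claim to the analogous (degenerate) limit theorem for U-statistics \citep[Theorem, Section 5.5.2]{serfling} via the decomposition in Lemma~\ref{thm:decompositionofmVstat}, and then combine the pieces by Slutsky's theorem.

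First I would center: since $\Vstat{n}{g-\theta_g}=\Vstat{n}{g}-\theta_g$, I may assume without loss of generality that $\theta_g=0$, so that $g\equiv\tilde g$. Applying Lemma~\ref{thm:decompositionofmVstat} then gives
\begin{equation*}
n\Vstat{n}{g}=\left(1+\landauO{n^{-1}}\right)\Ustat{n}{w}+\left(1+\landauO{n^{-1}}\right)R_n-\left(\tbinom{q}{2}+\landauO{n^{-1}}\right)\Ustat{n}{g}+n\Ustat{n}{g},
\end{equation*}
where $R_n\coloneqq\tfrac{1}{\permsize{n}{q-1}}\sum_{(i_1,\ldots,i_q)\in B}g(X_{i_1},\ldots,X_{i_q})$ and $\abs{B}=\landauO{n^{q-2}}$.

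I would then identify the limit of each of the four pieces. For the leading stochastic term, Serfling's degenerate U-statistic CLT applies because $\xi_1(g)=0$, $\xi_2(g)>0$, and the eigenvalues of the integral operator associated with $\tilde g_2$ are exactly those specified in Setting~\ref{setting:uvstatasymptotic}; it yields $n\Ustat{n}{g}\overset{d}{\longrightarrow}\binom{q}{2}\sum_{i=1}^{\infty}\lambda_i(Z_i^2-1)$. Weak consistency of U-statistics \citep[Theorem A, Section 5.4]{serfling} gives $\Ustat{n}{g}\overset{\P}{\longrightarrow}0$, killing the $\binom{q}{2}\Ustat{n}{g}$ piece. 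For $\Ustat{n}{w}$, the same consistency result combined with a direct computation of $\theta_w$ using the symmetry of $g$ (exactly as in the proof of Theorem~\ref{thm:biasVstat}) gives $\Ustat{n}{w}\overset{\P}{\longrightarrow}\binom{q}{2}\E(g_2(X_1,X_1))$, and Lemma~\ref{thm:lemma_eigensum} identifies this limit as $\binom{q}{2}\sum_{i=1}^{\infty}\lambda_i$. Finally, by the total boundedness condition of order $2$ and $\abs{B}=\landauO{n^{q-2}}$, one has $\E\abs{R_n}\leq C\abs{B}/\permsize{n}{q-1}=\landauO{n^{-1}}$, hence $R_n\overset{\P}{\longrightarrow}0$ by Markov.

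Combining these four limits via Slutsky's theorem yields
\begin{equation*}
n\Vstat{n}{g}\overset{d}{\longrightarrow}\tbinom{q}{2}\sum_{i=1}^{\infty}\lambda_i+\tbinom{q}{2}\sum_{i=1}^{\infty}\lambda_i(Z_i^2-1)=\tbinom{q}{2}\sum_{i=1}^{\infty}\lambda_iZ_i^2,
\end{equation*}
which is the claimed distributional limit. The only delicate point is ensuring that $\sum_{i=1}^{\infty}\lambda_i$ is finite and equal to the bias limit $\E(g_2(X_1,X_1))$; this is precisely what Lemma~\ref{thm:lemma_eigensum} provides via Mercer's theorem applied to the continuous positive-definite kernel $\tilde g_2$. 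Beyond that, the proof is a routine assembly of ingredients already developed in this appendix.
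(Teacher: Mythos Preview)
Your proposal is correct and follows essentially the same route as the paper's proof: both use Lemma~\ref{thm:decompositionofmVstat} to decompose $n\Vstat{n}{\tilde g}$, identify the limit of the $\Ustat{n}{w}$ piece via consistency and Lemma~\ref{thm:lemma_eigensum}, bound the $B$-sum remainder in $L^1$ using $\abs{B}=\landauO{n^{q-2}}$ and the total boundedness condition, invoke Serfling's degenerate U-statistic limit for $n\Ustat{n}{\tilde g}$, and combine everything with Slutsky. The only cosmetic difference is that the paper groups $\Ustat{n}{w}$ and the $B$-sum into a single term $S_n$ before passing to the limit, whereas you treat them separately.
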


\begin{proof}
  The idea of the proof is to use Lemma
  \ref{thm:decompositionofmVstat} to get the decomposition
  \begin{equation}
    \label{eq:asymptoticdecomposition1}
    \begin{split}
      n\Vstat{n}{\tilde{g}}&=\left(1+\landauO{n^{-1}}\right)S_n\\
      &\quad
      -\left(\binom{q}{2}+\landauO{n^{-1}}\right)\Ustat{n}{\tilde{g}}\\
      &\quad +n\Ustat{n}{\tilde{g}}
    \end{split}
  \end{equation}
  as $n\rightarrow\infty$, where
  $S_n=\Ustat{n}{w}+\tfrac{1}{\permsize{n}{q-1}}\sum_{B}\tilde{g}(X_{i_1},\dots,X_{i_q})$
  and $w$ is defined as in Lemma \ref{thm:decompositionofmVstat}.  We
  then calculate the asymptotic behavior of $S_n$ and use
  \citet[Theorem, Section 5.5.2]{serfling} to conclude.
  
  Begin by analyzing the asymptotic behavior of $S_n$. To
  this end, note that by symmetry of the core function $g$ it holds
  that
  \begin{equation*}
    \theta_w=\E(w(X_1,\dots,X_{q\sm 1}))=\binom{q}{2}\E(\tilde{g}_2(X_1,X_1)).
  \end{equation*}
  and together with Lemma \ref{thm:lemma_eigensum} it holds that
  \begin{equation*}
    \theta_w=\E(w(X_1,\dots,X_{q\sm 1}))=\binom{q}{2}\sum_{i=1}^{\infty}\lambda_i.
  \end{equation*}
  Combining this with \citet[Theorem A, Section 5.4]{serfling} it follows
  that
  \begin{equation}
    \label{eq:Smterm1}
    \Ustat{n}{w}\overset{\P}{\longrightarrow}\theta_w=\binom{q}{2}\sum_{i=1}^{\infty}\lambda_i
  \end{equation}
  as $n\rightarrow\infty$. Next, we use the total boundedness condition
  of $g$ to get a constant $C>0$ such that
  \begin{align*}
    \E\abs[\Big]{\dfrac{1}{\permsize{n}{q\sm 1}}\sum_{B}\tilde{g}(X_{i_1},\dots,X_{i_q})}&\leq
     \dfrac{1}{\permsize{n}{q\sm
        1}}\sum_{B}\E\abs[\big]{\tilde{g}(X_{i_1},\dots,X_{i_q})}\\
    &\leq C\dfrac{\abs{B}}{\permsize{n}{q\sm 1}}\\
    &=\landauO{n^{\sm 1}}
  \end{align*}
  as $n\rightarrow\infty$. Using that $L^1$ convergence implies
  convergence in probability we get that
  \begin{equation}
    \label{eq:Smterm2}
    \dfrac{1}{\permsize{n}{q\sm 1}}\sum_{A_2}\tilde{g}(X_{i_1},\dots,X_{i_q})\overset{\P}{\longrightarrow}0
  \end{equation}
  as $n\rightarrow\infty$. Finally, combining \eqref{eq:Smterm1} and
  \eqref{eq:Smterm2} this results in
  \begin{equation}
    \label{eq:Sm}
    S_n\overset{\P}{\longrightarrow}\binom{q}{2}\sum_{i=1}^{\infty}\lambda_i
  \end{equation}
  as $n\rightarrow\infty$. Now, by the properties of convergence in
  probability, \eqref{eq:Sm} and \citet[Theorem A, Section
  5.4]{serfling} we have
  \begin{equation}
    \label{eq:asymptoticdecomposition2}
    \left(1+\landauO{n^{\sm 1}}\right)S_n\overset{\P}{\longrightarrow}
    \dbinom{q}{2}\sum_{i=1}^{\infty}\lambda_i
  \end{equation}
  and
  \begin{equation}
    \label{eq:asymptoticdecomposition3}
    \left(\dbinom{q}{2}+\landauO{n^{\sm 1}}\right)\Ustat{n}{\tilde{g}}
    \overset{\P}{\longrightarrow} 0
  \end{equation}
  as $n\rightarrow\infty$. Hence, \eqref{eq:asymptoticdecomposition1},
  \eqref{eq:asymptoticdecomposition2} and
  \eqref{eq:asymptoticdecomposition3} together with Slutsky's theorem and
  \citet[Theorem, Section 5.5.2]{serfling} shows that
  \begin{equation*}
    n\left(\Vstat{n}{g}-\theta_g\right)=n\Vstat{n}{\tilde{g}}\overset{d}{\longrightarrow}\dbinom{q}{2}\sum_{i=1}^{\infty}\lambda_iZ_i^2
  \end{equation*}
  as $n\rightarrow\infty$, which completes the proof of Theorem \ref{thm:asymptoticdist_vstat2}.
\end{proof}

\subsection{Resampling results for U-statistics and V-statistics}\label{subsec:resampling_uvstat}

In this section we want to consider what happens to the asymptotic
behavior of $n\Ustat{n}{g}$ and $n\Vstat{n}{g}$ if instead of the
original data sequence $(X_i)_{i\in\N}$ we consider a sequence of
resampled data. The differences are quite subtle, therefore one needs
to be very precise about what resampling means. Throughout this
section we use the following setting.

\begin{setting}[resampling]
  \label{setting:resampling}
  Let $\uspace$ be a separable metric space, let
  $(\Omega,\mathcal{F},\P)$ be a probability space, let $X:\Omega
  \rightarrow \uspace$ be a random variable and let $(X_i)_{i\in\N}$
  be a sequence of iid copies of $X$. For all $n\in\N$, let
  $(\Omega_n,\mathcal{F}_n,\P_n)$ be probability spaces, let
  $X^*_n:\Omega_n\rightarrow\uspace$ be random variables satisfying
  that $X^*_n\overset{d}{\rightarrow}X$ as $n\rightarrow\infty$
  (i.e. $\lim_{n\rightarrow\infty}\E_n(f(X^*_n))=\E(f(X))$ for all
  bounded and continuous functions $f:\uspace\rightarrow\R$) and let
  $(X^*_{n,i})_{i\in\{1,\dots,n\}}$ be iid copies of $X^*_n$.
\end{setting}
The data $X^*_{n,1},\dots,X^*_{n,n}$ should be interpreted as a new
sample drawn from a distribution which converges to $\plaw$ as $n$
goes to infinity. Resampled data of this type often show up in
different types of bootstrapping or permutation techniques.
We are interested in finding properties of the resampled U-statistc
\begin{equation*}
  \Ustatr{n}{g}\coloneqq\binom{n}{q}^{\sm1}\sum_{\combset{q}{n}}g\left(X^*_{n,i_1},\dots,X^*_{n,i_q}\right)
\end{equation*}
and the resampled V-statistic
\begin{equation*}
  \Vstatr{n}{g}\coloneqq\frac{1}{m^q}\sum_{\mapset{q}{n}}g\left(X^*_{n,i_1},\dots,X^*_{n,i_q}\right).
\end{equation*}
The difference compared to the normal U-and V-statistic is that the
distribution of the sample $X^*_{n,1},\dots,X^*_{n,n}$ depends on
$m$. Therefore, the results of the previous sections only carry over
to the resampled U-and V-statistics if they are results for which $m$
is kept fixed. Results about the asymptotic behavior of the resampled
U-and V-statistics need to be proved separately. A further more
technical difficulty is that for different $m$ the random variables
$\Ustatr{n}{g}$ and $\Vstatr{n}{g}$ are no longer defined on the same
probability space. The following theorem gives us a way of dealing
with this issue and is a slightly modified version of Skorohod's
theorem \citep[see][Theorem 6.7]{billingsley}.

\begin{theorem}[Skorohod's theorem]
  \label{thm:skorohod}
  Assume Setting \ref{setting:resampling}. Then there exists a common
  probability space $(\tilde{\Omega},\tilde{\mathcal{F}},\tilde{\P})$
  and random variables $(\tilde{X}^*_{n,i})_{i\in\{1,\dots,n\}}$, $n\in\N$ and $(\tilde{X}_i)_{i\in\N}$ on
  this probability space satisfying
  \begin{enumerate}[(i)]
  \item for all $n\in\N$, for all $i\in\{1,\dots,n\}$: $\tilde{X}^*_{n,i}\sim\P^{X^*_n}$,
  \item for all $i\in\N$: $\tilde{X}_i\sim\plaw$ and,
  \item
    $\tilde{X}^*_{n,i}\overset{\tilde{\P}\text{-a.s.}}{\longrightarrow}\tilde{X}_i$
    as $n\rightarrow\infty$. 
  \end{enumerate}
\end{theorem}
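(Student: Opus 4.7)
The plan is to bootstrap the statement from the classical Skorohod representation theorem (Billingsley, Theorem 6.7) via a product-space construction that restores the iid row structure.

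First, I would apply the classical Skorohod theorem to the weak convergence $X^*_n \overset{d}{\to} X$. Since $\uspace$ is separable (Setting~\ref{setting:resampling}), this yields a single auxiliary probability space $(\Omega',\mathcal{F}',\P')$ carrying random variables $(Y_n)_{n\in\N}$ and $Y$ with $Y_n\sim\P^{X^*_n}$, $Y\sim\plaw$, and $Y_n\to Y$ $\P'$-a.s. Note that this step gives us a single coupled sequence; it does not by itself supply the iid replicates needed in each row.

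Second, I would form the common probability space as the countable product
\begin{equation*}
(\tilde{\Omega},\tilde{\mathcal{F}},\tilde{\P})\coloneqq\bigotimes_{i\in\N}(\Omega',\mathcal{F}',\P').
\end{equation*}
For each $i\in\N$, let $(Y^{(i)}_n)_{n\in\N}$ and $Y^{(i)}$ denote the coordinate projections onto the $i$-th factor, so that they are independent copies of $(Y_n)_{n\in\N}$ and $Y$ across $i$. Define $\tilde{X}_i\coloneqq Y^{(i)}$ for all $i\in\N$ and $\tilde{X}^*_{n,i}\coloneqq Y^{(i)}_n$ for all $n\in\N$ and $i\in\{1,\dots,n\}$.

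Third, I would verify the three claims: (i) for fixed $n$, the family $(\tilde{X}^*_{n,i})_{i=1}^n$ consists of independent copies of $Y_n\sim\P^{X^*_n}$ by construction, matching the iid structure of $(X^*_{n,i})_{i=1}^n$ in Setting~\ref{setting:resampling}; (ii) similarly $(\tilde{X}_i)_{i\in\N}$ are iid with law $\plaw$; (iii) for each fixed $i$, the a.s. convergence $Y^{(i)}_n\to Y^{(i)}$ on the $i$-th factor lifts to $\tilde{X}^*_{n,i}\to \tilde{X}_i$ $\tilde{\P}$-a.s. on the product space, because cylindrical null events pull back to null events under the product measure.

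The only genuinely delicate point is that the a.s. convergence holds for each fixed $i$ on its own full-measure event, rather than on a common full-measure event simultaneously for all $i\in\N$; this is, however, enough for the later use inside U-/V-statistics with finitely many summands per $n$. Separability of $\uspace$ is essential because it is precisely what allows the invocation of the classical Skorohod theorem on the single factor $(\Omega',\mathcal{F}',\P')$; everything else is a routine product-measure verification.
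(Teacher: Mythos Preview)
Your approach is correct. The paper does not actually prove this theorem: it is stated as a slight modification of the classical Skorohod representation theorem and simply cited to Billingsley, Theorem~6.7, without further argument. Your product-space construction is exactly the standard way to supply the missing details, namely how to pass from the single coupled sequence delivered by the classical result to the iid-within-rows array required here.

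One small comment: your caveat about the a.s.\ convergence holding for each fixed $i$ only on its own full-measure event is more cautious than necessary. Since there are only countably many indices $i$, the intersection $\bigcap_{i\in\N} A_i$ of the corresponding full-measure sets is again of full $\tilde{\P}$-measure, so the convergence in (iii) holds simultaneously for all $i$ on a common event. This is worth noting because the later proofs (e.g.\ of Theorem~\ref{thm:resamplingustat}) implicitly use the full iid structure of the rows $(\tilde{X}^*_{n,i})_{i=1}^n$ and of $(\tilde{X}_i)_{i\in\N}$ when invoking the U-statistic variance formula, even though the theorem statement as written only pins down the marginals; your product construction delivers this independence automatically.
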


In order to avoid ambiguity between the resampled and the original
sample we introduce the following notation
\begin{enumerate}[(i)]
\item for all $n\in\N$ and all $c\in\{1,\dots,n\}$
  define $$g^n_c(x_1,\dots,x_c)\coloneqq\E(g(x_1,\dots,x_c,X^*_{n,c+1},\dots,X^*_{n,q})),$$
\item for all $n\in\N$ define $$\theta^n_g\coloneqq\E(g(X^*_{n,1},\dots,X^*_{n,q})),$$
\item for all $n\in\N$ and all $c\in\{1,\dots,n\}$ define $$\xi^n_c(g)\coloneqq\E((g^n_c(X^*_{n,1},\dots,X^*_{n,c})-\theta^n_g)^2).$$
\end{enumerate}

The following theorem shows that $\Ustatr{n}{g}$ is also consistent
with $\theta_g$ in the appropriate sense.

\begin{lemma}[consistency of a resampling U-statistic]
  \label{thm:consistencyresampledUstat}
  Assume Setting \ref{setting:resampling} and let
  $g\in\lpspace{1}{\Plawpower{q}}{\abs{\cdot}_{\R}}$ be a continuous,
  bounded core function. Then it holds that
  \begin{equation*}
    \Ustatr{n}{g}\overset{d}{\longrightarrow}\theta_g
  \end{equation*}
  as $n\rightarrow\infty$.
\end{lemma}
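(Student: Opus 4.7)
My plan is to reduce convergence in distribution to a constant to two deterministic/stochastic pieces: the convergence of the resampled mean $\theta_g^n$ to $\theta_g$, and a variance bound showing that $\Ustatr{n}{g}$ concentrates around $\theta_g^n$.

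First I would decompose
\begin{equation*}
\Ustatr{n}{g}-\theta_g \;=\; \bigl(\Ustatr{n}{g}-\theta_g^n\bigr) \;+\; \bigl(\theta_g^n-\theta_g\bigr),
\end{equation*}
and note that because $\theta_g$ is deterministic, $\Ustatr{n}{g}\overset{d}{\to}\theta_g$ is equivalent to $\P_n(|\Ustatr{n}{g}-\theta_g|>\epsilon)\to 0$ for every $\epsilon>0$, so it suffices to handle each summand separately (even though the laws live on different probability spaces $(\Omega_n,\mathcal{F}_n,\P_n)$).

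For the deterministic piece $\theta_g^n-\theta_g$, I would use that $X^*_n\overset{d}{\to}X$ and that the iid copies produce product measures $(\P^{X^*_n})^{\otimes q}$ on the separable metric space $\uspace^q$. A standard fact about weak convergence on separable spaces (or a direct application of Portmanteau) yields $(\P^{X^*_n})^{\otimes q}\Rightarrow (\plaw)^{\otimes q}$; since $g$ is bounded and continuous on $\uspace^q$, integration against $g$ gives $\theta_g^n\to\theta_g$.

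For the stochastic piece $\Ustatr{n}{g}-\theta_g^n$, I would exploit that, conditional on $n$, $\Ustatr{n}{g}$ is an ordinary U-statistic based on iid copies of $X^*_n$ with core $g$ and population mean $\theta_g^n$. The classical variance formula (see \citet[Lemma~A, Section 5.2.1]{serfling}) reads
\begin{equation*}
\Var_n\bigl(\Ustatr{n}{g}\bigr)\;=\;\binom{n}{q}^{-1}\sum_{c=1}^{q}\binom{q}{c}\binom{n-q}{q-c}\xi_c^n(g).
\end{equation*}
Since $g$ is bounded, say $\|g\|_\infty\le C$, each $\xi_c^n(g)$ is bounded by $4C^2$ uniformly in $n$, so the whole variance is $\landauO{n^{-1}}$ with a constant independent of $n$. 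Chebyshev's inequality then gives $\P_n(|\Ustatr{n}{g}-\theta_g^n|>\epsilon)\to 0$.

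The hardest (and really the only non-routine) step is the weak convergence of the product measures, which needs separability of $\uspace$ so that the product of weakly convergent sequences of probability measures converges weakly to the product. Everything else is bookkeeping: combining the two bounds via $\P_n(|\Ustatr{n}{g}-\theta_g|>\epsilon)\le \P_n(|\Ustatr{n}{g}-\theta_g^n|>\epsilon/2)+\mathds{1}_{\{|\theta_g^n-\theta_g|>\epsilon/2\}}$, both pieces tend to zero, yielding the claimed convergence in distribution. Note that Skorohod's representation (Theorem~\ref{thm:skorohod}) is not required for this particular statement; it will be useful for the stronger distributional results for $\Vstatr{n}{g}$ that presumably follow.
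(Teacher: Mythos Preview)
Your proof is correct and takes a genuinely different, more elementary route than the paper. You decompose $\Ustatr{n}{g}-\theta_g$ into a deterministic part $\theta_g^n-\theta_g$ (handled by weak convergence of product measures plus bounded continuity of $g$) and a stochastic part $\Ustatr{n}{g}-\theta_g^n$ (handled by the uniform variance bound and Chebyshev). The paper instead invokes Skorohod's representation (Theorem~\ref{thm:skorohod}) to place all the $X^*_{n,i}$ and $X_i$ on a common probability space with almost-sure convergence, writes $\tilde{\mathcal{U}}_n(g)-\mathcal{U}_n(g)$ as a U-statistic in the paired variables $(\tilde{X}_i,\tilde{X}^*_{n,i})$ with a new core $w$, bounds its variance via \citet[Lemma~A, Section~5.2.1]{serfling}, and uses dominated convergence to kill the mean; it then combines with the classical consistency of $\mathcal{U}_n(g)$.

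What each buys: your argument is shorter and self-contained for this lemma, needing only the product-measure weak-convergence fact (which does use separability, as you note). The paper's Skorohod-based framework is heavier here, but it is the same machinery that is genuinely needed for the subsequent Theorems~\ref{thm:resamplingustat} and~\ref{thm:resamplingvstat}, where one must control $n(\Ustatr{n}{g}-\Ustat{n}{g})$ in distribution under degeneracy---a statement for which a mere variance bound on $\Ustatr{n}{g}$ around its own mean $\theta_g^n$ would not suffice. So the paper front-loads the common-space construction; you correctly anticipate this in your final remark.
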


\begin{proof}
  Applying Theorem \ref{thm:skorohod} results in a probability space
  $(\tilde{\Omega},\tilde{\mathcal{F}},\tilde{\P})$ and random
  variables $(\tilde{X}^*_{n,i})_{i\in\{1,\dots,n\}}$, $n\in\N$ and
  $(\tilde{X}_i)_{i\in\N}$ with properties specified in Theorem
  \ref{thm:skorohod}. Next, introduce the resampled U-statistic
  \begin{equation*}
    \tilde{\mathcal{U}}_n(g)\coloneqq\dbinom{n}{q}^{\sm1}\sum_{\combset{q}{n}}g(\tilde{X}^*_{n,i_1},\dots,\tilde{X}^*_{n,i_q}),
  \end{equation*}
  which has the same distribution under $\tilde{\P}$ as $\Ustatr{n}{g}$ under
  $\P_n$ and the U-statistic
  \begin{equation*}
    \mathcal{U}_n(g)\coloneqq\dbinom{n}{q}^{\sm1}\sum_{\combset{q}{n}}g(\tilde{X}_{i_1},\dots,\tilde{X}_{i_q}),
  \end{equation*}
  which has the same distribution under $\tilde{\P}$ as $\Ustat{n}{g}$ under
  $\P$. It holds that
  \begin{align}
    \tilde{\mathcal{U}}_n(g)-\mathcal{U}_n(g)
    &=\dbinom{n}{q}^{\sm1}\sum_{\combset{q}{n}}
      \left(g(\tilde{X}^*_{n,i_1},\dots,\tilde{X}^*_{n,i_q})
      -g(\tilde{X}_{i_1},\dots,\tilde{X}_{i_q})\right)\nonumber\\
    &=\dbinom{n}{q}^{\sm1}\sum_{\combset{q}{n}}
      w((\tilde{X}_{i_1},\tilde{X}^*_{n,i_1}),\dots,(\tilde{X}_{i_q},\tilde{X}^*_{n,i_q})),\label{eq:thisisaustat_consistency}
  \end{align}
  where $w(\vx_1,\dots,\vx_q)\coloneqq
  g(x^2_1,\dots,x^2_q)-g(x^1_1,\dots,x^1_q)$ is a symmetric core
  function. If we define for all $c\in\{1,\dots,q\}$ the functions
    \begin{equation*}
    w^n_c(\vx_1,\dots,\vx_c)\coloneqq\E\left(g(x^2_1,\dots,x^2_c,\tilde{X}^*_{n,c+1},\dots,\tilde{X}^*_{n,q})
      -g(x^1_1,\dots,x^1_c,\tilde{X}_{c+1},\dots,\tilde{X}_{q})\right)
  \end{equation*}
  it holds by the boundedness of $g$ that there exists a constant
  $C\in\R$ such that
  \begin{equation}
    \label{eq:uniformbound_consistency}
    \sup_{n\in\N}\xi^n_c(w)<C.
  \end{equation}
  By \eqref{eq:thisisaustat_consistency}, it holds that for
  fixed $n$ we can apply \citet[Lemma A, Section 5.2.1]{serfling} and together with
  \eqref{eq:uniformbound_consistency} to get
  \begin{equation}
    \label{eq:variance_null}
    \Var\left(\tilde{\mathcal{U}}_n(g)-\mathcal{U}_n(g)\right)
    =\dbinom{n}{q}^{-1}\sum_{c=1}^n\dbinom{q}{c}\dbinom{n-q}{q-c}\xi^n_c(w)
    =\landauO{n^{-1}}.
  \end{equation}
  For $(i_1,\dots,i_q)\in\combset{q}{n}$ it holds
  by continuity of $g$ that
  \begin{equation*}
    g(\tilde{X}^*_{n,i_1},\dots,\tilde{X}^*_{n,i_q})
    \overset{\tilde{\P}\text{-a.s.}}{\longrightarrow}
    g(\tilde{X}_{i_1},\dots,\tilde{X}_{i_q})
  \end{equation*}
  as $n\rightarrow\infty$ and since $g$ is also bounded the dominated
  convergence theorem in particular implies that
  \begin{equation}
    \label{eq:l2convergence_consistency}
    \lim_{n\rightarrow\infty}\E\left(g(\tilde{X}^*_{n,i_1},\dots,\tilde{X}^*_{n,i_q})-g(\tilde{X}_{i_1},\dots,\tilde{X}_{i_q})\right)=0.
  \end{equation}
  Combining \eqref{eq:variance_null} and
  \eqref{eq:l2convergence_consistency} hence proves that
  \begin{align*}
    \lim_{n\rightarrow\infty}\E\left(\left(\tilde{\mathcal{U}}_n(g)-\mathcal{U}_n(g)\right)^2\right)
    &=\lim_{n\rightarrow\infty}\E\left(\tilde{\mathcal{U}}_n(g)-\mathcal{U}_n(g)\right)^2\\
    &=\lim_{n\rightarrow\infty}\E\left(g(\tilde{X}^*_{n,i_1},\dots,\tilde{X}^*_{n,i_q})
      -g(\tilde{X}_{i_1},\dots,\tilde{X}_{i_q})\right)^2\\
    &=0.
  \end{align*}
  Using that convergence in second moment implies convergence in
  probability we have therefore shown that
  \begin{equation*}
    \tilde{\mathcal{U}}_n(g)-\mathcal{U}_n(g)\overset{\P}{\longrightarrow}0
  \end{equation*}
  as $n\rightarrow\infty$. Together with consistency of U-statistics
  \citep[see][Theorem A, Section 5.4]{serfling} it follows that
  \begin{equation*}
    \tilde{\mathcal{U}}_n(g)-\theta_g=(\tilde{\mathcal{U}}_n(g)-\mathcal{U}_n(g))-(\theta_g-\mathcal{U}_n(g))\overset{\P}{\longrightarrow}0
  \end{equation*}
  as $n\rightarrow\infty$. This concludes the proof of Lemma \ref{thm:consistencyresampledUstat}.
\end{proof}

The following two theorems are extensions
of results due to \citet{bootstrap_consistency} that show that
U-and V-statistics based on resampled data keep their respective
asymptotic distributions. In \citet{bootstrap_consistency} only U-and
V-statistics of order 2 (i.e. $q=2$) are considered. We adopted the
proofs to work for arbitrary order.

\begin{theorem}[asymptotic distribution of degenerate resampling
  U-statistic]
  \label{thm:resamplingustat}
  Assume Setting \ref{setting:resampling} and Setting
  \ref{setting:uvstatasymptotic}, let
  $g\in\lpspace{2}{\Plawpower{q}}{\abs{\cdot}_{\R}}$ be a continuous,
  bounded core function. Moreover, assume
  \begin{enumerate}[(i)]
  \item for all $n\in\N$ that $g^n_1\equiv 0$,
  \item $g_1\equiv 0$ (which implies $\xi_1(g)=0$) and
  \item $\theta_g=0$.
  \end{enumerate}
  Then if $\xi_2(g)>0$ it holds that
  \begin{equation*}
    n\Ustatr{n}{g}\overset{d}{\longrightarrow}\dbinom{q}{2}\sum_{i=1}^{\infty}\lambda_i(Z_i^2-1)
  \end{equation*}
  as $n\rightarrow\infty$ and if $\xi_2(g)=0$ it holds that
    \begin{equation*}
    n\Ustatr{n}{g}\overset{d}{\longrightarrow}0
  \end{equation*}
  as $n\rightarrow\infty$.
\end{theorem}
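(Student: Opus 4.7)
The plan is to use Skorohod's representation (Theorem~\ref{thm:skorohod}) to move to a common probability space on which $\tilde{X}^*_{n,i}\to\tilde{X}_i$ almost surely, and then analyse $n\tilde{\mathcal{U}}_n(g)$ via its Hoeffding decomposition with respect to the resampling law $\P^{X^*_n}$. A key preliminary point is that $g^n_1\equiv 0$ forces $\theta^n_g=\E(g^n_1(X^*_{n,1}))=0$, so the decomposition is completely degenerate at order one:
\begin{equation*}
\tilde{\mathcal{U}}_n(g)=\binom{q}{2}\binom{n}{2}^{-1}\sum_{i<j} g^n_2(\tilde{X}^*_{n,i},\tilde{X}^*_{n,j})+\sum_{c=3}^{q}\binom{q}{c} U^{(c)}_n(h^n_c),
\end{equation*}
where $h^n_c$ is the $c$th canonical Hoeffding kernel. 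Boundedness of $g$ makes all $h^n_c$ uniformly bounded in $n$, and orthogonality of the decomposition then yields $\Var(U^{(c)}_n(h^n_c))=\landauO{n^{-c}}$, so $n\cdot U^{(c)}_n(h^n_c)\to 0$ in $L^2$ for $c\ge 3$. The problem reduces to determining the asymptotic distribution of $n\binom{n}{2}^{-1}\sum_{i<j}g^n_2(\tilde{X}^*_{n,i},\tilde{X}^*_{n,j})$.

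For this reduced problem I would split $g^n_2=g_2+(g^n_2-g_2)$. Bounded continuity of $g$ together with $X^*_n\overset{d}{\to} X$ gives $g^n_2\to g_2$ pointwise with uniform bound $\|g^n_2\|_\infty\le\|g\|_\infty$; a variance argument then shows $n U^{(2)}_n(g^n_2-g_2)\to 0$ in $L^2$, so one can focus on $n U^{(2)}_n(g_2)$ evaluated at the resampled data. Using the spectral expansion $g_2(x,y)=\sum_k\lambda_k\phi_k(x)\phi_k(y)$ of the self-adjoint Hilbert--Schmidt operator $T_{\tilde g_2}$ (Mercer applies since $g_2$ is continuous, noting that $g_1\equiv 0,\theta_g=0$ imply $\tilde g_2=g_2$ and that the $\phi_k$ with $\lambda_k\neq 0$ are orthogonal to constants and hence $\P^X$-centred), I would write
\begin{equation*}
n U^{(2)}_n(g_2)=\sum_k \lambda_k\Bigl[\tfrac{1}{n-1}\bigl(\textstyle\sum_i\phi_k(\tilde{X}^*_{n,i})\bigr)^2-\tfrac{1}{n-1}\textstyle\sum_i\phi_k(\tilde{X}^*_{n,i})^2\Bigr].
\end{equation*}
A triangular-array Lindeberg CLT (applicable since the $\phi_k$ are bounded continuous and $\E\phi_k(\tilde{X}^*_{n,1})\to 0$, $\Var\phi_k(\tilde{X}^*_{n,1})\to 1$) yields $n^{-1/2}\sum_i\phi_k(\tilde{X}^*_{n,i})\overset{d}{\to} Z_k$, and a triangular-array LLN gives $n^{-1}\sum_i\phi_k(\tilde{X}^*_{n,i})^2\to 1$; joint convergence across finitely many coordinates follows from the asymptotic orthonormality of $\{\phi_k\}$ under $\P^{X^*_n}$, so each spectral summand converges to $\lambda_k(Z_k^2-1)$.

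To pass from truncations $k\le K$ to the full series, I would use the Hilbert--Schmidt tail bound $\sum_{k>K}\lambda_k^2\to 0$ to bound both $\Var(\sum_{k>K}\lambda_k(Z_k^2-1))=2\sum_{k>K}\lambda_k^2$ and the uniform-in-$n$ variance of the residual $n U^{(2)}_n(\sum_{k>K}\lambda_k\phi_k\otimes\phi_k)$, which follows from $L^2$-approximation of $g_2$ by its truncated Mercer expansion together with the variance bound used above. The degenerate sub-case $\xi_2(g)=0$ collapses: $g_2\equiv 0$ forces $g^n_2\to 0$ in $L^2$, hence $\Var(n U^{(2)}_n(g^n_2))\le n\,\E((g^n_2)^2)/(n-1)\to 0$, and the higher-order terms vanish as before, yielding $n\tilde{\mathcal{U}}_n(g)\to 0$ in $L^2$.

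The principal obstacle is the triangular-array nature of the sample: the distribution of $(\tilde{X}^*_{n,i})_i$ varies with $n$, so every CLT/LLN invocation above requires verifying Lindeberg-type conditions rather than appealing to the classical iid form. A secondary hurdle is justifying termwise convergence together with tail control in the Mercer series uniformly in $n$; this is not automatic from $g^n_2\to g_2$ in $L^2$, and care is needed to ensure the spectral decomposition of $g_2$ (built with respect to $\P^X$) interacts correctly with expectations taken under the shifting law $\P^{X^*_n}$.
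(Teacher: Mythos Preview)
Your route differs substantially from the paper's, and the step you flag as a ``secondary hurdle'' is in fact a genuine gap. The claim that $nU^{(2)}_n(g^n_2-g_2)\to 0$ in $L^2$ does not follow from a variance argument, because $g^n_2-g_2$ is \emph{not} degenerate under $\P^{X^*_n}$: its first-order projection equals $-\E_n\bigl(g_2(\cdot,X^*_{n,2})\bigr)$, which converges pointwise to $-g_1\equiv 0$ but at no controlled rate. Consequently the second moment of $nU^{(2)}_n(g^n_2-g_2)$ carries a first-order contribution of order $n\cdot\xi^n_1(g^n_2-g_2)$ that need not vanish; the same applies to the remaining piece $nU^{(2)}_n(g_2)$, since $g_2$ is not degenerate under $\P^{X^*_n}$ either. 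The identical obstruction reappears in your Mercer step: with $c_{k,n}:=\E_n\bigl(\phi_k(X^*_{n,1})\bigr)$ one has $n^{-1/2}\sum_i\phi_k(\tilde{X}^*_{n,i})=n^{-1/2}\sum_i\bigl(\phi_k(\tilde{X}^*_{n,i})-c_{k,n}\bigr)+\sqrt{n}\,c_{k,n}$, and while the centred sum obeys a triangular-array CLT, the drift $\sqrt{n}\,c_{k,n}$ is uncontrolled because the theorem imposes no rate on $X^*_n\overset{d}{\to}X$.

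The paper sidesteps all of this by using the Skorohod coupling in a more essential way. On the common space it writes $\tilde{\mathcal{U}}_n(g)-\mathcal{U}_n(g)$ as a single U-statistic of degree $q$ on the \emph{paired} variables $\bigl((\tilde{X}_i,\tilde{X}^*_{n,i})\bigr)_{i\le n}$, with kernel $w(\vx_1,\ldots,\vx_q)=g(x^2_1,\ldots,x^2_q)-g(x^1_1,\ldots,x^1_q)$. Assumptions (i)--(ii) give $w^n_1\equiv g^n_1-g_1\equiv 0$ \emph{exactly for every $n$}, so Serfling's variance formula yields $n^2\Var\bigl(\tilde{\mathcal{U}}_n(g)-\mathcal{U}_n(g)\bigr)=O\bigl(\xi^n_2(w)\bigr)+O(n^{-1})$; almost-sure convergence plus boundedness of $g$ then give $\xi^n_2(w)\le\E\bigl[\bigl(g(\tilde{X}^*_{n,1},\ldots,\tilde{X}^*_{n,q})-g(\tilde{X}_1,\ldots,\tilde{X}_q)\bigr)^2\bigr]\to 0$, whence $n\bigl(\tilde{\mathcal{U}}_n(g)-\mathcal{U}_n(g)\bigr)\to 0$ in probability. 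Slutsky together with the classical degenerate limit for $n\mathcal{U}_n(g)$ finishes (and for $\xi_2(g)=0$ the same variance formula applied to $\mathcal{U}_n(g)$ itself gives $n\mathcal{U}_n(g)\to 0$). The coupling converts ``asymptotic degeneracy under a moving law'' into ``exact first-order degeneracy for each $n$ under a single law on pairs'', which is precisely the structural ingredient your direct approach is missing.
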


\begin{proof}
  Applying Theorem \ref{thm:skorohod} results in a probability space
  $(\tilde{\Omega},\tilde{\mathcal{F}},\tilde{\P})$ and random
  variables $(\tilde{X}^*_{n,i})_{i\in\{1,\dots,n\}}$, $n\in\N$ and
  $(\tilde{X}_i)_{i\in\N}$ with properties specified in Theorem
  \ref{thm:skorohod}. For $(i_1,\dots,i_q)\in\combset{q}{n}$ it holds
  by continuity of $g$ that
  \begin{equation*}
    g(\tilde{X}^*_{n,i_1},\dots,\tilde{X}^*_{n,i_q})
    \overset{\tilde{\P}\text{-a.s.}}{\longrightarrow}
    g(\tilde{X}_{i_1},\dots,\tilde{X}_{i_q})
  \end{equation*}
  as $n\rightarrow\infty$ and since $g$ is also bounded the dominated
  convergence theorem in particular implies
  \begin{equation}
    \label{eq:l2convergence_asymptoticustat}
    \lim_{n\rightarrow\infty}\E\left(\left(g(\tilde{X}^*_{n,i_1},\dots,\tilde{X}^*_{n,i_q})-g(\tilde{X}_{i_1},\dots,\tilde{X}_{i_q})\right)^2\right)=0.
  \end{equation}
  Next, introduce the resampling U-statistic
  \begin{equation*}
    \tilde{\mathcal{U}}_n(g)\coloneqq\dbinom{n}{q}^{\sm1}\sum_{\combset{q}{n}}g(\tilde{X}^*_{n,i_1},\dots,\tilde{X}^*_{n,i_q}),
  \end{equation*}
  which has the same distribution under $\tilde{\P}$ as $\Ustatr{n}{g}$ under
  $\P_n$ and the U-statistic
  \begin{equation*}
    \mathcal{U}_n(g)\coloneqq\dbinom{n}{q}^{\sm1}\sum_{\combset{q}{n}}g(\tilde{X}_{i_1},\dots,\tilde{X}_{i_q}),
  \end{equation*}
  which has the same distribution under $\tilde{\P}$ as $\Ustat{n}{g}$ under
  $\P$. It holds that
  \begin{align}
    \tilde{\mathcal{U}}_n(g)-\mathcal{U}_n(g)
    &=\dbinom{n}{q}^{\sm1}\sum_{\combset{q}{n}}
      \left(g(\tilde{X}^*_{n,i_1},\dots,\tilde{X}^*_{n,i_q})
      -g(\tilde{X}_{i_1},\dots,\tilde{X}_{i_q})\right)\nonumber\\
    &=\dbinom{n}{q}^{\sm1}\sum_{\combset{q}{n}}
      w((\tilde{X}_{i_1},\tilde{X}^*_{n,i_1}),\dots,(\tilde{X}_{i_q},\tilde{X}^*_{n,i_q})),\label{eq:thisisaustat_asymptoticustat}
  \end{align}
  where $w(\vx_1,\dots,\vx_q)\coloneqq
  g(x^2_1,\dots,x^2_q)-g(x^1_1,\dots,x^1_q)$ is a symmetric core
  function. Define for all $c\in\{1,\dots,q\}$ the functions
  \begin{equation*}
    w^n_c(\vx_1,\dots,\vx_c)\coloneqq\E\left(g(x^2_1,\dots,x^2_c,\tilde{X}^*_{n,c+1},\dots,\tilde{X}^*_{n,q})
      -g(x^1_1,\dots,x^1_c,\tilde{X}_{c+1},\dots,\tilde{X}_{q})\right)
  \end{equation*}
  and the functions
  \begin{equation*}
    \xi^n_c(w)\coloneqq\E\left(w^n_c((\tilde{X}_1,\tilde{X}^*_{n,1}),\dots,(\tilde{X}_c,\tilde{X}^*_{n,c}))^2\right).
  \end{equation*}
  Then, it holds by the boundedness of $g$ that there exists a constant
  $C\in\R$ such that
  \begin{equation}
    \label{eq:uniformbound_asymptoticustat}
    \sup_{n\in\N}\xi^n_c(w)<C.
  \end{equation}
  Moreover, it holds by assumption (i) and (ii) that
  \begin{align*}
    w^n_1(\vx_1)
    &=\E\left(g(x^2_1,\tilde{X}^*_{n,2},\dots,\tilde{X}^*_{n,q})
      -g(x^1_1,\tilde{X}_{2},\dots,\tilde{X}_{q})\right)\\
    &=g^n_1(x^2_1)-g_1(x^1_1)\\
    &=0,
  \end{align*}
  which immediately implies that
  \begin{equation}
    \label{eq:xi1part}
    \xi^n_1(w)=\E\left(w^n_1((\tilde{X}_{1},\tilde{X}^*_{n,1}))^2\right)=0.
  \end{equation}
  Furthermore, by Jensen's inequality it holds that
  \begin{align}
    \xi^n_2(w)
    &=\E\left(w^n_2((\tilde{X}_{1},\tilde{X}^*_{n,1}),(\tilde{X}_{2},\tilde{X}^*_{n,2}))^2\right)\nonumber\\
    &\leq
    \E\left(w((\tilde{X}_{1},\tilde{X}^*_{n,1}),\dots,(\tilde{X}_{q},\tilde{X}^*_{n,q}))^2\right)\nonumber\\
    &=\E\left(\left(g(\tilde{X}^*_{n,1},\dots,\tilde{X}^*_{n,q})-g(\tilde{X}_{1},\dots,\tilde{X}_{q})\right)^2\right).\label{eq:xi2goestozero}
  \end{align}
  By \eqref{eq:thisisaustat_asymptoticustat}, it holds for fixed $n$
  that we can apply the variance formula for a U-statistic
  \citep[see][Lemma A, Section 5.2.1]{serfling} and together with
  \eqref{eq:uniformbound_asymptoticustat} and \eqref{eq:xi1part} we
  get
  \begin{equation}
    \Var\left(\tilde{\mathcal{U}}_n(g)-\mathcal{U}_n(g)\right)
    =\dbinom{n}{q}^{-1}\sum_{c=1}^n\dbinom{q}{c}\dbinom{n-q}{q-c}\xi^n_c(w)
    =\landauO{n^{-2}}\xi^n_2(w)+\landauO{n^{-3}}.
  \end{equation}
  Hence, together with \eqref{eq:xi2goestozero} and
  \eqref{eq:l2convergence_asymptoticustat} it holds that
  \begin{equation*}
    \lim_{n\rightarrow\infty}\Var\left(n\left(\mathcal{U}_n(g)-\tilde{\mathcal{U}}_n(g)\right)\right)=0
  \end{equation*}
  and consequently also that
  \begin{equation}
    \label{eq:convergenceUstatresampling}
    n\left(\mathcal{U}_n(g)-\tilde{\mathcal{U}}_n(g)\right)\overset{\tilde{\P}}{\longrightarrow}0
  \end{equation}
  as $n\rightarrow\infty$. Therefore, if $\xi_2(g)>0$, we can apply Slutsky's theorem
  together with the result about the asymptotic distribution of degenerate
  U-statistics given in \citet[Theorem, Section 5.5.2]{serfling} to
  get that
  \begin{equation*}
     n\Ustatr{n}{g}=n\Ustat{n}{g}+n\left(\Ustatr{n}{g}-\Ustat{n}{g}\right)\overset{d}{\longrightarrow}\dbinom{q}{2}\sum_{i=1}^{\infty}\lambda_i(Z_i^2-1)
  \end{equation*}
  as $n\rightarrow\infty$. If $\xi_2(g)=0$, we apply the variance
  formula of U-statistics \citep[see][Lemma A, Section 5.2.1]{serfling} to get that
  \begin{equation*}
    \lim_{n\rightarrow\infty}\Var(n\Ustat{n}{g})=0.
  \end{equation*}
  Hence, applying Slutsky's theorem together with
  \eqref{eq:convergenceUstatresampling} proves that
  \begin{equation*}
     n\Ustatr{n}{g}=n\Ustat{n}{g}+n\left(\Ustatr{n}{g}-\Ustat{n}{g}\right)\overset{d}{\longrightarrow}0
  \end{equation*}
  as $n\rightarrow\infty$, which completes the proof of Theorem \ref{thm:resamplingustat}.
\end{proof}
 
The same result also holds for V-statistics. The proof uses the same
technique as the proof of Theorem \ref{thm:asymptoticdist_vstat2} and reduces the
V-statistic back to the U-statistic.

\begin{theorem}[asymptotic distribution of degenerate resampling
  V-statistic]
  \label{thm:resamplingvstat}
  Assume Setting \ref{setting:resampling} and Setting
  \ref{setting:uvstatasymptotic}, let
  $g\in\lpspace{2}{\Plawpower{q}}{\abs{\cdot}_{\R}}$ be a continuous,
  bounded core function. Moreover, assume
  \begin{enumerate}[(i)]
  \item for all $n\in\N$ that $g^n_1\equiv 0$,
  \item $g_1\equiv 0$ (which implies $\xi_1(g)=0$) and
  \item $\theta_g=0$.
  \end{enumerate}
  Then if $\xi_2(g)>0$ it holds that
  \begin{equation*}
    n\Vstatr{n}{g}\overset{d}{\longrightarrow}\dbinom{q}{2}\sum_{i=1}^{\infty}\lambda_iZ_i^2
  \end{equation*}
  as $n\rightarrow\infty$ and if $\xi_2(g)=0$ it holds that
  \begin{equation*}
    n\Vstatr{n}{g}\overset{d}{\longrightarrow}\dbinom{q}{2}\sum_{i=1}^{\infty}\lambda_i
  \end{equation*}
  as $n\rightarrow\infty$.
\end{theorem}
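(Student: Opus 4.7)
The plan is to mimic the proof of Theorem~\ref{thm:asymptoticdist_vstat2}, replacing every appearance of a U-statistic in the original argument by its resampled counterpart and invoking the results already established in Section~\ref{subsec:resampling_uvstat}. Without loss of generality assume $\theta_g = 0$ (this is given). Apply Lemma~\ref{thm:decompositionofmVstat} to the resampled sample $(X^*_{n,1},\dots,X^*_{n,n})$ in place of $(X_1,\dots,X_n)$. This yields the decomposition
\begin{equation*}
  n\Vstatr{n}{g} = \bigl(1+\landauO{n^{-1}}\bigr)\tilde{S}_n
  \;-\;\Bigl(\tbinom{q}{2}+\landauO{n^{-1}}\Bigr)\Ustatr{n}{g}
  \;+\; n\Ustatr{n}{g},
\end{equation*}
where $\tilde{S}_n \coloneqq \Ustatr{n}{w} + \tfrac{1}{\permsize{n}{q-1}}\sum_{B} g(X^*_{n,i_1},\dots,X^*_{n,i_q})$ and $w$ is the symmetric core function introduced in Lemma~\ref{thm:decompositionofmVstat}. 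Note that $w$ is continuous and bounded because $g$ is.

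Next, I would analyze each piece. First, since $w$ is a continuous bounded core function, Lemma~\ref{thm:consistencyresampledUstat} gives $\Ustatr{n}{w}\overset{d}{\longrightarrow}\theta_w$; convergence in distribution to a constant is equivalent to convergence in probability. By the symmetry argument used in the proof of Theorem~\ref{thm:asymptoticdist_vstat2} together with Lemma~\ref{thm:lemma_eigensum}, $\theta_w = \binom{q}{2}\sum_{i=1}^\infty \lambda_i$. Second, using that $g$ is bounded with constant $C$ and $\abs{B}=\landauO{n^{q-2}}$, the $L^1$ estimate
\begin{equation*}
  \E\Bigl\lvert \tfrac{1}{\permsize{n}{q-1}}\sum_{B} g(X^*_{n,i_1},\dots,X^*_{n,i_q})\Bigr\rvert
  \le \tfrac{C\,\abs{B}}{\permsize{n}{q-1}} = \landauO{n^{-1}}
\end{equation*}
shows the remainder tends to $0$ in probability. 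Hence $\tilde{S}_n \overset{\P}{\longrightarrow} \binom{q}{2}\sum_{i=1}^\infty \lambda_i$. Third, Lemma~\ref{thm:consistencyresampledUstat} also gives $\Ustatr{n}{g}\overset{\P}{\longrightarrow}\theta_g = 0$, so the middle term $(\binom{q}{2}+\landauO{n^{-1}})\Ustatr{n}{g}$ vanishes in probability.

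Finally I would invoke Theorem~\ref{thm:resamplingustat} for the asymptotic distribution of $n\Ustatr{n}{g}$. Note that assumption (i) that $g^n_1\equiv 0$ together with assumption (ii) that $g_1\equiv 0$ and the boundedness/continuity of $g$ are exactly the hypotheses of Theorem~\ref{thm:resamplingustat}. If $\xi_2(g) > 0$ then $n\Ustatr{n}{g}\overset{d}{\longrightarrow}\binom{q}{2}\sum_{i=1}^\infty \lambda_i(Z_i^2-1)$, and Slutsky's theorem applied to the three-term decomposition yields
\begin{equation*}
  n\Vstatr{n}{g}\overset{d}{\longrightarrow} \binom{q}{2}\sum_{i=1}^\infty \lambda_i
  + \binom{q}{2}\sum_{i=1}^\infty \lambda_i(Z_i^2-1) = \binom{q}{2}\sum_{i=1}^\infty \lambda_i Z_i^2.
\end{equation*}
If $\xi_2(g)=0$ then $n\Ustatr{n}{g}\overset{d}{\longrightarrow}0$, and Slutsky's theorem yields $n\Vstatr{n}{g}\overset{d}{\longrightarrow}\binom{q}{2}\sum_{i=1}^\infty \lambda_i$.

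The main obstacle is establishing the in-probability limit of $\tilde{S}_n$: the $\Ustatr{n}{w}$ piece requires Lemma~\ref{thm:consistencyresampledUstat} (a nontrivial resampling result), and identifying $\theta_w$ with $\binom{q}{2}\sum_i\lambda_i$ relies on Lemma~\ref{thm:lemma_eigensum}, which in turn invokes Mercer's theorem and implicitly requires the continuity/positive-definiteness structure of $\tilde{g}_2$ inherited from Setting~\ref{setting:uvstatasymptotic}. Once this eigenvalue identity is in hand, everything else is a routine Slutsky-plus-decomposition argument mirroring the proof of Theorem~\ref{thm:asymptoticdist_vstat2}.
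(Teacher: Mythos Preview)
Your proposal is correct and follows essentially the same approach as the paper: decompose $n\Vstatr{n}{g}$ via Lemma~\ref{thm:decompositionofmVstat}, handle $\Ustatr{n}{w}$ with Lemma~\ref{thm:consistencyresampledUstat} and identify $\theta_w$ via Lemma~\ref{thm:lemma_eigensum}, bound the $B$-sum by boundedness of $g$, and invoke Theorem~\ref{thm:resamplingustat} plus Slutsky for $n\Ustatr{n}{g}$. The only cosmetic difference is that the paper first passes to a common probability space via Skorohod's theorem (Theorem~\ref{thm:skorohod}) before applying the decomposition, whereas you work directly with the resampled statistics on the spaces $(\Omega_n,\mathcal{F}_n,\P_n)$; since for each fixed $n$ all the pieces of the decomposition live on the same $\Omega_n$ and convergence in distribution to a constant upgrades to convergence in probability, Slutsky applies without the Skorohod detour, so your streamlining is harmless.
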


\begin{proof}
  Applying Theorem \ref{thm:skorohod} results in a probability space
  $(\tilde{\Omega},\tilde{\mathcal{F}},\tilde{\P})$ and random
  variables $(\tilde{X}^*_{n,i})_{i\in\{1,\dots,n\}}$, $n\in\N$ and
  $(\tilde{X}_i)_{i\in\N}$ with properties specified in Theorem
  \ref{thm:skorohod}. Next, introduce the resampling U-statistic
  \begin{equation*}
    \tilde{\mathcal{U}}_n(g)\coloneqq\dbinom{n}{q}^{\sm1}\sum_{\combset{q}{n}}g(\tilde{X}^*_{n,i_1},\dots,\tilde{X}^*_{n,i_q}),
  \end{equation*}
  which has the same distribution under $\tilde{\P}$ as $\Ustatr{n}{g}$ under
  $\P_n$ and the resampling V-statistic
  \begin{equation*}
    \tilde{\mathcal{V}}_n(g)\coloneqq\dfrac{1}{n^q}\sum_{\mapset{q}{n}}g(\tilde{X}^*_{n,i_1},\dots,\tilde{X}^*_{n,i_q}),
  \end{equation*}
  which has the same distribution under $\tilde{\P}$ as $\Vstatr{n}{g}$
  under $\P_n$. For fixed $n\in\N$ we can view $\tilde{\mathcal{V}}_n(g)$ as a
  V-statistic and apply an adjusted version of Lemma
  \ref{thm:decompositionofmVstat} to get
  \begin{equation}
    \label{eq:decomposition_fun}
    n\tilde{\mathcal{V}}_n(\tilde{g})=\left(1+\landauO{n^{-1}}\right)S_n+\left(n-\dbinom{q}{2}+\landauO{n^{-1}}\right)\tilde{\mathcal{U}}_n(\tilde{g})
  \end{equation}
  as $n\rightarrow\infty$, where
  $S_n=\tilde{\mathcal{U}}_n(w)+\frac{1}{\permsize{n}{q-1}}\sum_B\tilde{g}(\tilde{X}^*_{n,i_1},\dots,\tilde{X}^*_{n,i_q})$. By
  the symmetry of the core function $g$ and the definition of $w$
  given in Lemma \ref{thm:decompositionofmVstat} it holds that
  \begin{equation*}
    \theta_w=\E\left(w(X_1,\dots,X_{q-1})\right)=\dbinom{q}{2}\E\left(\tilde{g}_2(X_1,X_1)\right).
  \end{equation*}
  The consistency of resampled U-statistics given in Lemma
  \ref{thm:consistencyresampledUstat} together with Lemma
  \ref{thm:lemma_eigensum} imply that
  \begin{equation}
    \label{eq:smpart1_fun}
    \tilde{\mathcal{U}}_n(w)\overset{d}{\longrightarrow}\theta_w=\dbinom{q}{2}\sum_{i=1}^{\infty}\lambda_i
  \end{equation}
  as $n\rightarrow\infty$. The boundedness of $g$ combined with the
  size of the set $B$ given in Lemma \ref{thm:decompositionofmVstat}
  shows that
  \begin{equation}
    \label{eq:smpart2_fun}
    \frac{1}{\permsize{n}{q-1}}\sum_B\tilde{g}(\tilde{X}^*_{n,i_1},\dots,\tilde{X}^*_{n,i_q})
    \leq \frac{C\abs{B}}{\permsize{n}{q-1}}=\landauO{n^{-1}}.
  \end{equation}
  Moreover, also by Lemma \ref{thm:consistencyresampledUstat} it
  holds that
  \begin{equation}
    \label{eq:umpart1_fun}
    \tilde{\mathcal{U}}_n(\tilde{g})\overset{d}{\longrightarrow}0
  \end{equation}
  as $n\rightarrow\infty$. By Theorem \ref{thm:resamplingustat} it
  holds if $\xi_2(g)>0$ that
  \begin{equation}
    \label{eq:umpart2_fun}
    n\tilde{\mathcal{U}}_n(\tilde{g})\overset{d}{\longrightarrow}\sum_{i=1}^{\infty}\lambda_i\left(Z_i^2-1\right)
  \end{equation}
  as $n\rightarrow\infty$ and if $\xi_2(g)=0$ that
  \begin{equation}
    \label{eq:umpart3_fun}
    n\tilde{\mathcal{U}}_n(\tilde{g})\overset{d}{\longrightarrow}0
  \end{equation}
  as $n\rightarrow\infty$.
  Finally, we can combine \eqref{eq:decomposition_fun},
  \eqref{eq:smpart1_fun}, \eqref{eq:smpart2_fun},
  \eqref{eq:umpart1_fun}, \eqref{eq:umpart2_fun}, \eqref{eq:umpart3_fun} and use that
  convergence in distribution to a constant implies convergence in
  probability together with Slutsky's theorem to get that if
  $\xi_2(g)>0$ it holds that
  \begin{equation*}
    n\tilde{\mathcal{V}}_n(\tilde{g})\overset{d}{\longrightarrow}\sum_{i=1}^{\infty}\lambda_iZ_i^2
  \end{equation*}
  as $n\rightarrow\infty$ and if $\xi_2(g)=0$ it holds that
    \begin{equation*}
    n\tilde{\mathcal{V}}_n(\tilde{g})\overset{d}{\longrightarrow}\dbinom{q}{2}\sum_{i=1}^{\infty}\lambda_i
  \end{equation*}
  as $n\rightarrow\infty$. This concludes the proof of
  Theorem~\ref{thm:resamplingvstat}.
\end{proof}

\section{Additional proofs and details}

In this section we collect all missing proofs from the main part of
the paper. To make this section more readable we have grouped the
proofs by topic.

\subsection{Properties of the asymptotic distribution of $n\cdot\mhsic_n$
  under $\HO$}\label{proof:properties_of_dist_HO}

This section strongly relies on the theory of empirical processes. We
therefore shortly recall some notation related to this theory, which
mainly builds on \citet{vandervaart}.

Let $\kernelspace$ be a measurable space, let
$\P\in\pmspace{\kernelspace}$ be a probability measure on this space
and let $\mathcal{G}$ be a function class consisting of measurable
functions $h:\kernelspace\rightarrow\R$. Moreover let
$X_1,X_2,\dots\iid\P$, which  due to issues related to the outer integral
defined below \citep[see also][Remark above Section
2.1.1]{vandervaart} are assumed to be canonically defined as
coordinate projections. The empirical measure
$\empjoint(X_1,\dots,X_n)\coloneqq\frac{1}{n}\sum_{i=1}^n\delta_{X_i}$
induces a linear map from $\mathcal{G}$ to $\R$ given by
\begin{equation*}
  h\mapsto\empjoint(X_1,\dots,X_n)h,
\end{equation*}
where we use the convention that
$\mathbb{Q}h\coloneqq\int_{\kernelspace} h(x) \measure{\mathbb{Q}}{x}$
for any measurable function $h$ and any signed measure $\mathbb{Q}$ on
$\kernelspace$.  A natural space to consider in this context is the
space of bounded functions from $\mathcal{G}$ to $\R$, which we denote
by $\linfG$. For all $T\in\linfG$ define the norm
$\norm{T}_{\mathcal{G}}\coloneqq\sup_{h\in\mathcal{G}}\abs{Th}$, then
the space $\linfG$ together with $\norm{\cdot}_{\mathcal{G}}$ forms a
Banach space which is separable if and only if $\mathcal{G}$
is a finite set. Assuming that for all $x\in\kernelspace$ it holds
that $\sup_{h\in\mathcal{G}}\abs{h(x)-\P h}<\infty$ allows us to view
the (normalized) empirical process
\begin{equation}
  \label{eq:empirical_process}
  \sqrt{n}\left(\empjoint(X_1,\dots,X_n)-\P\right),
\end{equation}
as an element of $\linfG$. In modern empirical process theory one
views this process as a random variable on $\linfG$ and is interested
in the asymptotic properties as $n\rightarrow\infty$. It, however,
turns out that the space $\linfG$ is in general too large to ensure
that the empirical process is a Borel measurable function. Therefore
it is common to introduce a specific outer integral, defined for all
functions $T$ from a probability space $(\Omega,\mathcal{F},\P)$ to
the extended real line $\overline{\R}$ (not necessarily measurable) by
\begin{equation*}
  \E^{*}(T)\coloneqq\inf\left\{\E(U):\, U\geq T,
    U:\Omega\rightarrow\overline{R}\text{ measurable and }\E(U)\text{ exists}\right\}.
\end{equation*}
Based on this definition one can extend the notions of convergence in
distribution, probability and almost surely \citep[Chapter
1]{vandervaart}. For example, assume $(X_n)_{n\in\N}$ is a sequence of
possibly non-measurable functions from underlying probability spaces
$(\Omega_n,\mathcal{A}_n,\P_n)$ to a metric space $\kernelspace$ and
let $X$ be a Borel measurable map on $\kernelspace$. Then we say
$(X_n)_{n\in\N}$ converges weakly in $\kernelspace$ to $X$, and write
$X_n\rightsquigarrow_{\kernelspace} X$, if for all
$f\in\operatorname{C}_b(\kernelspace)$ it holds that
\begin{equation*}
  \lim_{n\rightarrow\infty}\E^{*}(f(X_n))=\E(f(X)).
\end{equation*}

An important aspect of empirical process theory is to classify the
sets $\mathcal{G}$ in which the
empirical process defined in \eqref{eq:empirical_process} converges
weakly in $\linfG$. To this end, we need to introduce a particular
Gaussian process. For a probability distribution
$\P\in\pmspace{\kernelspace}$ and for a class of functions
$\mathcal{G}\subset\Lpspace{2}{\P}{\abs{\cdot}_{\R}}$ denote by
$G_{\P}$ the centered Gaussian process indexed by $\mathcal{G}$ with
covariance function given for all $f,g\in\mathcal{G}$ by
\begin{equation*}
  \operatorname{Cov}\left(G_{\P}f,G_{\P}g\right)=\E_{\P}(f(X)g(X))-\E_{\P}(f(X))\E_{\P}(g(X)).
\end{equation*}
The process $G_{\P}$ is sometimes called (generalized)
Brownian bridge process.

It should be clear that convergence of the empirical process is
related to the size of the function class $\mathcal{G}$. We call a
class of functions $\mathcal{G}$ a $\P$-Donsker class if for
$X_1,X_2,\dots\iid\P$ it holds that
\begin{equation*}
  \sqrt{n}\left(\empjoint(X_1,\dots,X_n)-\P\right)\rightsquigarrow_{\linfG} G_{\P}
\end{equation*}
as $n\rightarrow\infty$ and $G_{\P}$ is a tight Borel measurable
element in $\linfG$.

We now turn back to dHSIC. Denote by
$\empprod(\vx_1,\dots,\vx_n)\coloneqq\prod_{j=1}^d\left(\frac{1}{n}\sum_{i=1}^n\delta_{x^j_i}\right)$
the empirical product distribution and define the estimator $T_n:\prodkernelspace^n\rightarrow\R$ for all
$(\vx_1,\dots,\vx_n)\in\prodkernelspace^n$ by
\begin{equation}
  \label{eq:Tn_rep_dhsic}
  T_n(\vx_1,\dots,\vx_n)\coloneqq \sqrt{n}\sup_{f\in\prodRKHS:
    \norm{f}_{\prodRKHS}\leq 1}\abs[\big]{\empjoint(\vx_1,\dots,\vx_n)f-\empprod(\vx_1,\dots,\vx_n)f},
\end{equation}
then it is straightforward to prove that
$n\cdot\mhsicb_n=(T_n)^2$. Next, define for all $j\in\{1,\dots,d\}$ the
sets $\mathcal{F}^j\coloneqq\{f\in\RKHS^j: \norm{f}_{\RKHS^j}\leq 1\}$
and using the tensor product for functions define the set
\begin{equation}
  \label{eq:general_functionclass_F}
  \mathcal{F}\coloneqq\left\{f=\sum_{l=1}^m\lambda_lf^1_l\otimes\cdots\otimes
  f^d_l\,\Big\rvert\,\sum_{l=1}^m\lambda_l\leq 1\text{, }\lambda_l\geq 0\text{
    and } f^j_l\in\mathcal{F}^j\right\}.
\end{equation}
By \citet[Proposition 2.2]{ryan} it holds that $\overline{\mathcal{F}}=\{f\in\prodRKHS:
\norm{f}_{\prodRKHS}\leq 1\}$, which in particular, implies that the
estimator $T_n$ can be expressed as
\begin{equation*}
  T_n(\vx_1,\dots,\vx_n)\coloneqq \sqrt{n}\sup_{f\in\mathcal{F}}\abs[\big]{\empjoint(\vx_1,\dots,\vx_n)f-\empprod(\vx_1,\dots,\vx_n)f}.
\end{equation*}
Introduce the estimator
$S_n:\prodkernelspace^n\rightarrow\linfF$ defined for all
$(\vx_1,\dots,\vx_n)\in\prodkernelspace^n$ by
\begin{equation*}
  S_n(\vx_1,\dots,\vx_n)\coloneqq\sqrt{n}\left(\empjoint(\vx_1,\dots,\vx_n)-\empprod(\vx_1,\dots,\vx_n)\right).
\end{equation*}
The idea is to use the empirical process theory to find a weak limit
for $S_n$. By the continuous mapping theorem this will give us a weak
limit of $T_n=\norm{S_n}_{\mathcal{F}}$. In order to apply the empirical process theory, we show
that $S_n$ can be approximated by the estimator
$Z_n:\prodkernelspace^n\rightarrow\linfF$ which is defined for all
$(\vx_1,\dots,\vx_n)\in\prodkernelspace^n$ by
\begin{equation*}
  Z_n(\vx_1,\dots,\vx_n)\coloneqq\sqrt{n}\left(\empjoint(\vx_1,\dots,\vx_n)-\prodlawbold
  \right)-\sum_{j=1}^d\bigg(\prod_{l\neq
    j}\P^{X^l}\bigg)\sqrt{n}\left(\empjoint^j(\vx_1,\dots,\vx_n)-\P^{X^j}\right).
\end{equation*}
This trick, of approximating $S_n$ by a linear combination of
normalized empirical processes, is due to
\citet{romano1988bootstrap}. We will then be able to show that $Z_n$ converges to
a Gaussian process by making use of \citet[Theorem
4.3]{sriperumbudur2013}, which states that for all $j\in\{1,\dots,d\}$
the function class $\mathcal{F}^j$ is a $\P^{X^j}$-Donsker
class and also that the function class $\overline{\mathcal{F}}$, and
hence also $\mathcal{F}$, is a $\prodlawbold$-Donsker class.

We begin by showing that $Z_n$ and $S_n$ have the same asymptotic
properties on $\linfF$. The proof of this Lemma extends the ideas in
\citet{romano1988bootstrap} to account for classes of functions
instead of sets. Moreover, for the convergence we make use
of the empirical process theory.
\begin{lemma}[decomposition of $S_n$]
  \label{thm:decomposition_Sn}
  Assume Setting~\ref{setting:mHSIC}. Then for all
  $\vX_1,\vX_2,\dots\iid\prodlawbold\in\HO$ it holds that 
  \begin{equation*}
    \norm{S_n(\vX_1,\dots,\vX_n)-Z_n(\vX_1,\dots,\vX_n)}_{\mathcal{F}}\overset{\P}{\longrightarrow}0,
  \end{equation*}
  as $n\rightarrow\infty$.
\end{lemma}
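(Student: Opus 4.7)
The plan is to exploit the product structure $\mathbb{P} = \bigotimes_j \mathbb{P}^{X^j}$ that holds under $H_0$ to show that $S_n - Z_n$ is a higher-order remainder in the marginal empirical processes. First I would cancel the term $\sqrt{n}(\hat{\mathbb{P}}_n - \mathbb{P})$ appearing in both $S_n$ and $Z_n$, reducing the problem to
$$S_n - Z_n = -\sqrt{n}(\empprod - \mathbb{P}) + \sum_{j=1}^d \Big(\prod_{l\neq j} \mathbb{P}^{X^l}\Big) \mathbb{G}_n^j,$$
where $\mathbb{G}_n^j := \sqrt{n}(\hat{\mathbb{P}}_n^j - \mathbb{P}^{X^j})$.

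Next, by linearity and the representation \eqref{eq:general_functionclass_F}, it suffices to control $|(S_n-Z_n)f|$ on elementary tensors $f = \bigotimes_j f^j$ with $f^j \in \mathcal{F}^j$ and then use $\sum_k \lambda_k \leq 1$ with the triangle inequality. For such $f$, writing $\hat{\mu}_n^j(f^j) = \mu^j(f^j) + n^{-1/2}\mathbb{G}_n^j f^j$ with $\mu^j(f^j) := \mathbb{P}^{X^j} f^j$ and expanding the product $\prod_j \hat{\mu}_n^j(f^j)$ gives
$$\sqrt{n}(\empprod f - \mathbb{P}f) = \sum_{j=1}^d \mathbb{G}_n^j f^j \prod_{l\neq j}\mu^l(f^l) + \sum_{A \subseteq \{1,\ldots,d\},\, |A|\geq 2} n^{-(|A|-1)/2} \prod_{j\in A}\mathbb{G}_n^j f^j \prod_{l\notin A}\mu^l(f^l).$$
The linear part cancels exactly the subtracted marginal processes in $Z_n$, so
$$(S_n-Z_n)f = -\sum_{|A|\geq 2} n^{-(|A|-1)/2} \prod_{j\in A}\mathbb{G}_n^j f^j \prod_{l\notin A}\mu^l(f^l).$$

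The uniform bound then follows from two ingredients. By the boundedness of the kernels and the reproducing property, $|f^j(x^j)| \leq M$ uniformly for $f^j \in \mathcal{F}^j$, hence $|\mu^l(f^l)| \leq M$. By \citet[Theorem 4.3]{sriperumbudur2013}, each $\mathcal{F}^j$ is $\mathbb{P}^{X^j}$-Donsker, so $\|\mathbb{G}_n^j\|_{\mathcal{F}^j} = O_{\mathbb{P}}(1)$. Every term in the remainder carries a factor $n^{-(|A|-1)/2}$ with $|A|\geq 2$ and at most $d$ bounded factors times $|A|$ empirical-process factors, giving $\sup_f |(S_n-Z_n)f| = O_{\mathbb{P}}(n^{-1/2})$ on elementary tensors. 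Extending to $\mathcal{F}$ via linearity gives $\|S_n - Z_n\|_{\mathcal{F}} = O_{\mathbb{P}}(n^{-1/2}) \overset{\mathbb{P}}{\to} 0$.

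The main obstacle is the uniform control of the $\mathbb{G}_n^j$ over the RKHS unit balls $\mathcal{F}^j$; without the Donsker conclusion the remainder estimates would only be pointwise in $f$. Once that ingredient is in place, the rest is an algebraic expansion: the independence in $H_0$ makes the degree-one part of $\sqrt{n}(\empprod f - \mathbb{P}f)$ coincide with the marginal processes subtracted in $Z_n$, so everything that survives is at least quadratic in the $\mathbb{G}_n^j$'s and therefore of order $n^{-1/2}$.
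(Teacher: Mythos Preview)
Your proposal is correct and follows essentially the same approach as the paper: both proofs expand $\empprod f$ on elementary tensors as a product of $\mu^j(f^j) + n^{-1/2}\mathbb{G}_n^j f^j$, observe that the linear terms cancel against the marginal processes subtracted in $Z_n$, bound the remainder (terms with $|A|\geq 2$ empirical-process factors) uniformly via the Donsker property of the unit balls $\mathcal{F}^j$ from \citet[Theorem~4.3]{sriperumbudur2013}, and then pass to general $f\in\mathcal{F}$ by convexity. Your explicit subset indexing of the remainder is slightly more transparent than the paper's verbal description of ``terms with at least two factors'', but the argument is the same.
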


\begin{proof}
  Fix $\vX_1,\vX_2,\dots\iid\prodlawbold\in\HO$ and denote by
  $\empjoint^j(\vx_1,\dots,\vx_n)\coloneqq\frac{1}{n}\sum_{i=1}^n\delta_{x^j_i}$
  the empirical measure on the $j$-th coordinate. Then for all $j\in\{1,\dots,d\}$ it holds by
  \citet[Theorem 4.3]{sriperumbudur2013} that $\mathcal{F}^j$ is a
  $\P^{X^j}$-Donsker class and hence in particular that
  \begin{equation*}
    \norm[\big]{\empjoint^j-\P^{X^j}}_{\mathcal{F}^j}\coloneqq\sup_{h\in\mathcal{F}^j}\abs[\big]{\empjoint^j(\vX_1,\dots,\vX_n)h-\P^{X^j}h}=\landauProbO{n^{-\frac{1}{2}}},
  \end{equation*}
  as $n\rightarrow\infty$.
  Next, let
  $f=f^1\otimes\dots\otimes
  f^d\in\mathcal{F}$ then it holds that
  \begin{align}
    \empprod(\vX_1,\dots,\vX_n)f&=\prod_{j=1}^d\empjoint^j(\vX_1,\dots,\vX_n)f^j\nonumber\\
    &=\prod_{j=1}^d\left[\P^{X^j}f^j+\left(\empjoint^j(\vX_1,\dots,\vX_n)f^j-\P^{X^j}f^j\right)\right]\nonumber\\
    &=\prodlawbold
      f+\sum_{j=1}^d\left(\prod_{k\neq
          j}\P^{X^k}f^k\right)\left(\empjoint^j(\vX_1,\dots,\vX_n)f^j-\P^{X^j}f^j\right)+R\left(\empprod,\prodlawbold,f\right),\label{eq:decompositionSn_1}
  \end{align}
  where $R\left(\empprod,\prodlawbold,f\right)$ is the remainder
  term resulting from multiplying out the product. In order to
  quantify the contribution of this remainder term to the sum,
  observe that for all $j\in\{1,\dots,d\}$ it holds that
  \begin{equation}
    \label{eq:remainderterm_est_1}
    \abs[\big]{\empjoint^j(\vX_1,\dots,\vX_n)f^j-\P^{X^j}f^j}
    \leq\norm[\big]{\empjoint^j(\vX_1,\dots,\vX_n)-\P^{X^j}}_{\mathcal{F}^j}
    =\landauProbO{n^{-\frac{1}{2}}}
  \end{equation}
  as $n\rightarrow\infty$. Moreover, using that $\mathcal{F}^j$
  consists of uniformly bounded functions (clearly true since
  $\mathcal{F}^j$ is the unit ball of the RKHS $\RKHS^j$) it holds that
  \begin{equation}
    \label{eq:remainderterm_est_2}
    \abs[\big]{\P^{X^j}f^j}
    \leq\norm[\big]{\P^{X^j}}_{\mathcal{F}^j}<\infty.
  \end{equation}
  Now, since $R\left(\empprod,\prodlawbold,f\right)$ only contains
  terms with at least two factors of the form
  $\left(\empjoint^kf^k_l-\P^{X^k}f^k_l\right)$ it holds by
  \eqref{eq:remainderterm_est_1} and \eqref{eq:remainderterm_est_2} that
  \begin{equation}
    \label{eq:decompositionSn_2}
    \abs[\Big]{R\left(\empprod,\prodlawbold,f\right)}=\landauProbO{n^{-1}}.
  \end{equation}
  as $n\rightarrow\infty$. Finally, using \eqref{eq:decompositionSn_1} and
  \eqref{eq:decompositionSn_2} we get for all
  $f=\sum_{l=1}^m\lambda_lf^1_l\otimes\dots\otimes
  f^d_l\in\mathcal{F}$ that
  \begin{align*}
    &\abs{S_n(\vX_1,\dots,\vX_n)f-Z_n(\vX_1,\dots,\vX_n)f}\\
    &\quad=\abs[\bigg]{\sum_{l=1}^m\lambda_l\sqrt{n}\Big[\prodlawbold
        f_l-\empprod(\vX_1,\dots,\vX_n) f_l+\sum_{j=1}^d\Big(\prod_{k\neq
          j}\P^{X^k}f^k_l\Big)\Big(\empjoint^j(\vX_1,\dots,\vX_n)f^j_l-\P^{X^j}f^j_l\Big)\Big]}\\
    &\quad=\abs[\bigg]{\sum_{l=1}^m\lambda_l\sqrt{n}R\left(\empprod,\prodlawbold,f_l\right)}\\
    &\quad=\landauProbO{n^{-\frac{1}{2}}},
  \end{align*}
  as $n\rightarrow\infty$. Since the bound is independent of $f$ we
  immediately get
  \begin{equation*}
    \norm{S_n(\vX_1,\dots,\vX_n)-Z_n(\vX_1,\dots,\vX_n)}_{\mathcal{F}}\overset{\P}{\longrightarrow}0,
  \end{equation*}
  as $n\rightarrow\infty$, which completes the proof of Lemma~\ref{thm:decomposition_Sn}.
\end{proof}

\begin{lemma}[asymptotic properties of $Z_n$]
  \label{thm:asymptotic_props_Zn2}
  Assume Setting~\ref{setting:mHSIC}. Let
  $\vX_1,\vX_2\dots\iid\prodlawbold\in\HO$. Then it holds that
  \begin{equation*}
    Z_n(\vX_1,\dots,\vX_n)
  \end{equation*}
  converges in distribution on $\linfF$ to a centered Gaussian
  process $Z$ indexed by $\mathcal{F}$ such that $Z$ is a tight Borel
  measurable element on $\linfF$ and such that $\P\left(\norm{Z}_{\mathcal{F}}=0\right)=0$.
\end{lemma}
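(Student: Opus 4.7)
The plan is to realize $Z_n$ as a continuous image of a single empirical process indexed by a suitable enlarged function class, apply the Donsker results cited from \citet{sriperumbudur2013}, and then use the continuous mapping theorem to identify the limit as a centered Gaussian process on $\ell^\infty(\mathcal{F})$.

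First I would enlarge the class $\mathcal{F}$ on $\prodkernelspace$ to a class $\tilde{\mathcal{F}}$ containing $\mathcal{F}$ together with, for each $j \in \{1,\dots,d\}$, all functions of the form $\vx \mapsto h(x^j)$ with $h \in \mathcal{F}^j$. Since the individual marginals $\hat{\P}_n^{X^j}$ are pushforwards of $\hat{\P}_n^{\vX}$ under the coordinate projection, we can rewrite
\begin{equation*}
Z_n(\vX_1,\dots,\vX_n)f = \sqrt{n}(\hat{\P}_n^{\vX} - \prodlawbold)f - \sum_{j=1}^d \Bigl(\prod_{l \neq j}\P^{X^l}f^l\Bigr)\,\sqrt{n}(\hat{\P}_n^{\vX} - \prodlawbold)\tilde f^{\,j},
\end{equation*}
for elementary tensors $f = f^1 \otimes \cdots \otimes f^d$, where $\tilde f^{\,j}(\vx) = f^j(x^j)$, and extend linearly to all $f \in \mathcal{F}$. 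Thus $Z_n$ is a (continuous, bounded-coefficient) linear map of the single empirical process $\sqrt{n}(\hat{\P}_n^{\vX} - \prodlawbold)$ viewed as an element of $\ell^\infty(\tilde{\mathcal{F}})$.

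Next I would establish that $\tilde{\mathcal{F}}$ is $\prodlawbold$-Donsker. By \citet[Theorem 4.3]{sriperumbudur2013} the closed unit ball $\overline{\mathcal{F}}$ is $\prodlawbold$-Donsker, and each $\mathcal{F}^j$ is $\P^{X^j}$-Donsker. The pullback of $\mathcal{F}^j$ under the coordinate projection is $\prodlawbold$-Donsker (the Donsker property is preserved by measurable transformations, and the uniform entropy conditions transfer). A finite union of uniformly bounded Donsker classes is Donsker, so $\tilde{\mathcal{F}}$ is $\prodlawbold$-Donsker, and
\begin{equation*}
\sqrt{n}(\hat{\P}_n^{\vX} - \prodlawbold) \rightsquigarrow_{\ell^\infty(\tilde{\mathcal{F}})} G_{\prodlawbold},
\end{equation*}
with $G_{\prodlawbold}$ a tight Borel measurable centered Gaussian process on $\ell^\infty(\tilde{\mathcal{F}})$. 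The continuous mapping theorem then yields $Z_n \rightsquigarrow_{\ell^\infty(\mathcal{F})} Z$, where
\begin{equation*}
Z(f) = G_{\prodlawbold}(f) - \sum_{j=1}^d \Bigl(\prod_{l \neq j}\P^{X^l}f^l\Bigr) G_{\prodlawbold}(\tilde f^{\,j})
\end{equation*}
on elementary tensors, extended linearly (and by continuity) to $\mathcal{F}$. Being a continuous linear image of a tight centered Gaussian element, $Z$ is itself tight, Borel measurable, and centered Gaussian on $\ell^\infty(\mathcal{F})$.

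It remains to show $\P(\|Z\|_{\mathcal{F}} = 0) = 0$, which I expect to be the main delicate step. The idea is to exhibit a single $f_0 \in \mathcal{F}$ with $\Var(Z(f_0)) > 0$; then $\P(\|Z\|_{\mathcal{F}} = 0) \leq \P(Z(f_0) = 0) = 0$ since a nondegenerate Gaussian has no atoms. Pick centered $f^j \in \mathcal{F}^j$ (i.e.\ with $\P^{X^j}f^j = 0$) that are not $\P^{X^j}$-a.s.\ zero — such $f^j$ exist because $\mathcal{F}^j$ is the unit ball of an RKHS whose kernel is characteristic, hence dense in $\Lpspace{2}{\P^{X^j}}{\abs{\cdot}_{\R}}$, in particular in its mean-zero subspace. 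For $f_0 = f^1 \otimes \cdots \otimes f^d$ and under $H_0$, each correction term $\prod_{l \neq j}\P^{X^l}f^l$ vanishes, so $Z(f_0) = G_{\prodlawbold}(f_0)$, and under the independence hypothesis
\begin{equation*}
\Var(Z(f_0)) = \prodlawbold(f_0^2) - (\prodlawbold f_0)^2 = \prod_{j=1}^d \E\bigl((f^j(X^j))^2\bigr) > 0.
\end{equation*}
This concludes the proof; the main obstacle is the verification that $\tilde{\mathcal{F}}$ remains Donsker under $\prodlawbold$ after gluing in the pullbacks of the marginal classes, which should follow from standard preservation results but deserves careful attention because the pullbacks introduce additional dependencies that could in principle enlarge the bracketing/uniform entropy beyond the product-measure setting considered in \citet{sriperumbudur2013}.
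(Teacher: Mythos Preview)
Your weak-convergence argument is close in spirit to the paper's, though organised differently: the paper invokes the Donsker result of \citet{sriperumbudur2013} separately for $\mathcal{F}$ (under $\prodlawbold$) and for each $\mathcal{F}^j$ (under $\P^{X^j}$), and writes the limit directly as $G_{\prodlawbold}-\sum_{j}(\prod_{l\neq j}\P^{X^l})G_{\P^{X^j}}$. Joint convergence of these dependent processes is left implicit there, and your pullback-to-a-single-enlarged-class argument is precisely what supplies it. So on this half your route is correct and, if anything, more careful about the joint limit than the paper is.

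The genuine gap is in your non-degeneracy step. Setting~\ref{setting:mHSIC} only assumes that the \emph{product} kernel $\prodk=k^1\otimes\cdots\otimes k^d$ is characteristic; there is no hypothesis on the individual $k^j$. Your assertion that ``$\mathcal{F}^j$ is the unit ball of an RKHS whose kernel is characteristic'' is therefore unwarranted. Moreover, even for characteristic kernels the RKHS is not in general dense in $L^2(\P^{X^j})$---characteristicness is injectivity of the mean embedding on finite measures, which is strictly weaker than $L^2$-density---so the existence of a centred $f^j\in\mathcal{F}^j$ that is not $\P^{X^j}$-a.s.\ zero does not follow from the stated hypotheses. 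The paper avoids any marginal-kernel assumption by working with $\prodk$ throughout: it writes $Z_nf=n^{-1/2}\sum_{i=1}^n Y_i(f)$ for explicit iid summands, computes $\Var(Y_i(f))$ directly under $H_0$, and then uses injectivity of the mean embedding $\Pi$ for the \emph{product} kernel to exhibit an elementary tensor $f_0\in\mathcal{F}$ with $\E f_0(\vX)\neq 0$, which forces $\Var(Y_i(f_0))>0$ and hence $\P(Zf_0=0)=0$ via the CLT.
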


\begin{proof}
  Recall that $Z_n$ (first introduced by \citet{romano1988bootstrap}) is a linear combination of empirical processes
  \begin{equation}
    \label{eq:Zn_rep_emp_process}
    Z_n(\vX_1,\dots,\vX_n)=\sqrt{n}\left(\empjoint(\vX_1,\dots,\vX_n)-\prodlawbold
      \right)-\sum_{j=1}^d\bigg(\prod_{l\neq
    j}\P^{X^l}\bigg)\sqrt{n}\left(\empjoint^j(\vX_1,\dots,\vX_n)-\P^{X^j}\right).
  \end{equation}
  Therefore, since by \citet[Theorem 4.3]{sriperumbudur2013} for all
  $j\in\{1,\dots,d\}$ the sets $\mathcal{F}^j$ are $\P^{X^j}$-Donsker
  classes and $\mathcal{F}$ is a $\prodlawbold$-Donsker class it holds
  that
  \begin{equation*}
    Z_n(\vX_1,\dots,\vX_n)\rightsquigarrow_{\linfF} G_{\prodlawbold}-\sum_{j=1}^d\bigg(\prod_{l\neq
    j}\P^{X^l}\bigg)G_{\P^{X^j}}\eqqcolon Z,
  \end{equation*}
  as $n\rightarrow\infty$. Using that the sum of tight
  Borel measurable Gaussian processes is again a tight Borel
  measurable Gaussian process, it follows that $Z$ is a tight Borel
  measurable centered Gaussian process on $\linfF$.
  
  It remains to show that $\P\left(\norm{Z}_{\mathcal{F}}=0\right)=0$. By
  monotonicity of the integral it is enough to show that there exists
  $f_0\in\mathcal{F}$ such that $\P\left(\abs{Zf_0}=0\right)=0$.  To
  this end, define for all $f=f^1\otimes\dots\otimes
  f^d\in\mathcal{F}$ and for all $i\in\N$ the random variables
  \begin{equation*}
    Y_i(f)\coloneqq
    f(\vX_i)-\E\left(f(\vX_i)\right)-\sum_{j=1}^d\left(\prod_{l\neq j}\E\left(f^l(X^l_i)\right)\right)\left(f^j(X^j_i)-\E\left(f^j(X^j_i\right)\right).
  \end{equation*}
  Clearly, it holds that $(Y_i(f))_{i\in\N}$ is an iid sequence of
  centered real-valued random variables and by
  \eqref{eq:Zn_rep_emp_process} it holds that
  $Z_nf=\frac{1}{\sqrt{n}}\sum_{i=1}^nY_i(f)$.
  Hence, if we are able to show that there exists
  $f_0=f^1_0\otimes\dots\otimes f^d_0\in\mathcal{F}$
  such that $\Var\left(Y_i(f_0)\right)>0$ the classical central limit
  theorem proves that $\P\left(\abs{Zf_0}=0\right)=0$. Using that
  $\prodlawbold\in\HO$ we can perform the following calculation.
  \begin{align}
    \Var\left(Y_i(f)\right)
    &=\E\Bigg(\Big[f(\vX_i)-\sum_{j=1}^d\Big(\prod_{l\neq
            j}\E\left(f^l(X^l_i)\right)\Big)f^j(X^j_i)\Big]^2\Bigg)-(d-1)^2\E\left(f(\vX_i)\right)^2\nonumber\\
    &=\E\left(f(\vX_i)^2\right)-\sum_{j=1}^d\Big(\prod_{l\neq
            j}\E\left(f^l(X^l_i)\right)\Big)\E\left(f(\vX_i)f^j(X^j_i)\right)\nonumber\\
    &\quad +\sum_{j_1=1}^d\sum_{j_2=1}^d\Big(\prod_{l_1\neq
      j_1}\E\left(f^{l_1}(X^{l_1}_i)\right)\Big)\Big(\prod_{l_2\neq
      j_2}\E\left(f^{l_2}(X^{l_2}_i)\right)\Big)\E\left(f^{j_1}(X^{j_1}_i)f^{j_2}(X^{j_2}_i)\right)\nonumber\\
    &\quad -(d-1)^2\E\left(f(\vX_i)\right)^2\nonumber\\
    &=\E\left(f(\vX_i)^2\right)-\sum_{j=1}^d\Big(\prod_{l\neq
            j}\E\left(f^l(X^l_i)\right)\Big)^2\E\left(f^j(X^j_i)^2\right)\nonumber\\
    &\quad +\sum_{j=1}^d\Big(\prod_{l\neq
            j}\E\left(f^l(X^l_i)\right)\Big)^2\E\left(f^j(X^j_i)^2\right)+d(d-1)\E\left(f(\vX_i)\right)^2-(d-1)^2\E\left(f(\vX_i)\right)^2\nonumber\\
    &=\Var\left(f(\vX_i)\right)+d\cdot\E\left(f(\vX_i)\right)^2.\label{eq:variance_yi}
  \end{align}
  Assume for the sake of contradiction that
  $\E\left(f(\vX_i)\right)=0$ for all $f=f^1\otimes\dots\otimes
  f^d\in\mathcal{F}$, then by linearity it in particular holds that
  $\E\left(f(\vX_i)\right)=0$ for all $f\in\mathcal{F}$. Using the
  properties of the Bochner integral and the definition of the mean
  embedding function this implies that for all $f\in\mathcal{F}$ it
  holds that
  \begin{equation*}
    \E\left(f(\vX_i)\right)
    =\E\Big(\scalarprod[\big]{\prodk(\vX_i,\cdot),f}_{\prodRKHS}\Big)
    =\scalarprod[\big]{\Pi\left(\prodlawbold\right),f}_{\prodRKHS}=0.
  \end{equation*}
  Since $\overline{\mathcal{F}}$ is the unit ball in $\prodRKHS$ it
  holds that $\Pi\left(\prodlawbold\right)=0$. By
  Setting~\ref{setting:mHSIC}, we have that $\Pi$ is injective (since
  $\prodk$ is characteristic) and hence $\prodlawbold\equiv 0$, which
  contradicts that $\prodlawbold$ is a probability
  distribution. Therefore, there exists $f_0=f^1_0\otimes\dots\otimes
  f^d_0\in\mathcal{F}$ such that $\E\left(f_0(\vX_i)\right)^2>0$. By
  \eqref{eq:variance_yi} this implies that
  $\Var\left(f_0(\vX_i)\right)>0$, which completes the proof of
  Lemma~\ref{thm:asymptotic_props_Zn2}.
\end{proof}

\begin{theorem}[continuous asymptotic distribution]
  \label{thm:cont_strictly_increasing_cdf}
  Assume Setting~\ref{setting:mHSIC}. Then for
  $\vX_1,\vX_2,\dots\iid\prodlawbold\in\HO$ it holds that
  \begin{equation*}
    n\cdot\mhsicb\left(\vX_1,\dots,\vX_n\right)
  \end{equation*}
  converges in distribution to a random variable which has no atom at
  $0$ and a continuous distribution function on $(0,\infty)$.
\end{theorem}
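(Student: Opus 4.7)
The plan is to assemble the two preceding lemmas via Slutsky, push the limit through the squared supremum norm by the continuous mapping theorem, and then appeal to a classical fact about the distribution of the supremum of a Gaussian process.

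First, by Lemma \ref{thm:decomposition_Sn} we have $\|S_n - Z_n\|_{\mathcal{F}} \overset{\P}{\to} 0$, while by Lemma \ref{thm:asymptotic_props_Zn2} the sequence $Z_n(\vX_1,\dots,\vX_n)$ converges in distribution on $\ell^\infty(\mathcal{F})$ to a tight Borel-measurable centered Gaussian process $Z$ with $\P(\|Z\|_{\mathcal{F}} = 0) = 0$. A standard Slutsky-type argument for weak convergence in the (non-separable) Banach space $\ell^\infty(\mathcal{F})$, which is permissible because $Z$ is tight, yields $S_n \rightsquigarrow_{\ell^\infty(\mathcal{F})} Z$. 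Since the map $T \mapsto \|T\|_{\mathcal{F}}^2$ is continuous from $\ell^\infty(\mathcal{F})$ to $\R$, the continuous mapping theorem gives
\begin{equation*}
n\cdot\mhsicb_n(\vX_1,\dots,\vX_n) = \|S_n\|_{\mathcal{F}}^2 \overset{d}{\longrightarrow} \|Z\|_{\mathcal{F}}^2,
\end{equation*}
using the identification $n\cdot\mhsicb_n = T_n^2 = \|S_n\|_{\mathcal{F}}^2$ established above \eqref{eq:general_functionclass_F}.

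The statement that the limit has no atom at $0$ is then immediate: $\P(\|Z\|_{\mathcal{F}}^2 = 0) = \P(\|Z\|_{\mathcal{F}} = 0) = 0$ by Lemma~\ref{thm:asymptotic_props_Zn2}. What remains, and what I expect to be the main obstacle, is continuity of the distribution function of $\|Z\|_{\mathcal{F}}^2$ on $(0,\infty)$. Since $t \mapsto t^2$ is a continuous bijection from $(0,\infty)$ onto $(0,\infty)$, it suffices to prove continuity of the distribution function of $\|Z\|_{\mathcal{F}} = \sup_{f \in \mathcal{F}} |Zf|$ on $(0,\infty)$.

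For this last step I would invoke the classical result of Tsirelson (and independently Ibragimov--Sudakov; see, e.g., Lifshits' monograph on Gaussian processes) which states that for any tight centered Gaussian random element $Z$ in a Banach space, the distribution function of $\|Z\|$ is either degenerate at $0$ or has no atom and is continuous on $(\text{ess inf}\|Z\|, \infty)$. Observe that the set $\mathcal{F}$ defined in \eqref{eq:general_functionclass_F} is symmetric ($f \in \mathcal{F}$ implies $-f \in \mathcal{F}$, since replacing any one tensor factor $f^j_l$ by $-f^j_l$ keeps it in $\mathcal{F}^j$), so $\|Z\|_{\mathcal{F}} = \sup_{f \in \mathcal{F}} Zf \geq 0$ and $\text{ess inf}\|Z\|_{\mathcal{F}} = 0$ as soon as the limit is non-degenerate. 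Non-degeneracy follows from Lemma~\ref{thm:asymptotic_props_Zn2}. Combining these ingredients yields the claimed continuity of the CDF on $(0,\infty)$ and completes the proof.
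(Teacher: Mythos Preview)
Your proof is correct and essentially identical to the paper's: both combine Lemmas~\ref{thm:decomposition_Sn} and~\ref{thm:asymptotic_props_Zn2} via Slutsky, apply the continuous mapping theorem to the sup-norm, and finish with the classical Gaussian result on continuity of the norm distribution. The only cosmetic difference is that the paper explicitly passes to the separable subspace $\operatorname{C}_u(\mathcal{F},\rho)$ (via van der Vaart--Wellner, Lemma~1.5.9) before citing the Gaussian-norm result as Gin\'e--Nickl, Exercise~2.4.4, whereas you invoke Tsirelson directly for tight Gaussian elements---equivalent, since tightness already forces concentration on a separable subspace, and your symmetry/ess-inf detour is then unnecessary (for a centered tight Gaussian the origin is in the support, so $\operatorname{ess\,inf}\|Z\|_{\mathcal{F}}=0$ automatically).
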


\begin{proof}
  It is sufficient to prove the result for $T_n$, because
  $n\cdot\mhsicb_n=(T_n)^2$. Furthermore, by Slutsky's theorem
  together with Lemma~\ref{thm:decomposition_Sn} and
  Lemma~\ref{thm:asymptotic_props_Zn2} it holds that
  $S_n(\vX_1,\dots,\vX_n)$ converges in distribution on $\linfF$ to a
  mean zero Gaussian process $Z$ indexed by $\mathcal{F}$ such that
  $Z$ is a tight Borel measurable element in $\linfF$.  Next, by
  \citet[Lemma 1.5.9]{vandervaart2} there exists a semi-metric $\rho$
  on $\mathcal{F}$ for which the paths of $Z$ are $\P$-a.s. uniformly
  continuous and $\mathcal{F}$ is totally bounded.

  Denote by $\operatorname{C}_u(\mathcal{F},\rho)$ the space of
  bounded uniformly continuous functions from $\mathcal{F}$ to
  $\R$. It can be shown that this space seen as a subspace of $\linfF$
  with the $\norm{\cdot}_{\mathcal{F}}$ norm is a complete and
  separable space if $\mathcal{F}$ is totally bounded with respect to
  $\rho$ \citep[e.g.][Example 18.7]{vandervaart}. The previous
  considerations therefore imply that it holds $\P$-a.s. that $Z$
  takes values in a separable Banach space.  Therefore we can apply
  \citet[Exercise 2.4.4]{gine} with
  $B=\operatorname{C}_u(\mathcal{F},\rho)$, which proves that
  distribution function of $\norm{Z}_{\mathcal{F}}$ is continuous on
  $(0,\infty)$.  Moreover, the continuous mapping theorem \citep[for
  an extention to outer measures see][Theorem 1.11.1]{vandervaart2}
  together with the continuity of the norm implies that $T_n$
  converges in distribution to $\norm{Z}_{\mathcal{F}}$. Since by
  Lemma~\ref{thm:asymptotic_props_Zn2} it also holds that
  $\norm{Z}_{\mathcal{F}}$ has no atom at zero, the proof of
  Theorem~\ref{thm:cont_strictly_increasing_cdf} is complete.
\end{proof}

\subsection{Asymptotic distribution of $n\cdot\mhsic_n$}\label{proof:asymptotic_dist}

A key step in the following proof is to show by means of contradiction
that $\xi_2(h)>0$. This relies on
Theorem~\ref{thm:cont_strictly_increasing_cdf}, which is proved using
empirical process theory in Appendix~\ref{proof:properties_of_dist_HO}.

\begin{proof}[Theorem~\ref{thm:asymptoticdist_HO}]
  Use Lemma~\ref{thm:properties_h} to observe that $\mhsicb_n$ is simply
  the V-statistic $\Vstat{n}{h}$ with $\theta_h=\mhsic$. By Lemma
  \ref{thm:degeneracy} it holds that $\xi_1(h)=0$ under $\HO$ and
  moreover, again by Lemma \ref{thm:properties_h} it holds that $h$ is
  bounded and continuous.  If $\xi_2(h)>0$, we can apply
  Theorem~\ref{thm:asymptoticdist_vstat2} to see that,
  \begin{equation*}
    n\cdot\mhsicb_n\overset{d}{\longrightarrow}\dbinom{2d}{2}\sum_{i=1}^{\infty}\lambda_iZ_i^2
  \end{equation*}
  as $n\rightarrow\infty$.
  
  Next, assume $\xi_2(h)=0$, we can apply Theorem~\ref{thm:var_vstat} to see
  that $\lim_{n\rightarrow\infty}\Var(n\cdot\mhsicb_n)=0$. Combining
  this with Theorem~\ref{thm:biasVstat} and Lemma~\ref{thm:lemma_eigensum} hence
  leads to
  \begin{equation*}
    n\cdot\mhsicb_n\overset{d}{\longrightarrow}\dbinom{2d}{2}\sum_{i=1}^{\infty}\lambda_i.
  \end{equation*}
  as $n\rightarrow\infty$. However,
  Theorem~\ref{thm:cont_strictly_increasing_cdf} states that the
  limiting distribution of $n\cdot\mhsicb_n$ has a distribution
  function which is continuous on $(0,\infty)$ and has no atom at
  $0$. This is a contradiction and hence it holds that $\xi_2(h)>0$,
  which completes the proof of Theorem~\ref{thm:asymptoticdist_HO}.
\end{proof}

\begin{proof}[Theorem~\ref{thm:corollaryasymptoticdistmhsic}]
  Let $t\in\R$ and $\vX_1,\vX_2,\ldots\iid\jointlawb\in\HA$ fixed, then
  \begin{equation}
    \label{eq:asymptoticdivergence}
    \begin{split}
    \P&\left(n\cdot\mhsicb_n(\vX_1,\dots,\vX_n)\leq t\right)\\
    &=\P\left(\sqrt{n}(\mhsicb_n(\vX_1,\dots,\vX_n)-\mhsic)\leq
      \dfrac{t}{\sqrt{n}}-\sqrt{n}\mhsic\right).
    \end{split}
  \end{equation}
  Moreover, by Lemma~\ref{thm:properties_h} it holds that $\mhsicb$ is
  simply the V-statistic $\Vstat{n}{h}$ with
  $\theta_h=\mhsic$. Additionally, again by Lemma \ref{thm:properties_h}
  it holds that $h$ is bounded and continuous.
  If $\xi_1(h)>0$ then we can apply
  Theorem \ref{thm:asymptoticdist_vstat1} to see that,
  \begin{equation}
    \label{eq:xi_1nonzerocase}
    \sqrt{n}\left(\mhsicb_n-\mhsic\right)
    \overset{d}{\longrightarrow} \normaldist{0}{(2d)^2\xi_1(h)}
  \end{equation}
  as $n\rightarrow\infty$.
  Next assume $\xi_1(h)=0$, then by Theorem~\ref{thm:var_vstat}
  it holds that
  \begin{equation*}
    \E\left(n\left(\mhsicb_n-\mhsic\right)^2\right)=n\Var\left(\mhsicb\right)=\landauO{n^{-1}}
  \end{equation*}
  as $n\rightarrow\infty$ and since convergence in second moment implies convergence in
  distribution this implies
  \begin{equation}
    \label{eq:xi_1zerocase}
    \sqrt{n}\left(\mhsicb_n-\mhsic\right)\overset{d}{\longrightarrow}0
  \end{equation}
  as $n\rightarrow\infty$.  Using the corollary of Slutsky's theorem
  given in \citet[Corollary 11.2.3]{lehmann} and combining \eqref{eq:asymptoticdivergence}
  with \eqref{eq:xi_1nonzerocase} if $\xi_1(h)>0$ and \eqref{eq:asymptoticdivergence} with
  \eqref{eq:xi_1zerocase} if $\xi_1(h)=0$ completes the proof of Corollary
  \ref{thm:corollaryasymptoticdistmhsic}.
\end{proof}

\subsection{Properties of the permutation test}\label{proof:permutation_test}

\begin{proof}[Proposition~\ref{thm:level_permutation_test}]
  Fix $n\in\N$, under $\HO$, i.e. $\vX_1,\vX_2,\ldots\iid\prodlawbold\in\HO$, it holds that the
  individual coordinates of $\vX_i$ are independent. Hence, for all
  $\boldsymbol{\psi}\in (S_n)^d$ it holds that
  $(\vX_1,\dots,\vX_n)$ is equal in distribution to
  $(\vX_1^{\boldsymbol{\psi}},\dots,\vX_n^{\boldsymbol{\psi}})$, so in
  particular, we have that
  \begin{equation}
    \label{eq:invariancecondperm}
    g_{n,\boldsymbol{\psi}}(\vX_1,\dots,\vX_n)\text{ is equal in distribution to }(\vX_1,\dots,\vX_n).
  \end{equation}
  Moreover since $(S_n)^d$ has a group structure we can apply Theorem
  \ref{thm:level_resampling_test} to get that $\phi$ has level
  $\alpha$, which completes the proof of Proposition
  \ref{thm:level_permutation_test}.
\end{proof}

\subsection{Properties of the bootstrap test}\label{proof:bootstrap_test}

In this section we prove that the bootstrap test has pointwise
asymptotic level (Theorem~\ref{thm:level_bootstrap_test}) and is
consistent (Theorem~\ref{thm:consistencybootstraptest}). The proofs
rely on showing that under both $\HO$ and $\HA$ the resampled test
statistic
$n\cdot\mhsicb_n(g_{n,\boldsymbol{\Psi}_n}(\vX_1,\dots,\vX_n))$
converges in distribution to an infinite sum of chi-squared
distributed random variables. This is shown in
Lemma~\ref{thm:asymptotic_dist_resampled_dhsic}. We can use standard
arguments to conclude the proofs, by comparing this
distribution with the results about the asymptotic distribution of the
test statistic $n\cdot\mhsicb(\vX_1,\dots,\vX_n)$ given in
Theorem~\ref{thm:asymptoticdist_HO} and
Theorem~\ref{thm:corollaryasymptoticdistmhsic}.

We begin by formally introducing the empirical product distribution function.
\begin{definition}[empirical product distribution function]
  Assume Setting \ref{setting:mHSIC}, then the function
  $\hat{F}_n:\prodkernelspace^n\times\R^d\rightarrow [0,1]$ satisfying for all
  $(\vx_1,\dots,\vx_n)\in\prodkernelspace^n$ and for all $\vt\in\R^d$ that
  \begin{equation*}
    \hat{F}_n(\vx_1,\dots,\vx_n)(\vt)\coloneqq\prod_{j=1}^d\left(\dfrac{1}{n}\sum_{i=1}^n\mathds{1}_{\{x^j_i\leq t^j\}}\right)
  \end{equation*}
  is called the empirical product distribution function.
\end{definition}
The following lemma is basic but essential for the proof of
Lemma~\ref{thm:asymptotic_dist_resampled_dhsic}. It shows that random draws
from the resampling distribution correspond to independent draws from
the empirical product distribution $\empprodlaw$.

\begin{lemma}[bootstrapping property]
  \label{thm:bootstrap_prop}
  Assume Setting \ref{setting:mHSIC}, let $n\in\N$, and for all
  $\boldsymbol{\psi}\in B_n^d$ (see \eqref{eq:bootstrap_set}) let $g_{n,\boldsymbol{\psi}}$ be defined as in \eqref{eq:gmpsifun},
  let $\boldsymbol{\Psi}$ be a random variable with uniform
  distribution on $B_n^d$ and let $\hat{F}_n$ be the empirical product
  distribution function. Then it holds for all $(\vx_1,\dots,\vx_n)\in\prodkernelspace^n$ that
  \begin{equation*}
    g_{n,\boldsymbol{\Psi}}(\vx_1,\dots,\vx_n)=\left(\vx_{n,1}^{\boldsymbol{\Psi}},\dots,\vx_{n,n}^{\boldsymbol{\Psi}}\right)
  \end{equation*}
  are $n$ iid random variables with distribution function $\hat{F}_n(\vx_1,\dots,\vx_n)$.
\end{lemma}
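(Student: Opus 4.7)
The plan is to exploit that choosing $\boldsymbol{\Psi}$ uniformly on $B_n^d$ is equivalent to drawing $nd$ iid uniform random variables on $\{1,\ldots,n\}$ and arranging them as the values $\Psi^j(i)$ for $j \in \{1,\ldots,d\}$ and $i \in \{1,\ldots,n\}$. Since $B_n^d = (B_n)^d$ and $B_n = \{1,\ldots,n\}^{\{1,\ldots,n\}}$ can be identified with the Cartesian product $\{1,\ldots,n\}^n$, the uniform distribution on $B_n^d$ is precisely the product of $nd$ uniform distributions on $\{1,\ldots,n\}$. Therefore the family $(\Psi^j(i))_{j\in\{1,\dots,d\},\,i\in\{1,\dots,n\}}$ consists of $nd$ iid uniform random variables on $\{1,\ldots,n\}$.

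Independence of the vectors $\vx_{n,i}^{\boldsymbol{\Psi}}$ for $i = 1, \ldots, n$ then follows because the $i$-th vector depends only on the subfamily $\{\Psi^j(i) : j=1,\ldots,d\}$, and these subfamilies are disjoint across different indices $i$. Since disjoint subfamilies of an iid family are mutually independent, so are the images under the (deterministic) map $(k^1,\ldots,k^d)\mapsto (x^1_{k^1},\ldots,x^d_{k^d})$, yielding independence of the $n$ vectors. For the identification of the distribution, fix $i$ and note that for each $j$ the random variable $x^j_{\Psi^j(i)}$ takes the value $x^j_k$ with probability $1/n$ for each $k\in\{1,\ldots,n\}$, hence has distribution function $t^j \mapsto \tfrac{1}{n}\sum_{k=1}^n \mathds{1}_{\{x^j_k \leq t^j\}}$. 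Because $\Psi^1(i), \ldots, \Psi^d(i)$ are independent, the coordinates of $\vx_{n,i}^{\boldsymbol{\Psi}}$ are independent, so its joint distribution function factors to
\begin{equation*}
\prod_{j=1}^d \Bigl(\tfrac{1}{n}\sum_{k=1}^n \mathds{1}_{\{x^j_k \leq t^j\}}\Bigr) = \hat{F}_n(\vx_1,\ldots,\vx_n)(\vt),
\end{equation*}
as required.

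The argument is essentially a bookkeeping exercise rather than an analytic one: the only obstacle is keeping the indexing transparent, in particular the fact that the $d$-fold Cartesian product of uniform measures on $B_n$ coincides with the uniform measure on $B_n^d$. Note also that the data points $\vx_1,\ldots,\vx_n$ are treated as fixed (deterministic) quantities here, and all randomness in the statement comes from $\boldsymbol{\Psi}$; this is the standard formulation of bootstrap resampling yielding iid draws from the empirical distribution, adapted here to the product-of-marginals setting.
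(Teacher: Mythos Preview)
Your proof is correct and follows essentially the same approach as the paper: both arguments observe that the uniform distribution on $B_n^d$ factors into $nd$ iid uniform indices $\Psi^j(i)$ on $\{1,\ldots,n\}$, deduce independence of the $n$ resampled vectors from the disjointness of the index families across $i$, and compute the distribution function coordinatewise as a product of empirical marginals. If anything, your version is slightly more explicit about the independence step (naming it as ``disjoint subfamilies of an iid family''), whereas the paper states it more tersely; but there is no substantive difference.
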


\begin{proof}
  Let $(\tilde{\Omega},\tilde{\mathcal{F}},\tilde{\P})$ be the
  probability space such that
  $\boldsymbol{\Psi}=(\Psi^1,\dots,\Psi^d):\tilde{\Omega}\rightarrow
  B_n^d$. Then, by the properties of the uniform distribution it holds
  that $\Psi^1,\dots,\Psi^d$ are iid with uniform distribution on
  $B_n$ and moreover
  $\Psi^1(1),\dots,\Psi^1(n),\dots,\Psi^d(1),\dots,\Psi^d(n)$ are iid
  with uniform distribution on $\{1,\dots,n\}$. This implies
  that for all
  $(\vx_1,\dots,\vx_n)\in\prodkernelspace^n$ it holds that
  \begin{equation*}
    g_{n,\boldsymbol{\Psi}}(\vx_1,\dots,\vx_n)=\left(\vx_{n,1}^{\boldsymbol{\Psi}},\dots,\vx_{n,n}^{\boldsymbol{\Psi}}\right)
  \end{equation*}
  is a vector of $n$ independent random variables. Furthermore, we have that for
  all $(\vx_1,\dots,\vx_n)\in\prodkernelspace^n$, for all
  $i\in\{1,\dots,n\}$ and for all $\vt\in\R^d$ it holds that
  \begin{align*}
    \tilde{\P}\left(\vx^{\boldsymbol{\Psi}}_{n,i}\leq\vt\right)
    &=\prod_{j=1}^d\tilde{\P}\left(x^j_{n,\Psi^j(i)}\leq
      t^j\right)\\
    &=\prod_{j=1}^d\left(\dfrac{1}{\abs{B_n}}\sum_{\psi\in B}\mathds{1}_{\{x^j_{n,\psi(i)}\leq
      t^j\}}\right)\\
    &=\prod_{j=1}^d\left(\dfrac{1}{n}\sum_{l=1}^n\mathds{1}_{\{x^j_{n,i}\leq
      t^j\}}\right).
  \end{align*}
  Hence, it holds for
  all $i\in\{1,\dots,n\}$ that $\vx^{\boldsymbol{\Psi}}_{n,i}$ has
  distribution function $\hat{F}_n(\vx_1,\dots,\vx_n)$, which completes the proof of
  Lemma~\ref{thm:bootstrap_prop}.
\end{proof}

We are now ready to show that the resampled test statistic
asymptotically behaves like the test statistic based on the
product law.

\begin{lemma}[asymptotic distribution of the resampled test statistic]
  \label{thm:asymptotic_dist_resampled_dhsic}
  Assume Setting \ref{setting:mHSIC}. Let
  $\vX_1,\vX_2,\dots\iid\prodlawbold\in\HO\cup\HA$ and for all
  $n\in\N$ let $\boldsymbol{\Psi}_n$ be a uniformly distributed random
  variable on $B_n^d$ independent of $(\vX_i)_{i\in\N}$. Moreover, let
  $(Z_i)_{i\in\N}$ be a sequence of independent standard normal random
  variables on $\R$, let $T_{h_2}\in
  L(\Lpspace{2}{\prodlaw}{\abs{\cdot}_{\R}})$ with the property that
  for every $f\in\Lpspace{2}{\prodlaw}{\abs{\cdot}_{\R}}$ and for
  every $\vx\in\prodkernelspace$ it holds that
  \begin{equation}
    \label{eq:lambda_diff}
    \left(T_{h_2}(f)\right)(\vx)=\int_{\prodkernelspace}h_2(\vx,\vy)f(\vy)\measure{\prodlaw}{\vy}
  \end{equation}
  and let $(\lambda_i)_{i\in\N}$ be the eigenvalues of $T_{h_2}$. Then
  there exists a measurable set $A_0\subset\Omega$ with $\P(A_0)=1$ satisfying
  that, for all $\omega\in A_0$ that
  \begin{equation*}
    n\cdot\mhsicb_n\left(g_{n,\boldsymbol{\Psi}_n}(\vX_1(\omega),\dots,\vX_n(\omega))\right)\overset{d}{\longrightarrow}\binom{2d}{2}\sum_{i=1}^{\infty}\lambda_iZ_i^2
  \end{equation*}
  as $n\rightarrow\infty$.
\end{lemma}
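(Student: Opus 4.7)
The plan is to reduce to the resampled V-statistic result Theorem~\ref{thm:resamplingvstat} by viewing the bootstrapped sample, conditional on the data, as an iid sample from the empirical product distribution $\hat{F}_n$. More precisely, Lemma~\ref{thm:bootstrap_prop} shows that given a realization $(\vx_1,\dots,\vx_n)$, the resampled vector $g_{n,\boldsymbol{\Psi}_n}(\vx_1,\dots,\vx_n)$ consists of $n$ iid random variables with law $\hat{F}_n(\vx_1,\dots,\vx_n) = \empprodlaw$, and by Lemma~\ref{thm:properties_h}, $n\cdot\mhsicb_n$ is the V-statistic $n \Vstat{n}{h}$ with core function $h$. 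Thus the resampled statistic is, conditionally on the data, precisely a V-statistic computed from an iid sample drawn from the (random) product distribution $\empprodlaw$.

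To match Setting~\ref{setting:resampling}, I would condition on an element $\omega$ of a probability-one set $A_0$ on which the marginal empirical distributions $\empjoint^j$ converge weakly to $\P^{X^j}$ for each $j$; such a set exists by the strong law (applied coordinatewise to the marginals, yielding Glivenko--Cantelli convergence of each marginal CDF almost surely, which gives weak convergence of the product). Fixing $\omega\in A_0$, the empirical product $\empprodlaw$ converges weakly to $\prodlaw$ as $n\to\infty$. I then verify the three hypotheses of Theorem~\ref{thm:resamplingvstat} applied to the core function $h$:
\begin{compactitem}
\item[(iii)] $\theta_h = \mhsic(\prodlaw) = 0$ since $\prodlaw \in \HO$ by construction;
\item[(ii)] $h_1 \equiv 0$ with respect to the product law $\prodlaw$ by Lemma~\ref{thm:degeneracy};
\item[(i)] $h_1^n \equiv 0$ with respect to $\empprodlaw$ for every $n$, again by Lemma~\ref{thm:degeneracy}, since $\empprodlaw$ is itself a product law and the degeneracy argument applies to any product distribution.
\end{compactitem}
Finally, the non-degeneracy $\xi_2(h) > 0$ (computed under $\prodlaw$) is already established in Theorem~\ref{thm:asymptoticdist_HO}. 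Theorem~\ref{thm:resamplingvstat} then gives, for each fixed $\omega \in A_0$, the conditional convergence in distribution to $\binom{2d}{2}\sum_{i=1}^{\infty}\lambda_i Z_i^2$, where the $\lambda_i$ are the eigenvalues of $T_{\tilde{h}_2} = T_{h_2}$ (since $\theta_h = 0$) under $\prodlaw$, matching the statement.

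The main obstacle will be carefully setting up the conditional probability framework so that Theorem~\ref{thm:resamplingvstat} (stated for a deterministic sequence of distributions $\P^{X^*_n}$ converging to $\P^X$) can be invoked given a fixed $\omega$; in particular, one must observe that the theorem's hypotheses only reference the laws $\P^{X^*_n}$, and that on the event $A_0$ the random sequence of laws $\empprodlaw(\omega)$ provides a deterministic sequence satisfying the required weak convergence. A second subtle point is confirming that the degeneracy $h_1^n \equiv 0$ really holds pointwise for the empirical product: since Lemma~\ref{thm:degeneracy} derives $h_1 \equiv 0$ purely from the product structure of the underlying distribution (and not from any absolute continuity), the same proof goes through verbatim when the distribution is replaced by the empirical product, which is always a product measure by construction.
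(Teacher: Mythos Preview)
Your proposal is correct and follows essentially the same route as the paper: fix $\omega$ in a probability-one set on which the empirical product law converges weakly to $\prodlaw$ (via Glivenko--Cantelli coordinatewise), identify the bootstrapped sample as iid from $\empprodlaw$ by Lemma~\ref{thm:bootstrap_prop}, verify the degeneracy hypotheses of Theorem~\ref{thm:resamplingvstat} using Lemma~\ref{thm:degeneracy} for both the empirical and limiting product laws, invoke $\xi_2(h)>0$ from Theorem~\ref{thm:asymptoticdist_HO}, and conclude. The paper's proof is structured identically; your remarks on the two ``subtle points'' are exactly the considerations the paper handles implicitly.
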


\begin{proof}
  Let $\hat{F}_n$ be the empirical product distribution
  function and define for all $\vt\in\R^d$ the population product
  distribution function by
  \begin{equation*}
    F(\vt)\coloneqq
    \prodlaw
    \big((-\infty,t^1]\times\cdots\times (-\infty,t^d]\big)
    =\prod_{j=1}^d\P\left(X^j\leq t^j\right).
  \end{equation*}

  Applying the Glivenko-Cantelli theorem \citep[e.g.][Theorem
  19.1]{vandervaart}, which extends the strong law of large numbers
  for empirical distributions to uniform convergence, shows that there
  exists a subset $A_0\subseteq\Omega$ such that $\P(A_0)=1$ and such
  that for all $\omega\in A_0$ it holds for all $\vt\in\R^d$ that
  \begin{equation}
    \label{eq:convergence_empdist_consistency}
    \lim_{n\rightarrow\infty}\hat{F}_n(\vX_1(\omega),\dots,\vX_n(\omega))(\vt)=F(\vt).
  \end{equation}
  The Glivenko-Cantelli theorem is necessary to get the
  almost sure convergence uniform in $\vt$.

  By Lemma~\ref{thm:bootstrap_prop} it holds for all $n\in\N$
  and for all $(\vx_1,\dots,\vx_n)\in\prodkernelspace^n$ that
  \begin{equation*}
    g_{n,\boldsymbol{\Psi}_n}(\vx_1,\dots,\vx_n)=\left(\vx_{n,1}^{\boldsymbol{\Psi}_n},\dots,\vx_{n,n}^{\boldsymbol{\Psi}_n}\right)
  \end{equation*}
  are iid random
  variables with distribution function $\hat{F}_n(\vx_1,\dots,\vx_n)$.
  
  Fix $\omega\in A_0$, let $(\vX^*_i)_{i\in\N}$ be iid sequence of
  random variables with distribution $\prodlaw$ and for all $i\in\N$ define
  $\vx_i\coloneqq\vX_i(\omega)$. Then, by
  \eqref{eq:convergence_empdist_consistency} it holds that
  \begin{equation*}
    \vx^{\boldsymbol{\Psi}_n}_{n,i}\overset{d}{\longrightarrow}\vX^*_i
  \end{equation*}
  as $n\rightarrow\infty$. Hence, we are in the same setting as described in Setting
  \ref{setting:resampling}.

  Since both $\prodlaw\in\HO$ and $\empprodlaw\in\HO$ it holds by
  Lemma \ref{thm:degeneracy} for all $\vz\in\prodkernelspace$ that
  \begin{equation*}
    h_1(\vz)=\E\left(h(\vz,\vX^*_2,\dots,\vX^*_{2d})\right)=0
  \end{equation*}
  and for all $n\in\{2d,2d+1,\dots\}$ and for all $\vz\in\prodkernelspace$ that
  \begin{equation*}
    h^n_1(\vz)=\E\left(h(\vz,\vx^{\boldsymbol{\Psi}_n}_{n,2},\dots,\vx^{\boldsymbol{\Psi}_n}_{n,2d})\right)=0,
  \end{equation*}
  where $h$ is defined as in \eqref{eq:hcore}. Moreover, it holds by
  Proposition~\ref{thm:mhsic_independence} that
  \begin{equation*}
    \theta_h=\E\left(h(\vX^*_1,\dots,\vX^*_{2d})\right)=\mhsic\left(\prodlaw\right)=0.
  \end{equation*}
  Moreover, since $\prodlaw$ clearly satisfies the null hypothesis,
  Theorem~\ref{thm:asymptoticdist_HO} implies that $\xi_2(h)>0$.  We
  therefore satisfy all requirements of
  Theorem~\ref{thm:resamplingvstat} and get that
  \begin{equation}
    \label{eq:convergence_in_distribution1_consistency}
    n\cdot\mhsicb_n(\vx^{\boldsymbol{\Psi}_n}_{n,1},\dots,\vx^{\boldsymbol{\Psi}_n}_{n,n})
    =n\Vstatr{n}{h}\overset{d}{\longrightarrow}
    \dbinom{2d}{2}\sum_{i=1}^{\infty}\lambda_iZ_i^2
  \end{equation}
  as $n\rightarrow\infty$. This completes the proof of
  Lemma~\ref{thm:asymptotic_dist_resampled_dhsic}.
\end{proof}

Intuitively, it should be clear that
Lemma~\ref{thm:asymptotic_dist_resampled_dhsic} together with
Theorem~\ref{thm:asymptoticdist_HO} is sufficient to show pointwise
asymptotic level. The details are given in the proof below.

\begin{proof}[Theorem~\ref{thm:level_bootstrap_test}, asymptotic level]
  Let $\vX_1,\vX_2,\ldots\iid\prodlawbold\in\HO$ be fixed and use the
  notation defined in
  Lemma~\ref{thm:asymptotic_dist_resampled_dhsic}. Then it holds by
  Theorem~\ref{thm:asymptoticdist_HO} that
  \begin{equation}
    \label{eq:asymptotic_dist_dhsic_null}
    n\cdot\mhsicb_n(\vX_1,\dots,\vX_n)\overset{d}{\longrightarrow}\binom{2d}{2}\sum_{i=1}^{\infty}\lambda_iZ_i^2
  \end{equation}
  as $n\rightarrow\infty$. Let $G:\R\rightarrow(0,1)$ be the distribution
  function of $\binom{2d}{2}\sum_{i=1}^{\infty}\lambda_iZ_i^2$, then
  by Lemma~\ref{thm:asymptotic_dist_resampled_dhsic} it holds for all
  $t\in\R$ and for all $\omega\in A_0$ that
  \begin{align*}
    \lim_{n\rightarrow\infty}\hat{R}_n(\vX_1(\omega),\dots,\vX_n(\omega))(t)
    &=\lim_{n\rightarrow\infty}\dfrac{1}{n^{nd}}\sum_{\boldsymbol{\psi}\in
      B_n^d}
    \mathds{1}_{\{n\cdot\mhsicb_n(g_{n,\boldsymbol{\psi}}(\vX_1(\omega),\dots,\vX_n(\omega)))\leq
      t\}}\\
    &=\lim_{n\rightarrow\infty}\E\left(\mathds{1}_{\{n\cdot\mhsicb_n(g_{n,\boldsymbol{\Psi}_n}(\vX_1(\omega),\dots,\vX_n(\omega)))\leq
      t\}}\right)\\
    &=\lim_{n\rightarrow\infty}\P\left(n\cdot\mhsicb_n(g_{n,\boldsymbol{\Psi}_n}(\vX_1(\omega),\dots,\vX_n(\omega)))\leq
      t\right)\\
    &=G(t).
  \end{align*}
  Since $G$ is continuous it holds for all $t\in (0,1)$ and for all
  $\omega\in A_0$ that
  \begin{equation*}
    \lim_{n\rightarrow\infty}\left(\hat{R}_n(\vX_1(\omega),\dots,\vX_n(\omega))\right)^{-1}(t)=G^{-1}(t)
  \end{equation*}
  \citep[e.g.][Lemma 11.2.1]{lehmann}. Recall that $\P(A_0)=1$ which
  implies that it holds $\P$-a.s. that
  \begin{equation}
    \label{eq:quantileconvergence_level}
    \lim_{n\rightarrow\infty}\left(\hat{R}_n(\vX_1,\dots,\vX_n)\right)^{-1}(1-\alpha)=G^{-1}(1-\alpha).
  \end{equation}
  Finally, we can perform the following calculation
  \begin{align*}
    \limsup_{n\rightarrow\infty}&\,\P\left(\phi_n(\vX_1,\dots,\vX_n)=1\right)\\
    &=\limsup_{n\rightarrow\infty}\,\P\left(n\cdot\mhsicb_n(\vX_1,\dots,\vX_n)>\left(\hat{R}_n(\vX_1,\dots,\vX_n)\right)^{-1}(1-\alpha)\right)\\
    &=1-\liminf_{n\rightarrow\infty}\,\P\left(n\cdot\mhsicb_n(\vX_1,\dots,\vX_n)\leq\left(\hat{R}_n(\vX_1,\dots,\vX_n)\right)^{-1}(1-\alpha)\right)\\
    &=1-G(G^{-1}(1-\alpha))=\alpha,
  \end{align*}
  where in the last step we use the corollary of Slutsky's theorem
  given in \citet[Corollary 11.2.3]{lehmann} together with
  \eqref{eq:asymptotic_dist_dhsic_null} and
  \eqref{eq:quantileconvergence_level}. This completes
  the proof of Theorem~\ref{thm:level_bootstrap_test}.  
\end{proof}

The proof of Theorem~\ref{thm:consistencybootstraptest} uses similar
reasoning as the proof of Theorem~\ref{thm:level_bootstrap_test}.

\begin{proof}[Theorem~\ref{thm:consistencybootstraptest}, consistency]
  Let $\vX_1,\vX_2,\ldots\iid\prodlawbold\in\HA$ be fixed and use the
  notation defined in
  Lemma~\ref{thm:asymptotic_dist_resampled_dhsic}. Let
  $G:\R\rightarrow(0,1)$ be the distribution function of
  $\binom{2d}{2}\sum_{i=1}^{\infty}\lambda_iZ_i^2$, then by
  Lemma~\ref{thm:asymptotic_dist_resampled_dhsic} it holds for all
  $t\in\R$ and for all $\omega\in A_0$ that
  \begin{align*}
    \lim_{n\rightarrow\infty}\hat{R}_n(\vX_1(\omega),\dots,\vX_n(\omega))(t)
    &=\lim_{n\rightarrow\infty}\dfrac{1}{n^{nd}}\sum_{\boldsymbol{\psi}\in
      B_n^d}
    \mathds{1}_{\{n\cdot\mhsicb_n(g_{n,\boldsymbol{\psi}}(\vX_1(\omega),\dots,\vX_n(\omega)))\leq
      t\}}\\
    &=\lim_{n\rightarrow\infty}\E\left(\mathds{1}_{\{n\cdot\mhsicb_n(g_{n,\boldsymbol{\Psi}_n}(\vX_1(\omega),\dots,\vX_n(\omega)))\leq
      t\}}\right)\\
    &=\lim_{n\rightarrow\infty}\P\left(n\cdot\mhsicb_n(g_{n,\boldsymbol{\Psi}_n}(\vX_1(\omega),\dots,\vX_n(\omega)))\leq
      t\right)\\
    &=G(t).
  \end{align*}
  Since $G$ is continuous it holds for all $t\in (0,1)$ and for all
  $\omega\in A_0$ that
  \begin{equation*}
    \lim_{n\rightarrow\infty}\left(\hat{R}_n(\vX_1(\omega),\dots,\vx_n(\omega))\right)^{-1}(t)=G^{-1}(t)
  \end{equation*}
  \citep[e.g.][Lemma 11.2.1]{lehmann}. So in particular for all $\omega\in A_0$ it holds that
  \begin{equation}
    \label{eq:quantileconvergence_consistency}
    \lim_{n\rightarrow\infty}\left(\hat{R}_n(\vX_1(\omega),\dots,\vX_n(\omega))\right)^{-1}(1-\alpha)=G^{-1}(1-\alpha).
  \end{equation}
  Introduce the set
  \begin{equation}
    \label{eq:quantileconvergence_consistency2}
    A_1\coloneqq\left\{\omega\in\Omega \,\big\rvert\, \forall
      t\in\R:\,\lim_{n\rightarrow\infty}\mathds{1}_{\{n\cdot\mhsicb_n(\vX_1(\omega),\dots,\vX_n(\omega))\leq
      t\}}=0\right\}.
  \end{equation}
  By Theorem~\ref{thm:corollaryasymptoticdistmhsic} it holds that
  $\P(A_1)=1$, which implies that $\P(A_0\cap A_1)=1$.  Let $\omega\in
  A_0\cap A_1$, then by \eqref{eq:quantileconvergence_consistency} and
  \eqref{eq:quantileconvergence_consistency2} there exists a constant
  $t^*\in\R$ such that for all $n\in\N$ it holds that
  \begin{equation*}
    \left(\hat{R}_n(\vX_1(\omega),\dots,\vX_n(\omega))\right)^{-1}(1-\alpha)\leq t^*
  \end{equation*}
  and hence
  \begin{align*}
    \lim_{n\rightarrow\infty}&\mathds{1}_{\left\{n\cdot\mhsicb_n(\vX_1(\omega),\dots,\vX_n(\omega))
        \leq\left(\hat{R}_n(\vX_1(\omega),\dots,\vX_n(\omega))\right)^{-1}(1-\alpha)\right\}}\\
    &\leq \lim_{n\rightarrow\infty}\mathds{1}_{\{n\cdot\mhsicb_n(\vX_1(\omega),\dots,\vX_n(\omega))\leq
      t^*\}}=0.
  \end{align*}
  This proves that $\P$-a.s. it holds that
  \begin{equation*}
    \lim_{n\rightarrow\infty}\mathds{1}_{\left\{n\cdot\mhsicb_n(\vX_1,\dots,\vX_n)\leq
      \left(\hat{R}_n(\vX_1,\dots,\vX_n)\right)^{-1}(1-\alpha)\right\}}=0
  \end{equation*}
  and applying the dominated convergence theorem we also get
  \begin{align*}
    \lim_{n\rightarrow\infty}&\P\left(\phi_n(\vX_1,\dots,\vX_n)=0\right)\\
    &=\lim_{n\rightarrow\infty}\E\left(\mathds{1}_{\left\{n\cdot\mhsicb_n(\vX_1,\dots,\vX_n)\leq
      \left(\hat{R}_n(\vX_1,\dots,\vX_n)\right)^{-1}(1-\alpha)\right\}}\right)\\
    &=0,
  \end{align*}
  which completes the proof of Theorem~\ref{thm:consistencybootstraptest}.
\end{proof}

\subsection{Moments of $\mhsicb_n$}\label{proof:momentsofdhsic}
In this section we show how the first two asymptotic moments of
$\mhsicb_n$ can be calculated.
\begin{proof}[Proposition~\ref{thm:meanofmhsic}]
  Due to Lemma \ref{thm:properties_h} we know that $\mhsicb$ is a
  V-statistic with core function $h$. Under $\HO$ it holds that
  $\theta_h=0$ and thus applying Lemma
  \ref{thm:biasVstat} results in
  \begin{equation*}
    \E\left(\mhsicb_n\right)=\dfrac{1}{n}\dbinom{2d}{2}\E\left(h_2(\vX_1,\vX_1)\right)+\landauO{n^{-2}}.
  \end{equation*}
  We can use Lemma \ref{thm:expansionh2} to explicitly calculate
  $\tbinom{2p}{2}\E(h_2(\vX_1,\vX_1))$, which
  together with the independence assumption under $\HO$ simplifies to
  the desired expression. This concludes the proof of Proposition~\ref{thm:meanofmhsic}.
\end{proof}

\begin{proof}[Proposition~\ref{thm:varianceofmhsic}]
  Due to Lemma \ref{thm:properties_h} we know that $\mhsicb$ is a
  V-statistic with core function $h$. Applying Lemma
  \ref{thm:var_vstat} thus results in
  \begin{equation*}
    \Var\left(\mhsicb_n\right)=\binom{n}{2d}^{-1}\dbinom{2d}{2}\dbinom{n-2d}{2d-2}\xi_2+\landauO{n^{-\frac{5}{2}}}.
  \end{equation*}
  Under $\HO$ we can use Lemma~\ref{thm:expansionh2} to get that
  \begin{align*}
    \xi_2
    &=\E\left(h_2(\vX_1,\vX_2)^2\right)\\
    &=\dbinom{2d}{2}^{-2}\E\left(\left(\sum_{i=1}^{10}a_i\right)^2\right)\\
    &=\dbinom{2d}{2}^{-2}\sum_{i,j=1}^{10}\E\left(a_ia_j\right).
  \end{align*}
  Each term $\E(a_ia_j)$ can be explicitly calculated and simplified
  using the independence properties under $\HO$ (very tedious). This concludes the
  proof of Proposition~\ref{thm:varianceofmhsic}.
\end{proof}

\subsection{Kernel representation of $\mhsic$}\label{proof:mhsickernel}

\begin{proof}[Proposition~\ref{thm:mhsickernel}]
  Using the definition of the mean embedding we get
  \begin{align}
    \mhsic&=\norm[\Big]{\Pi\left(\prodlaw\right)-\Pi\left(\jointlaw\right)}_{\scriptscriptstyle \prodRKHS}^2\nonumber\\
    &=\norm[\bigg]{\prod_{j=1}^d\E\left(k^j\left(X^j_1,\cdot\right)\right)-\E\left(\prodk\left(\vX_1,\cdot\right)\right)}_{\scriptscriptstyle \prodRKHS}^2\nonumber\\
    &=\norm[\bigg]{\prod_{j=1}^d\E\left(k^j\left(X^j_1,\cdot\right)\right)}_{\scriptscriptstyle \prodRKHS}^2+\norm[\bigg]{\E\left(\prodk\left(\vX_1,\cdot\right)\right)}_{\scriptscriptstyle \prodRKHS}^2\nonumber\\
    &\qquad
    -2\scalarprod[\Bigg]{\prod_{j=1}^d\E\left(k^j\left(X^j_1,\cdot\right)\right),\E\left(\prodk\left(\vX_1,\cdot\right)\right)}_{\scriptscriptstyle \prodRKHS}\label{eq:mhsicpart0}
  \end{align}
  Next we simplify each term individually using the properties of the
  Bochner integral and the properties
  of tensor Hilbert spaces.
  \begin{align}
    \norm[\bigg]{\prod_{j=1}^d\E\left(k^j\left(X^j_1,\cdot\right)\right)}_{\scriptscriptstyle \prodRKHS}^2&=\prod_{j=1}^d\norm[\bigg]{\E\left(k^j\left(X^j_1,\cdot\right)\right)}_{\scriptscriptstyle \RKHS^j}^2\nonumber\\
    &=\prod_{j=1}^d\scalarprod[\bigg]{\E\left(k^j\left(X^j_1,\cdot\right)\right),\E\left(k^j\left(X^j_1,\cdot\right)\right)}_{\scriptscriptstyle \RKHS^j}\nonumber\\
    &=\prod_{j=1}^d\E\left(\scalarprod[\big]{k^j\left(X^j_1,\cdot\right),k^j\left(X^j_2,\cdot\right)}_{\scriptscriptstyle \RKHS^j}\right)\nonumber\\
    &=\prod_{j=1}^d\E\left(k^j\left(X^j_1,X^j_2\right)\right)\nonumber\\
    &=\E\left(\prod_{j=1}^dk^j\left(X^j_{2j-1},X^j_{2j}\right)\right)\label{eq:mhsicpart1}
  \end{align}
  \begin{align}
    \norm[\bigg]{\E\left(\prodk\left(\vX_1,\cdot\right)\right)}_{\scriptscriptstyle \prodRKHS}^2&=\scalarprod[\bigg]{\E\left(\prodk\left(\vX_1,\cdot\right)\right),\E\left(\prodk\left(\vX_1,\cdot\right)\right)}_{\scriptscriptstyle \prodRKHS}\nonumber\\
    &=\E\Big(\scalarprod[\big]{\prodk\left(\vX_1,\cdot\right),\prodk\left(\vX_2,\cdot\right)}_{\scriptscriptstyle \prodRKHS}\Big)\nonumber\\
    &=\E\left(\prodk\left(\vX_1,\vX_2\right)\right)\nonumber\\
    &=\E\left(\prod_{j=1}^dk^j\left(X^j_1,X^j_2\right)\right)\label{eq:mhsicpart2}
  \end{align}
  \begin{align}
    \scalarprod[\Bigg]{\prod_{j=1}^d\E\left(k^j\left(X^j_1,\cdot\right)\right),\E\left(\prodk\left(\vX_1,\cdot\right)\right)}_{\scriptscriptstyle
      \prodRKHS}
    &=\E\left(\scalarprod[\bigg]{\E\left(\prod_{j=1}^d k^j\left(X^j_{j+1},\cdot\right)\right),\prod_{i=1}^dk^j\left(X^j_1,\cdot\right)}_{\scriptscriptstyle \prodRKHS}\right)\nonumber\\
    &=\E\left(\scalarprod[\bigg]{\prod_{j=1}^dk^j\left(X^j_{j+1},\cdot\right),\prod_{j=1}^dk^j\left(X^j_1,\cdot\right)}_{\scriptscriptstyle \prodRKHS}\right)\nonumber\\
    &=\E\left(\prod_{j=1}^d\scalarprod[\big]{k^j\left(X^j_{j+1},\cdot\right),k^j\left(X^j_1,\cdot\right)}_{\scriptscriptstyle \RKHS^j}\right)\nonumber\\
    &=\E\left(\prod_{j=1}^dk^j\left(X^j_1,X^j_{j+1}\right)\right)\label{eq:mhsicpart3}
  \end{align}
  Combining \eqref{eq:mhsicpart0}, \eqref{eq:mhsicpart1}, \eqref{eq:mhsicpart2} and
  \eqref{eq:mhsicpart3} completes the proof of Proposition~\ref{thm:mhsickernel}.
\end{proof}

\subsection{Properties of $h$}\label{proof:properties_h}

\begin{proof}[Lemma~\ref{thm:properties_h}]
  We prove the 5 properties separately.
  
  $\bullet$ {\bfseries $h$ is symmetric:}\\ This is immediate by
    construction.
    
  $\bullet$ {\bfseries $h$ is continuous:}\\ This follows from the continuity of the kernels
    $k^j$, which is assumed in Setting \ref{setting:mHSIC}.
    
  $\bullet$ {\bfseries $h$ is bounded:}\\ Under Setting \ref{setting:mHSIC} we assume
  that all $k^j$'s are bounded. Hence for all $j\in\{1,\dots,d\}$ let
  $C^j>0$ such that for all $z_1,z_2\in\kernelspace$ it holds
  that $\abs{k^j(z_1,z_2)}<C^j.$
  Thus it is clear that for all
  $\vz_1,\dots,\vz_{2d}\in\prodkernelspace$ it holds that
  \begin{equation*}
    \abs{h(\vz_1,\dots,\vz_{2d})}<4\prod_{j=1}^dC^j=:C.
  \end{equation*}
  
  $\bullet$ {\bfseries $\mhsicb$ is a V-statistic with core function $h$:}\\ Compute directly,
  \begin{align*}
    \Vstat{n}{h}&=\dfrac{1}{n^{2d}}\sum_{\mapset{2d}{n}}h\left(\vX_1,\dots,\vX_{2p}\right)\\
    &=\dfrac{1}{(2d)!}\sum_{\pi\in
      S_{2d}}\dfrac{1}{n^{2d}}\sum_{\mapset{2d}{n}}\Bigg[\prod_{j=1}^dk^j\left(X^j_{\pi(i_1)},X^j_{\pi(i_2)}\right)+
    \prod_{j=1}^dk^j\left(X^j_{\pi(_{2j-1})},X^j_{\pi(i_{2j})}\right)\\
    &\qquad\qquad\qquad\qquad\qquad\qquad-2\prod_{j=1}^dk^j\left(X^j_{\pi(i_1)},X^j_{\pi(i_{j+1})}\right)\Bigg]\\
    &=\dfrac{1}{(2d)!}\sum_{\pi\in
      S_{2d}}\Bigg[\dfrac{1}{n^{2}}\sum_{\mapset{2}{n}}\prod_{j=1}^dk^j\left(X^j_{\pi(i_1)},X^j_{\pi(i_2)}\right)\\
    &\qquad\qquad\qquad\qquad\qquad\qquad+\dfrac{1}{n^{2d}}\sum_{\mapset{2d}{n}}\prod_{j=1}^dk^j\left(X^j_{\pi(_{2j-1})},X^j_{\pi(i_{2j})}\right)\\
    &\qquad\qquad\qquad\qquad\qquad\qquad-\dfrac{2}{n^{d+1}}\sum_{\mapset{d+1}{n}}\prod_{j=1}^dk^j\left(X^j_{\pi(i_1)},X^j_{\pi(i_{j+1})}\right)\Bigg]\\
    &=\mhsicb_n.
  \end{align*}
  
  $\bullet$ {\bfseries $\theta_h=\mhsic$:}\\ Again computing directly,
  \begin{align*}
    \E\left(h(\vX_1,\dots,\vX_{2d})\right)&=\dfrac{1}{(2d)!}\sum_{\pi\in
      S_{2d}}\Bigg[\E\left(\prod_{j=1}^dk^j\left(X^j_{\pi(1)},X^j_{\pi(2)}\right)\right)\\
    &\qquad\qquad\qquad+\E\left(\prod_{j=1}^dk^j\left(z^j_{\pi(2j-1)},z^j_{\pi(2j)}\right)\right)\\
    &\qquad\qquad\qquad-2\E\left(\prod_{j=1}^dk^j\left(z^j_{\pi(1)},z^j_{\pi(j+1)}\right)\right)\Bigg]\\
    &=\dfrac{1}{(2d)!}\sum_{\pi\in
      S_{2d}}\Bigg[\E\left(\prod_{j=1}^dk^j\left(X^j_{1},X^j_{2}\right)\right)+
    \E\left(\prod_{j=1}^dk^j\left(z^j_{2j-1},z^j_{2j}\right)\right)\\
    &\qquad\qquad\qquad-2\E\left(\prod_{j=1}^dk^j\left(z^j_{1},z^j_{j+1}\right)\right)\Bigg]\\
    &=\mhsic.
  \end{align*}

  This completes the proof of Lemma \ref{thm:properties_h}
\end{proof}

\subsection{Further technical results related to $h$}

In order to make the calculations in this section more readable we use
the following conventions.
\begin{itemize}
\item For all $j\in\{1,\dots,d\}$ and for all
  $i_1,i_2\in\{1,\dots,n\}$ we
  set $$k^j_{i_1,i_2}:=k^j(X^j_{i_1},X^j_{i_2}).$$
\item For all $q,n\in\N$, for all functions
  $g:\prodkernelspace^n\rightarrow\R$ and for all
  $i_1,\dots,i_q,j_1,\dots,j_n\in\{1,\dots,n\}$ we
  set $$\E_{i_1,\dots,i_q}\left(g(\vX_{j_1},\dots,\vX_{j_n})\right)=\int_{\prodkernelspace}\cdots\int_{\prodkernelspace}g(\vX_{j_1},\dots,\vX_{j_n})\measure{\P^{\vX}}{\vX_{i_1}}\cdots\measure{\P^{\vX}}{\vX_{i_q}}.$$
\end{itemize}

\begin{lemma}[expansion of $h_1$]
  \label{thm:expansionh1}
  Assume Setting \ref{setting:mHSIC}. Then it holds for all
  $\vz\in\prodkernelspace$ that,
  \begin{equation*}
    \begin{split}
      h_1(\vz)&=\dfrac{1}{d}\left[\E\left(\prod_{j=1}^dk^j(z^j,X^j_1)\right)-\E\left(\prod_{j=1}^dk^j(z^j,X^j_{j})\right)\right]\\
      &\quad
      +\dfrac{d-1}{d}\left[\E\left(\prod_{j=1}^dk^j(X^j_1,X^j_2)\right)-\E\left(\prod_{j=1}^dk^j(X^j_1,X^j_{j+1})\right)\right]\\
      &\quad +\dfrac{1}{d}\left[\sum_{r=1}^d\E\left(\left(\prod_{j\neq
            r}^dk^j(X^j_{2j\sm
            1},X^j_{2j})\right)k^r(z^r,X^r_{2r})\right)\right.\\
      &\qquad\qquad\qquad\left.-\sum_{r=1}^d\E\left(\left(\prod_{j\neq
          r}^dk^j(X^j_1,X^j_{j+1})\right)k^r(z^r,X^r_{r+1})\right)\right]
    \end{split}
  \end{equation*}
\end{lemma}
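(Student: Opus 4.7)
The plan is to compute $h_1(\vz) = \E(h(\vz, \vX_2, \ldots, \vX_{2d}))$ by interchanging the expectation with the symmetrization sum in the definition of $h$. Since $h$ is a linear combination of three symmetrized products (with coefficients $+1, +1, -2$ and an overall prefactor $1/(2d)!$), I would handle each product separately. Because $\vX_2, \ldots, \vX_{2d}$ are iid copies of $\vX$, the expectation $\E[P(\pi)(\vz, \vX_2, \ldots, \vX_{2d})]$ for each product $P$ depends on $\pi$ only through the ``slot type'' occupied by the distinguished index $1$ (which carries $\vz$) inside $P$; so the strategy is to enumerate these slot types, count the permutations that realize each, and evaluate the resulting expectation using independence of observations across distinct indices together with the joint law $\jointlawb$ within a single observation.

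For the first product $\prod_j k^j(z^j_{\pi(1)}, z^j_{\pi(2)})$, either $1 \in \{\pi(1), \pi(2)\}$ (count $2(2d-1)!$, yielding $\E(\prod_j k^j(z^j, X^j_1))$ by kernel symmetry) or $1 \notin \{\pi(1), \pi(2)\}$ (count $(2d-2)(2d-1)!$, yielding $\E(\prod_j k^j(X^j_1, X^j_2))$); dividing by $(2d)!$ produces the coefficients $\tfrac{1}{d}$ and $\tfrac{d-1}{d}$. For the second product $\prod_j k^j(z^j_{\pi(2j-1)}, z^j_{\pi(2j)})$, the $d$ index pairs partition $\{1,\ldots,2d\}$, so $1$ lies in exactly one pair $r$; all $d$ factors then act on disjoint independent observations and the expectation factorizes, reassembling after summing over $r$ into $\tfrac{1}{d}\sum_r \E((\prod_{j \neq r} k^j(X^j_{2j-1}, X^j_{2j})) k^r(z^r, X^r_{2r}))$.

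The subtlest piece is the third product $\prod_j k^j(z^j_{\pi(1)}, z^j_{\pi(j+1)})$, because $\pi(1)$ appears as first argument in every coordinate factor, so $\vX_{\pi(1)}$ enters all $d$ coordinates jointly when $\pi(1) \neq 1$. I would split into three sub-cases: (a) $\pi(1) = 1$, where $\vz$ sits in every first slot and the $d$ free second-slot observations decouple, producing $\E(\prod_j k^j(z^j, X^j_j))$; (b) $\pi(r+1) = 1$ for some $r$, where $\vz$ enters only the $r$-th factor's second slot and the common first slot couples all coordinates of one joint observation; and (c) $\pi(k) = 1$ for $k \geq d+2$, where $\vz$ drops out and all coordinates couple through $\vX_{\pi(1)}$, producing $\E(\prod_j k^j(X^j_1, X^j_{j+1}))$. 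The main obstacle will be the bookkeeping in sub-case (b): one must integrate out the free observations $X^j_{\pi(j+1)}$ via $g^j(x) := \E(k^j(x, X^j))$, then re-express the remaining joint-law integral over $\vX_{\pi(1)}$ through an extra independent copy in order to match the claimed form $\E((\prod_{j \neq r} k^j(X^j_1, X^j_{j+1})) k^r(z^r, X^r_{r+1}))$. Once the three contributions are tallied, multiplying the third product by $-2$ and regrouping the six expressions by slot type should yield the three bracketed lines with their stated coefficients $\tfrac{1}{d}, \tfrac{d-1}{d}, \tfrac{1}{d}$.
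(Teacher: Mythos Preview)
Your proposal is correct and follows essentially the same approach as the paper: both decompose $h$ into its three symmetrized products, enumerate the cases according to which slot of each product the distinguished index $1$ occupies, count the corresponding permutations, and evaluate the resulting expectations using iid-ness across distinct indices. Your sub-case analysis for the third product (cases (a), (b), (c)) matches the paper's three cases exactly, and the ``re-expression through an extra independent copy'' you flag in sub-case (b) is just the index-relabeling that the paper writes down directly without comment.
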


\begin{proof}
  Recall that
  \begin{equation*}
    h_1(\vz)=\E\left(h(\vz,\vX_1,\dots,\vX_{2d\sm 1})\right).
  \end{equation*}
  Next we separate $h$ into 3 terms as follows.
  \begin{align*}
    h(\vz_1,\dots,\vz_{2d})&=\dfrac{1}{(2d)!}\sum_{\pi\in
      S_{2d}}\left[\prod_{j=1}^dk^j\left(z^j_{\pi(1)},z^j_{\pi(2)}\right)\right](=:b_1)\\
    &\quad+\dfrac{1}{(2d)!}\sum_{\pi\in S_{2d}}\left[\prod_{j=1}^dk^j\left(z^j_{\pi(2j-1)},z^j_{\pi(2j)}\right)\right](=:b_2)\\
    &\quad-\dfrac{2}{(2d)!}\sum_{\pi\in S_{2d}}\left[\prod_{j=1}^dk^j\left(z^j_{\pi(1)},z^j_{\pi(j+1)}\right)\right](=:b_3).
  \end{align*}
  Now we calculate
  $\E_{2,\dots,2d}\left(h(\vX_1,\dots,\vX_{2d})\right)$ by considering
  these three terms separately.

  $\mathbf{b_1}$:
  Begin by letting $\pi\in S_{2d}$, then
  \begin{equation*}
    \E_{2,\dots,2d}\left(\prod_{j=1}^dk^j_{\pi(1),\pi(2)}\right)=
    \begin{cases}
      \E_{2,3}\left(\prod_{j=1}^dk^j_{2,3}\right) &\text{if }
      \pi(1)\neq 1 \wedge \pi(2)\neq 1\\
      \E_2\left(\prod_{j=1}^dk^j_{1,2}\right) &\text{if }
      \pi(1)=1 \vee \pi(2)=1.
    \end{cases}
  \end{equation*}
  Counting how often each of these cases can occur for $\pi\in S_{2d}$
  leads to
  \begin{align}
    &\dfrac{1}{(2d)!}\sum_{\pi\in
      S_{2d}}\E_{2,\dots,2d}\left(\prod_{j=1}^dk^j_{\pi(1),\pi(2)}\right)\nonumber\\
    &\quad
    =\dfrac{(2d-2)(2d-1)!}{(2d)!}\E_{2,3}\left(\prod_{j=1}^dk^j_{2,3}\right)+\dfrac{2(2d-1)!}{(2d)!}\E_2\left(\prod_{j=1}^dk^j_{1,2}\right)\nonumber\\
    &\quad
    =\dfrac{d-1}{p}\E_{2,3}\left(\prod_{j=1}^dk^j_{2,3}\right)+\dfrac{1}{p}\E_2\left(\prod_{j=1}^dk^j_{1,2}\right)\label{eq:partb1}
  \end{align}

  $\mathbf{b_2}$:
  Begin by letting $\pi\in S_{2d}$, $r\in\{1,\dots,p\}$ such that $\pi(2r-1)=1$ or $\pi(2r)=1$ then
  \begin{equation*}
    \E_{2,\dots,2d}\left(\prod_{j=1}^dk^j_{\pi(2j-1),\pi(2j)}\right)=\E_{2,\dots,2d+1}\left(\left(\prod_{j\neq
        r}^dk^j_{2j,2j+1}\right)k^r_{1,2r}\right)
  \end{equation*}
  Counting how many combinations are possible for each $r$ and adding all different combinations up gives us
  \begin{align}
    &\dfrac{1}{(2d)!}\sum_{\pi\in
      S_{2d}}\E_{2,\dots,2d}\left(\prod_{j=1}^dk^j_{\pi(2j-1),\pi(2j)}\right)\nonumber\\
    &\quad
    =\dfrac{2(2d-1)!}{(2d)!}\sum_{r=1}^d\E_{2,\dots,2d+1}\left(\left(\prod_{j\neq
          r}^dk^j_{2j,2j+1}\right)k^r_{1,2r}\right)\nonumber\\
    &\quad
    =\dfrac{1}{d}\sum_{r=1}^d\E_{2,\dots,2d+1}\left(\left(\prod_{j\neq
          r}^dk^j_{2j,2j+1}\right)k^r_{1,2r}\right)\label{eq:partb2}
  \end{align}
  
  $\mathbf{b_3}$:
  Begin by letting $\pi\in S_{2d}$, then
  \begin{equation*}
    \E_{2,\dots,2d}\left(\prod_{j=1}^dk^j_{\pi(1),\pi(j+1)}\right)=
    \begin{cases}
      \E_{2,\dots,d+2}\left(\prod_{j=1}^dk^j_{2,j+2}\right) &\text{if }
      \pi(1)\neq 1 \wedge\cdots\wedge \pi(d+1)\neq 1\\
      \E_{2,\dots,d+1}\left(\prod_{j=1}^dk^j_{1,j+1}\right) &\text{if }
      \pi(1)=1\\
      \E_{2,\dots,d+2}\left(\prod_{j\neq r}^dk^j_{2,j+2}k^r_{1,2}\right) &\text{if }
      \pi(r+1)=1 \text{ for }r\in\{1,\dots,d\}\\
    \end{cases}
  \end{equation*}
  Counting how often each of these cases can occur for different
  $\pi\in S_{2d}$ and adding all cases up results in
  \begin{align}
    &\dfrac{1}{(2d)!}\sum_{\pi\in
      S_{2d}}\E_{2,\dots,2d}\left(\prod_{j=1}^dk^j_{\pi(1),\pi(j+1)}\right)\nonumber\\
    &\quad
    =\dfrac{d-1}{2d}\E_{2,\dots,p+2}\left(\prod_{j=1}^dk^j_{2,j+2}\right)+\dfrac{1}{2d}\E_{2,\dots,p+1}\left(\prod_{j=1}^dk^j_{1,j+1}\right)\nonumber\\
    &\qquad\quad+\dfrac{1}{2d}\sum_{r=1}^d\E_{2,\dots,p+2}\left(\prod_{j\neq r}^dk^j_{2,j+2}k^r_{1,2}\right)\label{eq:partb3}
  \end{align}

  Finally combining \eqref{eq:partb1}, \eqref{eq:partb1} and
  \eqref{eq:partb1} completes the proof of Lemma \ref{thm:expansionh1}.
\end{proof}

\begin{lemma}[expansion of $h_2$ under $\HO$]
  \label{thm:expansionh2}
  Assume Setting \ref{setting:mHSIC}. Then under $\HO$ it holds for all
  $\vz_1,\vz_2\in\prodkernelspace$ that,
  \begin{align*}
    \dbinom{2d}{2}h_2(\vz_1,\vz_2)&=\prod_{r=1}^dk^r(z^r_1,z^r_2) \quad (=:a_1)\\
  &\quad+(d-1)^2\prod_{r=1}^d\E\left(k^r(X^r_1,X^r_2)\right) \quad (=:a_2)\\
  &\quad+(d-1)\prod_{r=1}^d\E\left(k^r(z^r_1,X^r_1)\right) \quad (=:a_3)\\
  &\quad+(d-1)\prod_{r=1}^d\E\left(k^r(z^r_2,X^r_1)\right) \quad (=:a_4)\\
  &\quad+\sum_{r=1}^dk^r(z^r_1,z^r_2)\prod_{l\neq r}\E\left(k^l(X^l_1,X^l_2)\right) \quad (=:a_5)\\
  &\quad-\sum_{r=1}^dk^r(z^r_1,z^r_2)\prod_{l\neq r}\E\left(k^l(z^l_1,X^l_1)\right) \quad (=:a_6)\\
  &\quad-\sum_{r=1}^dk^r(z^r_1,z^r_2)\prod_{l\neq r}\E\left(k^l(z^r_2,X^r_1)\right) \quad (=:a_7)\\
  &\quad+\sum_{r\neq s}\E\left(k^r(z^r_1,X^r_1)\right)\E\left(k^s(z^s_2,X^s_1)\right)\prod_{l\neq
    r,s}\E\left(k^l(X^l_1,X^l_2)\right) \quad (=:a_8)\\
  &\quad-(d-1)\sum_{r=1}^d\E\left(k^r(z^r_1,X^r_1)\right)\prod_{l\neq
    r}\E\left(k^l(X^l_1,X^l_2)\right) \quad (=:a_9)\\
  &\quad-(d-1)\sum_{r=1}^d\E\left(k^r(z^r_2,X^r_1)\right)\prod_{l\neq r}\E\left(k^l(X^l_1,X^l_2)\right) \quad (=:a_{10}).
  \end{align*}
\end{lemma}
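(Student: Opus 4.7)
The plan is to expand $h_2(\vz_1,\vz_2)=\E\left(h(\vz_1,\vz_2,\vX_3,\dots,\vX_{2d})\right)$ by splitting $h$ into three pieces
\begin{align*}
b_1(\vz_1,\dots,\vz_{2d})&\coloneqq\tfrac{1}{(2d)!}\sum_{\pi\in S_{2d}}\prod_{j=1}^d k^j(z^j_{\pi(1)},z^j_{\pi(2)}),\\
b_2(\vz_1,\dots,\vz_{2d})&\coloneqq\tfrac{1}{(2d)!}\sum_{\pi\in S_{2d}}\prod_{j=1}^d k^j(z^j_{\pi(2j-1)},z^j_{\pi(2j)}),\\
b_3(\vz_1,\dots,\vz_{2d})&\coloneqq-\tfrac{2}{(2d)!}\sum_{\pi\in S_{2d}}\prod_{j=1}^d k^j(z^j_{\pi(1)},z^j_{\pi(j+1)}),
\end{align*}
and computing $\E_{3,\dots,2d}$ of each piece separately, exactly as in the proof of Lemma~\ref{thm:expansionh1}. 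The crucial simplification is that under $\HO$ the marginals $X^1,\dots,X^d$ are independent, so $\E_{3,\dots,2d}$ distributes across the product over $j$; therefore each factor $k^j$ can be integrated in isolation, and any index $p\in\{3,\dots,2d\}$ appearing in $k^j$ produces a fresh copy of $X^j$ whose law is $\P^{X^j}$.

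For each of $b_1,b_2,b_3$ the calculation then reduces to a combinatorial classification of permutations $\pi\in S_{2d}$ by the pattern of occurrences of the special indices $1$ and $2$ in the slots that actually appear in the product. For $b_1$ I would split by whether $\{\pi(1),\pi(2)\}$ equals $\{1,2\}$, contains exactly one element of $\{1,2\}$, or is disjoint from $\{1,2\}$. For $b_2$ I would record the indices $r,s\in\{1,\dots,d\}$ for which $1\in\{\pi(2r{-}1),\pi(2r)\}$ and $2\in\{\pi(2s{-}1),\pi(2s)\}$, distinguishing $r=s$ (yielding an unintegrated $k^r(z^r_1,z^r_2)$) from $r\neq s$ (yielding two single-argument mean embeddings $\E(k^r(z^r_1,X^r))$ and $\E(k^s(z^s_2,X^s))$). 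For $b_3$ I would condition first on $\pi(1)\in\{1,2,\,\text{else}\}$ and then, in each case, on which of $1$ and $2$ lie in $\{\pi(2),\dots,\pi(d+1)\}$; by kernel symmetry, cases obtained by swapping a $z^j_1$ with itself as a right-argument are identified. Throughout, each case comes with an explicit count of how many permutations fall into it, obtained by multiplying the number of ways to place the distinguished indices by $(2d-2)!$ or $(2d-3)!$ for the remaining positions.

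After dividing by $(2d)!$ and multiplying by $\binom{2d}{2}=d(2d{-}1)$, the combinatorial factors collapse via
$\binom{2d}{2}\,\tfrac{(2d-2)!}{(2d)!}=\tfrac{1}{2}$, producing the ten candidate monomials $a_1,\dots,a_{10}$ with rational coefficients. Collecting the contributions of $b_1,b_2,b_3$ to each $a_i$ and using identities such as $\binom{2d-2}{2}-(d-1)(d-2)=(d-1)^2$ and $2(d-1)-(d-1)=d-1$ yields the coefficients $1,(d-1)^2,(d-1),(d-1),1,-1,-1,1,-(d-1),-(d-1)$ stated in the lemma.

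The main obstacle is not conceptual but bookkeeping: one must enumerate the cases carefully (especially for $b_3$, where $\pi(1)$ is entangled with all $d$ factors) and then check that each of the ten monomial types receives contributions that combine to the claimed coefficient. I would organise the computation in a table indexed by (term of $h$) $\times$ ($a_i$), filling in the per-case counts and the resulting rational factor, so that the final summation is a transparent line-by-line verification rather than an opaque calculation.
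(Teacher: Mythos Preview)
Your proposal is correct and follows essentially the same route as the paper: split $h$ into the three pieces $b_1,b_2,b_3$ (the paper calls the corresponding permutation sums $A,B,C$), classify permutations in each by the positions of the special indices $1$ and $2$, count, factor the expectations using independence under $\HO$, and collect into the ten monomials. Your description of the case splits is in fact more explicit than the paper's proof, which simply states the simplified forms of $A$, $B$, $C$ and refers back to the combinatorial arguments of Lemma~\ref{thm:expansionh1}.
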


\begin{proof}
  Begin by setting,
  \begin{align*}
    A&:=\sum_{\pi\in S_{2d}}\E_{3,\dots,2d}\left(\prod_{j=1}^dk^j_{\pi(1),\pi(2)}\right)\\
    B&:=\sum_{\pi\in S_{2d}}\E_{3,\dots,2d}\left(\prod_{j=1}^dk^j_{\pi(2j-1),\pi(2j)}\right)\\
    C&:=\sum_{\pi\in S_{2d}}\E_{3,\dots,2d}\left(\prod_{j=1}^dk^j_{\pi(1),\pi(j+1)}\right).
  \end{align*}
  Then it holds that,
  \begin{equation}
    \label{eq:expansionh2}
    h_2(\vX_1,\vX_2)=\E_{3,\dots,2d}\left(h(\vX_1,\dots,\vX_{2d})\right)=\dfrac{1}{(2d)!}\left(A+B-2C\right).
  \end{equation}
  Under the null hypothesis $\HO$ the terms $A$,$B$ and $C$ can be
  simplified using combinatorial arguments (similar to the ones used
  in the proof of Lemma \ref{thm:expansionh1}).
  \begin{align*}
    A&=2(2d-2)!\prod_{r=1}^dk^r_{1,2}\\
    &\quad+(2d-2)(2d-3)(2d-2)!\prod_{r=1}^d\E_{3,4}\left(k^r_{3,4}\right)\\
    &\quad+2(2d-2)(2d-2)!\prod_{r=1}^d\E_{3}\left(k^r_{1,3}\right)\\
    &\quad+2(2d-2)(2d-2)!\prod_{r=1}^d\E_{3}\left(k^r_{2,3}\right)
  \end{align*}
\begin{align*}
    B&=2(2d-2)!\sum_{r=1}^dk^r_{1,2}\prod_{l\neq
      r}\E_{3,4}\left(k^l_{3,4}\right)\\
    &\quad+4(2d-2)!\sum_{r\neq
      s}\E_{3}\left(k^r_{1,3}\right)\E_{3}\left(k^s_{2,3}\right)\prod_{l\neq r,s}\E_{3,4}\left(k^l_{3,4}\right)
  \end{align*}
\begin{align*}
    C&=2(2d-2)!\sum_{r=1}^dk^r_{1,2}\prod_{l\neq
      r}\E_{3}\left(k^l_{1,3}\right)\\
    &\quad+(d-1)(2d-2)!\prod_{r=1}^d\E_{3}\left(k^r_{1,3}\right)+(d-1)(2d-2)!\prod_{r=1}^d\E_{3}\left(k^r_{2,3}\right)\\
    &\quad+(d-1)(d-2)(2d-2)!\prod_{r=1}^d\E_{3,4}\left(k^r_{3,4}\right)\\
    &\quad+(2d-2)!\sum_{r\neq
      s}\E_{3}\left(k^r_{1,3}\right)\E_{3}\left(k^s_{2,3}\right)\prod_{l\neq
      r,s}\E_{3,4}\left(k^l_{3,4}\right)\\
    &\quad+(d-1)(2d-2)!\sum_{r=1}^d\E_{3}\left(k^r_{1,3}\right)\prod_{l\neq
      r}\E_{3,4}\left(k^l_{3,4}\right)\\
    &\quad+(d-1)(2d-2)!\sum_{r=1}^d\E_{3}\left(k^r_{2,3}\right)\prod_{l\neq r}\E_{3,4}\left(k^l_{3,4}\right)
  \end{align*}
  Plugging these expressions for $A$, $B$ and $C$ into
  \eqref{eq:expansionh2} completes the proof of Lemma \ref{thm:expansionh2}.
\end{proof}

\begin{lemma}[degeneracy under $\HO$]
  \label{thm:degeneracy}
  Assume Setting \ref{setting:mHSIC}. Then under $\HO$ it holds for
  all $\vz\in\prodkernelspace$ that
  \begin{equation*}
    h_1(\vz)=0,
  \end{equation*}
  and therefore in particular that $\xi_1(h)=0$.
\end{lemma}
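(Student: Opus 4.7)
The plan is to apply Lemma~\ref{thm:expansionh1} to reduce $h_1(\vz)$ to three bracketed differences of expectations and then verify that, under $\HO$, each bracket vanishes term by term by exploiting the two sources of independence available: (i) independent copies across distinct sample indices, which always holds because $(\vX_i)_{i\in\N}$ is iid, and (ii) independence across distinct coordinates $j \in \{1,\dots,d\}$ \emph{within} a single sample, which is precisely the content of $\HO$. Together, these let us factor every product expectation into a product of marginal expectations that depends only on which pairs $(j,i)$ appear, not on how the sample indices are coupled across coordinates.

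More concretely, I first would treat the first bracket. The expectation $\E\bigl(\prod_j k^j(z^j,X^j_1)\bigr)$ involves coordinates of a single sample $\vX_1$, so under $\HO$ it factors as $\prod_j \E\bigl(k^j(z^j,X^j)\bigr)$; the expectation $\E\bigl(\prod_j k^j(z^j,X^j_j)\bigr)$ involves the $j$-th coordinate of $d$ distinct samples, which are independent irrespective of $\HO$, and also factors as $\prod_j \E\bigl(k^j(z^j,X^j)\bigr)$ since the marginals coincide. Hence the two terms are equal and the bracket is zero. The second bracket is handled identically: both $\E\bigl(\prod_j k^j(X^j_1,X^j_2)\bigr)$ and $\E\bigl(\prod_j k^j(X^j_1,X^j_{j+1})\bigr)$ reduce under $\HO$ to $\prod_j \E\bigl(k^j(X^j_1,X^j_2)\bigr)$, once one checks in the second case that all the sample indices $1$ and $j+1$ ($j=1,\dots,d$) that coincide across different $j$'s never force dependence in a way that breaks the factorization — here $\HO$ supplies within-sample independence at index $1$, and distinct-sample independence handles the rest.

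For the third bracket one performs the same bookkeeping summand by summand in $r$. In $\E\bigl((\prod_{j\neq r} k^j(X^j_{2j-1},X^j_{2j}))\, k^r(z^r,X^r_{2r})\bigr)$ all sample indices appearing are pairwise distinct, so the expectation factors into $\E\bigl(k^r(z^r,X^r)\bigr)\prod_{j\neq r}\E\bigl(k^j(X^j,\tilde X^j)\bigr)$ by iid alone. In $\E\bigl((\prod_{j\neq r} k^j(X^j_1,X^j_{j+1}))\, k^r(z^r,X^r_{r+1})\bigr)$ the shared index $1$ across the coordinates $j\neq r$ is dissolved by within-sample independence under $\HO$, and the indices $j+1$ (for $j\neq r$) together with $r+1$ are distinct samples, yielding the same product. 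Equality of the two summands for every $r$ kills the third bracket.

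Having shown $h_1 \equiv 0$ under $\HO$, it follows immediately that $\theta_h = \E(h_1(\vX_1)) = 0$ (consistent with Proposition~\ref{thm:mhsic_independence}) and hence $\tilde h_1 \equiv h_1 - \theta_h \equiv 0$, so $\xi_1(h) = \E(\tilde h_1(\vX_1)^2) = 0$. There is no real obstacle here; the only mildly delicate step is the careful index-counting for the third bracket, where one must verify that the sample indices $1, 2, \dots, d+1$ occurring in $(X^j_1, X^j_{j+1})_{j\neq r}$ together with $X^r_{r+1}$ can be decoupled using precisely within-sample (coordinate) independence at index $1$ and inter-sample independence elsewhere.
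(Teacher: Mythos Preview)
Your proposal is correct and follows essentially the same approach as the paper: both invoke Lemma~\ref{thm:expansionh1} and then verify, bracket by bracket, that the two expectations in each bracket coincide under $\HO$ by factoring the product expectations using iid-across-samples and coordinate-independence-within-samples. The paper's proof simply records the three equalities \eqref{eq:h1_parta}--\eqref{eq:h1_partc} without spelling out the index bookkeeping, whereas you supply that detail explicitly.
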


\begin{proof}
  Observe that under $\HO$ it holds for all $\vz\in\prodkernelspace$ that
  \begin{align}
    &\mathbf{\bullet}\quad\E\left(\prod_{j=1}^dk^j(z^j,X^j_1)\right)=\E\left(\prod_{j=1}^dk^j(z^j,X^j_{j})\right)\label{eq:h1_parta}\\
    &\mathbf{\bullet}\quad\E\left(\prod_{j=1}^dk^j(X^j_1,X^j_2)\right)=\E\left(\prod_{j=1}^dk^j(X^j_1,X^j_{j+1})\right)\label{eq:h1_partb}\\
    &\mathbf{\bullet}\quad\E\left(\left(\prod_{j\neq r}^dk^j(X^j_{2j\sm 1},X^j_{2j})\right)k^r(z^r,X^r_{2r})\right)\nonumber\\
    &\qquad\qquad\qquad\qquad\qquad=\E\left(\left(\prod_{j\neq r}^dk^j(X^j_1,X^j_{j+1})\right)k^r(z^r,X^r_{r+1})\right).\label{eq:h1_partc}
  \end{align}
  Plugging \eqref{eq:h1_parta}, \eqref{eq:h1_partb} and
  \eqref{eq:h1_partc} into the explicit form of $h_1$ given in Lemma
  \ref{thm:expansionh1} yields
  \begin{equation*}
    h_1(\vz)=0.
  \end{equation*}
  This completes the proof of Lemma \ref{thm:degeneracy}.
\end{proof}

\section{Dealing with estimated residuals}\label{sec:approx_resid}

There are two ways of dealing with the fact that the residuals
resulting from the regression in the DAG verification method in
Section~\ref{subsec.causal} are estimated and inherently
dependent. Firstly, it can be shown that even though the estimated
residuals are dependent the asymptotic ordering corresponds to the
ordering resulting from the exact residuals, see
Section~\ref{sec:order_preserve}. While this allows us to use the DAG
verification method as a model selection procedure, we need to be more
careful if we want to use it as a goodness of fit test. To ensure
that such a test has asymptotic level (or equivalently that the
$p$-values are asymptotically correct) we can employ a sample splitting
procedure, see Section~\ref{sec:sample_split}.

We first introduce the non-parametric regression setting used in
the DAG verification method in a more formal manner. Let
$(\vX_i,\vR_i)_{i\in\{1,\dots,n\}}\in\R^d\times\R^d$ 
be 
iid 
random variables satisfying
\begin{equation}
  \label{eq:regression_setting}
  \begin{pmatrix}
    X^1_i\\
    \vdots\\
    X^d_i
  \end{pmatrix}
  =
  \begin{pmatrix}
    f_1(\parents^1_i)\\
    \vdots\\
    f_d(\parents^d_i)
  \end{pmatrix}
  +
  \begin{pmatrix}
    R^1_i\\
    \vdots\\
    R^d_i
  \end{pmatrix}
  \quad\text{with}\quad \parents^j_i\independent R^j_i
\end{equation}
for some functions $f_j:\R^{\abs{\parents^j}}\rightarrow\R$. The functions
$f_j$ are called regression functions and can be expressed for all
$\vx\in\R^{\abs{\parents^j}}$ as
\begin{equation*}
  f_j(\vx)=\E\left(X^j\mid\parents^j=\vx\right).
\end{equation*}
Given a data set $\mathcal{Z}_n$ based on $n$ 
iid
random variables, a
regression method provides an estimate of the regression function,
which we denote by $\hat{f}_{j,\mathcal{Z}_n}$. The corresponding
estimated residuals $\hat{\vR}_{i,n}$ are defined for all
$j\in\{1,\dots,d\}$ by
\begin{equation*}
  \hat{R}^j_{i,n}\coloneqq X^j_i-\hat{f}_{j,\mathcal{Z}_n}(\parents^j_i).
\end{equation*}
Similar as in \citet[Definition 18]{dataset} we call a regression method
suitable if 
\begin{equation*}
  \lim_{n\rightarrow\infty}\E\left(\frac{1}{n}\sum_{i=1}^n\norm{\hat{\vR}_{i,n}-\vR_i}_{\R^d}^2\right)
  =\sum_{j=1}^d\lim_{n\rightarrow\infty}\E\left(\frac{1}{n}\sum_{i=1}^n\abs{\hat{R}^j_{i,n}-R^j_i}^2\right)
  =0.
\end{equation*}
Such non-parametric regression methods exists \citep[see
e.g.][Lemma 5]{kpotufe2014}.

\subsection{dHSIC preserves rank}\label{sec:order_preserve}
In order to prove that the ordering is preserved, we use a Lipschitz
property of $\mhsicb$ that holds whenever the kernel function
is Lipschitz \citep[see also][Lemma 16]{dataset}.
\begin{lemma}[Lipschitz-property of $\mhsicb$]
  \label{thm:dhsic_lipschitz}
  Let $\vx_1,\dots,\vx_n,\vy_1\dots,\vy_n\in\R^d$ and for all
  $j\in\{1,\dots,d\}$ let $k^j$ be a positive semi-definite
  kernel on $\R$ satisfying that there exists a constant
  $L^j>0$ such that for all $x,x',y\in\kernelspace^j$ it holds that
  $\abs{k^j(x,y)-k^j(x',y)}\leq L^j\abs{x-x'}$. Then
  it holds that
  \begin{equation*}
    \abs[\big]{\mhsicb_n(\vx_1,\dots,\vx_n)-\mhsicb_n(\vy_1,\dots,\vy_n)}^2
    \leq\frac{K}{n}\sum_{i=1}^n\norm{\vx_i-\vy_i}_{\R^d}^2.
  \end{equation*}
\end{lemma}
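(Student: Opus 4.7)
The plan is to bound the three terms of $\mhsicb_n$ separately, using the fact that each is a normalized sum over index-tuples of products of the form $\prod_{j=1}^d k^j(\cdot,\cdot)$. The core analytic input is the standard observation that a product of bounded Lipschitz factors is itself Lipschitz: if $|a_j|,|b_j|\le M$ for all $j$, then a telescoping argument yields $\bigl|\prod_j a_j - \prod_j b_j\bigr| \le M^{d-1}\sum_j |a_j-b_j|$. Since each $k^j$ is continuous and bounded (Setting~\ref{setting:mHSIC}), we can set $M \coloneqq \max_j \sup_{x,y} |k^j(x,y)| < \infty$ and use the Lipschitz hypothesis together with the triangle inequality ($k^j(x,x')-k^j(y,y') = [k^j(x,x')-k^j(y,x')]+[k^j(y,x')-k^j(y,y')]$) to get $|k^j(x^j_{i_1},x^j_{i_2})-k^j(y^j_{i_1},y^j_{i_2})| \le L^j(|x^j_{i_1}-y^j_{i_1}|+|x^j_{i_2}-y^j_{i_2}|)$.

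Next I would feed this into each of the three sums in Definition~\ref{def:mhsic_esimator}. For the first term, summing over $(i_1,i_2)\in\mapset{2}{n}$ gives, after combining the two bounds,
\begin{equation*}
\frac{1}{n^2}\bigl|T_1(\vx)-T_1(\vy)\bigr| \le \frac{2M^{d-1}}{n}\sum_{j=1}^d L^j\sum_{i=1}^n |x^j_i-y^j_i|,
\end{equation*}
where each index appears with the correct counting factor $n$ out of $n^2$. Exactly analogous manipulations for the second term (sum over $\mapset{2d}{n}$, normalization $n^{-2d}$, each fixed index appears $n^{2d-1}$ times) and the third term (sum over $\mapset{d+1}{n}$, normalization $2 n^{-(d+1)}$) yield bounds of the same shape, namely $(\mathrm{const.}/n)\sum_j L^j\sum_i|x^j_i-y^j_i|$. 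Summing the three contributions by the triangle inequality produces $|\mhsicb_n(\vx)-\mhsicb_n(\vy)| \le \frac{C}{n}\sum_{j=1}^d L^j\sum_{i=1}^n |x^j_i-y^j_i|$ for a combinatorial constant $C$ depending only on $d$ and $M$.

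Finally I would convert this $\ell^1$-style bound into the desired squared $\ell^2$-bound by Cauchy--Schwarz: $\sum_i |x^j_i - y^j_i| \le \sqrt{n}\bigl(\sum_i |x^j_i-y^j_i|^2\bigr)^{1/2} \le \sqrt{n}\bigl(\sum_i \|\vx_i-\vy_i\|_{\R^d}^2\bigr)^{1/2}$, then summing the $L^j$ contributions (another Cauchy--Schwarz, or just $\sum_j L^j \le \sqrt{d}\,(\sum_j (L^j)^2)^{1/2}$) and squaring. Both factors of $\sqrt{n}$ cancel one of the $n$'s in the denominator, leaving exactly the claimed form with $K \coloneqq (C \sum_j L^j)^2$, which depends only on $d$, $M$, and the Lipschitz constants $L^1,\dots,L^d$.

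There is no real obstacle here, only careful bookkeeping; the main thing to get right is the combinatorics in the three sums (ensuring the factors $n$, $n^{2d-1}$, $n^d$ come out correctly against the normalizations $n^{-2}$, $n^{-2d}$, $n^{-(d+1)}$ so that every contribution has the same $n^{-1}$ prefactor), and applying Cauchy--Schwarz exactly once at the end so as not to lose the right power of $n$.
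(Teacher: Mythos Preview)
Your argument is correct, and in fact it is cleaner than the paper's. The overall logical shape is the same---bound the difference of the products termwise, count how often each index contributes, and pass from $\ell^1$ to $\ell^2$ via Cauchy--Schwarz---but the execution differs in two ways. First, the paper works with the symmetrized $V$-statistic representation $\mhsicb_n=\Vstat{n}{h}$, which drags the permutation sum over $S_{2d}$ through the entire calculation; you work directly with the three-term Definition~\ref{def:mhsic_esimator}, so the combinatorics stays at the level of $\mapset{q}{n}$ only. Second, the paper squares first and then uses a coordinate-by-coordinate interpolation $\vz_{i,j}$ together with repeated inequalities $(a+b+c)^2\le 4a^2+4b^2+4c^2$ and $(a+b)^2\le 2a^2+2b^2$; you instead keep the absolute value linear, apply the telescoping product bound $|\prod a_j-\prod b_j|\le M^{d-1}\sum_j|a_j-b_j|$ inside each summand, and only square at the very end after a single Cauchy--Schwarz. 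Your route avoids the interpolation entirely and gives more transparent constants. One small remark: the lemma as stated does not itself assume boundedness of the $k^j$, and both your proof and the paper's rely on it (for $M$ resp.\ $C$); your invocation of Setting~\ref{setting:mHSIC} is the right justification.
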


\begin{proof}
  For all $i\in\{1,\dots,n\}$, $j\in\{2,\dots,d\}$ define
  $\vz_{i,j}\coloneqq(y^1_i,\dots,y^{(j-1)}_i,x^j_i,\dots,x^d_i)$,
  moreover for all $i\in\{1,\dots,n\}$ define
  $\vz_{i,1}\coloneqq\vx_i$ and 
  $\vz_{i,d+1}\coloneqq\vy_i$. Fix a coordinate
  $j\in\{1,\dots,d\}$, we want to show that there exists a constant
  $K_j>0$ such that
  \begin{equation*}
    \abs[\big]{\mhsicb_n(\vz_{i,j},\dots,\vz_{n,j})-\mhsicb_n(\vz_{i,j+1},\dots,\vz_{n,j+1})}^2
    \leq\frac{K_j}{n}\sum_{i=1}^n\abs{x^j_i-y^j_i}^2.
  \end{equation*}
  To this end, use $\mhsicb_n=\Vstat{n}{h}$ where $h$
  is given in \eqref{eq:hcore}, to get that,
  \begin{align*}
    &\abs[\big]{\mhsicb_n(\vz_{i,j},\dots,\vz_{n,j})-\mhsicb_n(\vz_{i,j+1},\dots,\vz_{n,j+1})}^2\\
    &\quad=\Bigg(\frac{1}{n^{2d}}\frac{1}{(2d)!}\sum_{\mapset{2d}{n}}\sum_{S_{2d}}
      \bigg[\prod_{l\neq j}k^l(z^l_{i_{\pi(1)},j},z^l_{i_{\pi(2)},j})\left(k^j(x^j_{i_{\pi(1)}},x^j_{i_{\pi(2)}})-k^j(y^j_{i_{\pi(1)}},y^j_{i_{\pi(2)}})\right)\\
    &\qquad\qquad +\prod_{l\neq
      j}k^l(z^l_{i_{\pi(2j-1)},j},z^l_{i_{\pi(2j)},j})\left(k^j(x^j_{i_{\pi(2j-1)}},x^j_{i_{\pi(2j)}})-k^j(y^j_{i_{\pi(2j-1)}},y^j_{i_{\pi(2j)}})\right)\\
    &\qquad\qquad -2\prod_{l\neq j}k^l(z^l_{i_{\pi(1)},j},z^l_{i_{\pi(j+1)},j})\left(k^j(x^j_{i_{\pi(1)}},x^j_{i_{\pi(j+1)}})-k^j(y^j_{i_{\pi(1)}},y^j_{i_{\pi(j+1)}})\right)\bigg]\Bigg)^2.
  \end{align*}
  Hence, using $(a+b+c)^2\leq 4a^2+4b^2+4c^2$ we get that
  \begin{align*}
    &\abs[\big]{\mhsicb_n(\vz_{i,j},\dots,\vz_{n,j})-\mhsicb_n(\vz_{i,j+1},\dots,\vz_{n,j+1})}^2\\
    &\quad\leq\left(\frac{1}{n^{2d}}\frac{1}{(2d)!}\right)^2\Bigg(
      4\sum_{\mapset{2d}{n}}\sum_{S_{2d}}\bigg[\prod_{l\neq j}k^l(z^l_{i_{\pi(1)},j},z^l_{i_{\pi(2)},j})\left(k^j(x^j_{i_{\pi(1)}},x^j_{i_{\pi(2)}})-k^j(y^j_{i_{\pi(1)}},y^j_{i_{\pi(2)}})\right)\bigg]^2\\
    &\qquad\qquad +4\sum_{\mapset{2d}{n}}\sum_{S_{2d}}\bigg[\prod_{l\neq
      j}k^l(z^l_{i_{\pi(2j-1)},j},z^l_{i_{\pi(2j)},j})\left(k^j(x^j_{i_{\pi(2j-1)}},x^j_{i_{\pi(2j)}})-k^j(y^j_{i_{\pi(2j-1)}},y^j_{i_{\pi(2j)}})\right)\bigg]^2\\
    &\qquad\qquad +8\sum_{\mapset{2d}{n}}\sum_{S_{2d}}\bigg[\prod_{l\neq j}k^l(z^l_{i_{\pi(1)},j},z^l_{i_{\pi(j+1)},j})\left(k^j(x^j_{i_{\pi(1)}},x^j_{i_{\pi(j+1)}})-k^j(y^j_{i_{\pi(1)}},y^j_{i_{\pi(j+1)}})\right)\bigg]^2\Bigg).
  \end{align*}
  We analyze each of the three parts of the sum separately. The
  Cauchy-Schwartz inequality together with the Lipschitz property of
  the kernel $k^j$, the boundedness of the kernels and the inequality
  $(a+b)^2\leq 2a^2+2b^2$ lead to
  \begin{align*}
    &\sum_{\mapset{2d}{n}}\sum_{S_{2d}}
      \bigg[\prod_{l\neq
    j}k^l(z^l_{i_{\pi(1)},j},z^l_{i_{\pi(2)},j})\left(k^j(x^j_{i_{\pi(1)}},x^j_{i_{\pi(2)}})-k^j(y^j_{i_{\pi(1)}},y^j_{i_{\pi(2)}})\right)\bigg]^2\\
    &\quad\leq \sum_{\mapset{2d}{n}}\sum_{S_{2d}}
      \bigg[\prod_{l\neq
      j}k^l(z^l_{i_{\pi(1)},j},z^l_{i_{\pi(2)},j})\bigg]^2
      \sum_{\mapset{2d}{n}}\sum_{S_{2d}}
      \bigg[\left(k^j(x^j_{i_{\pi(1)}},x^j_{i_{\pi(2)}})-k^j(y^j_{i_{\pi(1)}},y^j_{i_{\pi(2)}})\right)\bigg]^2\\
    &\quad\leq n^{2d}(2d)!C^{2(d-1)}L_j^2\sum_{\mapset{2d}{n}}\sum_{S_{2d}}
      \bigg[\abs{x^j_{i_{\pi(1)}}-y^j_{i_{\pi(1)}}}+\abs{x^j_{i_{\pi(2)}}-y^j_{i_{\pi(2)}}}\bigg]^2\\
    &\quad\leq 2n^{2d}(2d)!C^{2(d-1)}L_j^2\bigg[\sum_{\mapset{2d}{n}}\sum_{S_{2d}}
      \abs{x^j_{i_{\pi(1)}}-y^j_{i_{\pi(1)}}}^2+\sum_{\mapset{2d}{n}}\sum_{S_{2d}}\abs{x^j_{i_{\pi(2)}}-y^j_{i_{\pi(2)}}}^2\bigg]\\
    &\quad=2n^{2d}(2d)!C^{2(d-1)}L_j^2\bigg[n^{2d-1}(2d)!\sum_{i=1}^n
      \abs{x^j_{i}-y^j_{i}}^2+n^{2d-1}(2d)!\sum_{i=1}^n\abs{x^j_{i}-y^j_{i}}^2\bigg]\\
    &\quad=4n^{4d-1}(2d)!^2C^{2(d-1)}L_j^2\sum_{i=1}^n
      \abs{x^j_{i}-y^j_{i}}^2.
  \end{align*}
  The same argument can be applied to the two remaining terms, which
  finally results in
  \begin{align}
    &\abs[\big]{\mhsicb_n(\vz_{i,j},\dots,\vz_{n,j})-\mhsicb_n(\vz_{i,j+1},\dots,\vz_{n,j+1})}^2\nonumber\\
    &\quad\leq \frac{16n^{4d-1}(2d)!^2C^{2(d-1)}L_j^2}{n^{4d}(2d)!^2}\sum_{i=1}^n
      \abs{x^j_{i}-y^j_{i}}^2 \nonumber\\
    &\quad
    \leq \frac{16C^{2(d-1)}L_j^2}{n}\sum_{i=1}^n
      \abs{x^j_{i}-y^j_{i}}^2.\label{eq:jthinequality}
  \end{align}
  Using \eqref{eq:jthinequality} and applying the
  inequality $(\sum_{j=1}^da_j)^2\leq 2^{d-1}\sum_{j=1}^da_j^2$
yields
  \begin{align*}
    &\abs[\big]{\mhsicb_n(\vx_1,\dots,\vx_n)-\mhsicb_n(\vy_1,\dots,\vy_n)}^2\\
    &\quad=\abs[\bigg]{\sum_{j=1}^d\mhsicb_n(\vz_{i,j},\dots,\vz_{n,j})-\mhsicb_n(\vz_{i,j+1},\dots,\vz_{n,j+1})}^2\\
    &\quad\leq\sum_{j=1}^d\abs[\big]{\mhsicb_n(\vz_{i,j},\dots,\vz_{n,j})-\mhsicb_n(\vz_{i,j+1},\dots,\vz_{n,j+1})}^2\\
    &\quad\leq\sum_{j=1}^d\frac{16C^{2(d-1)}L_j^2}{n}\sum_{i=1}^n
      \abs{x^j_i-y^j_i}^2\\
    &\quad\leq\frac{K}{n}\sum_{i=1}^n\norm{\vx_i-\vy_i}_{\R^d}^2,
  \end{align*}
  where $K\coloneqq 16C^{2(d-1)}(\max_{j\in\{1,\dots,d\}}L_j)^2$. This
  completes the proof of Lemma~\ref{thm:dhsic_lipschitz}.
\end{proof}

Using the Lipschitz property 
given in Lemma~\ref{thm:dhsic_lipschitz} 
we can now prove that the difference
between $\mhsicb$ based on the estimated residuals and $\mhsicb$ based on the
exact residuals converges to $0$ in probability, see Theorem~\ref{thm:noniid_resid} below.
This shows that (asymptotically) both quantities yield the same order of causal models. 

\begin{theorem}[consistency of $\mhsicb$ for estimated residuals]
  \label{thm:noniid_resid}
  Let $(\vX_i,\vR_i)_{i\in\{1,\dots,n\}}\in\R^d\times\R^d$ be iid
  random variables satisfying \eqref{eq:regression_setting}. Moreover, assume $\prodk$ satisfies
  the Lipschitz condition of Lemma~\ref{thm:dhsic_lipschitz}. Then,
  given a suitable regression method for this problem it holds that
  \begin{equation*}
    \abs[\big]{\mhsicb_n(\hat{\vR}_{1,n},\dots,\hat{\vR}_{n,n})-\mhsicb_n(\vR_1,\dots,\vR_n)}\overset{\P}{\longrightarrow}0, 
    \quad  \text{ as } n\rightarrow\infty.
  \end{equation*}
\end{theorem}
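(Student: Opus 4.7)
The proof is essentially a direct application of the Lipschitz bound in Lemma~\ref{thm:dhsic_lipschitz} combined with the definition of a suitable regression method. My plan is to set $\vx_i \coloneqq \hat{\vR}_{i,n}$ and $\vy_i \coloneqq \vR_i$ in that lemma. Since the product kernel $\prodk$ is assumed Lipschitz in each coordinate, the lemma yields a constant $K>0$ such that
\begin{equation*}
  \abs[\big]{\mhsicb_n(\hat{\vR}_{1,n},\dots,\hat{\vR}_{n,n})-\mhsicb_n(\vR_1,\dots,\vR_n)}^2
  \leq \frac{K}{n}\sum_{i=1}^n\norm{\hat{\vR}_{i,n}-\vR_i}^2_{\R^d}.
\end{equation*}
This reduces the problem to controlling the mean squared error of the residuals, which is exactly the quantity appearing in the definition of a suitable regression method.

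Next I would take expectations on both sides of this inequality. By the definition of suitability,
\begin{equation*}
  \lim_{n\rightarrow\infty}\E\left(\frac{1}{n}\sum_{i=1}^n\norm{\hat{\vR}_{i,n}-\vR_i}^2_{\R^d}\right)=0,
\end{equation*}
so the nonnegative random variable on the right-hand side converges to $0$ in $L^1$ and hence in probability. Since the squared difference of the two dHSIC values is dominated by this quantity (multiplied by the constant $K$), it also converges to $0$ in probability. Finally, since $x \mapsto \sqrt{x}$ is continuous on $[0,\infty)$, the continuous mapping theorem gives the desired conclusion
\begin{equation*}
  \abs[\big]{\mhsicb_n(\hat{\vR}_{1,n},\dots,\hat{\vR}_{n,n})-\mhsicb_n(\vR_1,\dots,\vR_n)}\overset{\P}{\longrightarrow}0
\end{equation*}
as $n\rightarrow\infty$.

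There is no real obstacle here; the theorem is essentially a corollary of Lemma~\ref{thm:dhsic_lipschitz} together with the $L^1$ hypothesis built into the definition of a suitable regression method. The only thing worth double-checking is that Lemma~\ref{thm:dhsic_lipschitz} is applicable without any measurability issue when the input arguments themselves are random (here the $\hat{\vR}_{i,n}$), which is immediate because the inequality in that lemma is deterministic in $(\vx_1,\dots,\vx_n)$ and so holds pointwise on $\Omega$.
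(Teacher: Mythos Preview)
Your proposal is correct and follows essentially the same approach as the paper: apply Lemma~\ref{thm:dhsic_lipschitz} with $\vx_i=\hat{\vR}_{i,n}$ and $\vy_i=\vR_i$, then use the definition of a suitable regression method to conclude that the right-hand side tends to zero in expectation, hence the difference converges to zero in probability. The paper phrases the last step as ``$L^2$-convergence implies convergence in probability,'' which is exactly your argument with slightly different wording.
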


\begin{proof}
  By Lemma~\ref{thm:dhsic_lipschitz} it holds that
  \begin{equation*}
    \abs[\big]{\mhsicb_n(\hat{\vR}_{1,n},\dots,\hat{\vR}_{n,n})-\mhsicb_n(\vR_1,\dots,\vR_n)}^2
    \leq\frac{K}{n}\sum_{i=1}^n \norm{\hat{\vR}^j_{i,n}-\vR_i^j}_{\R^d}^2,
  \end{equation*}
  from which the result immediately follows, since $L^2$-convergence
  implies convergence in probability. This completes the proof of
  Theorem~\ref{thm:noniid_resid}.
\end{proof}

\subsection{Sample splitting}\label{sec:sample_split}

In order to justify the DAG verification method as a goodness of fit
test we need to ensure that it achieves asymptotic
level.
This can be achieved by splitting the
sample and using one part to perform the regression and the other part to
perform the independence test. While this guarantees that the
residuals are independent, we still need to show that the asymptotic
distribution of the estimated residuals converges to the same
distribution as the exact residuals. In order to do so, we use our
results on resampling V-statistics derived in
Section~\ref{subsec:resampling_uvstat}.

\begin{proposition}[asymptotic distribution of test statistic for estimated residual]
  \label{thm:asymptotic_dist_approx_resid}
  Let $(\vX_i,\vR_i)_{i\in\{1,\dots,n\}}\in\R^d\times\R^d$ be iid
  random variables satisfying \eqref{eq:regression_setting}. Let
  $\mathcal{Z}^1_n\coloneqq(\vX_1,\dots,\vX_{n})$ and
  $\mathcal{Z}^2_n\coloneqq(\vX_{n+1},\dots,\vX_{2n})$ be two
  sub-samples, assume $\hat{f}_{j,\mathcal{Z}^2_n}$ are the estimates
  of the regression functions due to a suitable regression method
  based on the sample $\mathcal{Z}^2_n$ and define for all
  $i\in\{1,\dots,n\}$ and for all $j\in\{1,\dots,d\}$ the residuals
  \begin{equation*}
    \hat{R}^j_{i,n}\coloneqq X^j_i-\hat{f}_{j,\mathcal{Z}^2_n}(\parents^j_i).
  \end{equation*}
  Moreover, let $(Z_i)_{i\in\N}$ be a sequence of independent standard
  normal random variables on $\R$, let
  $T_{h_2}\in L(\Lpspace{2}{\prodlawR}{\abs{\cdot}_{\R}})$ with the
  property that for every
  $f\in\Lpspace{2}{\prodlawR}{\abs{\cdot}_{\R}}$ and for every
  $\vx\in\prodkernelspace$ it holds that
  \begin{equation}
    \label{eq:lambda_diff}
    \left(T_{h_2}(f)\right)(\vx)=\int_{\prodkernelspace}h_2(\vx,\vy)f(\vy)\measure{\prodlawR}{\vy}
  \end{equation}
  and let $(\lambda_i)_{i\in\N}$ be the eigenvalues of $T_{h_2}$. Then
  it holds that
  \begin{equation*}
    \begin{rcases}
      n\cdot\mhsicb_n(\hat{\vR}_{1,n},\dots,\hat{\vR}_{n,n})\\
      n\cdot\mhsicb_n\left(\vR_{1},\dots,\vR_{n}\right)
    \end{rcases}
    \overset{d}{\longrightarrow}\binom{2d}{2}\sum_{i=1}^{\infty}\lambda_iZ_i^2
    \quad \text{ as } n\rightarrow\infty.
  \end{equation*}
\end{proposition}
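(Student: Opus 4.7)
The second convergence is essentially immediate from results already in the paper. The additive noise assumption \eqref{eq:regression_setting} forces $\vR_i$ to be a vector of jointly independent coordinates that is independent of the parents, so the law of $\vR_i$ is $\prodlawR$, a product of marginals. Since the $\vR_i$ are iid, we have $\prodlawR\in\HO$ in the sense of \eqref{eq:HO}, and Theorem~\ref{thm:asymptoticdist_HO} applied to this iid sample yields exactly the claimed limit with the eigenvalues of the integral operator $T_{h_2}$ as in~\eqref{eq:lambda_diff}.

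For the first convergence the plan is to exploit the sample splitting and then apply Theorem~\ref{thm:resamplingvstat} conditionally. Given $\mathcal{Z}^2_n$, each function $\hat{f}_{j,\mathcal{Z}^2_n}$ is deterministic, so $\hat{\vR}_{i,n}$ is a fixed measurable transformation of $\vX_i$; because $\mathcal{Z}^1_n$ is independent of $\mathcal{Z}^2_n$, conditionally on $\mathcal{Z}^2_n$ the estimated residuals are iid from a random law $\hat{Q}_n$. This places us in Setting~\ref{setting:resampling} with target $\prodlawR$. Suitability of the regression, $\frac{1}{n}\sum_i\E\|\hat{\vR}_{i,n}-\vR_i\|^2\to 0$, forces $\E\|\hat{\vR}_{1,n}-\vR_1\|^2\to 0$, hence $\hat{Q}_n\to\prodlawR$ weakly along subsequences a.s.\ over $\mathcal{Z}^2_n$, verifying the convergence $X^*_n\overset{d}{\to}X$ needed by the theorem. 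Two of the three hypotheses on the core function are then automatic at the product limit: $\theta_h=\mhsic(\prodlawR)=0$ by Proposition~\ref{thm:mhsic_independence} (condition (iii)), and $h_1\equiv 0$ by Lemma~\ref{thm:degeneracy} (condition (ii)). Boundedness, continuity and symmetry of $h$ come from Lemma~\ref{thm:properties_h}, and the non-degeneracy $\xi_2(h)>0$ is exactly the positivity result proved along with Theorem~\ref{thm:asymptoticdist_HO}.

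The main obstacle is condition (i), i.e.\ $h^n_1\equiv 0$, which by Lemma~\ref{thm:degeneracy} is equivalent to $\hat{Q}_n$ being a product distribution itself, and this is false at every finite $n$: even if the components of the true noise are independent, the regression errors couple the coordinates of $\hat{\vR}_{i,n}$ through their common dependence on $\vX_i$. The way around this is to revisit the proof of Theorem~\ref{thm:resamplingvstat}: condition (i) was used only to make $\xi^n_1(w)$ vanish in the variance decomposition of $\tilde{\mathcal{U}}_n(g)-\mathcal{U}_n(g)$, so after multiplying by $n$ the missing ingredient is precisely $\xi^n_1(w)\to 0$ in place of $\xi^n_1(w)=0$. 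Using the kernel expansion of Proposition~\ref{thm:mhsickernel} for $h^n_1$ together with the Lipschitz/boundedness argument already exploited in Lemma~\ref{thm:dhsic_lipschitz}, one estimates $|h^n_1(\vz)-h_1(\vz)|$ by $\E[\|\hat{\vR}_{1,n}-\vR_1\|\mid\mathcal{Z}^2_n]$, which vanishes in probability by suitability; combined with $h_1\equiv 0$ this gives $\xi^n_1(w)\to 0$. The rest of the argument of Theorem~\ref{thm:resamplingvstat} then transfers verbatim to produce the limit $\binom{2d}{2}\sum_i\lambda_i Z_i^2$ conditionally on $\mathcal{Z}^2_n$. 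Boundedness of the test statistic (from $h$ bounded) lets one pass from conditional to unconditional convergence in distribution by bounded convergence of characteristic functions, finishing the proof.
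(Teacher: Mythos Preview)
The paper's own proof is much terser than yours: it simply observes that, because of the sample split, the estimated residuals $(\hat{\vR}_{i,n})_{i\le n}$ are iid (conditionally on $\mathcal{Z}^2_n$), that suitability of the regression gives $\hat{\vR}_{i,n}\overset{d}{\to}\vR_i$, and then invokes Theorem~\ref{thm:resamplingvstat} directly. It does not discuss condition~(i) of that theorem at all. Your observation that condition~(i) is problematic is therefore a genuine refinement relative to the paper: the conditional law of $\hat{\vR}_{1,n}$ is in general \emph{not} a product measure (the regression errors $f_j(\parents^j)-\hat f_{j,\mathcal{Z}^2_n}(\parents^j)$ couple the coordinates through shared parents), so $h^n_1\not\equiv 0$ and Theorem~\ref{thm:resamplingvstat} cannot be applied verbatim.

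However, your proposed workaround has a quantitative gap. In the proof of Theorem~\ref{thm:resamplingustat}, the variance formula for U-statistics gives
\[
\Var\!\big(\tilde{\mathcal{U}}_n(g)-\mathcal{U}_n(g)\big)
=\binom{n}{q}^{-1}\sum_{c=1}^{q}\binom{q}{c}\binom{n-q}{q-c}\xi^n_c(w)
= O(n^{-1})\,\xi^n_1(w)+O(n^{-2})\,\xi^n_2(w)+O(n^{-3}),
\]
so after multiplying by $n^2$ the $c=1$ term contributes $O(n)\,\xi^n_1(w)$, not $O(1)\,\xi^n_1(w)$. Thus the ``missing ingredient'' is $n\,\xi^n_1(w)\to 0$, not merely $\xi^n_1(w)\to 0$ as you claim. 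Your Lipschitz estimate $|h^n_1(\vz)|\le C\,\E\big[\|\hat{\vR}_{1,n}-\vR_1\|\,\big|\,\mathcal{Z}^2_n\big]$ is fine, but it only yields $\xi^n_1(w)\le C'\big(\E[\|\hat{\vR}_{1,n}-\vR_1\|\,|\,\mathcal{Z}^2_n]\big)^2$, and the definition of a \emph{suitable} regression method furnishes no rate: it only says $\E\|\hat{\vR}_{1,n}-\vR_1\|^2\to 0$. Without an $o(n^{-1/2})$ rate on the regression error you cannot conclude $n\,\xi^n_1(w)\to 0$, and the remainder of the argument does not go through. In short: you have correctly spotted a point the paper skates over, but closing it genuinely requires either a rate assumption on the regression (which neither the proposition nor the notion ``suitable'' provides) or a different argument altogether.
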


\begin{proof}
  The convergence of
  $n\cdot\mhsicb_n\left(\vR_{1},\dots,\vR_{n}\right)$ follows
  immediately from Theorem~\ref{thm:asymptoticdist_HO}. Due to the
  sample splitting it follows that
  $\hat{\vR}_{1,n},\dots,\hat{\vR}_{n,n}$ are iid random variables and
  given that the regression method is \textit{suitable} and since
  $L^2$-convergence implies convergence in distribution it in
  particular holds that $\hat{\vR}_{i,n}\overset{d}{\rightarrow}\vR_i$
  as $n\rightarrow\infty$. Hence,
  $(\hat{\vR}_{i,n})_{i\in\{1,\dots,n\}}$ satisfies the same
  properties as $(\vX^*_{n,i})_{i\in\{1,\dots,n\}}$ in
  Setting~\ref{setting:resampling}. The convergence of
  $n\cdot\mhsicb_n(\hat{\vR}_{1,n},\dots,\hat{\vR}_{n,n})$ thus
  follows directly from Theorem~\ref{thm:resamplingvstat}.
\end{proof}

Using a similar argument as in Theorem~\ref{thm:level_bootstrap_test}
we can hence show that the goodness of fit test based on the DAG
verification method with sample splitting has asymptotic level.



\end{document}